\author{Daan Michiels}
\title{Symplectic Foliations, Currents, and Local Lie Groupoids}
\DeclareFixedFont{\ttb}{T1}{txtt}{bx}{n}{9} 
\DeclareFixedFont{\ttm}{T1}{txtt}{m}{n}{9}  
\definecolor{lightgrey}{rgb}{0.7,0.7,0.7}
\definecolor{deepblue}{rgb}{0,0,0.5}
\definecolor{deepred}{rgb}{0.6,0,0}
\definecolor{deepgreen}{rgb}{0,0.5,0}
\newcommand\pythonstyle{\lstset{
language=Python,
basicstyle=\ttm,
numbers=left,
stepnumber=1,
numberstyle=\itshape\color{lightgrey},
commentstyle=\itshape\color{lightgrey},
otherkeywords={as, self, yield},
keywordstyle=\ttb\color{deepblue},
emph={__name__},
emphstyle=\ttb\color{deepred},
stringstyle=\color{deepgreen},
frame=tb,
showstringspaces=false
}}
\newcommand{\Z}{\ensuremath{\mathbb{Z}}}
\newcommand{\R}{\ensuremath{\mathbb{R}}}
\renewcommand{\C}{\ensuremath{\mathbb{C}}}
\newcommand{\F}{\mathcal{F}}
\newcommand{\HH}{\mathcal{H}}
\newcommand{\Sphere}{\mathbb{S}}
\newcommand{\Torus}{\mathbb{T}}
\newcommand{\Disk}{\mathbb{D}}
\newcommand{\dR}{\text{dR}}
\newcommand{\cl}{\overline}
\newcommand{\ind}{\mathbbm{1}}
\newcommand{\Li}{\mathcal{L}}
\newcommand{\eps}{\varepsilon}
\newcommand{\G}{\mathcal{G}}
\newcommand{\g}{\mathfrak{g}}
\newcommand{\Cur}{\mathcal{C}}
\newcommand{\ZZ}{\mathcal{Z}}
\newcommand{\BB}{\mathcal{B}}
\newcommand{\V}{\mathcal{V}}
\renewcommand{\U}{\mathcal{U}}
\newcommand{\dd}[1]{\frac{\partial}{\partial{#1}}}
\newcommand{\timesst}{\tensor[_s]{\times}{_t}}
\newcommand{\tto}{\rightrightarrows}
\newcommand{\slot}{-}
\newcommand{\realization}[1]{{\lvert {#1}\rvert}}
\newcommand{\Nerve}{\mathcal{N}}
\newcommand{\AsCo}{\mathcal{AC}}
\newcommand{\Mon}{\tilde{\mathcal{N}}}
\newcommand{\Orbit}{\mathcal{O}}
\newcommand{\into}{\hookrightarrow}
\DeclareMathOperator{\id}{Id}
\DeclareMathOperator{\im}{Im}
\DeclareMathOperator{\spn}{span}
\DeclareMathOperator{\interior}{int}
\DeclareMathOperator{\Hol}{Hol}
\DeclareMathOperator{\Verts}{Vert}
\DeclareMathOperator{\Assoc}{Assoc}
\DeclareMathOperator{\ConvexHull}{ConvexHull}
\newtheorem{theorem}{Theorem}
\newtheorem{maintheorem}{Theorem}
\newtheorem{lemma}[theorem]{Lemma}
\newtheorem{corollary}[theorem]{Corollary}
\newtheorem{proposition}[theorem]{Proposition}
\newtheorem{conjecture}[theorem]{Conjecture}
\newtheorem*{conjecture*}{Conjecture}
\theoremstyle{definition}
\newtheorem*{example-plain}{Example}
\newtheorem*{remark}{Remark}
\newtheorem{definition}[theorem]{Definition}
\newcommand\xqed[1]{%
  \leavevmode\unskip\penalty9999 \hbox{}\nobreak\hfill
  \quad\hbox{#1}}
\newcommand\afterexample{\xqed{$\blacktriangleleft$}}
\newenvironment{example}[1]{%
    \begin{example-plain}#1}{%
    \afterexample\end{example-plain}%
}
\begin{document}

\maketitle
\parindent 1em%

\frontmatter

\begin{abstract}
    This thesis treats two main topics: calibrated symplectic foliations, and local
Lie groupoids.
Calibrated symplectic foliations are one possible generalization of taut foliations
of 3-manifolds to higher dimensions. Their study has been popular in recent years,
and we collect several interesting results.
We then show how de Rham's theory of currents, and Sullivan's theory of structure currents,
can be applied in trying to understand the calibratability of symplectic foliations.

Our study of local Lie groupoids begins with their definition
and an exploration of some of their basic properties.
Next, three main results are obtained.
The first is the generalization of a theorem by Mal'cev. The original theorem
characterizes the local Lie groups that are part of a (global) Lie group. We
give the corresponding result for local Lie groupoids.
The second result is the generalization of a theorem by Olver which classifies
local Lie groups in terms of Lie groups. Our generalization classifies, in
terms of Lie groupoids, those local Lie groupoids that have integrable algebroids.
The third and final result demonstrates a relationship between the
associativity of a local Lie groupoid, and the integrability of its algebroid.
In a certain sense, the monodromy groups of a Lie algebroid manifest themselves
combinatorially in a local integration, as a lack of associativity.

\end{abstract}

\begin{dedication}
    to my parents
\end{dedication}

\chapter*{Acknowledgements}

This thesis would not have been possible without the support of many people.
I wish to express my gratitude to all of them.

First and foremost, I would like to thank my advisor.
Rui, you have been a kind and wise mentor throughout my doctoral studies.
Your advice and your guidance have been invaluable, and your support has always been
much appreciated. Thank you.

I would also like to thank the members of the examination committee for taking the time
to read my work, and for their helpful questions and comments.

I would like to thank all other people at the mathematics department of the University of
Illinois at Urbana-Champaign.
All of you have contributed to this project in one way or another, be it administratively,
through teaching, or through one of countless other ways.

I would also like to thank Arno Kuijlaars, who was the first person to suggest I go to
the United States for my PhD, and Marius Crainic, who suggested I work with Rui.

In the five years I spent working toward this thesis, I have had the pleasure
of meeting many other mathematicians from all over the world. If I try to list
them all here, I will inevitably forget some. I am grateful for all
math-related and non-math-related conversations we had.

Doing mathematical research is no easy task, and I often found myself at the climbing
wall in an effort to clear my head. I would like to thank Matt for all the good times
we had climbing, and for his friendship.

I would like to thank my entire family for their love and support.
I am eternally grateful to my parents, in particular, for raising me
with incredible warmth and giving me all the opportunities anyone could ever want.
You are truly exceptional.

Finally, I would like to thank Annelies.
Living an ocean apart has not been easy, and I am
unable to count the hours we have spent on Skype.
I want to thank you for your support, and for everything you do for me.
Your love makes me a better person.

\tableofcontents

\mainmatter
\chapter*{Introduction}
\addcontentsline{toc}{chapter}{Introduction}

This thesis addresses two main topics: calibrations of symplectic foliations,
and the associativity of local Lie groupoids.
These topics are related to each other through the study of (symplectic) Lie groupoids.
On the one hand, symplectic foliations are precisely regular Poisson
structures, and Poisson manifolds are the infinitesimal counterpart to
symplectic Lie groupoids.
On the other hand, local Lie groupoids generalize Lie groupoids.

Let us sketch the broad context in which these topics are studied,
and give a brief summary of the main results in this thesis.
At the end of this introduction, we will outline the organization of this thesis.

\subsection*{Calibrated symplectic foliations}

A symplectic foliation is, roughly speaking, a foliation whose leaves are equipped
with a symplectic structure that varies smoothly from leaf to leaf.
Alternatively, one can consider a symplectic foliation as a regular Poisson structure.
One interesting problem about symplectic foliations is:
\begin{center}
\itshape Which manifolds admit a symplectic foliation of a given dimension?
\end{center}
For open manifolds, the answer is understood, in the sense that there is an $h$-principle
governing the existence of symplectic foliations \cite{fernandes-frejlich}.
For closed manifolds, however, this problem is very hard, and understanding is limited.
Indeed, in codimension 0 the problem reduces to the existence of symplectic structures on
closed manifolds, known to be a tough problem.
In \cite{bertelson}, an example is given of a foliation that does not admit a
symplectic structure despite all basic obstructions vanishing. The thesis
\cite{osornotorres} is dedicated to the study of symplectic foliations in
codimension 1.

Because the class of symplectic foliations is large, it is natural to focus attention
on a more restricted type of symplectic foliation.
One suitable class of structures is that of \emph{calibrated} symplectic foliations:
one requires the existence of a closed 2-form on the manifold that restricts to the given
leafwise symplectic form on the foliation.
(This condition known by different names in the literature. In
\cite{martineztorres}, these symplectic foliations
are called \emph{2-calibrated} to stress that the extra condition on the symplectic
foliation is the existence of a certain 2-form. Another common name is that of a
symplectic foliation \emph{with a closed extension}, to stress that the extra condition
on the symplectic foliation is the existence of a certain closed form.)
The question above then leads to the following problem:
\begin{center}
\itshape Which manifolds admit a calibrated symplectic foliation of a given dimension?
\end{center}
It is the class of calibrated symplectic foliations, and the existence problem
just formulated, that chapters \ref{chapter:calibrations} and
\ref{chapter:currents} will focus on.

An important motivation for studying calibrated symplectic foliations is
the hope that they can provide a suitable generalization of \emph{tautness} of
a foliation to higher dimensions.
Taut foliations are a special class of codimension-1 foliations that have been studied
extensively, particularly in dimension 3.
In dimension $m$, tautness of a foliation amounts to the existence of a closed $(m-1)$-form
whose restriction to each leaf is non-vanishing.
The existence of a taut foliation on a 3-manifold has consequences for the topology
of this manifold (see \cpageref{page:taut-in-dim3}).
While the study of taut foliations has been very successful on 3-manifolds, they
are not as useful in higher dimensions. In particular, the $h$-principle proved in
\cite{meigniez} shows that a compact manifold of dimension at least 4 always admits
a taut foliation, given that it admits a codimension-1 foliation.
Thus taut foliation lose much of their usefulness because they become too flexible
starting in dimension 4.
A codimension-1 foliation in dimension 3 is taut precisely if it admits a calibration,
and starting in dimension 5, admitting a calibration is a strictly stronger condition
than being taut (there are no symplectic codimension-1 foliations in dimension 4).
The more limited flexibility of calibrated symplectic foliations compared to
taut foliations is exactly analogous to the more limited flexibility of
symplectomorphism compared to volume-preserving maps (as exemplified, for example,
by Gromov's non-squeezing theorem \cite{gromov85}).
The hope, therefore, is that calibrated symplectic foliations are the correct way
to generalize tautness in dimension 3 to higher dimensions, and retain sufficient rigidity
to be able to deduce interesting consequences about the topology
of the ambient manifold.

\subsection*{Associativity of local Lie groupoids}

When the theory of Lie groups and Lie algebras was first developed,
Lie groups were studied locally, for no suitable notion of manifold was available
(\cite[pages 286--302]{bourbaki-lie}).
Once the modern notion of a Lie group has been established, then, it is natural to
ask whether every local Lie group is contained in a Lie group.
Perhaps surprisingly, the answer is ``no''.
While it is true that every local Lie group has a neighborhood of the identity
that is contained in a Lie group, this is not necessarily true for the entire local
Lie group.
We say that a local Lie group is \emph{globalizable} if it is contained in a Lie group.
The obstruction to globalizability of a local Lie group has long been understood:
a theorem by Mal'cev proves that a local Lie group is globalizable precisely
if it is \emph{globally associative} \cite{malcev}.

In \cite{olver}, Olver re-proves Mal'cev's result, and describes a construction
that, starting from Lie groups, can be used to obtain any local Lie group.
This effectively gives a sort of classification of local Lie groups.
Roughly speaking, every local Lie group is covered by a cover of a globalizable
local Lie group.
One can wonder whether these results by Mal'cev and Olver have corresponding
generalizations to local Lie groupoids.

As shown in chapters \ref{chapter:associativity} and
\ref{chapter:classification} of this thesis, corresponding results do indeed
exist. Mal'cev's theorem follows as a corollary of the following result.
In this statement, the associative completion $\AsCo(G)$
is a groupoid that can be naturally built for any local Lie groupoid $G$.

\begin{maintheorem}
    If $G$ is a bi-regular local Lie groupoid with products connected to the axes,
    then $\AsCo(G)$ is
    smooth if and only if $\Assoc(G)$ is uniformly discrete in $G$
    (and in that case, $G \to \AsCo(G)$ is a local diffeomorphism).
\end{maintheorem}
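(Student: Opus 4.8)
The plan is to recast $\AsCo(G)$ as the quotient of a manifold by an equivalence relation and then invoke Godement's criterion, observing that its hypotheses translate exactly into uniform discreteness of $\Assoc(G)$. So the first step is to recall the explicit description of $\AsCo(G)$. Using bi-regularity and the hypothesis that products are connected to the axes, $\AsCo(G)$ is realized as a quotient $P/R$, where $P$ is a smooth manifold built canonically from $G$ — either $G$ itself, or, in order to make the a priori only partial right action of the isotropy bundle $\Assoc(G)$ globally defined, the monodromy $\Mon$ of the nerve of $G$ — and $R\subseteq P\times P$ is the orbit equivalence relation of that action of $\Assoc(G)$; the natural morphism $G\to\AsCo(G)$ is (induced by) the quotient map. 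Concretely, the $R$-classes have the shape: $p$ and $p'$ are equivalent if and only if they have the same source and the same target and differ by an element of $\Assoc(G)$.

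For the ``if'' direction, assume $\Assoc(G)$ is uniformly discrete in $G$. I would verify that $R$ is then a closed embedded submanifold of $P\times P$ and that the projection $\pr_1\colon R\to P$ is a surjective submersion. Near the units this is precisely the assertion that, over each compact set of units, the action of $\Assoc(G)$ is free with uniformly discrete orbits — the definition of uniform discreteness — and away from the units one transports this using left and right translations, which are local diffeomorphisms by bi-regularity. Godement's theorem then gives that $\AsCo(G)=P/R$ is a smooth manifold, that $P\to\AsCo(G)$ is a submersion, and, being injective on suitable neighbourhoods of the orbits, a local diffeomorphism; since $P\to G$ is a local diffeomorphism with the same units, $G\to\AsCo(G)$ is a local diffeomorphism as well. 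A routine check that source, target, multiplication, inversion and the unit map descend to smooth maps then shows $\AsCo(G)$ is a Lie groupoid.

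For the ``only if'' direction, suppose $\Assoc(G)$ fails to be uniformly discrete: there is a unit $x_0$ and a sequence $a_n\in\Assoc(G)$, none of them units, with $a_n\to x_0$ (or an analogous failure, uniformly over a compact family of units). Then in $\AsCo(G)$ the class of the unit $x_0$ is a limit of the classes of the $a_n$, which are pairwise distinct and distinct from the class of $x_0$; hence the source fibre of $\AsCo(G)$ through $x_0$ fails to be Hausdorff at the unit — equivalently, $R$ is not closed in $P\times P$ — so $\AsCo(G)$ cannot carry a smooth structure compatible with its quotient topology, and in particular cannot be smooth. I would make this precise by analysing the germ of the quotient $\AsCo(G)$ near $x_0$, exactly as in the Crainic--Fernandes analysis of when the Weinstein groupoid $\G(A)$ of a Lie algebroid $A$ fails to be smooth.

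The step I expect to be the main obstacle is this ``only if'' direction: one must show that a single accumulation of $\Assoc(G)$ onto the unit manifold genuinely rules out \emph{any} smooth structure on $\AsCo(G)$, not merely the obvious one, which requires a clean local model for the germ of $\AsCo(G)$ at a unit together with a non-Hausdorffness argument (or, where appropriate, an argument that the quotient is not locally Euclidean) there. A secondary technical burden is keeping track of the two hypotheses: checking that ``products connected to the axes'' really does make the quotient description of $\AsCo(G)$ and the isotropy bundle $\Assoc(G)$ well defined with the fibrewise shape used above, and that bi-regularity makes left and right translations honest local diffeomorphisms, so that the picture established near the units propagates over all of $\AsCo(G)$.
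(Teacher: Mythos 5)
There is a genuine gap, and it occurs at the very first step. The associative completion is a quotient of $W(G)$, the space of all well-formed words, and the natural map $G\to\AsCo(G)$ is in general far from surjective (take $G$ a small neighbourhood of the identity in a Lie group: $\AsCo(G)$ is the whole group). So presenting $\AsCo(G)$ as $P/R$ with $P=G$ and $R$ the orbit relation of an $\Assoc(G)$-action cannot be right, and the alternative $P$ you gesture at (``the monodromy of the nerve'') is never constructed; Godement's criterion therefore has nothing to act on. Even the fibrewise description ``same source, same target, differ by an associator'' presupposes both that the relevant product with an associator is defined and that the kernel of $G\to\AsCo(G)$ is exactly $\Assoc(G)$ --- but that identification is itself the nontrivial content of Proposition~\ref{prop:equivalence-relation} (the expansions-then-contractions normal form), and it is precisely there that bi-regularity and ``products connected to the axes'' are used; you assume it silently. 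The paper's actual proof of the ``if'' direction builds charts on $\AsCo(G)$ by hand: fix a representative word $(x_1,\dots,x_n)$ of a point, choose local bisections $N_i$ through the letters, vary one letter and let the sources and targets along the bisections determine the others; uniform discreteness of $\Assoc(G)$ is exactly what makes these charts injective, and the transition maps are checked case by case. This construction reaches the points of $\AsCo(G)$ that are not in the image of $G$, which your quotient picture never does.

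Your ``only if'' direction also rests on a false claim. If $a_n\in\Assoc(G)$ with $a_n\to x_0$, then by the corollary to Proposition~\ref{prop:equivalence-relation} each $[a_n]$ \emph{is} the unit class at $s(a_n)$; the classes of the $a_n$ are therefore not ``pairwise distinct and distinct from the class of $x_0$'', and no non-Hausdorffness of the source fibre is produced by this mechanism. What failure of uniform discreteness actually yields is failure of local injectivity of $G\to\AsCo(G)$ arbitrarily close to $x_0$. The paper handles this direction differently and more cheaply: ``smooth'' means smooth as a quotient of $W(G)$, so the quotient map is a submersion; countability of the fibres of $G\to\AsCo(G)$ forces $\dim\AsCo(G)\ge\dim G$, while the existence of a component of $W(G)$ of dimension $\dim G$ forces the reverse inequality, so $G\to\AsCo(G)$ is a local diffeomorphism; then $\Assoc(G)$ is the preimage of the embedded unit manifold $M\subset\AsCo(G)$, hence an embedded submanifold of dimension $\dim M$, which is uniform discreteness. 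This also disposes of your concern about ruling out ``any'' smooth structure: smoothness here is by definition compatibility with the quotient map from $W(G)$.
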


Olver's classification also goes through (and we strengthen it slightly).
Note that Olver's construction can only classify those local Lie groupoids
whose Lie algebroid is integrable, because the construction to build local Lie groupoids
starts from a (global) Lie groupoid, and therefore necessarily from an integrable
algebroid.
The statement of the classification is as follows.

\begin{maintheorem}
    Suppose $G$ is a bi-regular $s$-connected local Lie groupoid over $M$
    with integrable Lie algebroid $A$.
    Write $\tilde{G}$ for its source-simply connected cover,
    and write $\G(A)$ for the source-simply connected integration of $A$.
    Then we have the following commutative diagram.
    \begin{center}
    \begin{tikzpicture}
        \matrix(m)[matrix of math nodes, row sep=2.2em, column sep=2.6em]{
            & \tilde{G} & \\
            G & & U\subseteq \G(A) \\
            & \AsCo(G) & \\
        };
        \path[->] (m-1-2) edge node[auto,swap]{$p_1$} (m-2-1.north east);
        \path[->] (m-1-2) edge node[auto]{$p_2$} (m-2-3.north west);
        \path[->] (m-2-1.south east) edge (m-3-2);
        \path[->] (m-2-3.south west) edge (m-3-2);
    \end{tikzpicture}
    \end{center}
    Here, $p_1$ is the covering map,
    and the map $G \to \AsCo(G)$ is the natural map from a local Lie group to its
    associative completion.
    The map $p_2$ is a generalized covering of local Lie groupoids,
    which sends a point $g\in\tilde{G}$ to the class of the $A$-path associated
    to a $\tilde{G}$-path from $s(g)$ to $g$.
\end{maintheorem}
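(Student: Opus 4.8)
The plan is to build the two upper legs of the diamond by separate covering-type constructions, and then to obtain the two legs into $\AsCo(G)$ by showing that the composite $\tilde{G}\to G\to\AsCo(G)$ factors through $p_2$. I begin with $\tilde{G}$ and $p_1$. Since $G$ is $s$-connected and bi-regular, every source fibre of $G$ is a connected manifold; taking the universal cover of each source fibre, based at the corresponding unit, and assembling these covers yields a manifold $\tilde{G}$ with a local diffeomorphism $p_1\colon\tilde{G}\to G$ which is the identity on $M$ and is fibrewise the universal covering of source fibres. The partial multiplication, the inversion and the unit embedding of $G$ lift uniquely along $p_1$: the hypothesis that products are connected to the axes is precisely what makes the domain of the multiplication connected to the unit section, so that the classical lifting criterion for covering maps applies; one then checks that the lifts obey the same identities as in $G$. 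Hence $\tilde{G}$ is a local Lie groupoid, $p_1$ is a morphism of local Lie groupoids (in fact a generalized covering), and, $p_1$ being étale, $\tilde{G}$ carries the same Lie algebroid $A$ and has simply connected source fibres.

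Next I construct $p_2$ by $A$-path monodromy. For $g\in\tilde{G}$ choose a path in the source fibre of $\tilde{G}$ from the unit $1_{s(g)}$ to $g$; projecting it to $G$ and differentiating, with right translations performed in small steps where they are defined, produces an $A$-path, whose $A$-homotopy class is a point of $\G(A)$. Since the source fibres of $\tilde{G}$ are simply connected, any two choices of path are homotopic and give $A$-homotopic $A$-paths, so $p_2(g)$ is well defined, and it is the class described in the statement. Smoothness of $p_2$ and the fact that it is a morphism of local Lie groupoids follow from the monodromy machinery for (local) Lie algebroids: near the unit section $p_2$ agrees with the canonical identification of a neighbourhood of the units of any local integration of $A$ with a neighbourhood of the units of $\G(A)$, and this identity is transported along source fibres using the partial groupoid structure, together with the facts that the $A$-path of a product $\tilde{G}$-path is $A$-homotopic to the concatenation of the $A$-paths of its factors (again using products connected to the axes) and that concatenation of $A$-paths is the multiplication of $\G(A)$. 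As a morphism of local Lie groupoids that is the identity on $M$ and the identity on $A$, $p_2$ is étale; hence $U:=p_2(\tilde{G})$ is an open sub-local-groupoid of $\G(A)$ containing the unit section, and on each source fibre $p_2$ restricts to a covering onto its open image, which — both source fibres being simply connected — is the universal covering of that image. Comparing with the definition of a generalized covering of local Lie groupoids shows that $p_2\colon\tilde{G}\to U$ is one.

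It remains to fill in the lower legs. The map $G\to\AsCo(G)$ is the canonical one, and for the last leg I would show that $\tilde{G}\to G\to\AsCo(G)$ is constant on the fibres of $p_2$: by construction $g\in\tilde{G}$ corresponds to a homotopy class of paths in a source fibre of $G$ from a unit to $p_1(g)$, a representative of which is a concatenation of short paths, each determining an element of $G$ near the unit section; since $\AsCo(G)$ is associative the product of these elements in $\AsCo(G)$ is well defined and equals the image of $g$ under $\tilde{G}\to G\to\AsCo(G)$. The defining properties of the associative completion then force this product to depend only on the $A$-homotopy class of the associated $A$-path, that is, only on $p_2(g)$; this descends to the desired map $U\to\AsCo(G)$, and commutativity of the square is automatic. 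I expect the main obstacle to be the construction of $p_2$ and the proof that it is a morphism of local Lie groupoids and étale — i.e.\ a generalized covering onto an open subset of $\G(A)$ — since this is where the $A$-path monodromy must be controlled in the presence of only partially defined products, which is exactly where \emph{bi-regular} and \emph{products connected to the axes} do the real work. Granting the construction and properties of $\AsCo(G)$ from the earlier chapters (note that here $\AsCo(G)$ need not be smooth — it is, precisely when $\Assoc(G)$ is uniformly discrete), the factorization through $p_2$ and the resulting commutative diagram are comparatively formal.
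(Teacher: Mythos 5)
Your outline reproduces the statement's description of $p_1$ and $p_2$, but the two places where the actual work happens are asserted rather than proved, and one of them is a genuine gap. First, the smoothness of $p_2$ and the fact that it is a morphism (indeed a generalized covering) onto an open $U\subseteq\G(A)$ is the technical heart of the theorem, and "follows from the monodromy machinery" is not an argument in the local setting: partial products mean that the path $\tau\mapsto\gamma(\tau)\cdot h$ used to compare the $A$-path of a product with the concatenation of $A$-paths need not be defined, and you invoke \emph{products connected to the axes}, which is not a hypothesis of the theorem (restricting $G$ to achieve it changes the equivalence relation defining $\AsCo(G)$ unless you also invoke the earlier comparison lemmas). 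The paper handles this step without any pointwise $A$-path analysis, by Cartan's method of the graph: it puts the foliation $\F_{(g_1,g_2)}=\{(\xi\cdot g_1,\xi\cdot g_2)\mid\xi\in A_{t(g_1)}\}$ on $\tilde{G}\tensor[_t]{\times}{_t}\G(A)$, shows via bi-regularity and right-invariance that the first projection of the leaf $N_x$ through $(x,x)$ is a covering of the (simply connected) source fiber of $\tilde{G}$, hence a diffeomorphism, and defines $p_2$ as $\pi_2$ composed with the resulting smooth section $\phi$. Similarly, your construction of the local groupoid structure on $\tilde{G}$ by lifting $m$ "by the classical lifting criterion" is shaky: $p_1$ is only a fiberwise covering, and the domain of left translation inside a source fiber has no a priori control on $\pi_1$; the paper instead pulls back the Maurer--Cartan $s$- and $t$-framings along the étale map and uses the characterization of local groupoid structures by invariant vector fields (its covering theorem for local Lie groupoids).

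Second, and more seriously, your descent argument for the lower triangle is circular: the claim that "the defining properties of the associative completion force this product to depend only on the $A$-homotopy class of the associated $A$-path" is exactly the nontrivial statement that $\tilde{G}\to G\to\AsCo(G)$ factors through $p_2$, and nothing in the definition of $\AsCo$ gives it to you — an $A$-homotopy between concatenated $A$-paths does not obviously translate into a chain of expansions and contractions of words in $G$. The paper sidesteps this entirely: it applies the functor $\AsCo$ to $p_1$ to get $\AsCo(\tilde{G})\to\AsCo(G)$, observes that $\Assoc(\tilde{G})$ sits inside the discrete fiber of $p_2$ over the identities so that $\AsCo(\tilde{G})$ is smooth, and then uses the chain $\AsCo(\tilde{G})\to\AsCo(U)\to\AsCo(\G(A))\cong\G(A)$ of Lie groupoid morphisms inducing isomorphisms on algebroids, together with source-simple-connectedness of $\G(A)$, to conclude $\AsCo(\tilde{G})\cong\G(A)$ and that under this identification the canonical map $\tilde{G}\to\AsCo(\tilde{G})$ is $p_2$; the bottom map and the commutativity of the diagram then come for free. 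You would either need to supply a genuine proof of your factorization claim (essentially redoing Chapter 7's comparison of $A$-homotopies with word moves) or switch to this functorial argument.
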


The third theorem about local Lie groupoids concerns the theory of integrability.
Every Lie algebroid can be integrated to a local Lie groupoid \cite{crainic-fernandes-lie}.
If one starts with a non-integrable algebroid, and integrates it to a local Lie groupoid,
one necessarily obtains a non-globalizable local Lie groupoid.
By Mal'cev's theorem, this non-globalizability must manifest itself as a lack of
global associativity.
Because the integrability of a Lie algebroid is controlled by its monodromy groups,
one expects there to be a relationship between the monodromy groups of an algebroid
and the associativity of any of its local integrations.
As we will prove in the last chapter, this relationship can be made precise:
under some technical assumptions, the ``associators'' of a local Lie groupoid
\emph{coincide} with the monodromy groups of its Lie algebroid, at least sufficiently
near the identities.
Here is the statement.

\begin{maintheorem}
    Suppose that $G$ is a shrunk local Lie groupoid over $M$. Let $x\in M$.
    Consider $G_x$ as a subset of $\G(\g_x)$ using the natural map
    $G_x \to \G(\g_x)$.
    Then
    \[ \Assoc_x(G) = \Mon_x \cap G_x .\]
\end{maintheorem}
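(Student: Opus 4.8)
The plan is to route both sides of the claimed equality through the Weinstein groupoid $\G(A)$ and compare isotropy groups. Recall that $\G(A)$ exists as a topological groupoid whether or not $A$ is integrable: it is the space of $A$-paths modulo $A$-homotopy, and the monodromy group has the standard description
\[ \Mon_x = \ker\bigl(\iota_x \colon \G(\g_x) \to \G(A)_x\bigr), \]
where $\iota_x$ sends a $\g_x$-path (an $A$-path lying over the constant loop at $x$) to its $A$-homotopy class. On the groupoid side, for a shrunk $G$ the source fibres are simply connected, so the right logarithmic derivative of a $G$-path in $s^{-1}(x)$ running from $1_x$ to $g$ has a well-defined $A$-homotopy class; this produces a morphism of topological groupoids $G \to \G(A)$, and over $x$ it agrees with $\iota_x \circ \phi$, where $\phi \colon G_x \into \G(\g_x)$ is the natural inclusion from the statement. (The point is that a $G$-path lying inside $G_x$ and an arbitrary $G$-path in $s^{-1}(x)$ with the same endpoints are homotopic rel endpoints, hence induce $A$-homotopic $A$-paths.) Finally I will use that $\Assoc_x(G)$ is precisely the fibre over $1_x$ of $G_x \to \AsCo(G)_x$, and that it is a subgroup of $G_x$.

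For $\Assoc_x(G) \subseteq \Mon_x \cap G_x$: since $\G(A)$ is a groupoid, hence globally associative, and receives the morphism $G \to \G(A)$, the associative completion $\AsCo(G)$ being universal among groupoids under $G$ yields a morphism $\AsCo(G) \to \G(A)$ factoring it. Over $x$ this gives a commuting triangle $G_x \to \AsCo(G)_x \to \G(A)_x$ with composite $\iota_x \circ \phi$, whence
\[ \Assoc_x(G) = \ker\bigl(G_x \to \AsCo(G)_x\bigr) \subseteq \ker(\iota_x \circ \phi) = \phi^{-1}(\Mon_x) = \Mon_x \cap G_x, \]
using injectivity of $\phi$ (part of being shrunk) and the identification of $G_x$ with its image. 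Unwound, this is just the statement that two associations $h,h'$ of a $G$-composable loop at $x$ differ by $h(h')^{-1} \in G_x$, which dies in $\G(A)_x$ because concatenation of the corresponding short $A$-paths is associative up to $A$-homotopy; so $h(h')^{-1}$ lies in $\Mon_x$.

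For the reverse inclusion, take $m \in \Mon_x$, say $m = \phi(g)$ with $g \in G_x$. By the description of $\Mon_x$ there is an $A$-homotopy $[0,1]^2 \to A$, over a square $[0,1]^2 \to L$ contracting the base loop, from the $\g_x$-path representing $m$ to the trivial $A$-path. I want to realize this homotopy \emph{combinatorially} inside $G$: subdivide $[0,1]^2$ into a grid fine enough that the homotopy is uniformly small over each elementary square, integrate the four edge $A$-paths of every square by elements of $G$, and note that because each elementary square is filled in $A$, the two ways around it form a $G$-composable string whose two associations differ by an associator of $G$. Sweeping these elementary defects across the grid, the contributions of interior edges cancel in pairs and what remains is the chosen $\g_x$-path along one side (which integrates to $g$) against trivial paths along the other three sides; hence $g$ is a product of associators, and since $\Assoc_x(G)$ is a subgroup, $g \in \Assoc_x(G)$, i.e.\ $m \in \Assoc_x(G)$.

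I expect this last step to be the main obstacle, for two reasons. First, locality: the $A$-homotopy witnessing $m \in \Mon_x$, and the grid refining it, must be arranged so that every elementary square \emph{and every partial product accumulated while sweeping} stays inside the region where products in $G$ are defined and inside the neighbourhood on which $\phi$ is an embedding with good control; this is exactly what the ``shrunk'' hypothesis should supply, and aligning the quantitative grid estimates with that hypothesis is the delicate part. Second, bookkeeping: one must verify that an elementary square defect is genuinely an associator of $G$, i.e.\ the discrepancy of two associations of an honest $G$-composable sequence, and that the discrete, non-abelian ``Stokes'' computation sweeping these defects across the grid really produces $g$ and nothing extra. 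Steps 1--2 are essentially formal once the $G$-path/$A$-path dictionary and the universal property of $\AsCo(G)$ are available; all the real work is the combinatorial integration in step 3, which plays the role here of filling a van Kampen diagram.
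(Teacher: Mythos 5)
Your overall strategy is the same as the paper's (the easy inclusion by pushing word evaluations to $A$-homotopy classes, the hard inclusion by a combinatorial two-dimensional filling), but the decisive step, $\Mon_x\cap G_x\subseteq\Assoc_x(G)$, is only sketched, and the mechanism you propose for closing it does not work as stated. The defect attached to an elementary square of your grid is an associativity discrepancy based at whatever point of $M$ its corner maps to, not at $x$; to assemble these into a statement about $g$ you must conjugate them back to the basepoint, and in a \emph{local} groupoid neither those conjugations nor the products of the resulting associators need be defined --- so ``$g$ is a product of associators, and $\Assoc_x(G)$ is a subgroup'' is not a legitimate conclusion (the paper itself only records that a product of two associators is an associator \emph{if defined}). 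The invariant one must propagate is the word equivalence $(w)\sim(w')$ in $W(G)$, i.e.\ one has to exhibit an explicit scheme of expansions and contractions proving $(g)\sim(x)$. That is precisely the content of the paper's proof: the disk representing $[\alpha]$ is triangulated, each short edge $e$ is assigned $g_e=\exp(\sigma(V_e))$ via a splitting $\sigma$ of the anchor, and one chooses an ordered edge sequence which is \emph{simultaneously} obtainable from the boundary word by inserting blocks and a concatenation of laces around the faces, with quantitative bounds (lace ends of length at most $2k$) guaranteeing that every lace product and every partial product is defined; this produces a good complex with boundary words $(x)$ and $(p_N,\dots,p_1)$, and a final comparison in $\G(\g_x)$ (the lemma that in a source-simply connected integration a product relation upstairs yields a word equivalence in any neighbourhood of the units, plus injectivity of $G_x\to\G(\g_x)$) identifies the outcome with $\partial[\alpha]$. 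None of this bookkeeping appears in your step 3, and it is the substance of the theorem, not a routine verification.

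There is also a smaller gap in your first inclusion: the shrunk hypothesis gives simply connected \emph{isotropy} groups $G_x$, not simply connected source fibres, so your map $G\to\G(A)$ defined by the logarithmic derivative of an arbitrary source-fibre path is not well defined as described, and its multiplicativity is not automatic either (right translation of a path by $h$ need not stay in the domain of multiplication of a local groupoid). The paper handles exactly this by fixing auxiliary data to define the $A$-path $P(g)$ and by building the compatibility of $P(gh)$ with $P(g),P(h)$ into the definition of shrunk (condition (5)); granted that, your argument that associators die in $\G(A)$ and hence land in $\Mon_x=\ker\bigl(\G(\g_x)\to\G(A)_x\bigr)$ is essentially the paper's Corollary \ref{cor:assoc-is-mon}, so this half is repairable, unlike the reverse inclusion.
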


\subsection*{Outline}

The main content of this thesis is organized into seven chapters.

In \cref{chapter:background}, we discuss some of the background material needed for the
rest of the thesis. It is intended to be a brief review, and the reader who is unfamiliar
with the topics in this first chapter is encouraged to read more comprehensive treatments
before moving on. None of the material in this chapter is original work.

In \cref{chapter:calibrations}, we give an overview of results about calibrated symplectic
foliations. We discuss what it means for a symplectic foliation to be calibrated (or
calibratable), and give some interpretations.
We illustrate by briefly mentioning the special case of cosymplectic structures,
and conclude by discussing a result about the transverse geometry of calibrated
symplectic foliations due to Mart\'inez-Torres et al.\ \cite{martineztorres}.
None of the material in this chapter is new, except perhaps the example on page \pageref{page:example-torus} and \cref{prop:morita-equivalence}.

In \cref{chapter:currents}, we discuss how the theory of currents (in the sense of de Rham)
can be used to try to understand calibratability of symplectic foliations.
This is the last chapter about symplectic foliations.
All of the core results in this chapter are due to Sullivan \cite{sullivan}.
In this thesis, we explain how Sullivan's theory of structure cycles becomes relevant in
understanding calibratability of symplectic foliations.
To the best of the author's knowledge,
no attempts have been made before at understanding calibratability using Sullivan's theory.
It seems likely that the understanding of the local structure of structure cycles
should give some type of answer to the existence question for calibrated symplectic
foliations, but no deep results have been obtained yet.
At the end of the chapter, we unify conjectures by Sullivan and Ghys, and illustrate with
a concrete example.

In \cref{chapter:llg}, we start our discussion of local Lie groupoids.
We define local Lie groupoids, and explain some of their basic properties.
These properties are all straightforward generalizations of corresponding
properties for local Lie groups \cite{olver}.
Then, we explain how one can associate a Lie algebroid to every local Lie groupoid.
While some of the results of this chapter do not seem to have been written down earlier,
no major theorems are proved.

In \cref{chapter:associativity},
we discuss the globalizability of local Lie groupoids.
One of the main results of this thesis is the generalization of Mal'cev's
theorem to local Lie groupoids. It is obtained in this chapter as a corollary of
a theorem describing smoothness of the associative completion of a local Lie groupoid.
The associators of a local Lie groupoid are introduced in this chapter,
and their relationship with the associative completion is explained.
The proof of Mal'cev's theorem for local Lie groupoids can be found in this chapter.
The recognition of associative completion as an important
operation, and the introduction of associators, are important in this chapter
and in the later ones.
The most important paper guiding this chapter was \cite{olver}, where the corresponding
results for local Lie groups are proved (though associators are not introduced there).

In \cref{chapter:classification},
associative completions are used as a tool to classify those local Lie groupoids
that have an integrable Lie algebroid.
We discuss coverings of local Lie groupoids, and we prove the classification result.

In \cref{chapter:integrability},
we prove the last of the three theorems, showing how the associators of a local
Lie groupoid are related to the monodromy groups of its Lie algebroid.
The theorem describing this relationship is the most important result of this thesis.
It demonstrates that the monodromy groups of a Lie algebroid manifest themselves
in a local integration, in a combinatorial way.

\setcounter{maintheorem}{0}

\chapter{Preliminaries}
\label{chapter:background}

In this first chapter, we review some of the basic material that will be needed later on.
We will also establish some notations that will be used throughout this thesis.

\section{Foliations}

Suppose that $M$ is a smooth manifold of dimension $m$.
A foliation of $M$ is, roughly speaking, a partition of $M$ into
immersed submanifolds of dimension $p$, in such a way that the partition
locally looks like $\R^m$ partitioned into parallel copies of $\R^p$.

\subsection*{Definitions}

There are many ways to define what a foliation is.
We will give two.

\begin{definition}
    A \emph{foliation atlas} of dimension $p$ (and codimension $q=m-p$) of $M$
    ($0\leq p\leq m$) is an atlas
    $(\varphi_i : U_i \to \R^m)_i$
    such that the transition maps are of the form
    \[ \varphi_{ij}(x,y) = (f_{ij}(x,y),g_{ij}(y)) \]
    for the decomposition $\R^m = \R^p \times \R^q$.
\end{definition}

The important part is that the second component of $\varphi_{ij}(x,y)$ does not
depend on $x$.  This definition should be interpreted as follows: $\R^m$
consists of parallel copies of $\R^p$ (a $q$-parameter family of them). The
transition maps of the atlas must respect this decomposition of $\R^m$ into
parallel submanifolds (so that points on the same submanifold are mapped to
points on the same submanifold).  Thus, if a manifold is obtained by gluing
together flexible pieces of fabric, then a foliated manifold is obtained by
gluing together flexible pieces of striped fabric in such a way that the
stripes line up whenever you glue.

\begin{definition}
    A \emph{foliation} of $M$ of dimension $p$ is a maximal foliation atlas of
    dimension $p$ of $M$.
\end{definition}

We will use the letter $\F$ to denote a foliation.
The stripes in a foliation chart are called $\emph{plaques}$.
They are copies of open subsets of $\R^p$.
The plaques fit together globally to partition the manifold into injectively immersed
$p$-dimensional submanifolds of $M$, called the \emph{leaves} of the foliation.
The \emph{leaf space} of $\F$, written $M/\F$, is the quotient space of $M$
obtained by identifying points on the same leaf, equipped with the quotient topology.
It need not be a well-behaved space.

In some situations, it is useful to choose a foliation atlas that is slightly better
behaved than a general foliation atlas.
A \emph{regular} foliation atlas is a foliation atlas that satisfies some mild extra
conditions:
\begin{itemize}
    \item the closure of each chart domain is a compact subset of some foliated chart
        (not necessarily in our atlas),
    \item the covering by chart domains is locally finite,
    \item the closure of a plaque of one chart intersects at most one plaque in each other
        chart.
\end{itemize}
Every foliation has a regular foliation atlas \cite[lemma 1.2.17]{candelconlon}.
We will make use of regular foliation atlases exactly once, on \cpageref{page:where-we-need-a-regular-atlas}.

A foliation is completely determined by the tangent space to the leaf at each point $x\in M$.
These tangent spaces form a subbundle of $TM$, and this subbundle is integrable because the
leaves of $\F$ are integral to this distribution.
This gives another way of defining a foliation.

\begin{definition}[Alternative definition]
    A \emph{foliation} of $M$ of dimension $p$ is an integrable subbundle of $TM$
    of rank $p$.
\end{definition}

Using this definition, the leaves of the foliation are the maximal integral submanifolds
of the integrable distribution.
If we want to think of a foliation $\F$ as a subbundle of $TM$, we will write it as $T\F$
(the tangent bundle of the foliation). Will also use $T^*\F$ for the dual of $T\F$
(the cotangent bundle of the foliation).

We will say that a foliation is \emph{orientable} (\emph{oriented}) if the bundle $T\F$ is orientable (oriented). We will say that a foliation is \emph{co-orientable} (\emph{co-oriented}) if the normal bundle to the foliation, $TM/T\F$, is orientable (oriented).
Co-orientability is also called \emph{transverse orientability}.
If $\F$ is not (co-)orientable, we can obtain a (co-)orientable foliation by going
to an orientation cover.

Every co-orientable foliation of codimension 1 can be specified as the kernel of a $1-form$.
If $\alpha \in \Omega^1(M)$, the kernel of $\alpha$ is an integrable distribution (i.e.\ a
foliation) precisely if $\alpha\wedge d\alpha = 0$.

\subsection*{Examples}

Let us go over some basic examples of foliations.

\begin{example}[Trivial foliation]
    If $L$ is a $p$-dimensional manifold, then $L\times \R^q$ has an obvious foliation.
    Its leaves are $L\times\{y\}$ for $y\in\R^q$. The leaf space is $\R^q$.
    More generally, for any $q$-dimensional manifold $Q$,
    we can foliate $L\times Q$ by leaves $L\times\{q\}$ for $q\in Q$.
    The leaf space is then $Q$.
    Even more generally, if $f : M \to Q$ is a submersion, the fibers of $f$
    form the leaves of a foliation. Then $T\F$ is the kernel of $df : TM \to TQ$.
    The leaf space is $Q$. Foliations of this form are called \emph{simple foliations}.
    A foliation with a smooth leaf space is automatically simple.
\end{example}

\begin{example}[Linear foliation of the 2-torus]
    Consider
    \[ M = \Torus^2 = \frac{\R^2}{\Z^2} ,\]
    the 2-torus, with coordinates $x,y\in \R/\Z$.
    Let $\theta \in \R$ and consider the foliation given by the 1-form
    \[ \alpha = -\sin(\theta)\,dx + \cos(\theta)\,dy .\]
    This form satisfies $\alpha\wedge d\alpha = 0$, and so its kernel is a foliation.
    If $\tan(\theta) \in \mathbb{Q} \cup \{\infty\}$, this is a foliation by circles.
    Otherwise, this is a foliation by lines.
    \label{ex:linear-foliation-torus}
\end{example}

\begin{example}[Reeb foliation]
    This example is one of the most important foliations in geometry.
    \phantomsection\label{example:reeb}
    Let $f : (-1,1) \to \R$ be an even function such that $f(y) \to +\infty$ as $y\to \pm1$.
    Consider the following foliation of $\R\times[-1,1]$:
    \begin{itemize}
        \item for each $x \in \R$ the image of $(-1,1) \to \R\times[-1,1] :
              y\mapsto (x+f(y),y)$ is a leaf,
        \item the boundary components $\R\times\{\pm 1\}$ are leaves.
    \end{itemize}
    This is a smooth foliation of $\R\times[-1,1]$ if $f$ is chosen appropriately.
    (We have not talked about foliations of manifolds with boundary.
    There are two common types of these: either boundary components are leaves,
    as in this example, or the foliation is transverse to the boundary.
    Adapting the definition to the case with boundary is not hard, but
    we won't need it for anything but this example.)
    It is shown in \cref{fig:reeb}.
    We can spin this foliation around the axis $\R\times\{0\}$ to obtain a foliation
    of $\R\times \Disk^2$, and then mod out by translation by 1 unit in the
    $\R$-direction to obtain a foliation of $\Sphere^1\times \Disk^2$, the so-called
    \emph{Reeb foliation} of the solid torus, named after the French mathematician
    Georges Reeb.
    Specifically, the Reeb foliation of the solid torus has the following leaves:
    \begin{itemize}
        \item for each $x \in \Sphere^1$, the graph of $\Disk^2 \to \Sphere^1 \times \Disk^2 :
              (r,\theta) \mapsto (x + f(r), (r,\theta))$ is a leaf (where we have
              used polar coordinates on $\Disk^2$),
        \item the boundary component $\Sphere^1 \times \Sphere^1$ is the only compact leaf.
    \end{itemize}
    The Reeb foliation of the solid torus is shown in \cref{fig:reeb3d}.
    One can glue two such foliated tori together along their boundary to obtain
    a codimension-1 foliation of $\Sphere^3$.  If one does this in such a way that
    meridians in one solid torus correspond to longitudes in the other, and vice
    versa, one obtains the so-called \emph{Reeb foliation} of the 3-sphere.  (To
    ensure smoothness of this foliation, $f(y)$ needs to grow fast enough as
    $y\to 1$.)
\end{example}

\begin{figure}
\center
\begin{tikzpicture}
    \draw[thick,->] (-4,0) -- (4,0) node[anchor=north] {$x$};
    \draw[thick,->] (0,-1.5) -- (0,1.5) node[anchor=east] {$y$};
    \draw (-3.5,1) -- (3.5,1);
    \draw (-3.5,-1) -- (3.5,-1);
    \begin{scope}
        \clip (-3.5,-1) rectangle (3.5,1);
        \foreach \x in {-10, -9, ..., 10} {
            \draw (\x+0.1,-0.9) .. controls (\x-3,-0.85) and (\x-3,0.85) .. (\x+0.1,0.9);
        }
    \end{scope}
\end{tikzpicture}
\caption{A foliation of $\R\times[-1,1]$.
The curved leaves asymptotically approach the boundary components.
The Reeb foliation of the solid torus is obtained by spinning this foliation around the $x$-axis,
and then modding out by a translation in the $x$-direction.}
\label{fig:reeb}
\end{figure}
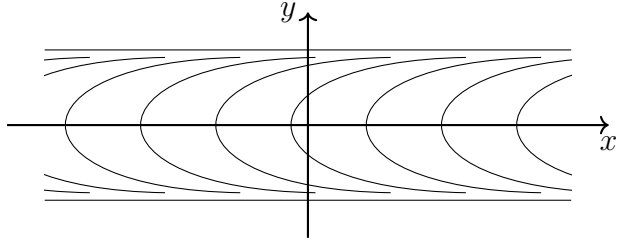

\begin{figure}
\center
\includegraphics[width=0.4\linewidth]{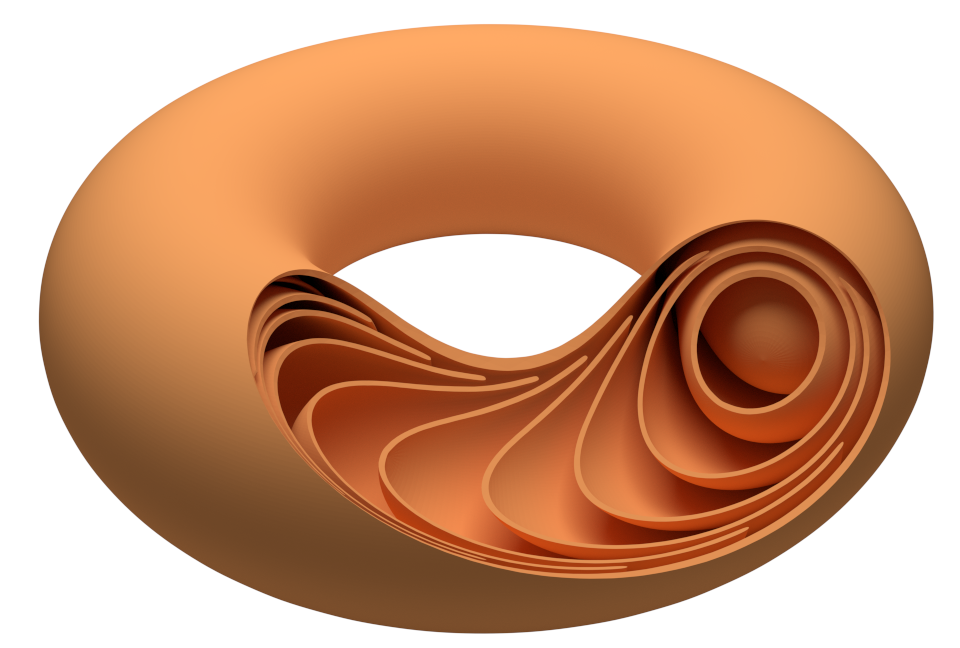}
\caption{A cutaway of the Reeb foliation of the solid torus.}
\label{fig:reeb3d}
\end{figure}

\begin{example}[Lawson's foliations of spheres]
    In \cite{lawson}, Lawson gave smooth codimension-1 foliations for all
    spheres of dimension $2^k+3$ ($k \geq 1$).  In the case $k=1$ this leads to
    a foliation of $\Sphere^5$ with exactly one compact leaf, similar to the Reeb
    foliation of $\Sphere^3$.  This compact
    leaf is diffeomorphic to $\Sphere^1\times L$, where $L$ is a circle bundle
    over a torus.  The compact leaf divides $\Sphere^5$ into two components,
    and each of these components is foliated by leaves all of which are
    diffeomorphic within their component.  The foliation is transversely
    oriented.
\end{example}

\begin{example}[Turbulization, see e.g.\ {{\cite[example 3.3.11]{candelconlon}}}]
    Let $(M,\F)$ be a foliated manifold of codimension 1,
    oriented and co-oriented.
    Suppose that $S$ is a properly embedded submanifold of $M$ of dimension 1,
    tranverse to $\F$ (the typical case is when $S$ is a circle transverse to $\F$).
    Fix a foliated tubular neighborhood $U$ of $S$ (this is a tubular neighborhood of $S$
    that is isomorphic to $S \times \Disk^{m-1}$ as foliated manifolds).
    Use coordinates $(z,u,r)$ on $S \times \Disk^{m-1}$, where $z$ is the coordinate
    along $S$, and the coordinates $u\in \Sphere^{m-2}$ and $r\in [0,1]$ are the direction
    and radius in $\Disk^{m-1}$.
    (These coordinates are not smooth where $r=0$, but this will not matter for us.)
    The foliation of $S\times\Disk^{m-1}$ is defined by $dz$.
    Now consider the 1-form
    \[ \alpha = \sin(\lambda(r)) \,dr + \cos(\lambda(r)) \,dz ,\]
    where $\lambda : [0,1] \to [0,\pi]$ is smooth, non-decreasing, and satisfies
    \begin{itemize}
        \item $\lambda(r) = 0$ if $0\leq r\leq 1/4$,
        \item $\lambda(1/2) = \pi/2$,
        \item $\lambda(r) = \pi$ if $3/4 \leq r\leq 1$,
        \item $\lambda$ is strictly increasing on $[1/4,3/4]$.
    \end{itemize}
    \Cref{fig:lambda-turbulization} shows the graph of such a function.
    This 1-form satisfies $\alpha\wedge d\alpha = 0$, and therefore defines
    a foliation. Moreover, this foliation agrees with the original one near $r=1$ (and also near $r=0$).
    We may therefore replace the foliation $\F$ in the tubular neighborhood of $S$
    by the one defined by $\alpha$.
    This results in a new foliation $\F'$ on $M$.
    Note that $\F'$ has a leaf diffeomorphic to $S \times \Sphere^{m-2}$ at $r=1/2$.
    This procedure of changing the foliation in a neighborhood of a 1-dimensional
    submanifold is called \emph{turbulization}.

    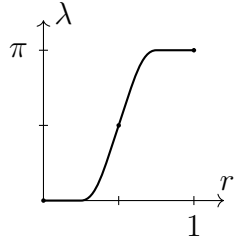
\begin{figure}
        \centering
        \begin{tikzpicture}[scale=2.0]
            \draw[->] (0,0) -- (1.2,0);
            \draw[->] (0,0) -- (0,1.2);
            \node[anchor=south west] at (0,1.1) {$\lambda$};
            \node[anchor=south west] at (1.1,0) {$r$};
            \draw[thick] (0,0) -- (0.25,0) .. controls (0.35,0) and (0.4,0.2) .. (0.5,0.5) .. controls (0.6,0.8) and (0.65,1) .. (0.75,1) -- (1,1);
            \fill[thick] (0.5,0.5) circle (0.015);
            \fill[thick] (0,0) circle (0.015);
            \fill[thick] (1,1) circle (0.015);
            \draw (0.5,-0.03) -- (0.5,0.03);
            \draw (-0.03,0.5) -- (0.03,0.5);
            \draw (1,-0.03) -- (1,0.03);
            \draw (-0.03,1) -- (0.03,1);
            \node[anchor=north] at (1,-0.03) {$1$};
            \node[anchor=east] at (-0.03,1) {$\pi$};
        \end{tikzpicture}
        \caption{The function $\lambda$ used in the turbulization procedure.}
        \label{fig:lambda-turbulization}
    \end{figure}
    \begin{figure}
        \centering
        \begin{minipage}[b]{.4\linewidth}
            \centering
            \begin{tikzpicture}[scale=0.88]
                \draw[thick,-] (-1,-2) -- (-1,2);
                \draw[thick,-] (1,-2) -- (1,2);
                \begin{scope}
                    \clip (-3.5,-2) rectangle (3.5,2);
                    \foreach \y in {-5, -4, ..., 5} {
                        \draw (-0.9,0.8*\y+1) .. controls (-0.8,0.8*\y) and (-0.4,0.8*\y) .. (-0.25,0.8*\y) -- (0.25,0.8*\y) .. controls (0.4,0.8*\y) and (0.8,0.8*\y) .. (0.9,0.8*\y+1);
                        \draw (1.1,0.8*\y+1) .. controls (1.2,0.8*\y) and (1.6,0.8*\y) .. (1.75,0.8*\y) -- (2.25,0.8*\y);
                        \draw (-2.25,0.8*\y) -- (-1.75,0.8*\y) .. controls (-1.6,0.8*\y) and (-1.2,0.8*\y) .. (-1.1,0.8*\y+1);
                    }
                \end{scope}
            \end{tikzpicture}
        \end{minipage}%
        \begin{minipage}[b]{.4\linewidth}
            \centering
            \includegraphics[width=0.8\linewidth]{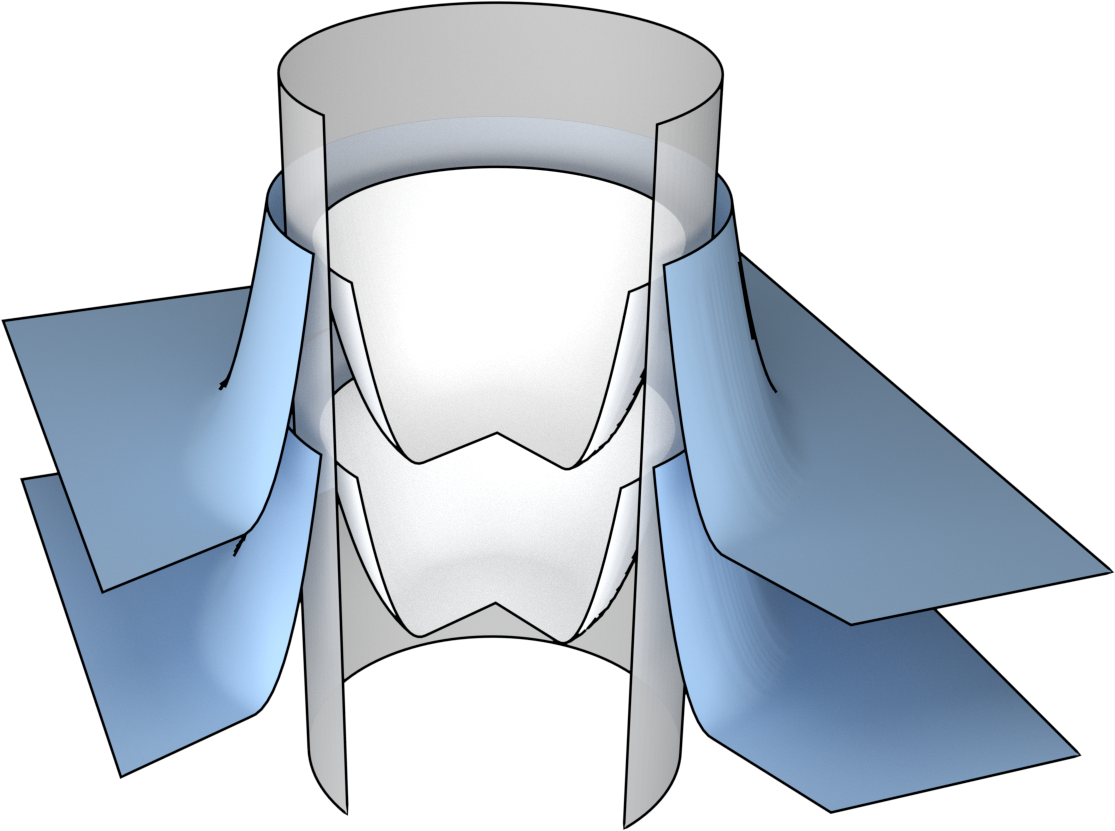}
        \end{minipage}%
        \caption{\emph{Left:} Turbulization in the two-dimensional case. The
            original foliation consisted of horizontal lines. The submanifold
            $S$ was a vertical line centered in the picture. The two vertical
            leaves correspond to $r=1/2$. \emph{Right:} Turbulization in the
            three-dimensional case (cutaway). The original foliation consisted of
            horizontal planes. The submanifold $S$ is a vertical line, the axis
            of the cylinder. There is a cylindrical leaf that corresponds to
            $r=1/2$.  The blue leaves on the outside approach the cylindrical
            leaf asymptotically.  The white cups on the inside are
            diffeomorphic to the plane, and they also approach the cylindrical
            leaf. The white cups together with the cylinder form the Reeb
            foliation of $\Sphere^1 \times \Disk^2$.}
    \end{figure}
\end{example}

\subsection*{Holonomy}

Suppose $\gamma : [0,1] \to M$ is a curve from $x\in M$ to $y\in M$ that lies
in a leaf of $\F$. Pick a small submanifold $T_x$ of $M$ through $x$,
transverse to $\F$, and a small submanifold $T_y$ of $M$ through $y$,
transverse to $\F$.  Then the foliation induces a diffeomorphism $f$ from a
small neighborhood $U_x$ of $x$ in $T_x$ to a small neighborhood $U_y$ of $y$
in $T_y$, as follows: suppose $x' \in T_x$ is sufficiently close to $x$.  By
moving the curve $\gamma$ slightly, we obtain a curve $\gamma'$ that starts at
$x'$, lies in a leaf of the foliation, and ends at a point $y' \in T_y$. Then
$f(x')=y'$.

\begin{definition}
    The \emph{holonomy of $\F$ along $\gamma$} for the transversals $T_x$ and
    $T_y$ is the germ of this diffeomorphism $f : U_x \to U_y$.
    The \emph{linear holonomy of $\F$ along $\gamma$} is the first-order
    approximation to the holonomy of $\F$ along $\gamma$, considered as a map
    $\nu_x\F \to \nu_y\F$.
\end{definition}

The linear holonomy does not depend on the choice of transversals.
Moreover, the holonomy of $\F$ along a curve only depends on the homotopy class of
this curve, if we take homotopies in the leaf and relative to the endpoints.
The linear holonomy, therefore, also depends only on this homotopy class.

\subsection*{Foliated cohomology}

A \emph{foliated differential $k$-form} on the foliated manifold $(M,\F)$
is a section of $\Lambda^kT^*\F$. A foliated $k$-form induces an ordinary
$k$-form on each leaf.
There is a foliated version of the de Rham differential, called the
\emph{leafwise de Rham differential} and written $d_{\F}$, which simply takes the
exterior derivative of a foliated form in a leafwise way.
The \emph{foliated cohomology} of a foliation $\F$ is the cohomology of the complex
\[ \cdots \to \Omega^{k-1}(\F) \xrightarrow{d_{\F}} \Omega^k(\F) \xrightarrow{d_{\F}} \Omega^{k+1}(\F) \to \cdots \]
This cohomology will be denoted by $H^{\bullet}(\F)$.

If $E$ is a vector bundle over $M$, an \emph{$\F$-connection} on $E$ (sometimes
called a \emph{partial connection}) has the same definition as an ordinary
connection, except that one can only differentiate
\emph{along} $\F$ (not in transverse directions).
In other words, it is a bilinear map
\[ \Gamma(T\F) \times \Gamma(E) \to \Gamma(E) : (\alpha,s) \mapsto \nabla_{\alpha}(s) \]
satisfying the usual conditions: it is $C^\infty(M)$-linear in $\alpha$, and it satisfies the Leibniz rule
\[ \nabla_{\alpha}(fs) = f\nabla_{\alpha}(s) + \Li_{\alpha}(f) s \]
with respect to $s$.
If $E$ is a vector bundle over $M$ with a flat $\F$-connection $\nabla$, then
we can define the cohomology of $\F$ with values in $E$. It is the cohomology
of the complex
\[ \cdots \to \Omega^{k-1}(\F;E) \xrightarrow{d_E} \Omega^k(\F;E) \xrightarrow{d_E} \Omega^{k+1}(\F;E) \to \cdots \]
where the differential $d_E$ uses the $\F$-connection.  For $\alpha \in
\Omega^k(\F;E)$ it is given by
\begin{align*}
(d_E\alpha)(V_0, \ldots, V_k) &= \sum_i (-1)^i \nabla_{V_i}(\alpha(V_0,\ldots,\hat{V}_i,\ldots,V_k)) \\
                              &+ \quad\sum_{i<j} (-1)^{i+j}\alpha([V_i,V_j],V_0,\ldots,\hat{V}_i,\ldots,\hat{V}_j,\ldots,V_k) .
\end{align*}
For this differential we have $d_E^2 = 0$ because the connection is flat.
The cohomology is denoted by $H^{\bullet}(\F;E)$.

The normal and conormal bundles of a foliation come equipped with a canonical flat
connection, the \emph{Bott connection}, which is such that parallel transport for this
connection corresponds to linear holonomy.
We will denote the normal bundle to $\F$ by $\nu(\F)$ and the conormal one by $\nu^*(\F)$.
The cohomology with values in these bundles is then written $H^{\bullet}(\F,\nu(\F))$ and $H^{\bullet}(\F,\nu^*(\F))$ (where the Bott connection is implicitly used).

\subsection*{Existence problem}

A driving problem in the early theory of foliations was the existence problem:
which manifolds admit foliations of a certain codimension?
For example, the 2-sphere does not have a foliation of dimension 1, because this
would imply the existence of a non-vanishing vector field on $\Sphere^2$.

The case of open manifolds was settled in 1970 by Haefliger \cite{haefliger70},
who gave a necessary and sufficient condition under which a distribution
can be homotoped to a foliation.
Let $BO_n$ be the classifying space of the orthogonal group $O_n$,
and let $B\Gamma_q$ be the classifying space of the groupoid of germs of
diffeomorphisms of $\R^q$. Then there is a commutative diagram

\begin{center}
    \begin{tikzpicture}
        \matrix(m)[matrix of math nodes, row sep=3.2em, column sep=0.0em]{
            && B\Gamma_q \times BO_p \\
            &BO_q \times BO_p & \\
            && BO_m \\
        };
        \path[->] (m-1-3) edge node[auto,swap]{} (m-2-2);
        \path[->] (m-1-3) edge node[auto]{} (m-3-3);
        \path[->] (m-2-2) edge node[auto]{} (m-3-3);
    \end{tikzpicture}
\end{center}

\begin{theorem}[Haefliger]
Let $M$ be a smooth manifold.
Let $\tau : M \to BO_m$ be a map classifying the tangent bundle of $M$.
Then there is a natural map from the space of codimension-$q$
foliations on $M$, up to integrable homotopy (i.e.\ homotopy through foliations),
to the space of homotopy classes of
lifts of $\tau$ to $B\Gamma_q \times BO_{m-q}$.
\end{theorem}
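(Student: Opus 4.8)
The plan is to construct the natural map by hand and then check it is well defined on integrable-homotopy classes, taking for granted the standard classifying-space theory for the étale groupoid $\Gamma_q$. Recall that $B\Gamma_q$ carries a universal codimension-$q$ \emph{Haefliger structure} (i.e.\ $\Gamma_q$-structure), so that for every manifold $N$ the set $[N,B\Gamma_q]$ is in natural bijection with the set of $\Gamma_q$-structures on $N$ up to concordance, and that $B\Gamma_q$ comes with a \emph{normal bundle} map $\nu\colon B\Gamma_q\to BO_q$, induced by the homomorphism from $\Gamma_q$ to the groupoid underlying $GL_q(\R)\simeq O_q$ given by the derivative at the source point. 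A codimension-$q$ foliation $\F$ of $M$ determines, via any foliation atlas, a $\Gamma_q$-structure whose concordance class does not depend on the atlas; let $h_{\F}\colon M\to B\Gamma_q$ classify it. The normal bundle of this $\Gamma_q$-structure is precisely $\nu(\F)=TM/T\F$, so $\nu\circ h_{\F}$ classifies $\nu(\F)$; meanwhile $T\F\subseteq TM$ is a rank-$(m-q)$ subbundle, classified by some $t_{\F}\colon M\to BO_{m-q}$. The assignment will be $\F\mapsto(h_{\F},t_{\F})\colon M\to B\Gamma_q\times BO_{m-q}$.

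Next I would check that $(h_{\F},t_{\F})$ is a lift of $\tau$ along the maps $B\Gamma_q\times BO_{m-q}\xrightarrow{\nu\times\id}BO_q\times BO_{m-q}\xrightarrow{\oplus}BO_m$ of the diagram above. Choosing a Riemannian metric on $M$ gives a literal splitting $TM\cong T\F\oplus\nu(\F)$. With explicit Grassmannian models for classifying maps, the composite of $(h_{\F},t_{\F})$ with $\oplus\circ(\nu\times\id)$ is literally a classifying map for $\nu(\F)\oplus T\F=TM$, hence comes canonically with a homotopy to $\tau$ (the space of metrics being convex, and any two classifying maps for the same bundle being homotopic). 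Interpreting a lift of $\tau$ homotopy-theoretically — a map to $B\Gamma_q\times BO_{m-q}$ together with a homotopy between its composite to $BO_m$ and $\tau$ — this exhibits $(h_{\F},t_{\F})$, with the homotopy just produced, as such a lift, and its homotopy class depends only on $\F$: a different atlas gives a concordant $\Gamma_q$-structure, hence a homotopic $h_{\F}$, and a different metric is joined to the first through metrics.

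It remains to see that the class of the lift depends only on the integrable-homotopy class of $\F$, and is natural in $M$. An integrable homotopy between $\F_0$ and $\F_1$ is a codimension-$q$ foliation $\tilde\F$ on $M\times[0,1]$, transverse to every slice $M\times\{t\}$ and restricting to $\F_0,\F_1$ at the ends. Its $\Gamma_q$-structure yields a map $M\times[0,1]\to B\Gamma_q$ restricting to $h_{\F_0}$ and $h_{\F_1}$; transversality to the slices makes $T\tilde\F\cap T(M\times\{t\})$ a smooth family of rank-$(m-q)$ integrable subbundles of $TM$ interpolating $T\F_0$ and $T\F_1$, giving a homotopy $M\times[0,1]\to BO_{m-q}$ between $t_{\F_0}$ and $t_{\F_1}$. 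Together with the canonical homotopies of the previous paragraph these assemble into a homotopy of lifts of $\tau\circ\pr_M$, and restricting to the two ends shows $[(h_{\F_0},t_{\F_0})]=[(h_{\F_1},t_{\F_1})]$. Naturality with respect to maps $f\colon M'\to M$ transverse to $\F$ (so that $f^{*}\F$ is defined) is immediate, since $h_{f^{*}\F}\simeq h_{\F}\circ f$, $t_{f^{*}\F}\simeq t_{\F}\circ f$, and $\tau_{M'}\simeq\tau_M\circ f$.

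The main obstacle is not in this assembly but in the input being cited: the construction of $B\Gamma_q$ (as a bar construction on the étale topological groupoid $\Gamma_q$, following Haefliger, or Milnor–Segal) together with the proof that it represents concordance classes of $\Gamma_q$-structures. Granting that, the one genuinely delicate point is keeping the normal bundle of a Haefliger structure coherent with the splitting $TM=T\F\oplus\nu(\F)$ tightly enough that the composite to $BO_m$ is, up to a canonical homotopy, the given tangent classifier $\tau$ rather than merely abstractly homotopic to it — this is exactly what dictates the homotopy-theoretic reading of ``space of lifts'' used above. I would emphasize that the much harder converse — that for open $M$ this natural map is a bijection, i.e.\ the Gromov–Haefliger–Phillips $h$-principle — is a separate theorem and is not part of the present statement.
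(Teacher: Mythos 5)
This theorem appears in the paper's background chapter as a quoted result of Haefliger (with only the lifting diagram as commentary), so there is no internal proof to compare against. Your construction --- the $\Gamma_q$-structure of $\F$ classified by $h_{\F}\colon M\to B\Gamma_q$ with normal bundle $\nu(\F)$, paired with a classifying map for $T\F$, and a choice of splitting $TM\cong T\F\oplus\nu(\F)$ furnishing the homotopy that exhibits $(h_{\F},t_{\F})$ as a lift of $\tau$, with invariance under integrable homotopy and naturality checked as you do --- is the standard argument and is correct as a sketch, the genuine content residing, as you acknowledge, in the cited fact that $B\Gamma_q$ represents concordance classes of Haefliger structures; the converse $h$-principle for open manifolds is indeed a separate matter and not part of the statement.
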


In a diagram, the situation is as follows: the map $\tau$ classifies the tangent bundle
of $M$.
Splitting this bundle as a sum of a rank-$p$ and rank-$q$ subbundles amounts to
finding a lift $\tau'$. The rank-$p$ subbundle is homotopic to an integrable
distribution (i.e.\ a foliation) precisely if $\tau'$ can further be lifted to $\tau''$.

\begin{center}
    \begin{tikzpicture}
        \matrix(m)[matrix of math nodes, row sep=3.2em, column sep=0.0em]{
             &[4em] & B\Gamma_q \times BO_p \\
            M&[4em] BO_q \times BO_p & \\
             &[4em] & BO_m \\
        };
        \path[->] (m-1-3) edge node[auto,swap]{} (m-2-2);
        \path[->] (m-1-3) edge node[auto]{} (m-3-3);
        \path[->] (m-2-2) edge node[auto]{} (m-3-3);
        \path[bend right=20,->] (m-2-1) edge node[auto,swap]{$\tau$} (m-3-3);
        \path[dashed,->] (m-2-1) edge node[auto]{$\tau'$} (m-2-2);
        \path[bend left=20,dashed,->] (m-2-1) edge node[auto]{$\tau''$} (m-1-3);
    \end{tikzpicture}
\end{center}

The existence of foliations on closed manifolds was solved in the seventies by Thurston
\cite{thurston74, thurston76}.
The result from 1974 settles the case $q>1$.
It uses the language of Haefliger structures (we do not go into detail).

\begin{theorem}[Thurston]
Let $M$ be a smooth manifold, $\HH$ a Haefliger $\Gamma_q$-structure on $M$
with $q>1$, and $i : \nu(\HH) \into TM$ a bundle monomorphism.
Then there is a foliation $\F$ of codimension $q$ on $M$ that is homotopic to $\HH$.
\end{theorem}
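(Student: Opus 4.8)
The plan is to follow Thurston's original strategy: build the foliation cell by cell over a handle decomposition (equivalently, a triangulation) of $M$, integrating the Haefliger structure over one more cell at each stage while dragging along the monomorphism $\nu(\HH)\into TM$ up to homotopy. So first I would fix a triangulation of $M$ with $k$-skeleton $M^{(k)}$ and set up the induction: the hypothesis at stage $k$ is that $\HH$ has been homotoped, through $\Gamma_q$-structures equipped with a bundle monomorphism of the normal bundle into $TM$, to a structure that is an honest codimension-$q$ foliation on an open neighborhood of $M^{(k)}$. The base case $k=0$ is immediate, since near a point every Haefliger structure is homotopic to a trivial foliation. The content is the inductive step: push the singular locus off a neighborhood of $M^{(k+1)}$.

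The second ingredient is Thurston's \emph{jiggling lemma}. Before treating a $(k+1)$-simplex $\sigma$, I would subdivide and perturb the triangulation so that every simplex of $M^{(k+1)}$ is in general position with respect to the tangent distribution of the foliation already constructed near its boundary; concretely, so that a thickened simplex $\sigma\times\Disk^{q}$ meets the foliation transversely along a collar of $\partial\sigma\times\Disk^{q}$. This reduces the inductive step to a model extension problem: given a codimension-$q$ foliation defined near $\partial\Disk^{k+1}\times\Disk^{q}$ and transverse to the $\Disk^{k+1}$ factor there, extend it to a foliation of all of $\Disk^{k+1}\times\Disk^{q}$ in a way compatible with the prescribed homotopy-theoretic data.

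The third and decisive step is solving that model extension problem, and this is exactly where the hypothesis $q>1$ is used. The formal obstruction to the extension is governed by the homotopy type of $B\Gamma_q$ — more precisely by the connectivity of the homotopy fiber of $B\Gamma_q\to BO_q$ — and the Haefliger–Thurston results guarantee that this obstruction vanishes when $q>1$, so that the "extra transverse room" can be exploited. Concretely one realizes the extension by an explicit geometric construction: assembling foliated products and inserting controlled transverse spiralling, a higher-dimensional and higher-codimensional cousin of the turbulization/Reeb constructions described earlier in this chapter. Throughout, the monomorphism $\nu(\HH)\into TM$ is carried along and corrected by a homotopy at each cell so that at the end it agrees with the genuine normal bundle of the foliation produced.

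I expect this model extension to be the main obstacle: it is the precise point at which purely homotopy-theoretic data must be upgraded to an actual integrable distribution, and it relies on Thurston's (hard) analysis of the homotopy type of $B\Gamma_q$, building on Haefliger's classification theorem stated above. By comparison, the remaining work — verifying that the jiggling can be performed relative to the already-foliated neighborhood, and gluing the local extensions into a global foliation using collars and partitions of unity so that the homotopy of $\Gamma_q$-structures is stationary where it needs to be — is essentially bookkeeping.
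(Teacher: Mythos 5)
The paper itself offers no proof of this statement: it is quoted in the preliminaries as background, with a citation to Thurston's 1974 paper and the explicit remark that the language of Haefliger structures will not be developed. So there is no in-paper argument to compare against, and your proposal has to stand on its own as a reconstruction of Thurston's proof.

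As such a reconstruction, your outline of the scaffolding is faithful — triangulate, apply the jiggling lemma to put simplices in general position relative to the already-constructed foliation, reduce to a model extension over $\Disk^{k+1}\times\Disk^q$, and carry the monomorphism $\nu(\HH)\into TM$ along — but the decisive step is not proved, it is assumed. You write that ``the Haefliger--Thurston results guarantee that this obstruction vanishes when $q>1$,'' where the obstruction is governed by the connectivity of the homotopy fiber $\overline{B\Gamma}_q$ of $B\Gamma_q\to BO_q$. That connectivity statement (that $\overline{B\Gamma}_q$ is $(q+1)$-connected, resting on Mather--Thurston theory about the homology of diffeomorphism groups) \emph{is} the hard content of the theorem you are asked to prove; it is not a prior result one may quote. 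The only genuinely earlier input is Haefliger's theorem, which — as stated in the chapter — is an $h$-principle for \emph{open} manifolds and does not by itself yield anything for closed $M$; the whole point of Thurston's work was to remove the openness hypothesis. Likewise, the geometric realization of the extension is not just ``controlled transverse spiralling'' analogous to turbulization: making that precise (foliated products, the Mather--Thurston comparison, and the actual filling construction) is where all the difficulty lives. So the proposal is a correct map of the proof's architecture, but at the one point where homotopy-theoretic data must be upgraded to an integrable distribution it appeals to the theorem itself, which is a genuine gap rather than bookkeeping.
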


The result from 1976 settles the case $q=1$.

\begin{theorem}[Thurston]
    A closed manifold $M$ has a foliation of codimension 1 if and only if
    its Euler characteristic is zero.
\end{theorem}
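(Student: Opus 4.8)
The plan splits along the biconditional. The necessity direction is elementary and I would dispatch it first; the sufficiency direction is Thurston's genuinely deep theorem, and for it I would isolate a soft reduction and flag the hard geometric core.

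For necessity, suppose $M$ has a codimension-1 foliation $\F$. I would discard integrability immediately, keeping only the hyperplane field $T\F\subseteq TM$ of rank $m-1$. Choosing a Riemannian metric splits $TM\cong T\F\oplus\nu(\F)$, where $\nu(\F)=TM/T\F$ is a real line bundle. If $\F$ is co-orientable then $\nu(\F)$ is orientable, hence trivial, hence admits a nowhere-vanishing section; lifting this section through the splitting yields a nowhere-vanishing vector field on $M$, so $\chi(M)=0$ by Poincar\'e--Hopf. If $\F$ is not co-orientable, I would pass to the orientation double cover $\pi:\tilde M\to M$ determined by $w_1(\nu(\F))$, on which $\pi^*\F$ is co-orientable; the previous case gives $\chi(\tilde M)=0$, and multiplicativity of the Euler characteristic under a finite cover, $\chi(\tilde M)=2\chi(M)$, forces $\chi(M)=0$.

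For sufficiency I would first perform the soft, homotopy-theoretic reduction. Assuming $\chi(M)=0$, Hopf's theorem produces a nowhere-vanishing vector field $X$, whose metric-orthogonal complement $X^\perp$ is a codimension-1 hyperplane field $\xi$. By Haefliger's theorem (stated above), producing a codimension-1 foliation amounts to realizing the classifying map of $\xi$ by a lift across the fibration $B\Gamma_1\to BO_1$ with fiber $\overline{B\Gamma_1}$, with the $BO_{m-1}$ factor already supplied by $\xi$ itself. Thus the entire obstruction to lifting $\xi$ to a $\Gamma_1$-structure lives in $\overline{B\Gamma_1}$, and the problem becomes twofold: show the lift exists, and then \emph{regularize} the resulting Haefliger structure into an honest foliation tangent to a deformation of $\xi$. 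This is exactly the point at which codimension one departs from the purely formal $q>1$ regime handled by the 1974 theorem above.

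The main obstacle, and the real content of the theorem, is the behaviour of $\overline{B\Gamma_1}$ together with the regularization. Here I would invoke Mather--Thurston theory in codimension one: Mather's theorem that the group of compactly supported $C^\infty$ diffeomorphisms of $\R$ is perfect forces $\overline{B\Gamma_1}$ to be highly connected (in particular $\pi_1$ and $\pi_2$ vanish), so that the lifting obstruction collapses to the data already encoded by the hyperplane field, and no higher obstruction survives once $\chi(M)=0$. The hard part I expect to be the regularization step, which Thurston carries out geometrically rather than abstractly: take a handle decomposition of $M$, foliate each handle $\Disk^k\times\Disk^{m-k}$ compatibly with $\xi$, and glue across the attaching regions by turbulization (as in the turbulization construction discussed above), using the perfectness of the one-dimensional diffeomorphism group to absorb the holonomy discrepancies that arise when matching foliated charts from adjacent handles. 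Controlling these discrepancies uniformly, so that the local foliated models assemble into a single global $C^\infty$ foliation homotopic to $\xi$, is the crux of the argument.
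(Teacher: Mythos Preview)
The paper does not prove this theorem: it is quoted as a background result with a citation to Thurston's 1976 paper, with no argument supplied. There is therefore no ``paper's own proof'' to compare your proposal against.

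On its merits, your necessity direction is the standard argument and is correct and complete. Your outline for sufficiency correctly isolates the two genuinely distinct ingredients --- the homotopy-theoretic lifting problem governed by the connectivity of $\overline{B\Gamma_1}$ (via Mather), and the geometric regularization step that turns a Haefliger structure on a \emph{closed} manifold into an honest foliation --- and you are right that the second is where the depth lies. One small caution: the version of Haefliger's theorem stated in the paper is explicitly for open manifolds, so your phrase ``By Haefliger's theorem (stated above), producing a codimension-1 foliation amounts to \ldots'' slightly overstates what that theorem gives you on a closed $M$; but you do immediately walk this back by flagging regularization as a separate and essential step, so the logic of your plan is sound.
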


\subsection*{Taut foliations}

We now restrict our attention to the case of co-oriented codimension-1 foliations.
Among these, there is a special type of foliation called a \emph{taut} foliation.
These special foliations have been extensively studied, particularly in the case $m=3$,
because the existence of taut foliations on 3-manifolds has implications for the
topology of the manifold.
Tautness of a foliation should be thought of as the counterpart of \emph{tightness}
in contact topology.

\begin{definition}
A co-oriented foliation $\F$ of codimension 1 on $M$ is called \emph{taut}
if for each leaf $L$, there is a loop in $M$ that is transverse to $\F$ and
intersects $L$.
\end{definition}

Some authors require that there exist a transverse loop that intersects
all leaves of $\F$. On compact manifolds, the case that is most relevant when studying
taut foliations, this definition is equivalent.

There are several criteria that are equivalent to tautness
(lemma 6.3.4, proposition 10.4.1 and corollary 10.5.10 in \cite{candelconlon}).

\begin{theorem}
If $\F$ is a smooth co-oriented codimension-1 foliation on a compact manifold,
the following are equivalent:
\begin{enumerate}[(a)]
\item $\F$ is taut, \label{item:criteria-tautness-taut}
\item no finite family of compact leaves $\{L_i\}_i$ bounds a submanifold
      $Q \subset M$ such that $Q$ lies on the positive side of every $L_i$,
\item no non-trivial foliation cycle for $\F$ bounds, \label{item:criteria-tautness-cycle}
\item there is a closed $(m-1)$-form which restricts to a volume form on each leaf, \label{item:criteria-tautness-form}
\item there is a vector field transverse to $\F$ whose flow preserves some
      volume form on $M$,
  \item there exists a Riemannian metric on $M$ such that every leaf of $\F$ is
      a minimal submanifold of $M$.
\end{enumerate} \label{thm:criteria-tautness}
\end{theorem}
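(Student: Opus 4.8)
The plan is to establish all six conditions as a connected web of implications, organized into three groups of increasing difficulty: the ``soft'' differential-form equivalences (d)$\Leftrightarrow$(e)$\Leftrightarrow$(f), a functional-analytic duality (c)$\Leftrightarrow$(d), and the topological heart (a)$\Leftrightarrow$(b) together with its link to (c).

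\emph{Soft equivalences.} Since $\F$ is co-oriented of codimension $1$, fix $\alpha\in\Omega^1(M)$ with $\ker\alpha=T\F$, compatible with the co-orientation. For (d)$\Rightarrow$(e): if $\omega$ is closed and restricts to a leafwise volume form, then $\mu:=\alpha\wedge\omega$ is a volume form on $M$, and the vector field $X$ defined by $\iota_X\mu=\omega$ satisfies $\alpha(X)=1$ (so $X$ is transverse to $\F$) and $\Li_X\mu=\d\iota_X\mu=\d\omega=0$. Conversely, given a transverse $X$ with $\Li_X\mu=0$, the form $\omega:=\iota_X\mu$ is closed (because $\Li_X\mu=\d\iota_X\mu$, as $\mu$ has top degree) and restricts to a volume form on each leaf since $X$ complements $T\F$; this gives (e)$\Rightarrow$(d). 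For the equivalence with (f) I would use Rummler's formula (see \cite{candelconlon}): for any metric $g$ the characteristic $(m-1)$-form $\chi_g$ satisfies $\d\chi_g=-\kappa_g\wedge\chi_g$, where the mean-curvature form $\kappa_g$ of the leaves vanishes on $T\F$ and hence equals $h\alpha$ for a function $h$; so the leaves are minimal iff $\d\chi_g=0$. Thus (f)$\Rightarrow$(d) with $\omega=\chi_g$, and for (d)$\Rightarrow$(f) I would choose the metric so that its characteristic form is the given closed $\omega$ — possible precisely because $\omega$ restricts to a leafwise volume form — whence $\kappa_g=0$.

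\emph{Duality.} I would prove (c)$\Leftrightarrow$(d) by Hahn--Banach separation in the space of currents, following Sullivan \cite{sullivan}. One direction is immediate: if $\omega$ is a closed $(m-1)$-form that is positive on all nonzero foliation currents and $T=\partial S$ is a foliation cycle that bounds, then $\langle T,\omega\rangle=\langle S,\d\omega\rangle=0$, so $T=0$. For the converse, let $\mathcal{C}$ be the convex cone of degree-$(m-1)$ foliation currents (positive currents directed by $\F$); it is weakly closed, and its slice $\{T\in\mathcal{C}:\mathbf{M}(T)=1\}$ of unit mass is weakly compact by Banach--Alaoglu. If no nonzero foliation cycle bounds, this slice is disjoint from the closed subspace $\mathcal{B}$ of boundaries, so separation produces a current-functional, i.e.\ a smooth $(m-1)$-form $\omega$, vanishing on $\mathcal{B}$ (hence closed, since $\langle S,\d\omega\rangle=\langle\partial S,\omega\rangle=0$ for all $S$) and bounded below by a positive constant on the slice, hence strictly positive on $\mathcal{C}\setminus\{0\}$. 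Pairing $\omega$ against currents of integration over small plaques shows that $\omega$ restricts to a volume form on each leaf.

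\emph{Topology.} Finally I would handle (a)$\Leftrightarrow$(b) and connect it to (c). For (a)$\Rightarrow$(b): because $\F$ is co-oriented, a loop transverse to $\F$ crosses each leaf it meets with a sign that is locally constant, hence constant, so after reversing orientation if needed it is positively transverse everywhere; if compact leaves $L_1,\dots,L_k$ bound a submanifold $Q$ lying on the positive side of every $L_i$, then a positively transverse loop could enter $Q$ only by crossing some $L_i$ positively and could never leave it, impossible for a loop, so no transverse loop meets $L_1$ and $\F$ is not taut. For (b)$\Rightarrow$(a) I would argue contrapositively via the structure theory of codimension-$1$ foliations (Goodman's theorem on dead-end components, see \cite{candelconlon}): if some leaf meets no closed transversal, one extracts an $\F$-saturated region whose frontier is a finite union of compact leaves all faced on their positive sides. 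To close the web I would link (b) to (c): such a region $Q$ gives the bounding foliation cycle $\partial[Q]$ carried by its boundary leaves, while conversely the support and boundary behaviour of a nonzero bounding foliation cycle — an $\F$-invariant transverse measure on a closed saturated set — produce such a region. \textbf{The main obstacle} is this last block, specifically the direction extracting a dead-end region from the mere failure of tautness; by contrast the separation argument is routine once one checks that the cone of foliation currents is weakly closed with weakly compact unit-mass slice, and the form manipulations are purely computational.
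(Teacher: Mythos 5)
First, note that the paper does not actually prove this theorem: it quotes it from Candel--Conlon (lemma 6.3.4, proposition 10.4.1, corollary 10.5.10), so the comparison here is with the standard literature proof rather than with an argument in the thesis. Most of your outline follows that standard route and is sound: the computations for (d)$\Leftrightarrow$(e), the Rummler--Sullivan characteristic-form argument for (d)$\Leftrightarrow$(f), the Hahn--Banach separation for (c)$\Leftrightarrow$(d) (this is exactly \cref{thm:sullivan-cycle} in the text), the easy implication (a)$\Rightarrow$(b), the delegation of (b)$\Rightarrow$(a) to Goodman-type dead-end structure theory, and the implication (c)$\Rightarrow$(b) via $\partial[Q]=\sum_i[L_i]$ are all correct in outline.

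The genuine gap is the one bridge you need from the $\{$(a),(b)$\}$ block into the $\{$(c),(d),(e),(f)$\}$ block, namely (b)$\Rightarrow$(c), which you assert by saying that ``the support and boundary behaviour of a nonzero bounding foliation cycle \ldots produce such a region.'' As stated this does not follow: the support of the invariant transverse measure attached to a foliation cycle need not contain any compact leaf, and nothing in your sketch extracts one from the mere fact that the cycle bounds. Without this implication your web only proves (c)$\Rightarrow$(b)$\Rightarrow$(a) and the equivalences inside each block, not the full theorem. The missing idea is a pairing argument: if $T=\partial S$ is a nonzero foliation cycle and $\gamma$ is a closed transversal meeting $\supp T$, then pairing $T$ with a Poincar\'e dual of $\gamma$ (a closed $(m-1)$-form supported near $\gamma$, nonnegative on leaves and positive at the crossings) gives $\langle T,\beta_\gamma\rangle>0$, whereas a bounding cycle annihilates every closed form --- so no closed transversal meets $\supp T$, i.e.\ $\neg$(c)$\Rightarrow\neg$(a), and only \emph{then} does the Goodman extraction you already invoke produce the positive-side region, giving $\neg$(c)$\Rightarrow\neg$(b). (Alternatively, bridge the blocks by (e)$\Rightarrow$(a): a volume-preserving transverse flow is nonwandering by Poincar\'e recurrence, and closing up nearly recurrent flow segments yields closed transversals through every leaf.) Relatedly, you identify the main obstacle as the dead-end extraction from non-tautness, but the真 weak point of your write-up is this unproved passage from a bounding cycle to the region; also note that the whole currents framework, and condition (d) itself, tacitly require $\F$ oriented (not just co-oriented), an orientation bookkeeping point worth flagging when you assemble $\partial[Q]$ as a \emph{positive} foliation cycle.
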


These criteria will serve as a useful guide in trying to generalize the notion of
tautness in dimension 3 using symplectic foliations.

Let us give an example of how the existence of a taut foliation gives information
about the topology of the ambient manifold. \phantomsection\label{page:taut-in-dim3}

\begin{theorem}[Novikov, \cite{novikov}]
Let $\F$ be a codimension-1 foliation on a compact connected oriented manifold
$M$ of dimension 3. If $M$ has finite fundamental group, then $\F$ has a Reeb
component. In particular, $\F$ is not taut.
\end{theorem}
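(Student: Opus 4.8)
The plan is to prove Novikov's theorem on the existence of Reeb components, from which non-tautness follows immediately: a foliation containing a Reeb component cannot be taut because the compact toral leaf bounding the Reeb component cannot be met by a transverse loop --- any transverse loop entering the solid torus through the toral leaf is trapped by the leaves asymptotic to the boundary and can never exit to return. So the heart of the matter is producing the Reeb component. I would first reduce to the case where $M$ is the universal cover, or rather exploit that $\pi_1(M)$ is finite: pass to the universal cover $\tilde M$, which is then a compact simply connected $3$-manifold, and pull back $\F$ to a foliation $\tilde\F$. It suffices to find a Reeb component upstairs and push it down, so from now on assume $\pi_1(M) = 1$.

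The key analytic input is the existence of a \emph{Novikov loop}, sometimes called a vanishing cycle: a null-homotopic loop tangent to $\F$ along which the holonomy is nontrivial but ``one-sided'' (trivial on one side). The standard route: since $M$ is simply connected, $\F$ cannot have trivial holonomy along all leaves (otherwise, by Reeb stability type arguments, or by the fact that a holonomy-free codimension-$1$ foliation on a compact manifold has all leaves compact with finite holonomy cover, one would get a fibration over $\Sphere^1$, contradicting $\pi_1(M)=1$); more carefully, one argues that either some leaf is a sphere (and one invokes Reeb stability to conclude $M = \Sphere^2\times\Sphere^1$, contradicting simple connectivity) or there is a leaf $L$ and an essential loop $\gamma$ in $L$ that is null-homotopic in $M$. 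Taking a disk in $M$ bounding $\gamma$ and putting it in general position with respect to $\F$, one analyzes the induced singular foliation on the disk (with only center and saddle singularities, by Morse theory applied to a defining function transverse to $\F$), and through the classical combinatorial argument --- eliminating singularities and using the innermost-disk principle --- one extracts a leaf-tangent loop bounding an embedded disk $D$ transverse to $\F$ except along $\partial D$, with the leaves near $D$ spiraling toward it from one side. This is the vanishing cycle.

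From the vanishing cycle one builds the Reeb component. Push the disk $D$ slightly into the leaf it lies in; the nearby leaves form a family of disks limiting onto $D$, and by following the foliation one sees that $D$ together with these spiraling disks sweeps out a region foliated exactly like the interior of the Reeb foliation of $\Sphere^1\times\Disk^2$ (as in \cref{example:reeb}). The maximal such region is bounded by a leaf $T$ which is a closed surface, necessarily a torus (it has trivial holonomy on the inside and is a limit of planes/disks, forcing Euler characteristic zero), and the closure is a solid torus carrying the Reeb foliation. That solid torus is the desired Reeb component, and we are done.

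The hardest step --- and the one I expect to occupy most of the work --- is the general-position analysis of the singular foliation on the spanning disk and the extraction of a genuine vanishing cycle bounding an \emph{embedded} transverse disk: controlling the singularities, running the induction that cancels them, and invoking the innermost-circle argument to get embeddedness rather than mere immersion. The initial reduction (finite $\pi_1$ $\Rightarrow$ pass to universal cover $\Rightarrow$ find a null-homotopic essential leaf-loop) and the final assembly of the Reeb component from the vanishing cycle are comparatively routine once one is willing to cite Reeb stability; the combinatorial topology of the disk is where all the genuine content lies.
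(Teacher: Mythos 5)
The paper offers no proof of this statement: it is quoted from Novikov with a citation, and the only argument given is the remark that a Reeb component is a ``dead end'' for transverse loops, which your opening paragraph reproduces correctly. So your proposal is really a sketch of Novikov's original proof, and judged as such it has two genuine problems. The first is your reduction. You pass to the compact, simply connected universal cover, find a Reeb component of the pulled-back foliation there, and claim it ``suffices to push it down.'' That step is not justified: the image of a Reeb component under a covering projection need not be a Reeb component without further argument, since the deck group may translate the solid torus off itself or act nontrivially on it (in the extreme case the cover is $\Sphere^3$ decomposed into two Reeb components permuted or preserved by the deck action, and the projected set is a quotient of a solid torus that you would still have to identify). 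The standard proofs avoid this entirely: since $\pi_1(M)$ is finite, some closed transversal exists (close up a long transverse arc through a non-compact leaf, treating the all-leaves-compact case separately), its class has finite order, hence some iterate is a null-homotopic closed transversal in $M$ itself, and the vanishing-cycle machinery is then run downstairs, producing the Reeb component directly in $M$.

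The second problem is that the actual content of the theorem is only named, not argued. General position of the spanning disk, the control of centers and saddles of the induced singular foliation, the extraction of a genuine (simple) vanishing cycle, and above all the limit argument showing that the disks it bounds in nearby leaves sweep out a region whose boundary is a compact leaf, that this leaf is a torus, and that the closed-up region is a solid torus carrying the Reeb foliation --- all of this is deferred to ``the classical combinatorial argument'' and ``by following the foliation one sees.'' You acknowledge that this is where the work lies, but as written the proposal is an outline of the literature proof rather than a proof, i.e.\ it accomplishes no more than the paper's own citation of \cite{novikov}; in particular your parenthetical reason that the boundary leaf has Euler characteristic zero is itself one of the nontrivial conclusions of Novikov's analysis, not a quick consequence. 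The one part that is complete and matches the paper is the final implication: a foliation with a Reeb component is not taut, by the dead-end argument for the boundary torus.
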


A \emph{Reeb component} is a solid torus
$\Sphere^1 \times \Disk^2$ embedded in $M$
that is saturated (i.e.\ is a union of leaves),
whose foliation is topologically conjugate to the Reeb foliation of the solid torus
(example on page~\pageref{example:reeb}).
If a foliation contains a Reeb component, it cannot be taut: any transverse
curve that crosses into the torus cannot come out again. The Reeb component
is effectively a dead end.

Here is another simple observation about taut foliations on simply connected manifolds
(not necessarily of dimension 3).

\begin{lemma}
    Let $M$ be a closed simply connected manifold.
    If $\F$ is a taut foliation on $M$, then it has no compact leaves.
\end{lemma}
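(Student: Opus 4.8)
The plan is to argue by contradiction using criterion \eqref{item:criteria-tautness-form} of \cref{thm:criteria-tautness}: suppose $\F$ is taut on a closed simply connected manifold $M$ of dimension $m$, and suppose toward a contradiction that $L$ is a compact leaf. Tautness gives a closed $(m-1)$-form $\omega$ on $M$ that restricts to a volume form on every leaf; in particular $\int_L \omega \neq 0$ (after fixing the co-orientation, which orients each leaf, so $L$ is a compact oriented $(m-1)$-manifold and the integral makes sense and is nonzero).

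First I would observe that $\omega$ being closed defines a de Rham cohomology class $[\omega] \in H^{m-1}_{\dR}(M)$, and that $\int_L \omega$ depends only on this class paired against the homology class $[L] \in H_{m-1}(M;\R)$ represented by the compact oriented leaf. Since $\int_L \omega \neq 0$, the class $[L]$ is nonzero in $H_{m-1}(M;\R)$. Next I would bring in simple connectivity: by Hurewicz and Poincaré duality, $H_{m-1}(M;\R) \cong H^1(M;\R) \cong \Hom(\pi_1(M),\R) = 0$ since $\pi_1(M)$ is trivial (here I use that $M$ is closed and oriented — orientability of $M$ follows since it is co-orientable, $T\F$ being orientable as the kernel of a volume form argument, together with $TM/T\F$ orientable, gives $TM$ orientable). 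This contradicts $[L] \neq 0$, so no compact leaf can exist.

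The one point requiring a little care — and the step I expect to be the main obstacle in writing it cleanly — is justifying that a compact leaf $L$ gives a genuinely nonzero class in $H_{m-1}(M;\R)$, i.e.\ that $L$ is not null-homologous. The clean way is exactly the pairing argument above: the existence of a closed $(m-1)$-form with $\int_L \omega \neq 0$ is equivalent, by de Rham's theorem, to $[L] \neq 0$ in real homology. One should also make sure $L$ is orientable so that $\int_L\omega$ and $[L]$ are defined: since $\F$ is co-oriented, the co-orientation together with an orientation of $M$ (or directly, a choice of transverse vector field) orients each leaf, so $L$ is a closed oriented hypersurface. An alternative to invoking Poincaré duality directly would be to note that a compact leaf $L$ of a co-oriented foliation, being two-sided, either separates $M$ or has a transverse loop hitting it exactly once; in the simply connected case every loop is null-homotopic hence has zero intersection number with any closed hypersurface, forcing $[L]=0$ by the other route. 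Either way the contradiction is immediate.

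I would therefore structure the proof as: (1) reduce to the closed-form criterion and fix orientations so $[L]$ and $\int_L\omega$ make sense; (2) show $\int_L \omega \neq 0$, hence $[L] \neq 0$ in $H_{m-1}(M;\R)$; (3) compute $H_{m-1}(M;\R) \cong H^1(M;\R) = 0$ from simple connectivity (Hurewicz plus universal coefficients, or Poincaré duality); (4) conclude. The whole argument is short; the only subtlety is the bookkeeping of orientations, and the key external inputs are \cref{thm:criteria-tautness}\eqref{item:criteria-tautness-form}, de Rham's theorem, and Poincaré duality.
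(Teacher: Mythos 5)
Your proof is correct, but it certifies the key fact --- that a compact leaf $L$ is non-trivial in $H_{m-1}(M;\R)$ --- by a different route than the paper. You use criterion (\ref{item:criteria-tautness-form}) of \cref{thm:criteria-tautness}: a closed $(m-1)$-form restricting to a leafwise volume form pairs non-trivially with $[L]$, so $[L]\neq 0$ by Stokes/de Rham. The paper instead argues directly from the transverse-loop definition of tautness: co-orientability is automatic since $\pi_1(M)=0$, a transverse loop meeting the compact leaf can be cut and re-closed inside a tubular neighborhood $L\times(-1,1)$ to produce a loop meeting $L$ exactly once, and intersection number $1$ with that (null-homotopic) loop shows $[L]\neq 0$. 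Both arguments then finish identically: $\pi_1(M)=\{\ast\}$ together with Poincar\'e duality forces $H_{m-1}(M;\R)=0$, a contradiction. The trade-off is that the paper's version is self-contained, using only the definition of tautness plus an intersection-number argument, whereas yours imports the non-trivial equivalence of (\ref{item:criteria-tautness-taut}) and (\ref{item:criteria-tautness-form}) in \cref{thm:criteria-tautness} (legitimate here, since $M$ is closed and the theorem is stated for compact manifolds) and in return reduces the homological step to a one-line pairing with no surgery on loops; your orientation bookkeeping ($L$ orientable because $\omega$ restricts to a volume form on it, $M$ orientable because it is simply connected) is fine. One minor remark: your parenthetical ``alternative'' via the dichotomy ``$L$ separates or some loop meets it exactly once'' is essentially the paper's argument; to close that version one should add that in the separating case $L$ bounds, hence is null-homologous, contradicting $\int_L\omega\neq 0$ --- but your main line of proof does not depend on this aside.
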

\begin{proof}
    Suppose $L$ is a compact leaf. We claim that $M \setminus L$ is disconnected.
    Indeed, a tubular neighborhood of $L$ is diffeomorphic to $L\times (-1,1)$
    ($\F$ must be transversely orientable because $M$ is simply connected).
    We will use $t$ as a coordinate on $(-1,1)$.
    Because $\F$ is taut, there is a path in $M\setminus L$ from a point with
    $t<0$ to a point with $t>0$.
    Then we can close up the path inside the tubular neighborhood to obtain
    a loop in $M$ that intersects $L$ exactly once.
    This shows that $L$ cannot be trivial in $H_{n-1}(M)$.
    But by $\pi_1(M) = \{\ast\}$ and Poincar\'e duality, $H_{n-1}(M)$ is trivial.
    This contradiction proves the result.
\end{proof}

\section{Symplectic foliations and Poisson geometry}

In this section, we will combine symplectic structures with foliations
and see how this brings us to the world of regular Poisson geometry.
We will need the leaves of our foliation to be even-dimensional, so
we will assume that $M$ is of dimension $m = 2n+q$.

\begin{definition}
    A \emph{symplectic foliation of codimension $q$} on $M$ is a
    codimension-$q$ foliation $\F$ on $M$ together with a leafwise form
    $\omega \in \Gamma(\Lambda^2 T^*\F)$ such that the restriction of $\omega$ to
    every leaf is symplectic.
\end{definition}

\subsection*{Local normal form}

Suppose $(M,\F,\omega)$ is a symplectically foliated manifold.
In a sufficiently small foliation chart for $\F$, we can apply Moser's trick
to bring the symplectic forms on all leaves into Darboux form simultaneously.
More precisely, we have the following lemma.

\begin{lemma}
    Let $(M,\F,\omega)$ be a manifold with a symplectic foliation of codimension $q$.
    Then near any point $p\in M$ we may choose coordinates $(x_1, \ldots, x_n,
    y_1, \ldots, y_n, z)$ ($z\in\R^q$) such that in a neighborhood of $p$:
    \begin{itemize}
    \item the foliation $\F$ is given by $z = \text{constant}$,
    \item the leafwise symplectic form is given by $\omega = \sum_i dx_i \wedge dy_i$.
    \end{itemize}
\end{lemma}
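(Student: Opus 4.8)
The plan is to combine two standard moves: first a foliated Darboux-type step coming from the foliation charts, and then a leafwise-parametrized version of Moser's trick to normalize the symplectic forms simultaneously on nearby leaves. I would start with a foliation chart around $p$, giving coordinates $(x,y,z)$ with $x,y\in\R^n$ and $z\in\R^q$ in which the leaves are the slices $z=\text{const}$; here I still have the freedom to compose with diffeomorphisms of the form $(x,y,z)\mapsto(\phi_z(x,y),z)$, i.e. to change coordinates leafwise in a way that depends smoothly on the transverse parameter $z$. After shrinking, I may also assume the chart image is convex, which is what lets the Moser homotopy argument go through fiberwise. At $p$ itself, a linear change of the $(x,y)$-coordinates (again allowed to depend on $z$) puts $\omega|_{z=z_0}$ into the standard form $\sum_i dx_i\wedge dy_i$ at the single point $p$; so without loss of generality $\omega$ and $\omega_0:=\sum_i dx_i\wedge dy_i$ agree at $p$.

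The core is then the Moser argument, carried out on the leaf slices but with smooth dependence on $z$. On each slice $\{z=\text{const}\}$ we have two leafwise-closed, leafwise-nondegenerate $2$-forms $\omega$ and $\omega_0$, cohomologous in the slice (the slice is contractible), so I can write $\omega-\omega_0=d_\F\alpha$ for a leafwise $1$-form $\alpha$ depending smoothly on all coordinates, and arrange $\alpha_p=0$ (using a leafwise Poincaré-lemma homotopy operator, which automatically gives smooth dependence on the transverse parameter and vanishing at $p$ if one homotopes toward $p$). Consider $\omega_t=\omega_0+t\,(\omega-\omega_0)$ for $t\in[0,1]$; near $p$ each $\omega_t$ is leafwise-nondegenerate, so there is a unique leafwise vector field $X_t$ (tangent to the leaves, smooth in $z$) solving $\iota_{X_t}\omega_t=-\alpha$, and $X_t$ vanishes at $p$. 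Integrating the time-dependent leafwise flow of $X_t$ on a small enough neighborhood of $p$ gives a diffeomorphism $\psi$ of the form $(x,y,z)\mapsto(\psi_z(x,y),z)$ with $\psi^\ast\omega=\omega_0$ leafwise, by the usual computation $\tfrac{d}{dt}\psi_t^\ast\omega_t=\psi_t^\ast(\Li_{X_t}\omega_t+\dot\omega_t)=\psi_t^\ast(d_\F\iota_{X_t}\omega_t+\omega-\omega_0)=0$. Absorbing $\psi$ into the coordinates gives the claimed normal form; the fact that $X_t$ vanishes at $p$ ensures the flow is defined up to time $1$ on a neighborhood of $p$ and fixes $p$.

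The main obstacle, and the one point that needs genuine care rather than being routine, is tracking the \emph{smooth transverse dependence} throughout: one must choose the leafwise primitive $\alpha$ and the homotopy operator so that they vary smoothly in $z$ (not just leaf-by-leaf), check that $X_t$ then depends smoothly on $(x,y,z,t)$, and verify that the leafwise flow integrates to a genuine smooth diffeomorphism of a single neighborhood of $p$ in $M$ rather than just a family of leaf diffeomorphisms with no uniform domain. This is exactly why one works in a convex chart and why one forces $\alpha_p=0$ (so $X_t(p)=0$): it guarantees a uniform existence time and a common neighborhood. Once this is set up, the verification $\psi^\ast\omega=\omega_0$ is the standard Moser computation and the rest is bookkeeping.
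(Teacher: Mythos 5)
Your proof is correct and follows exactly the route the paper indicates (it gives no detailed proof, only the remark that one applies Moser's trick in a foliation chart to normalize the leafwise forms on all leaves simultaneously): foliated chart, pointwise linear normalization at $p$, leafwise primitive via a fiberwise Poincar\'e homotopy depending smoothly on $z$, and the parametric Moser flow tangent to the leaves. The only cosmetic point is that on slices other than the one through $p$ you contract toward the center of that slice (the point with the same $(x,y)$-coordinates as $p$) rather than ``toward $p$'', which still gives $\alpha$ vanishing along the central transversal and hence $X_t(p)=0$ as you need.
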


\subsection*{Regular Poisson structures}

Suppose that we have a manifold $M$ equipped with a regular corank-$q$ Poisson
structure $\pi$.  Then $M$ is automatically endowed with a symplectic foliation
by taking
\begin{itemize}
\item $\F = \im(\pi^{\sharp})$,
\item $\omega_x(v,w) = \alpha(w)$ where $x\in M$, $v, w \in T_x\F$, $\alpha\in T^*_xM$ and $\pi^{\sharp}(\alpha) = v$.
\end{itemize}
The leaves of this symplectic foliation are known as the \emph{symplectic leaves} of $\pi$.

Conversely, given a symplectic foliation $(\F,\omega)$ on $M$, we can endow $M$ with a Poisson structure
by taking the Poisson tensor at a point $x\in M$ to be
the composition
\[ T^*_xM \to T^*_x\F \xrightarrow{(\omega_x^{\flat})^{-1}} T_x\F \to T_xM .\]
These two constructions (from symplectic foliation to regular Poisson structure
and vice versa)
are inverse to each other.
Thus we have a bijection
\[ \left\{ \parbox{9em}{\centering symplectic foliations of codimension $q$} \right\} \leftrightarrow \left\{ \text{regular corank-$q$ Poisson structures} \right\} \]
and a symplectic foliation is essentially the same thing as a regular Poisson structure.
The local normal form for symplectic foliations is then just the Weinstein splitting theorem
in the case of a regular Poisson structure, which says that the Poisson bivector locally
looks like
\[ \pi = \sum_i \partial x_i \wedge \partial y_i .\]

A deep result by Mitsumatsu states that Lawson's foliation of $\Sphere^5$
admits a leafwise symplectic structure.

\begin{theorem}[Mitsumatsu \cite{mitsumatsu}]
    Lawson's foliation on the 5-sphere $\Sphere^5$ admits a smooth leafwise
    symplectic structure.
\end{theorem}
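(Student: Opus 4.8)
The plan is to work with the concrete description of Lawson's foliation as a ``turbulized'' Milnor open book (essentially following Mitsumatsu).
Take the homogeneous cubic $P(z)=z_1^3+z_2^3+z_3^3$ on $\C^3$; its Milnor fibration equips $\Sphere^5\subset\C^3$ with an open book whose binding is the link $K=P^{-1}(0)\cap\Sphere^5$ --- a circle bundle over the elliptic curve $E=\{P=0\}\subset\C P^2$ with nonzero Euler number --- and whose page is the Milnor fiber $F$, a smooth affine cubic surface which is Stein with contact boundary $(K,\xi)$, with $\xi$ of Boothby--Wang type over $E$.
Lawson's foliation is obtained by replacing a neighborhood of the binding with a spiralling region: it has a single compact leaf $L\cong\Sphere^1\times K$, a Kodaira--Thurston-type $4$-nilmanifold, and all of its other leaves are copies of the open Milnor fiber that spiral onto $L$.
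I would put a leafwise symplectic form on each leaf and then verify that these fit together smoothly; since away from the compact leaf the foliation is a genuine fiber bundle (the open book), the entire difficulty concentrates near $L$.

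\textbf{The two standard ingredients.}
On the compact leaf, take Thurston's symplectic form: using the Boothby--Wang contact form $\alpha$ on $K$ together with pullbacks of a basis of $H^1(E)$ one assembles a closed non-degenerate $2$-form $\omega_L$ on $L=\Sphere^1\times K$, so $L$ is symplectic (and non-K\"ahler, as $b_1(L)=3$).
On the non-compact leaves --- affine cubic surfaces --- use a Stein symplectic form $\omega_F=d\lambda_F$, which near the conical end is modeled on the symplectization of $(K,\alpha)$; since the geometric monodromy of the open book is realizable symplectically, the ``open book'' part of $\Sphere^5$, which is an $F$-bundle over $\Sphere^1$, then carries a smooth leafwise symplectic form by the standard symplectic suspension.
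Both of these ingredients are comparatively standard.

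\textbf{The crux: matching near the compact leaf.}
What remains, and what is the real content, is to build on the spiralling region a leafwise symplectic form that restricts to (an unrolling of) $\omega_L$ along $L$ and matches $\omega_F$ where the spiralling region meets the open book.
Concretely one must interpolate, \emph{inside each non-compact leaf}, between the exact symplectization form on its bulk and an $\omega_L$-unrolled (non-exact) form on the part of the leaf winding onto $L$, keeping the result leafwise closed, leafwise non-degenerate, and smooth as a section over $\Sphere^5$.
The obstacles are: (i) a transverse slice of $L$ meets a single spiralling leaf in infinitely many sheets, so non-degeneracy and smoothness of the interpolation must be controlled uniformly over all of them; (ii) there is a genuine cohomological jump --- the forms on the non-compact leaves are exact while $[\omega_L]\neq 0$ --- which must be absorbed by the infinite winding and by a careful choice of turbulization and cut-off profiles; (iii) compatibility at the interface with the symplectic suspension.
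Arranging all of this simultaneously and smoothly is the technical heart of the argument, and is the step I expect to be the real obstacle.
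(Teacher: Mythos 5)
This theorem is not proved in the thesis at all: it is quoted as background and attributed to Mitsumatsu, so there is no in-paper argument to compare yours against; the only question is whether your proposal itself constitutes a proof. It does not. You correctly reproduce the overall architecture of Mitsumatsu's construction (Milnor open book of the Fermat cubic, Thurston-type symplectic form on the compact leaf $\Sphere^1\times K$, Stein structures on the Milnor fibers, and the need to splice these along the spiralling region), but the step you label ``the crux'' --- producing, uniformly over the infinitely many sheets of each spiralling leaf, a leafwise closed nondegenerate form that interpolates between the exact Stein form on the bulk of the Milnor fiber and the non-exact form induced from the compact leaf --- is exactly the content of Mitsumatsu's theorem, and you explicitly leave it as ``the step I expect to be the real obstacle.'' In particular, the cohomological jump you point out in (ii) is not something one can wave at: the standard Stein/symplectization model on the end of the affine cubic cannot converge to a form representing $[\omega_L]\neq 0$, so one must replace it by a symplectic structure on the Milnor fiber whose end is asymptotically periodic and modeled on the compact leaf; constructing such a structure on the Fermat cubic surface is the main technical theorem of Mitsumatsu's paper, and nothing in your outline supplies it.

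There is also a structural inaccuracy: the compact leaf separates $\Sphere^5$ into two components, and only one of them is foliated by Milnor fibers. The other component is the solid tube around the binding $K$, whose (non-compact) leaves are not copies of the Milnor fiber; they too spiral onto the compact leaf and must be equipped with symplectic forms matching $\omega_L$ in the limit. Your sketch, by asserting that ``all of its other leaves are copies of the open Milnor fiber,'' silently drops this half of the problem. So the proposal is a reasonable plan of attack that tracks the known proof, but as a proof it has a genuine gap at the decisive step and an incorrect description of the foliation on one side of the compact leaf.
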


The Reeb foliation shows that $\Sphere^3$ also admits a symplectic foliation.
It is not known whether this is the case for odd-dimensional spheres of dimension at least 7.

\section{Lie groupoids}

Lie groupoids are multi-object generalizations of Lie groups.
We briefly review some of the important points and establish notation.
We refer the reader to the excellent \cite{moerdijk-mrcun} for a more thorough
introduction.
The infinitesimal counterpart to a Lie groupoid is a Lie algebroid, but not all
algebroids can be integrated to groupoids.
A good reference for the theory of integrability of Lie algebroids is
\cite{lectures-integrability-lie} or the original article
\cite{crainic-fernandes-lie}.

\subsection*{Definition and examples}

There are several ways to define what a groupoid is. The shortest is the
following.

\begin{definition}
    A \emph{groupoid} is a small category whose morphisms are all isomorphisms.
\end{definition}

A groupoid therefore consists of two sets, $M$ (the set of objects) and $\G$
(the set of arrows), together with structure maps
\begin{align*}
    s : \G \to M & \quad \text{source map} \\
    t : \G \to M & \quad \text{target map} \\
    m : \G^{(2)} \to \G & \quad \text{multiplication} \\
    i : \G \to \G & \quad \text{inversion} \\
    u : M \to \G & \quad \text{unit map}
\end{align*}
that satisfy the axioms of a category.
The set $\G^{(2)}$ is $\G \timesst \G$, the space of composable arrows.
(We will more generally use $\G^{(k)} = \G \timesst ... \timesst \G$ for the $k$-fold
fibered product.)
Our convention throughout will be that arrows go ``from right to left'', meaning
that we can only calculate $gh = g\cdot h = m(g,h)$ if $s(g)=t(h)$.
The \emph{orbits} of a groupoid are its isomorphism classes (so they partition $M$).
The \emph{isotropy group} at an object $x\in M$, written $\G_x$, is the automorphism
group of $x$.

\begin{definition}
A \emph{Lie groupoid} is a groupoid where $\G$ and $M$ are both manifolds, $s$ and $t$
are smooth submersions, and $m$, $i$ and $u$ are smooth maps.
\end{definition}

\begin{remark}
    We require that $M$ is Hausdorff, but we do not require that $\G$ is Hausdorff.
    Indeed, many natural examples of Lie groupoids have non-Hausdorff spaces of arrows.
    One example is the holonomy groupoid of a foliation that has a vanishing cycle.
    We do, however, require that the source and target fibers are Hausdorff.
    This same remark will hold true in the case of local Lie groupoids later on.
\end{remark}

If follows from the axioms that $u : M \to \G$ is an embedding, and we will
regard $M$ as an embedded submanifold of $\G$. This means that, in notation,
we will not distinguish a point $x\in M$ and its unit $u(x)\in \G$, and just
write $x$ for either.
The usual notation for a Lie groupoid with space of arrows $\G$ and space of objects $M$
is $\G \tto M$ (the two arrows represent the maps $s$ and $t$).

\begin{example}
    A Lie groupoid where $M$ is a singleton is precisely a Lie group.
\end{example}

\begin{example}
    If $\G$ is a Lie group acting on a manifold $M$, then the \emph{action Lie groupoid}
    is the groupoid $\G\ltimes M\tto M$. Its space of arrows is $\G\times M$,
    with $s(g,x) = x$ and $t(g,x) = g\cdot x$.
    The product is $(h,y)\cdot(g,x) = (hg,x)$, the inversion is $(g,x)^{-1} =
    (g^{-1},g\cdot x)$ and the unit map is $u(x) = (e_{\G},x)$.
\end{example}

\begin{example}[Fundamental groupoid of a foliation]
    Suppose that $\F$ is a foliation of a manifold $M$.
    Then there is an important Lie groupoid $\Pi_1(\F) \tto M$,
    called the \emph{fundamental groupoid} of $\F$.
    Its space of arrows is
    \[ \Pi_1(\F) = \{ \text{paths in leaves of $\F$} \} / {\sim} ,\]
    where ${\sim}$ is homotopy in the leaf, relative to the endpoints.
    The source map assigns to any (class of a) path its starting point.
    The target map assigns to any (class of a) path its endpoint.
    The multiplication is concatenation of paths, and the inverse of a path
    is the same path traversed in the opposite direction.

    The isotropy group at a point $x\in M$ is $\pi_1(L,x)$, where $L$ is the leaf containing
    $x$.
    If $\F$ has just one leaf (the entire manifold), then $\Pi_1(\F)$ is known
    as the fundamental groupoid of $M$.
\end{example}

\begin{example}[Holonomy groupoid of a foliation]
    Another important groupoid associated to a foliation is its \emph{holonomy groupoid}.
    Its space of arrows is
    \[ \Hol(\F) = \{ \text{paths in leaves of $\F$} \} / {\sim} ,\]
    where two paths are equivalent for ${\sim}$ if their holonomy is the same.
    The structure maps are analogous to those of the fundamental groupoid.
    There is a natural map $\Pi_1(\F) \to \Hol(\F)$ because homotopic paths
    have the same holonomy.
    This map is a covering of Lie groupoids (it is a covering map at the level
    of source fibers).
\end{example}

\begin{example}[Symplectic groupoids]
    Suppose that $\G$ is a Lie groupoid over $M$.
    A differential form $\omega \in \Omega^{k}(M)$ is called \emph{multiplicative}
    if
    \[ m^*\omega = p_1^*\omega + p_2^*\omega ,\]
    where $m : \G \timesst \G \to \G$ is the multiplication and by $p_i : \G\timesst \G\to\G$
    the projection to the $i$'th factor.
    A \emph{symplectic groupoid} is a pair $(\G,\omega)$, where $\omega\in\Omega^2(\G)$
    is a symplectic form that is multiplicative.
\end{example}

If $\G \tto M$ is a Lie groupoid, then the orbits of this groupoid
form a (possibly singular) foliation of $M$.

While much of the theory of Lie groupoids is similar to that of Lie groups,
there are some important differences. The one that is most relevant for us
concerns the infinitesimal counterparts to Lie groupoids, which we review next.

\subsection*{Lie algebroids}

To every Lie group we associate a Lie algebra by looking at the Lie group
structure at the infinitesimal level near the identity.
Similarly, for Lie groupoids, there is a structure that allows us to capture
a lot about the groupoid at the infinitesimal level.
These structures are \emph{Lie algebroids}.

\begin{definition}
A Lie algebroid over $M$ is a tuple $(A,\rho,[\slot,\slot])$ where
\begin{itemize}
    \item $A$ is a vector bundle over $M$,
    \item $\rho : A \to TM$ is a bundle map called the \emph{anchor map},
    \item $[\slot,\slot] : \Gamma(A) \times \Gamma(A) \to \Gamma(A)$ is a Lie bracket on the space
        of sections of $A$,
\end{itemize}
such that the anchor map induces a Lie algebra homomorphism $\Gamma(A) \to \Gamma(TM)$
and the Leibniz identity $[X,fY] = f[X,Y] + \rho(X)(f) Y$ holds for all
$X,Y\in\Gamma(A)$ and $f\in C^\infty(M)$.
\end{definition}

\begin{remark}
    This definition gives little intuition about Lie algebroid upon first reading.
    Here is a short attempt at a more intuitive description.
    The tangent bundle of a manifold summarizes all the possible directions that one
    can walk in on this manifold. The fiber $T_xM$ is the vector space of all possible
    velocities an ant could have when walking on the manifold and passing through $x$.
    The Lie bracket of vector fields summarizes the extent to which ``ways of
    walking'' (vector fields) fail to commute.

    A Lie algebroid is a generalization of the tangent bundle. It gives, for each point,
    the vector space of all possible ways that you can start walking at that point.
    The bracket $[\slot,\slot]$ of the Lie algebroid summarizes the commutators
    of these ``ways of walking''.
    The anchor $\rho$ summarizes which effect a certain ``way of walking'' has on your
    position, by indicating how your position on the manifold changes if you walk
    that particular way.

    If one considers the arrows of a groupoid as taking you from one point (the source)
    to another (the target), the Lie algebroid of this Lie groupoid summarizes
    how these arrows correspond to ``ways of walking'' at the infinitesimal level.
\end{remark}

The Lie algebroid of a Lie groupoid is constructed as follows.
Let $T^s\G$ be the kernel of $ds : T\G \to TM$.
Then as a vector bundle, $A = T^s_M\G = T^s\G{\restriction_M}$.
The anchor map is induced from $dt : T\G \to TM$.
Right-invariant vector fields on $\G$ correspond to sections of $A$,
and the bracket $[\slot,\slot]$ on $\Gamma(A)$ is the one induced
from the usual Lie bracket of these right-invariant vector fields.
(We will repeat this construction in greater detail for local Lie groupoids later.)

The theory of Lie groupoids and Lie algebroids is largely analogous to that of
Lie groups and Lie algebras. Here are some key facts:
\begin{itemize}
    \item A morphism of Lie groupoids $\G_1 \to \G_2$ induces a unique morphism of Lie
        algebroids $A_1 \to A_2$.
        A morphism of Lie algebroids $A_1 \to A_2$ induces a unique morphism of Lie
        groupoids $\G_1 \to \G_2$ if $\G_1$ is source-simply connected (i.e.\ its source
        fibers are simply connected).
    \item Every Lie groupoid $\G$ is covered by a uniquely determined source-simply connected
        Lie groupoid.
        (More precisely, there is a source-simply connected Lie groupoid $\tilde{\G}$
        and a morphism $f: \tilde{\G} \to \G$ that is a local diffeomorphism,
        such that $f$ induces an isomorphism at the level of Lie algebroids.)
        This groupoid $\tilde{\G}$ is unique up to isomorphism.
\end{itemize}

It is a standard result in Lie theory that every Lie algebra is the Lie algebra
of some Lie group.
This no longer holds for Lie algebroids.
We say that a Lie algebroid is \emph{integrable} if it is the Lie algebroid of
some Lie groupoid.
The obstructions to the integrability of a Lie algebroid were first understood
by Crainic and Fernandes \cite{crainic-fernandes-lie}.
They showed that the integrability of a Lie algebroid is controlled by its
\emph{monodromy groups}.
The monodromy group at a point $x \in M$ is obtained as an image of $\pi_2(L)$ under
a certain map, where $L$ is the orbit of $x$.
Important in the construction of the monodromy groups is the idea of an $A$-path.

\subsection*{$A$-paths}

Suppose that $\G$ is a Lie group.  If we have a path $\gamma : I \to \G$
starting at the identity, we may differentiate it and right-translate to the
origin to get a path $a : I\to \g$, where $\g$ is the Lie algebra of $\G$.
This path effectively gives us the velocity of $\gamma$ at any time.
Conversely, if we have a path $a:I\to \g$, we can reconstruct the path in $\G$
that it was obtained from by integrating (assuming it started at the identity).
This way, there is a bijective correspondence between $\G$-paths and $\g$-paths.

If we have a family of paths $\gamma_t : I\to\G$ (all starting at the identity), we
can use this differentiation procedure to obtain a family of paths $a_t : I\to
\g$. If $(\gamma_t)_t$ is a homotopy of paths rel endpoints, this will have
some consequences for the family of $\g$-paths $(a_t)_t$. It turns out that one
can detect, solely from the family $(a_t)_t$ and the Lie algebra structure,
whether the original homotopy of $\G$-paths kept the endpoint of the paths fixed or not.
In other words, there is a condition on a family of $\g$-paths
\emph{that can be expressed using only the structure of the Lie algebra},
that tells us whether the endpoint of the $\G$-paths was kept fixed.
One says that two $\g$-paths are \emph{$\g$-homotopic} if they can be related to
each other by a family of $\g$-paths that satisfies this condition.
Note that one can speak about $\g$-homotopy without ever referring to a Lie group
integrating the Lie algebra.

Now if $\G$ is a simply connected Lie group, then $\G$ itself is in bijection
with the homotopy classes of $\G$-paths starting at the identity, rel endpoints.
This observation allows one to reconstruct a simply connected Lie group from its
Lie algebra.
This technique can be used to prove that every Lie algebra is the Lie algebra of some
Lie group (i.e.\ that every Lie algebra is integrable). One reference where this is
done is \cite{moerdijk-mrcun}.

While many proofs of the integrability of Lie algebras rely on some sort of classification
result about Lie algebras, the one outlined above does not.
By generalizing the procedure above to Lie algebroids, Crainic and Fernandes were
able to understand the obstructions to integrability of Lie algebroids \cite{crainic-fernandes-lie}.
Suppose that $A$ is a Lie algebroid over a manifold $M$.

\begin{definition}
    An \emph{$A$-path} is a pair $(a,\gamma)$, where $\gamma : I\to M$ is a path in $M$
    and $a : I \to A$ is a path in $A$, such that
    \begin{itemize}
        \item $a$ lies over $\gamma$, meaning that $a(t) \in A_{\gamma(t)}$ for all $t \in I$,
        \item $\rho(a(t)) = \frac{d\gamma}{dt}(t)$ for all $t\in I$.
    \end{itemize}
\end{definition}

Crainic and Fernandes construct an equivalence relation ${\sim}$ on the set of all $A$-paths,
called \emph{$A$-homotopy}, analogous to the $A$-homotopy for Lie algebras outlined above.

\begin{definition}
    If $A$ is a Lie algebroid, the quotient
    \[ \G(A) = \{ \text{$A$-paths} \} / {\sim} \]
    is called the \emph{Weinstein groupoid} of $A$.
\end{definition}

It is a source-simply connected topological groupoid, but it is not necessarily smooth.
Next, a morphism of groups $\partial : \pi_2(\Orbit_x) \to \G(\g_x)$ is constructed.
Here, $\Orbit_x$ is the orbit of a point $x\in M$ and $\G(\g_x)$ is the simply connected
integration of the isotropy Lie algebra $\g_x$ at $x$.
This morphism is called the \emph{monodromy homomorphism}.

\begin{definition}
    The image of the monodromy homomorphism $\partial : \pi_2(\Orbit_x) \to \G(\g_x)$
    is called the \emph{monodromy group} of $A$ at $x$, written $\Mon_x(A)$.
\end{definition}

The result characterizing the integrable Lie algebroids is the following.

\begin{theorem}[Crainic-Fernandes, \cite{crainic-fernandes-lie}]
    For a Lie algebroid $A$, the following statements are equivalent:
    \begin{itemize}
        \item $A$ is integrable,
        \item the Weinstein groupoid $\G(A)$ is smooth,
        \item the monodromy groups $\Mon_x(A)$ are uniformly discrete.
    \end{itemize}
    Moreover, in this case, $\G(a)$ is the unique $s$-simply connected Lie groupoid
    integrating $A$.
\end{theorem}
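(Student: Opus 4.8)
The plan is to realize $\G(A)$ as the leaf space of a foliation on an infinite-dimensional manifold, so that the smoothness of $\G(A)$ becomes a question about that leaf space being a Hausdorff, finite-dimensional manifold. Concretely, one puts a Banach manifold structure on the space $P(A)$ of $A$-paths (say of class $C^1$, or of a suitable Sobolev class) and shows that the relation of $A$-homotopy is generated by a foliation $\mathcal{F}_{\mathrm{hom}}$ of $P(A)$ whose leaves are exactly the $A$-homotopy classes. This foliation arises from the action of an infinite-dimensional Lie algebra of time-dependent sections of $A$; proving that this infinitesimal action integrates to a genuine foliation with finite-codimension leaves is the first technical lemma. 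Granting it, the structure maps of $\G(A) = P(A)/\mathcal{F}_{\mathrm{hom}}$ — source, target, concatenation, inversion, and the unit section $M \hookrightarrow \G(A)$ — descend from manifestly smooth maps on $P(A)$, so $\G(A)$ is always a topological groupoid, and near the unit section it even carries a canonical smooth structure (there $\mathcal{F}_{\mathrm{hom}}$ is a simple foliation). Everything then hinges on whether the global leaf space is smooth.

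I would first dispose of the two easy implications. If $\G(A)$ happens to be a smooth manifold, the descended structure maps make it a Lie groupoid, and identifying right-invariant vector fields with sections of $A$ (via constant $A$-paths) shows its Lie algebroid is $A$, so $A$ is integrable. Conversely, if $A$ is integrable, choose any integration $H \tto M$ and pass to its source-simply connected cover $\tilde{H}$, which exists and is unique by the general facts recalled above. One checks that $A$-paths are precisely the right logarithmic derivatives of $\tilde{H}$-paths issuing from the unit section, and that $A$-homotopy corresponds to homotopy rel endpoints inside the source fibers of $\tilde{H}$ — which, being simply connected, realize every such homotopy class uniquely. Hence $\tilde{H} \cong P(A)/\mathcal{F}_{\mathrm{hom}} = \G(A)$ as topological groupoids, so $\G(A)$ is smooth; and since $H$ was arbitrary, this simultaneously proves the final uniqueness assertion.

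The heart of the matter is the equivalence of smoothness with uniform discreteness of the monodromy groups, and this is where the real work lies. The strategy is local: fix $x \in M$ with orbit $\Orbit_x = L$, choose a transversal to $L$ through $x$, and restrict the path space accordingly. The key structural claim is that near the unit $x$ the foliation $\mathcal{F}_{\mathrm{hom}}$ splits, up to a genuine submersion (accounting for moving along $L$ and along the transversal), into a piece governed entirely by the isotropy Lie algebra $\g_x$: one builds a diffeomorphism between a neighborhood of $x$ in $\G(A)$ and an open set in the quotient of a neighborhood of the identity in $\G(\g_x)$ by the action of $\Mon_x = \im\bigl(\partial : \pi_2(L,x) \to \G(\g_x)\bigr)$. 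Here $\partial$ is the monodromy homomorphism, obtained by lifting a $2$-sphere in $L$ to a family of $A$-paths and extracting the residual element of $\G(\g_x)$ that obstructs contracting the lifted family rel endpoints; note that $\G(\g_x)$ is itself a genuine Lie group, since every Lie algebra is integrable — a fact that can be proved by this same $A$-path machinery applied to algebras. Such a quotient is a manifold near the identity precisely when $\Mon_x$ is discrete there, i.e.\ when $r_{\Orbit}(x) := d\bigl(1,\,\Mon_x \setminus \{1\}\bigr) > 0$ for a left-invariant metric on $\G(\g_x)$; and gluing these local models into a single manifold structure on $\G(A)$ succeeds exactly when $r_{\Orbit}$ stays bounded away from $0$ on compact subsets of $M$, which is the statement that the groups $\Mon_x$ are uniformly discrete.

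The main obstacle — the part I expect to be genuinely hard — is making this local model rigorous: one must control the infinite-dimensional geometry of $\mathcal{F}_{\mathrm{hom}}$ precisely enough to be sure that no degeneration of the leaf space occurs beyond the one already measured by $\Mon_x$, and that the monodromy homomorphism is well defined and captures all of it. This requires the delicate transversality and exponential-type estimates for the $A$-homotopy foliation — essentially an infinite-dimensional slice theorem for the gauge action near a point of $M$ — together with a careful analysis of how leaves of $\mathcal{F}_{\mathrm{hom}}$ accumulate. Once the local model is in place, the remaining assembly (patching the local smooth structures, verifying that the groupoid operations are smooth for the patched structure, and checking Hausdorffness of $\G(A)$ exactly when discreteness is uniform) is comparatively routine.
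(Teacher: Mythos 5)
The paper itself gives no proof of this statement --- it is quoted as background from Crainic--Fernandes --- and your sketch is essentially a faithful outline of that original proof: the Banach manifold of $A$-paths, the $A$-homotopy foliation whose leaf space is $\G(A)$, the two easy implications, uniqueness via the source-simply connected cover of any integration, and the monodromy groups $\Mon_x$ as the precise obstruction to smoothness of the quotient, with the genuinely hard part (the local model for the leaf space near the units) correctly identified as the technical core that occupies most of the Crainic--Fernandes paper. The only point to correct is your closing remark tying the uniform-discreteness condition to Hausdorffness of $\G(A)$: the theorem concerns smoothness only, and the Weinstein groupoid of an integrable algebroid may perfectly well be a smooth but non-Hausdorff manifold of arrows (the paper explicitly allows non-Hausdorff arrow spaces), so that clause should be dropped rather than "checked".
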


Here, ``uniformly discrete'' means that there is an open set $U\subset
A$ containing the zero section, such that the intersection
$U \cap \left( \bigcup_{x\in M} \Mon_x(A) \right)$
is the zero section.
In other words, the non-trivial monodromy is bounded away from the zero section.

\begin{example}[Symplectic groupoids integrate Poisson manifolds]
    If $(M,\pi)$ is a Poisson manifold, we can consider its \emph{cotangent algebroid}.
    As a vector bundle, this is $T^*M$. To turn this into a Lie algebroid, we prescribe
    the anchor map
    \[ \pi^{\sharp} : T^*M \to TM \]
    by $\beta(\pi^{\sharp}(\alpha)) = \pi(\alpha,\beta)$, and the bracket by
    \[ [\alpha, \beta] = \Li_{\pi^{\sharp}(\alpha)}(\beta) -
        \Li_{\pi^{\sharp}(\beta)}(\alpha) - d(\pi(\alpha,\beta)) .\]
    This makes $A$ into a Lie algebroid over $M$.
    If $A$ happens to be integrable (we say that $M$ is an \emph{integrable
        Poisson manifold}),
    then its integration $\G$ will be a symplectic Lie groupoid.
    In other words, because $A$ comes from a Poisson bracket, the integration $\G$
    will naturally come with a multiplicative symplectic structure.
    Conversely, if $\G\tto M$ is a symplectic Lie groupoid, then $M$
    has the structure of a Poisson manifold, whose cotangent algebroid
    is the algebroid of $\G$.
\end{example}

\chapter{Calibrated symplectic foliations of codimension 1}
\label{chapter:calibrations}

In this chapter we consider a special class of symplectic foliations, namely
the ones whose leafwise symplectic form can be extended to a closed 2-form
on the manifold.
\emph{Throughout this chapter, all foliations will be of codimension 1.}

\section{Extensions of the leafwise symplectic form}

Given a symplectic foliation $(\F,\omega)$ on $M$, we can ask about forms $\tilde{\omega}\in \Lambda^2T^*M$ extending $\omega$.
In other words, we want to understand the 2-forms $\tilde{\omega}$ on $M$ such that
\[ \tilde{\omega}(v,w) = \omega(v,w) \quad\text{for all $v, w\in T\F$} .\]
It is easy to see that such an extension always exists.
In fact, these extensions are in 1-to-1 correspondence with splittings
\[ TM = T\F \oplus \nu ,\]
by taking $\nu = \ker(\tilde{\omega})$. (Indeed, any extension has rank at least $2n$ everywhere,
and since the rank of a 2-form is always even, the rank must be $2n$.
The extension is then determined by specifying its kernel.)

\section{Existence of a closed extension}

We are interested in symplectic foliations for which the leafwise symplectic
form has a \emph{closed} extension (closed as elements of $\Omega^2(M)$, not merely
leafwise closed, which is assumed of a symplectic foliation).
In \cite{martineztorres}, these are called \emph{2-calibrated}. We will just
refer to them as \emph{calibrated}.

\begin{definition}
    A \emph{calibration} for a symplectic foliation $(M,\F,\omega)$ is an extension
    $\tilde{\omega} \in \Omega^2(M)$ of $\omega$ such that $d\tilde{\omega} = 0$.

    We refer to the tuple $(\F,\tilde{\omega})$ as a \emph{calibrated foliation} on $M$
    (it determines $\omega$, of course).
    A symplectic foliation that admits a calibration will be called \emph{calibratable}.
\end{definition}

\begin{remark}
    One could work with the same definition in higher codimensions.
    We will restrict ourselves to codimension 1, however.
    The geometric interpretation of a calibration that we will see in the next section
    no longer holds in higher codimensions.
\end{remark}

The following lemma is immediate.
\begin{lemma}
    If $(M,\F,\tilde{\omega})$ is a calibrated foliation, then $\F$ is taut.
\end{lemma}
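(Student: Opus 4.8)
The claim is that if $(M,\F,\tilde\omega)$ is a calibrated foliation of codimension $1$, then $\F$ is taut. The plan is to use criterion \eqref{item:criteria-tautness-form} from \cref{thm:criteria-tautness}: a codimension-$1$ foliation on a compact manifold is taut precisely if there is a closed $(m-1)$-form restricting to a volume form on each leaf. Here $m = \dim M = 2n+1$ and the leaves are $2n$-dimensional, so the natural candidate is $\tilde\omega^{\,n}$, the $n$-th wedge power of the calibration.

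First I would check that $\tilde\omega^{\,n} \in \Omega^{2n}(M)$ is closed: this is immediate since $d\tilde\omega = 0$ implies $d(\tilde\omega^{\,n}) = 0$. Next I would check that its restriction to each leaf $L$ is a volume form. By definition of a calibration, $\iota_L^*\tilde\omega = \omega|_L$, the leafwise symplectic form on $L$; hence $\iota_L^*(\tilde\omega^{\,n}) = (\omega|_L)^{\,n}$, which is nowhere zero because $\omega|_L$ is symplectic on the $2n$-dimensional leaf $L$. This gives a nowhere-vanishing top form on each leaf, i.e.\ a leafwise volume form, which is exactly condition \eqref{item:criteria-tautness-form}. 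Applying \cref{thm:criteria-tautness} then yields tautness.

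The only genuine subtlety is that \cref{thm:criteria-tautness} is stated for \emph{compact} $M$, whereas the lemma as phrased does not assume compactness; I would either add the standing compactness hypothesis (consistent with the rest of the chapter) or, alternatively, note that in the non-compact case one can take tautness to \emph{mean} the existence of such a closed leafwise-volume $(m-1)$-form (one of the standard equivalent definitions), so that the argument above is a direct verification rather than an appeal to the equivalence theorem. A second small point worth a remark: the foliation must be co-oriented for "taut" to make sense, but a calibration $\tilde\omega$ together with a choice of leafwise orientation (which $\tilde\omega^{\,n}$ itself provides) pins this down.

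I do not expect any real obstacle here; the entire content is the observation that wedging the calibration with itself $n$ times converts "closed extension of the leafwise symplectic form" into "closed extension of a leafwise volume form," which is the defining feature of tautness. The main thing to be careful about is simply invoking \cref{thm:criteria-tautness} under its stated hypotheses.
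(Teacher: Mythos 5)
Your proposal is correct and is essentially the paper's own argument: the paper likewise takes $\tilde{\omega}^n$, notes it restricts to a leafwise volume form, and invokes criterion (\ref{item:criteria-tautness-form}) of \cref{thm:criteria-tautness}. Your remarks on closedness, co-orientation, and the compactness hypothesis are just the details the paper leaves implicit.
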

\begin{proof}
    The $2n$-form $\tilde{\omega}^n$ is a volume form when restricted to any leaf.
    By theorem~\ref{thm:criteria-tautness}, this shows that $\F$ is taut.
\end{proof}

Clearly, the converse does not hold: the submersion $\Sphere^4\times \Sphere^1 \to \Sphere^1$
determines a taut foliation whose leaves do not admit symplectic structures.

Even among taut foliations that admit a leafwise symplectic structure,
there are some that do not admit calibrations. The following is an example.

\begin{example}
    We will consider the torus $\Torus^4$ as $\R^4/\Z^4$ with coordinates $(\theta_1,\theta_2,\theta_3,\theta_4)$. \phantomsection\label{page:example-torus}
    Let $\phi : \Torus^4\to \Torus^4$ be the diffeomorphism induced by the linear map $\R^4\to\R^4$ with matrix
    \[ \begin{pmatrix} 0 & 0 & -1 & 0 \\
     0 & -1 & 0 & 0 \\
     0 & 0 & 0 & 1 \\
     1 & 0 & 1 & 0 \\
    \end{pmatrix}. \]
    Let $(M,\F)$ be the suspension of $\phi$ with the obvious codimension-1 foliation
    (so $M = M\times [0,1] / ((p,0)\sim(\phi(p),1))$ with the foliation coming from the
    trivial one on $M\times[0,1]$).

    We claim that $(M,\F)$ has no calibrated symplectic foliation.
    As a basis for $H^2(\Torus^4;\R)$ we take
    \[ \{d\theta_1\wedge d\theta_2, d\theta_1\wedge d\theta_3, d\theta_1\wedge d\theta_4, d\theta_2\wedge d\theta_3, d\theta_2\wedge d\theta_4, d\theta_3\wedge d\theta_4\} .\]
    (For ease of notation, we left out brackets indicating equivalence classes.)
    Relative to this basis, $\phi^* : H^2(\Torus^4;\R) \to H^2(\Torus^4;\R)$ is given by
    \[ \begin{pmatrix}
     0 & 0 & 0 & 0 & 1 & 0 \\
     0 & 0 & 1 & 0 & 0 & 0 \\
     0 & 0 & 0 & 0 & 0 & -1 \\
     -1 & 0 & 0 & 0 & -1 & 0 \\
     0 & 0 & 0 & -1 & 0 & 0 \\
     0 & -1 & 0 & 0 & 0 & -1 \\
    \end{pmatrix}, \]
    which has no non-zero fixed points.
    This shows that $(M,\F)$ has no calibrated symplectic foliation,
    because in that case the class of the symplectic form on a leaf has to be
    invariant under $\phi^*$ (by Stokes' theorem; see also the geometric
    interpretation in the next section).

    There is, however, a leafwise symplectic structure on $(M,\F)$.
    Let
    \[ \omega_0 = d\theta_1\wedge d\theta_2 + d\theta_3\wedge d\theta_4 .\]
    Let
    \[ \omega_1 = -d\theta_1\wedge d\theta_2 - d\theta_1\wedge d\theta_3 + d\theta_2 \wedge d\theta_4 .\]
    Note that $\phi^*\omega_1 = \omega_0$.
    Let
    \[ \omega_t = t \omega_1 + (1-t)\omega_0 .\]
    An explicit calculation shows that
    \[ \omega_t\wedge\omega_t = (3t^2-3t+1) d\theta_1\wedge d\theta_2\wedge d\theta_3\wedge d\theta_4, \]
    which does not vanish for $t\in[0,1]$.

    Let $f : [0,1] \to [0,1]$ be smooth such that $f$ equals 0 near 0, and $f$ equals 1 near 1.
    The family of symplectic forms $(\omega_{f(t)})_{t\in[0,1]}$ on $\Torus^4\times [0,1]$ descends to a smooth leafwise symplectic structure on
    \[ M = \frac{\Torus^4\times[0,1]}{(p,0)\sim(\phi(p),1)} .\]
    (The reparameterization using $f$ is necessary for smoothness of the gluing, but is not essential to the spirit of the example.)
\end{example}

\section{Geometric interpretation of closedness}

One may wonder why it is interesting to look at \emph{closed} extensions of a leafwise
symplectic form.
It turns out that the closedness has a simple geometric interpretation.

Suppose $(M,\F,\omega)$ is a symplectically foliated manifold and $\tilde{\omega}$ a (not necessarily closed)
extension of $\omega$.
Let
\[ TM = T\F \oplus \nu \]
be the splitting determined by $\tilde{\omega}$, so that $\nu = \ker(\tilde{\omega})$.
This $\nu$ is a line field on $M$.

Closedness of $\tilde{\omega}$ is a local issue, and we will work in a chart
for our interpretation.
In the chart, we have local coordinates $x_1, \ldots, x_n, y_1, \ldots, y_n, z$
with the foliation given by $\ker(dz)$. Every leaf has a symplectic structure.
Let $X$ be the vector field tangent to $\nu$ such that $dz(X) = 1$.
Then the flow of $X$ maps leaves to leaves.

\begin{proposition}
    The extension $\tilde{\omega}$ is closed if and only if
    the flow of $X$ preserves the leafwise symplectic structure.
\end{proposition}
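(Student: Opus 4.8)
The plan is to do everything locally in the given chart, relate the flow condition to a Lie derivative via Cartan's formula $\Li_X = d\,\iota_X + \iota_X\,d$, and then pin down exactly which components of the $3$-form $d\tilde{\omega}$ are unconstrained. First I would record that $\iota_X\tilde{\omega} = 0$, since $X$ is a section of $\nu = \ker\tilde{\omega}$; Cartan's formula then gives $\Li_X\tilde{\omega} = \iota_X\,d\tilde{\omega} + d\,\iota_X\tilde{\omega} = \iota_X\,d\tilde{\omega}$. Next, recall that the flow $\phi_t$ of $X$ carries the leaf $\{z = c\}$ to the leaf $\{z = c+t\}$, so $d\phi_t$ maps $T\F$ into $T\F$. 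Saying that the flow of $X$ preserves the leafwise symplectic structure means that $\phi_t$, viewed as a map between leaves, pulls back the leafwise form on $\{z = c+t\}$ to the one on $\{z = c\}$, i.e.\ $(\phi_t^{*}\tilde{\omega})(v,w) = \tilde{\omega}(v,w)$ for all $v,w\in T\F$ and all small $t$. Differentiating in $t$ (using that $d\phi_t$ preserves $T\F$, so the derivative may be taken at any time) this is equivalent to $(\Li_X\tilde{\omega})(v,w) = 0$ for all $v,w\in T\F$, hence to $d\tilde{\omega}(X,v,w) = 0$ for all $v,w\in T\F$.

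Then I would observe that the remaining components of $d\tilde{\omega}$ vanish automatically. For a leaf inclusion $\iota_L\colon L \into M$ we have $\iota_L^{*}\tilde{\omega} = \omega|_L$, which is symplectic and hence closed; since $d$ commutes with pullback, $\iota_L^{*}(d\tilde{\omega}) = d(\omega|_L) = 0$, so $d\tilde{\omega}$ vanishes on $\Lambda^3 T\F$. Now at each point $TM = T\F \oplus \R X$, so a $3$-form is determined by its values on triples drawn from a basis of $T\F$ together with $X$: triples with two or more copies of $X$ vanish by antisymmetry, triples inside $T\F$ vanish by the previous sentence, and the only remaining triples have the form $(X,v,w)$ with $v,w\in T\F$. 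Therefore $d\tilde{\omega} = 0$ if and only if $d\tilde{\omega}(X,v,w) = 0$ for all $v,w\in T\F$, which we have just identified with the flow of $X$ preserving the leafwise symplectic structure. (For the implication ``closed $\Rightarrow$ flow'' one may also note directly that $d\tilde{\omega} = 0$ forces $\Li_X\tilde{\omega} = \iota_X\,d\tilde{\omega} + d\,\iota_X\tilde{\omega} = 0$, so the flow in fact preserves all of $\tilde{\omega}$, a fortiori leafwise.)

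There is no real obstacle of substance: the proof is a short computation once one has chosen the chart. The only points that need a little care are the bookkeeping in the dimension count for the $3$-form $d\tilde{\omega}$, and the passage between the infinitesimal statement $\Li_X\tilde{\omega}|_{\Lambda^2 T\F} = 0$ and the integrated flow statement — both routine here, since everything is local and $T\F$ is preserved by the flow of $X$.
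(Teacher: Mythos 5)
Your proof is correct and takes essentially the same route as the paper: Cartan's formula together with $\iota_X\tilde{\omega}=0$ and the leafwise closedness of $\tilde{\omega}$. The only cosmetic difference is that the paper first upgrades leafwise invariance to invariance of all of $\tilde{\omega}$ (arguing $\tilde{\omega}$ is determined by $\omega$ and $X$), whereas you restrict $\Li_X\tilde{\omega}$ to $T\F$ and finish with the pointwise decomposition $TM=T\F\oplus\R X$ applied to the $3$-form $d\tilde{\omega}$ --- which is exactly the fact the paper invokes when it says that $\iota_X d\tilde{\omega}=0$ forces $d\tilde{\omega}=0$ because $\tilde{\omega}$ is already leafwise closed.
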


In other words, $\tilde{\omega}$ is closed if and only if the maps from leaves to leaves induced
by the flow of $X$ are symplectomorphisms whenever defined.
This proposition is essentially a special case of lemma~6.18 in \cite{mcduff-salamon},
which says that a connection on a symplectic fibration is symplectic if and only if
the connection 2-form is vertically closed.

\begin{proof}
    If $X$ preserves $\tilde{\omega}$, it clearly preserves $\omega$ (since $\omega$ is
    determined by $\tilde{\omega}$ and $\F$, and $\F$ is preserved by $X$).
    Conversely, if $X$ preserves $\omega$, then it preserves $\tilde{\omega}$,
    since $\tilde{\omega}$ is determined by $\omega$ and $X$ itself (and the flow of $X$
    preserves $X$). We therefore want to show that $\Li_X\tilde{\omega} = 0$ if and only if
    $d\tilde{\omega} = 0$.

    Now
    \[ \Li_X\tilde{\omega} = d i_X \tilde{\omega} + i_X d\tilde{\omega} ,\]
    and $i_X\tilde{\omega} = 0$ because $X$ lies along $\nu = \ker(\tilde{\omega})$.
    This shows that $X$ preserves the leafwise symplectic form if and only if $i_Xd\tilde{\omega} = 0$.
    But $i_Xd\tilde{\omega} = 0$ if and only if $d\tilde{\omega} = 0$ because $\tilde{\omega}$ is already
    leafwise closed.
    This proves the result.
\end{proof}

\begin{remark}
    Suppose $(M,\F,\omega)$ is a symplectically foliated manifold.
    We can regard it as a regular Poisson manifold.
    If the symplectic foliation is calibratable, this Poisson manifold is automatically
    integrable. In fact, its monodromy groups are trivial, as we now briefly explain.
    We give a shortened version of the explanation in
    \cite[section 5.2]{lectures-integrability-lie}.
    For more detail, we refer the reader to that section.

    For regular Poisson manifolds, the monodromy has a geometric interpretation.
    Suppose that $x\in M$ lies on a leaf $L$.
    Consider a sphere $\gamma : \Sphere^2 \to L$ inside the symplectic leaf,
    such that $\gamma$ maps the north pole $N \in \Sphere^2$ to $x$.
    We can calculate the symplectic area $A(\gamma)$ of this sphere by
    integrating the leafwise symplectic structure over it.

    Now take a short tranversal to $\F$ at $x$, parameterized by $t\in(-\eps,\eps)$
    (where $t=0$ corresponds to $x$).
    A \emph{deformation of $\gamma_0$} is a family $\gamma_t : \Sphere^2 \to M$
    of spheres, parameterized by $t\in(-\eps,\eps)$, such that $\gamma_0 = \gamma$,
    and such that for each $t$ the sphere $\gamma_t$ lies entirely in a leaf of $\F$.
    The \emph{tranversal variation} of $\gamma_t$ is the element
    \[ \text{var}(\gamma_t) = \left.\frac{d}{dt}\right|_{t=0} \gamma_t(N) \in \nu_x(\F) .\]
    It turns out that $\left.\frac{d}{dt}\right|_{t=0} A(\gamma_t)$ only depends on the
    homotopy class of $\gamma$ and on $\text{var}(\gamma_t)$, and that it is linear
    in $\text{var}(\gamma_t)$ (we are using $A$ to denote the symplectic area).
    This way, we can associate to every $[\gamma]\in\pi_2(L,x)$ an element $A'([\gamma])$
    given by
    \[ \left.\frac{d}{dt}\right|_{t=0} A(\gamma_t) = \langle A'([\gamma]), \text{var}(\gamma_t)\rangle . \]
    This quantity $A'([\gamma])$ measures how much the symplectic area of a sphere $[\gamma]$
    changes as we move it transversely.
    As shown in \cite[proposition 5.4]{lectures-integrability-lie}, we have
    \[ \Mon_x = \{ A'([\gamma]) \mid [\gamma] \in \pi_2(L,x) \} .\]
    By the geometric interpretation of closedness, however, we see that for a calibrated
    symplectic foliation the symplectic area does not change when moving transversely.
    This implies that the monodromy of $(M,\F,\omega)$ vanishes.
    In particular, every symplectically foliated manifold that is calibratable
    is an integrable Poisson manifold (and its isotropy groups are simply $(\R,+)$).
\end{remark}

\section{Local normal form}

Using the geometric interpretation of closedness we obtain the following local normal form
for calibrated foliations.

\begin{lemma}
    Let $(M,\F,\tilde{\omega})$ be a manifold with a calibrated foliation.
    Then near any point $p\in M$ we may choose coordinates $x_1, \ldots, x_n, y_1, \ldots, y_n, z$
    such that in a neighborhood of $p$:
    \begin{itemize}
    \item the foliation $\F$ is given by $dz = 0$,
    \item the calibration $\tilde{\omega}$ is given by $\tilde{\omega} = \sum_i dx_i \wedge dy_i$.
    \end{itemize}
\end{lemma}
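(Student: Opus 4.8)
The plan is to start from the (uncalibrated) local normal form for symplectic foliations that was already established, and then "straighten out" the kernel line field of the calibration using the geometric interpretation of closedness. First I would apply the earlier local normal form lemma for symplectic foliations: near $p$ there are coordinates $(x_1,\ldots,x_n,y_1,\ldots,y_n,z)$ (with $z\in\R$, since we are in codimension $1$) in which $\F=\ker(dz)$ and the leafwise form is $\omega=\sum_i dx_i\wedge dy_i$. In these coordinates the calibration $\tilde\omega$ is \emph{some} closed extension of $\omega$; its kernel $\nu=\ker(\tilde\omega)$ is a line field transverse to $\F$, and I would let $X$ be the unique section of $\nu$ with $dz(X)=1$, exactly as in the geometric-interpretation section. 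By the proposition proved there, closedness of $\tilde\omega$ means precisely that the flow of $X$ preserves the leafwise symplectic form.

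Next I would use the flow of $X$ to build new coordinates in which $\nu$ becomes the coordinate direction $\partial/\partial z$. Concretely, shrinking the chart, flow the slice $\{z=0\}$ along $X$: the map $(q,z)\mapsto \varphi^z_X(q)$ (where $q$ ranges over a neighborhood of $p$ in the leaf $\{z=0\}$ and $\varphi^z_X$ is the time-$z$ flow) is a diffeomorphism onto a neighborhood of $p$, because $X$ is transverse to the leaves and $dz(X)=1$. Pulling back the old $x_i,y_i$ coordinates from the slice $\{z=0\}$ along this flow gives new coordinates, still called $x_i,y_i,z$, in which $\F=\ker(dz)$ is unchanged, $X=\partial/\partial z$, and — this is the key point — on the initial slice $z=0$ the leafwise form is still $\sum_i dx_i\wedge dy_i$. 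Since $X=\partial/\partial z$ and the flow of $X$ preserves the leafwise symplectic form, the leafwise form in these coordinates is $\sum_i dx_i\wedge dy_i$ on \emph{every} slice, i.e.\ $\omega=\sum_i dx_i\wedge dy_i$ throughout the chart.

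Finally I would identify $\tilde\omega$ itself. We now know $\tilde\omega$ is a closed $2$-form that restricts to $\sum dx_i\wedge dy_i$ leafwise and has $\partial/\partial z$ in its kernel; so $i_{\partial/\partial z}\tilde\omega=0$, which forces $\tilde\omega$ to have no $dz$-terms, hence $\tilde\omega=\sum_{i<j} a_{ij}\,dx_i\wedge dx_j+\sum_{i,j} b_{ij}\,dx_i\wedge dy_j+\sum_{i<j} c_{ij}\,dy_i\wedge dy_j$ with coefficients depending on all coordinates; its leafwise part being standard pins the coefficients down to give $\tilde\omega=\sum_i dx_i\wedge dy_i$ exactly (a $2$-form with no $dz$-component whose restriction to each slice $z=\text{const}$ equals $\sum dx_i\wedge dy_i$ must equal $\sum dx_i\wedge dy_i$). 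I expect the main obstacle to be the bookkeeping in the coordinate change: verifying that flowing along $X$ genuinely produces a valid chart on a possibly smaller neighborhood, and that the preservation of the leafwise form under the flow translates into $z$-independence of the normal form as claimed. None of this is deep, but it is the step where one must be careful, and it is the step that genuinely uses the hypothesis $d\tilde\omega=0$ (via the earlier proposition), as opposed to merely the existence of an extension.
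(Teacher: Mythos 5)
Your proposal is correct and follows essentially the same route as the paper: take Darboux coordinates on (a slice of) a leaf, flow them along a vector field spanning $\ker(\tilde{\omega})$ whose flow maps leaves to leaves, invoke the geometric interpretation of closedness to see the leafwise form stays standard on every slice, and then conclude $\tilde{\omega} = \sum_i dx_i\wedge dy_i$ since its kernel is $\partial/\partial z$. The only cosmetic difference is your normalization of $X$ by $dz(X)=1$ in a preliminary foliation chart versus the paper's choice of a Bott-parallel section of $\nu$; both serve the same purpose.
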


Note the difference with the local normal form for symplectic foliations: the calibration $\tilde{\omega}$
is also brought to a normal form in this lemma, not just its restriction to leaves.

\begin{proof}
    Let $U$ be a small open neighborhood of $p$ in its leaf, with Darboux coordinates $x_1, \ldots, x_n, y_1, \ldots, y_n$.
    Let $X$ be a section of $\nu$, non-vanishing and parallel for the Bott connection (so that
    its flow maps leaves to leaves).
    Write $\phi_t$ for the time-$t$ flow of $X$.
    After shrinking $U$ if necessary and picking $\eps > 0$ small enough, the map
    \[ U \times (-\eps,\eps) \to M : q \mapsto \phi_t(q) \]
    is a diffeomorphism onto an open neighborhood of $p$ in $M$.
    We use (the inverse of) this map as a chart near $p$, with coordinates $x_i, y_i$ on $U$
    and $z$ for the coordinate along $(-\eps,\eps)$.
    Then the foliation is given by $\ker(dz)$ in this chart.
    Moreover, translation in the $z$-direction is a symplectomorphism between leaves.
    Because the $x_i, y_i$ are Darboux coordinates for $U$, this means the leafwise symplectic
    structure is given by $\sum_i dx_i \wedge dy_i$.
    Also, in this chart, the kernel of $\tilde{\omega}$ is, by construction, given by $\partial z$.
    This shows that in fact
    \[ \tilde{\omega} = \sum_i dx_i \wedge dy_i ,\]
    proving the result.
\end{proof}

\section{Calibratability and transverse differentiation}

\begin{lemma}
    A form $\alpha \in \Omega^k(\F)$ has a closed extension if and only if its class $[\alpha] \in H^k(\F)$ is in the image of the restriction map $H^k(M) \to H^k(\F)$.
\end{lemma}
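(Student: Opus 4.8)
The plan is to build everything out of the restriction map on forms induced by the inclusion of the tangent distribution $T\F \hookrightarrow TM$. Write $r \colon \Omega^\bullet(M) \to \Omega^\bullet(\F)$ for this map, sending a form $\beta \in \Omega^k(M)$ to the foliated form $r\beta$ obtained by only feeding in vectors tangent to the leaves. I would first record two facts about $r$. First, $r$ is a cochain map: $r \circ d = d_{\F} \circ r$, since by definition $d_{\F}$ is just the ordinary exterior derivative computed leafwise. Hence $r$ descends to a map on cohomology, and this descended map $r_* \colon H^k(M) \to H^k(\F)$ is precisely the ``restriction map'' appearing in the statement. Second, $r$ is surjective already at the level of forms: picking any splitting $TM = T\F \oplus \nu$ (for instance from a Riemannian metric) exhibits $\Lambda^k T^*\F$ as a subbundle of $\Lambda^k T^*M$, so every foliated $k$-form is the restriction of some genuine $k$-form on $M$. (This is the same ``an extension always exists'' observation already used for $k=2$.) Implicit in the statement is that $\alpha$ is $d_{\F}$-closed, so that $[\alpha] \in H^k(\F)$ makes sense, and that ``closed extension'' means a form $\tilde\alpha \in \Omega^k(M)$ with $d\tilde\alpha = 0$ and $r\tilde\alpha = \alpha$.

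The forward direction is then immediate: if $\alpha$ admits a closed extension $\tilde\alpha$, then $[\tilde\alpha] \in H^k(M)$ is defined and $r_*[\tilde\alpha] = [r\tilde\alpha] = [\alpha]$, so $[\alpha]$ lies in the image of $r_*$.

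For the converse I would run the standard diagram chase. Suppose $[\alpha] = r_*[\beta]$ for some closed $\beta \in \Omega^k(M)$. Then $r\beta$ and $\alpha$ represent the same class in $H^k(\F)$, so $\alpha - r\beta = d_{\F}\eta$ for some $\eta \in \Omega^{k-1}(\F)$. Using surjectivity of $r$ in degree $k-1$, choose $\hat\eta \in \Omega^{k-1}(M)$ with $r\hat\eta = \eta$, and set $\tilde\alpha = \beta + d\hat\eta$. Then $\tilde\alpha$ is closed, and $r\tilde\alpha = r\beta + d_{\F}(r\hat\eta) = r\beta + d_{\F}\eta = \alpha$, so $\tilde\alpha$ is the desired closed extension.

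I do not expect a genuine obstacle here: the whole content is that $r$ is a \emph{surjective} cochain map, which is an elementary consequence of the splitting $TM = T\F \oplus \nu$, plus the routine observation that for a surjective cochain map the image on cohomology detects exactly which classes lift. The only points requiring a word of care are fixing the conventions (that an ``extension'' means pullback along $T\F \hookrightarrow TM$, and that $\alpha$ is assumed leafwise closed), both of which are already in force in the surrounding text.
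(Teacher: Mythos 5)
Your proposal is correct and follows essentially the same argument as the paper: both directions use the surjectivity of the restriction map on forms (extensions always exist via a splitting $TM = T\F \oplus \nu$), and the converse is the same diagram chase, extending the primitive $\eta$ to $\hat\eta$ and taking $\tilde\alpha = \beta + d\hat\eta$. Your version is in fact slightly more careful with signs and with spelling out that restriction is a cochain map.
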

\begin{proof}
    If $\alpha$ has a closed extension, then clearly it lies in the image of this map.
    Conversely, suppose $[\alpha]$ is the image of $[\beta]\in H^k(M)$.
    This means that
    \[ \beta{\restriction_\F} - \alpha = d_{\F}\gamma \]
    for some $\gamma \in \Omega^{k-1}(\F)$.
    Pick an extension $\tilde{\gamma}\in\Omega^{k-1}(M)$ of $\gamma$.
    Set
    \[ \tilde{\alpha} = \beta + d\tilde{\gamma} .\]
    Then $\tilde{\alpha}$ is closed, and its restriction to $\F$ equals $\alpha$ by construction.
    This proves the result.
\end{proof}

By definition, $\Omega^{\bullet}(M,\F)$ will be the kernel of the restriction map
$\Omega^{\bullet}(M) \to \Omega^{\bullet}(\F)$.
The following lemma follows from computation.

\begin{lemma}
    The maps
    $\phi^k : \Omega^{k}(M,\F) \to \Omega^{k-1}(\F,\nu^*)$
    given by
    \[ \phi^k(\alpha)(v_1, \ldots, v_{k-1})(v_k) = \alpha(v_1, \ldots, v_k) \]
    form an isomorphism of chain complexes $\Omega^{\bullet}(M,\F) \to \Omega^{\bullet-1}(\F,\nu^*)$.
\end{lemma}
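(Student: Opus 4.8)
The plan is to break the claim into three parts: that $\Omega^{\bullet}(M,\F)$ is a subcomplex of the de Rham complex, that each $\phi^k$ is a fiberwise linear isomorphism, and that the $\phi^k$ intertwine the two differentials.

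First I would record that $(\Omega^{\bullet}(M,\F),d)$ really is a subcomplex of $(\Omega^{\bullet}(M),d)$: if $\alpha$ vanishes on every $k$-tuple of vectors from $T\F$, then so does $d\alpha$, since in Cartan's formula for $d\alpha(V_0,\dots,V_k)$ with all $V_i\in\Gamma(T\F)$ every summand is a value of $\alpha$ on $k$ vectors from $T\F$ --- here one uses that $T\F$ is involutive, so the brackets $[V_i,V_j]$ stay in $\Gamma(T\F)$. The same involutivity makes $\phi^k$ well defined: because $\alpha$ kills $k$-tuples from $T\F$, the functional $w\mapsto\phi^k(\alpha)(v_1,\dots,v_{k-1})(w)$ annihilates $T\F$, so $\phi^k(\alpha)(v_1,\dots,v_{k-1})$ is a genuine element of $\nu^*(\F)$, and it is evidently $C^\infty(M)$-multilinear and alternating in $v_1,\dots,v_{k-1}$.

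Next, the fiberwise isomorphism statement is where codimension $1$ enters. Fixing (fiberwise, or locally) a complement line field $\nu$ to $T\F$, one gets a bigrading $\Lambda^kT^*M = \Lambda^kT^*\F \oplus (\Lambda^{k-1}T^*\F\otimes\nu^*)$, because $\Lambda^j\nu^* = 0$ for $j\geq 2$. Under this splitting the restriction map $\Omega^k(M)\to\Omega^k(\F)$ is projection onto the first summand, so $\Omega^k(M,\F)$ is exactly the second summand; unwinding the definition shows $\phi^k$ is precisely this identification, and its inverse can be written down explicitly in terms of the chosen splitting. (This is also the step that would fail in codimension $\geq 2$.) In particular each $\phi^k$ is a $C^\infty(M)$-linear isomorphism on sections.

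The heart of the matter is to verify $\phi^{k+1}\circ d = d_{\nu^*}\circ\phi^k$, where $d_{\nu^*}$ is the Bott-twisted leafwise differential on $\Omega^{\bullet}(\F,\nu^*)$. The observation that makes this routine is that, on the conormal bundle viewed as a subbundle of $T^*M$, the Bott connection is simply $\nabla_V\xi = \Li_V\xi = i_V\,d\xi$ for $V\in\Gamma(T\F)$ (the two expressions agree since $i_V\xi = 0$, and $i_V d\xi$ again annihilates $T\F$ by involutivity; that this coincides with the Bott connection is forced by its compatibility with linear holonomy). Granting this, I would evaluate both sides on $v_0,\dots,v_{k-1}\in\Gamma(T\F)$ and a test vector $v_k\in\Gamma(TM)$: the left side is $d\alpha(v_0,\dots,v_{k-1},v_k)$ expanded by Cartan's formula, while the right side is the formula for $d_E$ given above applied to $\phi^k(\alpha)$, paired against $v_k$, with $\nabla$ replaced by $i_{(\cdot)}d$. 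Expanding, the two sides produce the same derivative terms $v_i\bigl(\alpha(\dots,v_k)\bigr)$, the same ``foliated'' bracket terms $\alpha([v_i,v_j],\dots,v_k)$ for $i<j\leq k-1$, and the same ``mixed'' bracket terms $\alpha([v_i,v_k],\dots)$; the lone Cartan term with no partner, $\pm\,v_k\bigl(\alpha(v_0,\dots,v_{k-1})\bigr)$, vanishes precisely because $\alpha(v_0,\dots,v_{k-1}) = 0$ for $\alpha\in\Omega^k(M,\F)$, which is exactly why the two expansions balance with nothing left over. The only delicate point --- and the main obstacle --- is the sign bookkeeping in this comparison, in particular the sign $(-1)^{k-1}$ incurred when moving each $[v_i,v_k]$ from the last slot of $\alpha$ into the first; once that is handled carefully, the two expansions agree term by term with no residual overall sign.
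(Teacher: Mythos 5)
Your argument is correct and follows essentially the same route as the paper's: the fiberwise splitting $\Lambda^kT^*M \cong \Lambda^kT^*\F \oplus (\Lambda^{k-1}T^*\F\otimes\nu^*)$ (valid precisely because the normal bundle is a line), together with the identification of the Bott connection on $\nu^*\subset T^*M$ with $\nabla_V = \Li_V = i_V\,d$ for $V\in\Gamma(T\F)$, followed by a direct comparison of the two differentials. The paper's only shortcut is to carry out that comparison on commuting coordinate vector fields tangent to $\F$, which kills every bracket term and hence the $(-1)^{k-1}$ sign bookkeeping you single out as the delicate point; your computation with general fields, tracking the mixed bracket terms, is equally valid.
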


We now have a short exact sequence of complexes
\[ 0 \to \Omega^{\bullet-1}(\F,\nu^*) \to \Omega^{\bullet}(M) \to \Omega^{\bullet}(\F) \to 0 .\]
It has an associated long exact sequence in cohomology, which near $H^k(M)$ looks like
\[ \cdots \to H^{k-1}(\F,\nu^*) \to H^k(M) \to H^k(\F) \xrightarrow{d_{\pitchfork}} H^k(\F,\nu^*) \to \cdots \]
The connecting morphism from $H^k(\F)$ to $H^k(\F,\nu^*)$ is a type of transverse differentiation of the leafwise form, which is why we denote it by $d_{\pitchfork}$.
We have the following corollary.

\begin{corollary}
    A leafwise form $\omega$ on $\F$ has a closed extension if and only if
    $d_{\pitchfork}[\omega] = 0$.
\end{corollary}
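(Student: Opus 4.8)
The plan is to deduce this corollary immediately from the long exact sequence above, using the preceding lemma that characterizes when a leafwise form has a closed extension. Recall that the previous lemma says: a form $\alpha \in \Omega^k(\F)$ has a closed extension if and only if $[\alpha] \in H^k(\F)$ lies in the image of the restriction map $H^k(M) \to H^k(\F)$. So the task reduces to showing that, for the leafwise symplectic form $\omega$ (which is leafwise closed, hence defines a class $[\omega] \in H^k(\F)$ with $k=2$), being in the image of restriction is equivalent to $d_{\pitchfork}[\omega] = 0$.

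The key step is exactness of the long exact sequence at the spot $H^k(\F)$. The relevant fragment is
\[ H^k(M) \xrightarrow{r} H^k(\F) \xrightarrow{d_{\pitchfork}} H^k(\F,\nu^*), \]
where $r$ is the restriction map and $d_{\pitchfork}$ is the connecting homomorphism. Exactness at $H^k(\F)$ says precisely that $\im(r) = \ker(d_{\pitchfork})$. Therefore $[\omega] \in \im(r)$ if and only if $d_{\pitchfork}[\omega] = 0$. Combining this with the previous lemma gives the statement: $\omega$ has a closed extension $\iff [\omega] \in \im(r) \iff d_{\pitchfork}[\omega] = 0$.

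One small point to verify carefully is that $[\omega]$ is a well-defined class in $H^k(\F)$ to begin with — but this is built into the hypothesis that $(\F,\omega)$ is a symplectic foliation, since $\omega$ is leafwise closed ($d_\F\omega = 0$), so $\omega$ is a cocycle for the leafwise de Rham complex. There is no real obstacle here; the corollary is a formal consequence of the short exact sequence of complexes
\[ 0 \to \Omega^{\bullet-1}(\F,\nu^*) \to \Omega^{\bullet}(M) \to \Omega^{\bullet}(\F) \to 0 \]
together with the earlier lemma. The only thing one might want to double-check is that the connecting morphism in the long exact sequence genuinely agrees with the map the text calls $d_{\pitchfork}$ — but this is exactly how $d_{\pitchfork}$ was defined. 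So the proof is essentially a one-line invocation of exactness, and I do not anticipate any genuine difficulty; the "content" was all in establishing the short exact sequence and the extension lemma, which are already done.
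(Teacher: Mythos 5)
Your proof is correct and follows exactly the route the paper intends: the corollary is read off from exactness of the long exact sequence at $H^k(\F)$ combined with the preceding lemma characterizing closed extensions via the image of the restriction map $H^k(M)\to H^k(\F)$. Nothing further is needed.
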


For every leaf $L$ of the foliation, there is a map $H^k(\F,\nu^*) \to H^k(L,{\nu^*}{\restriction_L})$.
Therefore, if a leafwise symplectic form is to have a closed extension, we must have $d_{\pitchfork}[\omega]{\restriction_L} = 0$.
This is not a sufficient condition, however, as the following example shows.

\begin{example}
    Consider the Reeb foliation of $\Sphere^3$.
    We denote the compact leaf by $T$.
    Pick a small tubular neighborhood $T\times (-1,1)$ around $T$ and a symplectic form $\omega_T$ on $T$.
    Equip the tubular neighborhood with the 2-form given by pulling back along the projection $T\times (-1,1) \to T$.
    Restrict the resulting form to the leaves to get a leafwise form near $T$, and extend this
    leafwise form to get a leafwise symplectic form $\omega$ on all of $\F$.

    We claim that the leafwise symplectic form has no closed extension,
    even though $d_{\pitchfork}[\omega]{\restriction_L} = 0$ for every leaf $L$.
    There is no closed extension because the Reeb foliation is not taut.
    The only leaf with $H^2(L,\nu^*) \neq 0$ is the compact leaf $T$ (note that $\nu^*$ is a trivial bundle since there is no linear holonomy).
    But we can find an extension of $\omega$ that is closed near $T$ (we pick the extension to equal the pullback we originally constructed on $T\times (-1,1)$), which shows that $d_{\pitchfork}[\omega]$ vanishes when restricted to $L$.
    (By the geometric interpretation of monodromy, we have shown that this Poisson
    manifold has trivial monodromy.)
\end{example}

%

\section{The cosymplectic case}

In this section we discuss manifolds with cosymplectic structures.
A cosymplectic structure is a special case of a calibrated structure.

\begin{definition}
    A \emph{cosymplectic structure} on $M$ is a closed non-vanishing 1-form
    $\alpha \in \Omega^1(M)$ together with a closed 2-form $\omega \in \Omega^2(M)$
    such that $(M,\ker(\alpha),\omega)$ is a calibrated foliation on $M$.
\end{definition}

In other words, a cosymplectic structure is a calibrated foliation for which the foliation
is determined by a \emph{closed} 1-form.
In particular, the foliation has no holonomy.
Note that $\omega$ being non-degenerate on $\ker(\alpha)$ is equivalent to $\alpha\wedge\omega^n$ being non-vanishing.

\begin{proposition}
    Let $(M,\F,\omega)$ be a calibrated symplectic foliation.
    Write $\pi$ for the induced Poisson bivector on $M$.
    Then the following are equivalent:
    \begin{enumerate}
        \item There is a closed 1-form $\alpha$ defining $\F$ (i.e.\ the calibration comes from a cosymplectic structure).
        \item There is a volume form on $M$ invariant under all Hamiltonian vector fields (one says that the Poisson structure is unimodular).
        \item There is a Poisson vector field $E$ transverse to $\F$.
    \end{enumerate}
\end{proposition}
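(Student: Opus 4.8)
The plan is to prove the equivalences by a cycle of implications, exploiting the geometric interpretation of closedness from the previous section and the relationship between Poisson-geometric data and leafwise/transverse structure. Throughout, $\F = \ker(\alpha_0)$ for some (not necessarily closed) defining $1$-form, and $X$ denotes the vector field transverse to $\F$ with $\ker(\tilde\omega) = \R X$; recall from the geometric interpretation that closedness of $\tilde\omega$ means exactly that the flow of $X$ preserves the leafwise symplectic structure, i.e. $\Li_X\tilde\omega = 0$.

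First I would prove (1)$\Rightarrow$(2). Given a closed defining $1$-form $\alpha$, I claim $\mu = \alpha \wedge \tilde\omega^n$ is a volume form invariant under all Hamiltonian vector fields. Non-vanishing is the remark just before the proposition (non-degeneracy of $\omega$ on $\ker\alpha$ is equivalent to $\alpha\wedge\omega^n$ non-vanishing). For a Hamiltonian vector field $X_f$ (which is tangent to $\F$, since Hamiltonian flows preserve symplectic leaves), compute $\Li_{X_f}\mu = (\Li_{X_f}\alpha)\wedge\tilde\omega^n + \alpha\wedge \Li_{X_f}(\tilde\omega^n)$. The first term vanishes: $\Li_{X_f}\alpha = d\,i_{X_f}\alpha + i_{X_f}d\alpha$, and $i_{X_f}\alpha = 0$ since $X_f$ is tangent to $\F=\ker\alpha$, while $d\alpha = 0$ by hypothesis. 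For the second term, since $X_f$ is tangent to the leaves and $\tilde\omega$ restricts to the leafwise symplectic form which is Hamiltonian-invariant, $\Li_{X_f}\tilde\omega$ vanishes on $T\F\wedge T\F$; wedging with $\alpha$ (which kills any $X$-direction) one gets $\alpha\wedge\Li_{X_f}(\tilde\omega^n) = n\,\alpha\wedge(\Li_{X_f}\tilde\omega)\wedge\tilde\omega^{n-1} = 0$. Hence $\Li_{X_f}\mu=0$. This is the step I expect to require the most care, because one must track exactly which contractions vanish for degree and tangency reasons.

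Next, (2)$\Rightarrow$(3). Given an invariant volume form $\mu$, I want to produce a Poisson vector field $E$ transverse to $\F$. A Poisson vector field is one whose flow preserves $\pi$, equivalently $\Li_E\pi = 0$. The natural candidate is the modular vector field of $(M,\pi)$ with respect to $\mu$: the map $f \mapsto \operatorname{div}_\mu(X_f)$ is a derivation, hence a vector field $E_\mu$, and it is a standard fact in Poisson geometry that $E_\mu$ is always a Poisson vector field. Unimodularity with respect to $\mu$ means $E_\mu$ is actually Hamiltonian — but here I instead want to run the construction with a \emph{different} volume form to get a \emph{transverse} Poisson vector field. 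Concretely: start from the volume form $\mu_0 = \tilde\omega^n \wedge \beta$ built from any transverse $1$-form $\beta$ (not assumed closed); its modular vector field $E$ is Poisson, and by the local normal form for calibrated foliations one checks that $E$ has a nonzero transverse component — it is transverse up to adding a Hamiltonian (hence leafwise) vector field, which one can do without destroying the Poisson property. Alternatively, and more cleanly: take $\mu$ as in (2) and $\mu_0$ any volume form, write $\mu_0 = e^g \mu$; then $E_{\mu_0} = E_\mu + X_{-g}$... this needs adjusting so that the transverse component survives. I would make this precise using the local normal form to see that transversality is exactly the statement that the modular class pairs nontrivially with the normal bundle.

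Finally, (3)$\Rightarrow$(1). Given a Poisson vector field $E$ transverse to $\F$, set $\alpha = i_E\tilde\omega$... but $i_E\tilde\omega$ need not vanish and need not be closed off-hand; a better route is: since $E$ is transverse, after rescaling assume $\alpha_0(E) = 1$ for some defining form, and consider the $1$-form $\alpha$ characterized by $\ker\alpha = T\F$ and $\alpha(E)=1$. Being Poisson, the flow of $E$ preserves $\pi$, hence preserves its kernel-image data, hence preserves $\F$; combined with the geometric interpretation (the flow of $E$, being Poisson and transverse, preserves the leafwise symplectic form since $\tilde\omega$ on the leaves is reconstructed from $\pi$), one gets that $E$ plays the role of the vector field $X$ above up to rescaling. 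Then $\Li_E\alpha$ vanishes on $T\F$ (it preserves $\F$) and on $E$ (it preserves $E$ up to the flow), so $\Li_E\alpha = 0$, whence $i_E\, d\alpha = -d\,i_E\alpha = -d(1) = 0$; since $d\alpha$ is a $2$-form annihilated by contraction with the transverse direction $E$, it must be a leafwise $2$-form, and because $\alpha$ restricted to any leaf is zero, $d\alpha$ restricted to any leaf is zero too, forcing $d\alpha = 0$. This closes the cycle and proves all three statements equivalent.
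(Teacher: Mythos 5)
Your steps (1)$\Rightarrow$(2) and (3)$\Rightarrow$(1) are fine and essentially coincide with the paper's arguments (the paper also derives $i_E\,d\alpha=0$ from $\Li_E\alpha=0$ and kills the leafwise part of $d\alpha$, using $\alpha\wedge d\alpha=0$ where you use naturality of $d$ under pullback to leaves — both work). The genuine gap is your (2)$\Rightarrow$(3) via modular vector fields: this step cannot be repaired in the form you propose, because the modular vector field of a \emph{regular} Poisson structure is never transverse to the symplectic foliation. In Weinstein splitting coordinates $\pi=\sum_i \partial x_i\wedge\partial y_i$ and for any volume form $\mu_0=h\,dx\,dy\,dz$, one computes $\operatorname{div}_{\mu_0}(X_f)=\pm X_{\log h}(f)$, so the modular vector field is locally Hamiltonian and hence a section of $T\F$; your claim that "one checks that $E$ has a nonzero transverse component" is false, and "transverse up to adding a Hamiltonian vector field" cannot be fixed by adding leafwise fields to a leafwise field. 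Worse, under hypothesis (2) the modular vector field with respect to $\mu$ is identically zero, and replacing $\mu$ by $e^g\mu$ only shifts it by the Hamiltonian field of $g$, so no choice of volume form produces anything transverse. (Nontriviality of the modular class is also irrelevant here: (2) says precisely that the modular class vanishes.)

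The economical repair is the paper's route: prove (2)$\Rightarrow$(1) directly, then (1)$\Rightarrow$(3). Given the invariant volume $\mu$, let $\alpha$ be the unique defining $1$-form for $\F$ with $\alpha\wedge\tilde\omega^{\,n}=\mu$; running your own computation from (1)$\Rightarrow$(2) backwards, invariance of $\mu$ forces $i_{X_f}\,d\alpha=0$ for every Hamiltonian $f$, and since the $X_f$ span $T\F$ pointwise while the normal direction is only one-dimensional, a $2$-form annihilated by all of $T\F$ must vanish, so $d\alpha=0$. Then for (1)$\Rightarrow$(3) take $E$ determined by $i_E\tilde\omega=0$ and $\alpha(E)=1$: closedness gives $\Li_E\tilde\omega=d\,i_E\tilde\omega+i_E\,d\tilde\omega=0$ and $\Li_E\alpha=d(\alpha(E))=0$, so the flow of $E$ preserves both $\F$ and the leafwise symplectic form, hence $\pi$, and $E$ is a transverse Poisson vector field. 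With this replacement your argument closes correctly.
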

\begin{proof}
    $(1) \Rightarrow (2)$ Take $\mu = \alpha\wedge\omega^n$, a volume form on $M$.
    Then $\mu$ is invariant under Hamiltonian vector fields:
    \begin{align*}
        \Li_{X_f}(\mu)
        &= \Li_{X_f}(\alpha\wedge\omega^n) \\
        &= (\Li_{X_f}\alpha) \wedge \omega^n + \alpha \wedge (\Li_{X_f}\omega^n)
    \end{align*}
    The last of these terms vanishes because $\Li_{X_f}\omega^n = 0$ along $\F$.
    We get
    \begin{align*}
        \Li_{X_f}(\mu)
        &= (\Li_{X_f}\alpha) \wedge \omega^n \\
        &= (d i_{X_f} \alpha + i_{X_f} d\alpha) \wedge \omega^n \\
        &= (0 + 0) \wedge \omega^n \\
        &= 0.
    \end{align*}

    $(2) \Rightarrow (1)$ Let $\mu$ be a volume form invariant under Hamiltonian flows.
    Then let $\alpha$ be the unique 1-form on $M$ defining $\F$ that also satisfies
    $\alpha\wedge\omega^n = \mu$.
    By the calculation above, we have that $i_{X_f} d\alpha = 0$ for any Hamiltonian
    function $f$.
    This means that $d\alpha$ vanishes whenever an $\F$-tangential vector is plugged in,
    so it only takes nonzero values in the normal direction.
    But there is only one normal direction, and $d\alpha$ is a 2-form. This shows that
    $d\alpha = 0$.

    $(1) \Rightarrow (3)$ Let $E$ be defined by $i_E\omega = 0$ and $\alpha(E) = 1$.
    Then clearly $\Li_E(\omega) = 0$, so $\omega$ is preserved by $E$.
    Also, $\Li_E(\alpha) = d(i_E\alpha) = 0$, so $\alpha$ is preserved by $E$.
    Therefore, $\F$ is also preserved by $E$.
    But $\pi$ is determined by $\omega$ and $\F$, and is therefore preserved by $E$.
    This shows that $E$ is Poisson.

    $(3) \Rightarrow (1)$ If $\Li_E\pi = 0$ then $\F$ is preserved by $E$.
    Let $\alpha$ be the 1-form determining $\F$ such that $\alpha(E) = 1$.
    Now $\Li_E(E) = [E,E] = 0$, so the flow of $E$ preserves $E$ and $\F$
    and therefore also $\alpha$.
    This means that
    $0 = \Li_E\alpha = i_E(d\alpha)$.
    But then $i_Ed\alpha = 0$ and $\alpha\wedge d\alpha = 0$, so we must have $d\alpha = 0$.
    This proves the result.
\end{proof}

Cosymplectic structures have been completely characterized in \cite{guillemin-miranda-pires}.
They are quite restricted. For example, if a cosymplectic structure has a compact leaf,
then all leaves are compact, and the manifold is a mapping torus of
a compact leaf.

\section{Transverse geometry in the calibrated case}

In \cite{martineztorres}, the authors prove the following result using approximately
holomorphic techniques following Donaldson.
We refer the reader to \cite[section 5.4]{moerdijk-mrcun} for the notions of essential equivalence
and Morita equivalence between groupoids.

\begin{theorem}
    Let $M$ be a closed smooth manifold and $(\F,\omega)$ a calibratable
    symplectic codimension-1 foliation. Then there exists a 3-dimensional
    closed submanifold $W$ of $M$, transverse to $\F$, which inherits a taut
    foliation $\F_W$ from $\F$, and such that the map between holonomy
    groupoids $\Hol(\F_W) \to \Hol(\F)$ is an essential equivalence.
\end{theorem}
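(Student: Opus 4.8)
The plan is to run Donaldson's approximately holomorphic machinery in the version adapted to calibrated (``2-calibrated'') foliations by Ibort and Mart\'inez-Torres, iterating it to cut $M$ down to a $3$-dimensional transverse slice. Write $\dim M = 2n+1$, so that the leaves of $\F$ have dimension $2n$. First I would arrange integrality: a small perturbation of the calibration $\tilde\omega$ inside the affine space of closed extensions of $\omega$ (which changes neither $\F$ nor the leafwise symplectic isotopy type of $\omega$) lets us assume $[\tilde\omega]\in H^2(M;\R)$ is the image of an integral class, and I would fix a Hermitian line bundle $(\mathcal L,\nabla)\to M$ with curvature $-2\pi i\,\tilde\omega$. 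Then I would choose a leafwise almost-complex structure $J$ on $T\F$ compatible with $\omega$ and a compatible metric; together with the rank-$1$ field $\nu=\ker\tilde\omega$ this gives an ``almost-Cauchy--Riemann'' geometry, on which Donaldson's rescaling $g_k = k\,g$ makes each leaf look, at scale $k^{-1/2}$, like a ball in $\C^{\,n}$ with the transverse direction uniformly controlled.

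Next I would build the sections. Using the standard globalization argument (localized peak sections at a $k^{-1/2}$-dense net of points, assembled with a partition of unity), for $k\gg 0$ one produces a section $s_k$ of $\mathcal L^{\otimes k}$ that is both approximately holomorphic along the leaves and uniformly (estimatedly) transverse to the zero section along the leaves, the latter by Donaldson's quantitative Sard lemma in Auroux's form. Uniform transversality makes $W_k^{(1)}:=s_k^{-1}(0)$ a smooth codimension-$2$ submanifold; combined with approximate leafwise holomorphicity it forces $W_k^{(1)}$ to be transverse to $\F$ and to meet each leaf in an approximately $J$-complex hypersurface, and it makes $\bigl(W_k^{(1)},\F|_{W_k^{(1)}},\tilde\omega|_{W_k^{(1)}}\bigr)$ again a calibrated codimension-$1$ foliation with prequantum bundle $\mathcal L|_{W_k^{(1)}}$. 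Iterating this $n-1$ times produces a chain $M=M_0\supset M_1\supset\cdots\supset M_{n-1}=:W$ with $\dim W = (2n+1)-2(n-1)=3$, with $W$ closed and transverse to $\F$, and with the inherited foliation $\F_W$ calibrated by $\tilde\omega|_W$. By the lemma above that every calibrated foliation is taut (equivalently, criterion~\eqref{item:criteria-tautness-form} of \cref{thm:criteria-tautness}, since $\tilde\omega|_W$ restricts to an area form on each $2$-dimensional leaf of $\F_W$), $\F_W$ is taut.

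To identify the holonomy groupoids, the key input is the Lefschetz hyperplane property of iterated Donaldson hypersurfaces, valid leaf by leaf even for noncompact leaves: for each leaf $L$ of $\F$, the inclusion $L\cap W\hookrightarrow L$ is highly connected, in particular surjective on $\pi_0$ and an isomorphism on $\pi_1$ (hence on holonomy representations), because a uniformly transverse approximately holomorphic hypersurface in this positively curved setting carries the ampleness that drives the Lefschetz argument. Surjectivity on $\pi_0$ means $W$ meets every leaf, so the functor $\Hol(\F_W)\to\Hol(\F)$ induced by the inclusion $W\into M$ is essentially surjective on objects. For fullness and faithfulness I would compare holonomy pseudogroups: since $\F$ and $\F_W$ both have codimension $1$, a small arc transverse to $\F_W$ inside $W$ is simultaneously transverse to $\F$ in $M$, and the $\F_W$-holonomy along a leafwise path contained in $W$ equals the $\F$-holonomy along that same path in $M$; together with the $\pi_1$-isomorphism on leaves this shows the arrows of $\Hol(\F_W)$ lying over $W$ exhaust, without collapsing, the arrows of $\Hol(\F)$ lying over $W$. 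Hence $\Hol(\F_W)\to\Hol(\F)$ is an essential equivalence.

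As with every Donaldson-type theorem, the main obstacle is step two: establishing uniform estimated transversality of approximately holomorphic sections in the foliated setting, uniformly over the whole (possibly noncompact) family of leaves, and verifying that the zero locus is genuinely transverse to $\F$ rather than merely leafwise well-behaved. Closely tied to this is the Lefschetz connectivity statement of step three, which must be pushed through for noncompact leaves, so that the Morse-theoretic argument survives the rescaling and the loss of compactness. These two points are precisely the technical core of \cite{martineztorres}; the integrality perturbation, the tautness conclusion, and the pseudogroup comparison are comparatively formal.
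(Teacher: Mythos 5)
The paper itself does not prove this theorem: it is quoted from \cite{martineztorres}, with only the remark that it is proved there ``using approximately holomorphic techniques following Donaldson.'' Your outline is precisely that strategy --- integral perturbation of the calibration, a prequantum line bundle with curvature proportional to $\tilde\omega$, leafwise approximately holomorphic sections with uniform estimated transversality, iterated zero loci cutting down to dimension $3$, and a Lefschetz-type connectivity input --- so as a reconstruction of the cited proof it takes essentially the same route, and you correctly isolate the two genuinely hard points (uniform transversality over the possibly noncompact leaves, and the Lefschetz connectivity statement in that setting) as the technical content of \cite{martineztorres}.

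One claim in your third step is wrong as stated, though your conclusion survives. For the $3$-dimensional $W$ the intersections $L\cap W$ are surfaces, and the iterated Donaldson--Lefschetz argument only makes the pair $(L,\,L\cap W)$ $1$-connected: $\pi_0$ is bijective and $\pi_1(L\cap W)\to\pi_1(L)$ is \emph{surjective}, not an isomorphism. An isomorphism on $\pi_1$ is in general impossible at this dimension --- that is exactly why the companion result about homotopy groupoids $\Pi_1(\F_W)\to\Pi_1(\F)$ requires a $5$-dimensional $W$, and the example $M=\Torus^4\times\Sphere^1$ in this chapter shows no closed surface can have fundamental group $\pi_1(\Torus^4)$. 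What rescues the holonomy statement is the other observation you make: since both foliations have codimension $1$, a small transversal to $\F_W$ inside $W$ is simultaneously a transversal to $\F$ in $M$, and the $\F_W$-holonomy germ along a leafwise path in $W$ coincides with its $\F$-holonomy germ; hence the map on holonomy (isotropy) groups is automatically injective, surjectivity follows from $\pi_1$-surjectivity of $L\cap W\into L$, and essential surjectivity from $W$ meeting every leaf. With the $\pi_1$-isomorphism claim replaced by surjectivity plus this germ comparison, your sketch is the intended argument.
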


In other words, symplectic codimension-1 foliations have transverse geometry that is captured completely by the induced foliation on some 3-dimensional submanifold.

They also prove the following version for homotopy groupoids.

\begin{theorem}
    Let $M$ be a closed smooth manifold and $(\F,\omega)$ a calibratable
    symplectic codimension-1 foliation. Then there
    exists a 5-dimensional closed submanifold $W$ of $M$, transverse to $\F$,
    which inherits a taut foliation $\F_W$ from $\F$, and such that the map
    between homotopy groupoids $\Pi_1(\F_W) \to \Pi_1(\F)$ is an essential
    equivalence.
\end{theorem}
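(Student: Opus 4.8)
The plan is to mimic the proof of the preceding theorem (the holonomy-groupoid version, with a $3$-dimensional $W$), which rests on Donaldson's approximately holomorphic techniques adapted to $2$-calibrated foliations by Mart\'inez-Torres et al., and to terminate the iterated-hyperplane construction one step earlier so that the leafwise Lefschetz hyperplane theorem yields an \emph{isomorphism} on $\pi_1$ rather than merely an epimorphism. First I would normalize: by $C^0$-perturbing $\tilde\omega$ inside the open set of closed $2$-forms that are leafwise non-degenerate, and rescaling, one may assume the calibration has integral cohomology class (this changes neither $\F$ nor, up to a small deformation, the leafwise symplectic form). Then there is a Hermitian line bundle $\mathcal L\to M$ carrying a unitary connection of curvature $-2\pi i\,\tilde\omega$. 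For $k\gg 0$, the bundle $\mathcal L^{\otimes k}$ admits sections that are approximately holomorphic along the leaves and uniformly transverse to zero, with all estimates uniform over the (possibly non-compact) leaves; the zero set of such a reference section is a codimension-$2$ submanifold $W_1\subset M$ transverse to $\F$ whose intersection with each leaf $L$ is a Donaldson-type symplectic hypersurface of $L$.

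I would then iterate. Taking zero sets of $j$ successive reference sections produces a submanifold $W_j\subset M$ with $\dim W_j=2n+1-2j$, transverse to $\F$, meeting each leaf $L$ in an approximately holomorphic (hence symplectic) submanifold of complex codimension $j$. I stop at $j=n-2$, setting $W:=W_{n-2}$, which has dimension $5$; the leaves of $\F_W:=\F|_W$ then have real dimension $4$. (If $\dim M=5$ one simply takes $W=M$, and $\dim M=3$ is vacuous.) Since $W$ is transverse to $\F$, $\F_W$ is a codimension-$1$ foliation of the closed manifold $W$, and $\tilde\omega|_W$ is a closed extension of the leafwise symplectic form on $W$, so $(\F_W,\tilde\omega|_W)$ is a calibrated symplectic foliation; in particular $\F_W$ is taut, by the lemma that calibrated foliations are taut.

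It remains to show that the inclusion $W\hookrightarrow M$ induces an essential equivalence $\Pi_1(\F_W)\to\Pi_1(\F)$. Essential surjectivity says $W$ meets every leaf, which follows because a $C^0$-bounded, uniformly transverse, approximately holomorphic section of a leafwise-positive bundle must vanish inside every leafwise ball of radius $\gtrsim k^{-1/2}$ — so $W$ is $O(k^{-1/2})$-dense in each leaf. Full faithfulness says that, for every leaf $L$ and every $p\in L\cap W$, the maps $\pi_0(L\cap W)\to\pi_0(L)$ and $\pi_1(L\cap W,p)\to\pi_1(L,p)$ are bijective; equivalently (passing to $\Pi_1$), leafwise homotopy classes of $\F_W$-paths between points of $W$ coincide with those of $\F$-paths. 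This is exactly the leafwise Lefschetz hyperplane theorem for Donaldson submanifolds (Donaldson, Auroux; in the parametrized form, \cite{martineztorres}): a complex-codimension-$1$ approximately holomorphic submanifold $V$ of an approximately K\"ahler manifold $X$ of complex dimension $d$ has $\pi_i(V)\to\pi_i(X)$ an isomorphism for $i\le d-2$ and an epimorphism for $i=d-1$, with constants uniform in the leaf. Because the iteration was stopped at complex dimension $2$, at every step the ambient leaf has complex dimension $\ge 3$, so each step is a $\pi_0$-bijection and a $\pi_1$-isomorphism, and hence so is the composite. This is the only place the argument diverges from the $3$-dimensional holonomy-groupoid version: there one iterates down to complex dimension $1$ and gets only a $\pi_1$-epimorphism, which suffices because $\Hol(\F)$ is a $\pi_1$-quotient of $\Pi_1(\F)$ but is not enough for $\Pi_1$ itself.

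The principal obstacle is the analytic core: building Donaldson's approximately holomorphic and uniform-transversality estimates \emph{leafwise}, i.e.\ a genuinely parametrized theory on the possibly non-compact, possibly self-accumulating leaves of a foliation with holonomy, and establishing the leafwise Lefschetz theorem with leaf-independent constants. That work is the substance of \cite{martineztorres}; granting it, the present statement is obtained merely by choosing where to truncate the iteration.
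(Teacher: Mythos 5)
The thesis does not actually prove this statement: it is quoted as a result of Mart\'inez-Torres et al.\ \cite{martineztorres}, so there is no in-paper argument to compare yours against. Your sketch is an accurate outline of how that reference proceeds — perturb the calibration to an integral class, take leafwise approximately holomorphic, uniformly transverse sections of the $k$-th tensor power of the prequantum line bundle, iterate the Donaldson-type hyperplane construction transversely to $\F$, and stop at real dimension $5$ so that the parametrized leafwise Lefschetz hyperplane theorem yields a $\pi_1$-isomorphism on each leaf (rather than the mere epimorphism available at dimension $3$, which is only enough for the holonomy groupoid) — and you correctly acknowledge that the genuine content (the uniform leafwise estimates and the leafwise Lefschetz theorem) lives in \cite{martineztorres} rather than in the truncation step, so as written your proposal is a faithful reduction to the cited work, exactly as the thesis treats it.
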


These statements do not relate the symplectic geometry of the leaves of $W$ to that of $M$.
To gain insight into the symplectic geometry of this situation, one can wonder how the symplectic groupoids integrating $M$ and $W$ (which are both Poisson manifolds) are related.

The following proposition shows that a Morita equivalence on $\Pi_1$ is equivalent to a Morita equivalence on symplectic integrations.

\begin{proposition}
    Let $M$ be a closed smooth manifold and $(\F,\omega)$ a symplectic codimension-1 foliation with a calibration.
    Let $W$ be a closed submanifold of $M$, transverse to $\F$.
    It inherits a taut foliation $\F_W$ from $\F$.
    The map between source-simply connected symplectic integrations
    \[ \Sigma(W) \to \Sigma(M) \]
    is a symplectic Morita equivalence if and only if the map between fundamental groupoids
    \[ \Pi_1(\F_W) \to \Pi_1(\F) \]
    is a Morita equivalence.
    \label{prop:morita-equivalence}
\end{proposition}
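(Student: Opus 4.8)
The plan is to realise $\Sigma(W)$ as a fibre product of $\Sigma(M)$ with $\Pi_1(\F_W)$ over $\Pi_1(\F)$, and then to deduce the equivalence of the two Morita statements by a formal argument about pullback squares of Lie groupoids. First I would pass to the Lie algebroid picture. Since $(M,\F,\omega)$ is calibrated, its induced Poisson structure $\pi$ has trivial monodromy (as observed above), hence is integrable, so $\Sigma(M)$ is the source-simply-connected integration of the cotangent algebroid $T^*M$. For $\Sigma(W)$ to be defined, $\omega$ must restrict to a symplectic form on the leaves of $\F_W$, i.e.\ $W$ is a Poisson transversal of $(M,\pi)$; moreover $\tilde\omega{\restriction_W}$ is closed and restricts on the leaves of $\F_W$ to the leafwise symplectic form, so it is a calibration of $(W,\F_W)$ and $W$ is again an integrable Poisson manifold, $\Sigma(W)=\G(T^*W)$. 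Since $\Pi_1$ of a foliation is the source-simply-connected integration of its tangent algebroid, we also have $\Pi_1(\F)=\G(T\F)$ and $\Pi_1(\F_W)=\G(T\F_W)$. Under these identifications $\Sigma(W)\to\Sigma(M)$ integrates the Lie-algebroid embedding $T^*W\into T^*M$ of the Poisson transversal, $\Pi_1(\F_W)\to\Pi_1(\F)$ integrates $T\F_W\into T\F$, and the anchors give canonical morphisms $q_M\colon\Sigma(M)\to\Pi_1(\F)$ and $q_W\colon\Sigma(W)\to\Pi_1(\F_W)$ over $\id_M$, $\id_W$ respectively, integrating the surjective submersive algebroid maps $\pi^\sharp\colon T^*M\twoheadrightarrow T\F$ and $\pi^\sharp_W\colon T^*W\twoheadrightarrow T\F_W$; this assembles into a commuting square
\[
\begin{array}{ccc}
\Sigma(W) & \longrightarrow & \Sigma(M)\\
\downarrow & & \downarrow\\
\Pi_1(\F_W) & \longrightarrow & \Pi_1(\F).
\end{array}
\]

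The heart of the argument is to show that this square is a pullback. I would first verify the corresponding statement for Lie algebroids, $T^*W=T^*M{\restriction_W}\times_{T\F{\restriction_W}}T\F_W$, using the standard linear algebra of a Poisson transversal: $TM{\restriction_W}=TW\oplus\pi^\sharp(N^*W)$, the induced identification $T^*W\cong(\pi^\sharp N^*W)^{\circ}\subseteq T^*M{\restriction_W}$ (annihilator) is a Lie-algebroid embedding, and $\pi^\sharp$ sends $(\pi^\sharp N^*W)^{\circ}$ into $TW\cap T\F=T\F_W$. Combined with the symplectic-orthogonal splitting $T\F{\restriction_W}=T\F_W\oplus\pi^\sharp(N^*W)$ — which is precisely where non-degeneracy of $\omega$ on $\F_W$ is used — this places $T^*W$ inside the fibre product, and a rank count (both sides have rank $\dim W$) gives equality. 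Next, $\pi^\sharp\colon T^*M\twoheadrightarrow T\F$ is a fibration of integrable Lie algebroids with abelian kernel $\nu^*\F$ of rank $1$, so $q_M$ is a fibration of Lie groupoids; hence $P:=\Sigma(M)\times_{\Pi_1(\F)}\Pi_1(\F_W)$ is a smooth Lie groupoid over $W$ whose Lie algebroid is the fibre product just computed, namely $T^*W$. One then checks $P$ is source-simply-connected — its source fibres are principal $\R$-bundles over the simply-connected source fibres of $\Pi_1(\F_W)$ — so by uniqueness of source-simply-connected integrations $P\cong\Sigma(W)$, compatibly with all four maps. Finally $\Sigma(M){\restriction_W^W}:=s^{-1}(W)\cap t^{-1}(W)$ is a symplectic submanifold of $\Sigma(M)$ carrying the symplectic integration of $W$, and $\Sigma(W)\to\Sigma(M)$ factors through it by a local diffeomorphism, so that map pulls back $\omega_{\Sigma(M)}$ to $\omega_{\Sigma(W)}$; for such a map, being a symplectic Morita equivalence just means being an essential equivalence.

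It remains to prove the purely formal assertion: in a pullback square of Lie groupoids as above, with $q_M$ a surjective submersion and a morphism over $\id_M$, the top map $j$ is an essential equivalence iff the bottom map $i$ is. For essential surjectivity one uses $s=s\circ q_M$ and $t=t\circ q_M$, so that the anchored submersion $\{\sigma\in\Sigma(M):s(\sigma)\in W\}\to M$ factors through $\{[\gamma]\in\Pi_1(\F):s[\gamma]\in W\}\to M$ via the surjective submersion $q_M$, and surjective submersions have the needed two-out-of-three property. For full faithfulness one notes $\Sigma(M){\restriction_W^W}=q_M^{-1}(\Pi_1(\F){\restriction_W^W})$, whence
\[
\begin{array}{ccc}
\Sigma(W) & \longrightarrow & \Sigma(M){\restriction_W^W}\\
\downarrow & & \downarrow\\
\Pi_1(\F_W) & \longrightarrow & \Pi_1(\F){\restriction_W^W}
\end{array}
\]
is again a pullback along the surjective submersion $q_M$, so the top map is an isomorphism iff the bottom one is — which is exactly full faithfulness of $j$, resp.\ of $i$. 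Combining the two yields the claimed equivalence.

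The main obstacle is the middle step, the identification $\Sigma(W)\cong\Sigma(M)\times_{\Pi_1(\F)}\Pi_1(\F_W)$: one must carefully reconcile the Poisson-transversal linear algebra with the abelian extension $\nu^*\F\to T^*M\to T\F$, justify that $q_M$ really is a groupoid fibration, and check source-simple-connectedness of the fibre product. Once this structural fact is established, the final step is a formal matter of pullbacks of essential equivalences.
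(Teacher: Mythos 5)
Your proposal is correct in outline, but it follows a genuinely different route from the paper. The paper's proof is criterion-based: it invokes the characterization of Morita morphisms (a map of Lie groupoids is a Morita equivalence iff it induces a homeomorphism on orbit spaces, isomorphisms on isotropy groups, and isomorphisms of normal representations), observes that $\Sigma(W)\to\Sigma(M)$ and $\Pi_1(\F_W)\to\Pi_1(\F)$ induce the same map on leaf spaces and the same (holonomy) normal representations, and so reduces everything to isotropy; there it applies the five lemma to the exact sequences $\pi_2(L,x)\to\nu^*_x\to\Sigma(\cdot)_x\to\pi_1(L,x)\to 1$, the calibration forcing the monodromy homomorphisms $\partial$ to vanish. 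You instead prove a stronger structural statement --- that $\Sigma(W)$ is the strict fibre product $\Sigma(M)\times_{\Pi_1(\F)}\Pi_1(\F_W)$ along the integration $q_M$ of $\pi^\sharp$ --- and then transfer essential equivalence formally across the pullback square. Your formal step is sound (a pullback of an isomorphism is an isomorphism, and conversely surjectivity of $q_M$ plus a bijective-submersion argument recovers the bottom map; essential surjectivity transfers by the two-out-of-three property of surjective submersions), and your Poisson-transversal linear algebra identifying $T^*W$ with the pullback algebroid inside $T^*M{\restriction_W}$ is correct: transversality to $\F$ together with nondegeneracy of $\omega$ on $T\F_W$ is exactly the Poisson-transversal condition. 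What your route buys is the structural fact about $\Sigma(W)$ itself, from which the proposition becomes formal; what it costs is considerably more machinery (Lie II, integration of the algebroid fibration $\pi^\sharp$ to a surjective submersive groupoid map, smoothness of the fibre product, uniqueness of source-simply-connected integrations) compared with the paper's half-page ladder-and-five-lemma argument.

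Two points to make explicit if you write this up. First, the calibration enters your argument exactly where you assert that the source fibres of $q_M$, hence of the fibre product, are principal $\R$-bundles: in general the kernel of $q_M$ over $x$ is $\nu^*_x$ modulo the monodromy group, and it is the vanishing of monodromy (coming from the closed extension, for both $M$ and $W$) that makes this kernel $\R$ rather than a circle; without it the fibre product need not be source-simply connected and the identification with $\Sigma(W)$ breaks down. This is the precise counterpart of the paper's use of $\partial=0$, so it deserves to be stated rather than left implicit. Second, the middle step you flag is indeed where the real work lies --- the Poisson-transversal identification of algebroids, the fact that $q_M$ is a surjective submersion (right-translation from surjectivity of $\pi^\sharp$ at the units), and source-simple-connectedness of the fibre product --- but all of these hold and can be proved along the lines you indicate, so the gap is one of detail rather than of substance.
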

\begin{proof}
    A map between Lie groupoids is a Morita equivalence if and only if it
    induces a homeomorphism
    on leaf spaces, isomorphisms on isotropy groups and isomorphisms of normal
    representations \cite[theorem 4.3.1]{delhoyo}.
    The maps $\Sigma(W)\to\Sigma(M)$ and $\Pi_1(\F_W)\to\Pi_1(\F)$ induce the
    same maps on leaf spaces.
    The normal space at any point $x\in M$, for any of the groupoids, is the normal
    space to the foliation, and the action on it is given by holonomy.
    It therefore suffices to check that one map induces isomorphisms on isotropy groups
    if and only if the other does.

    Suppose $x\in W$. Write $L$ for the leaf of $M$ that contains $x$, and $L_W$ for the leaf of $W$ that contains $x$. By proposition 3.21 in \cite{lectures-integrability-lie},
    there exists a diagram as follows, with exact rows:

    \begin{center}
    \begin{tikzpicture}
        \matrix(m)[matrix of math nodes, row sep=2.2em, column sep=2.6em]{
            \pi_2(L_W,x) & \nu_x^* & \Sigma(W)_x & \pi_1(L_W,x) & 1 \\
            \pi_2(L,x) & \nu_x^* & \Sigma(M)_x & \pi_1(L,x) & 1 \\
        };
        \path[->]
        (m-1-1) edge node[auto] {$\partial$} (m-1-2)
        (m-1-2) edge node[auto] {} (m-1-3)
        (m-1-3) edge node[auto] {} (m-1-4)
        (m-1-4) edge node[auto] {} (m-1-5)
        (m-2-1) edge node[auto] {$\partial$} (m-2-2)
        (m-2-2) edge node[auto] {} (m-2-3)
        (m-2-3) edge node[auto] {} (m-2-4)
        (m-2-4) edge node[auto] {} (m-2-5)
        (m-1-1) edge node[auto] {} (m-2-1)
        (m-1-2) edge node[auto] {} (m-2-2)
        (m-1-3) edge node[auto] {} (m-2-3)
        (m-1-4) edge node[auto] {} (m-2-4)
        (m-1-5) edge node[auto] {} (m-2-5)
        ;
    \end{tikzpicture}
    \end{center}

    The maps $\partial$ are the monodromy homomorphisms of the cotangent
    algebroids of $W$ and $M$, and because $\omega$ has a closed extension, we
    have $\partial = 0$.
    We therefore have
    the following commutative diagram:

    \begin{center}
    \begin{tikzpicture}
        \matrix(m)[matrix of math nodes, row sep=2.2em, column sep=2.6em]{
            1 & \nu_x^* & \Sigma(W)_x & \pi_1(L_W,x) & 1 \\
            1 & \nu_x^* & \Sigma(M)_x & \pi_1(L,x) & 1 \\
        };
        \path[->]
        (m-1-1) edge node[auto] {} (m-1-2)
        (m-1-2) edge node[auto] {} (m-1-3)
        (m-1-3) edge node[auto] {} (m-1-4)
        (m-1-4) edge node[auto] {} (m-1-5)
        (m-2-1) edge node[auto] {} (m-2-2)
        (m-2-2) edge node[auto] {} (m-2-3)
        (m-2-3) edge node[auto] {} (m-2-4)
        (m-2-4) edge node[auto] {} (m-2-5)
        (m-1-1) edge node[auto] {} (m-2-1)
        (m-1-2) edge node[auto] {} (m-2-2)
        (m-1-3) edge node[auto] {} (m-2-3)
        (m-1-4) edge node[auto] {} (m-2-4)
        (m-1-5) edge node[auto] {} (m-2-5)
        ;
    \end{tikzpicture}
    \end{center}

    The map $\nu_x^* \to \nu_x^*$ is the identity, and by the five lemma we see that
    the map $\Sigma(W)_x \to \Sigma(M)_x$ is an isomorphism if and only if the map
    $\pi_1(L_W, x) \to \pi_1(L,x)$ is.
\end{proof}

If we have a symplectic foliation of codimension 1 with a calibration, we can
essentially capture its symplectic integration using a 5-dimensional transverse submanifold.
A 3-dimensional submanifold suffices to capture the transverse geometry of the foliation.
As the following example shows, it is not generally possible to capture the Morita equivalence class of the symplectic integration using a 3-dimensional submanifold.

\begin{example}
    Consider $M = \Torus^4 \times \Sphere^1$ with the foliation with leaves
    $\Torus^4\times\{\ast\}$.  Equip $M$ with a leafwise symplectic form, such
    as $d\theta_1\wedge d\theta_2 + d\theta_3\wedge d\theta_4$ (where the
    $\theta_i$ are angular coordinates on $\Torus^4$).  Then $M$ has no
    3-dimensional closed transverse submanifold $W$ for which
    $\Sigma(W)\to\Sigma(M)$ is a Morita equivalence.  Indeed, by the lemma
    above this would mean that $L_W \into \Torus^4$ induces an isomorphism on
    $\pi_1$, where $L_W$ is a closed surface.  But there is no closed surface
    with fundamental group isomorphic to $\pi_1(\Torus^4)$.
\end{example}

\chapter{Currents}
\label{chapter:currents}

In this chapter, we first review some elements of the theory of currents
as developed by de Rham \cite{derham}.
Currents are generalizations of singular chains, and one particular class of them,
the so-called \emph{foliation currents}, have been used successfully in the study
of tautness of foliations.
Foliation currents are currents that ``lie tangent to'' a given foliation.

Foliation currents can be generalized to structure currents, which are required to
``lie tangent to'' a given cone structure. This setup is much more general.
The goal of this chapter is to show that these structure currents are
interesting in the study of calibrated symplectic foliations.
For a limited class of symplectic foliations, they allow the study of calibratability
to be done in a leafwise fashion.
We unify two conjectures due to Sullivan and Ghys, and try to give the reader
some intuition for why the new conjecture might hold.

Throughout this chapter, we will assume that $M$ is a smooth manifold
of dimension $m$. The codimension of the foliation under consideration will be $q$.

\section{Definitions}

We start by recalling the theory of currents, originally due to de Rham \cite{derham}.
All of the material here can be found in chapter 10 of \cite{candelconlon}.
Write $\Omega^p(M)$ for the space of all $p$-forms on $M$, equipped with the $C^{\infty}$-topology.
Write $\Omega_c^p(M)$ for the space of all compactly supported $p$-forms on $M$, equipped with the $C^{\infty}$-topology.

\begin{definition}
    A \emph{$p$-current} on $M$ is a continuous linear map $\Omega_c^p(M) \to \R$
    ($0\leq p\leq m$).
    We write $\Cur_p(M) = \Cur_p$ for the space of all $p$-currents on $M$.
    A \emph{compactly supported $p$-current} on $M$ is a continuous linear map
    $\Omega^p(M) \to \R$ ($0 \leq p\leq m$).
    We write $\Cur_p^c(M)$ for the space of all compactly supported $p$-currents on $M$.
\end{definition}

We will consider $\Cur_p^c(M)$ as a subset of $\Cur_p(M)$ by restriction.

\begin{remark}
    The \emph{support} of a current $c$ is defined in the way that is usual for
    distributions (generalized functions): a point $x\in M$ is not in the
    support if it has a neighborhood $U$ such that $c{\restriction_U} = 0$.
    This definition agrees with our use of ``compactly supported'' above.
\end{remark}

\begin{example}[Dirac currents]
    Let $v\in \Lambda^pTM$. Then we can consider $v$ as a current
    \[ \Omega^p(M) \to \R : \alpha \mapsto \alpha(v) .\]
    Such currents are called \emph{Dirac currents}. They are compactly supported.
\end{example}

\begin{example}[Smooth currents]
    Assume that $M$ is oriented, and let $\beta \in \Omega^q(M)$, where $p+q=m$.
    Then we can consider $\beta$ as a current
    \[ \Omega_c^p(M) \to \R : \alpha \mapsto \int_M \beta\wedge\alpha .\]
    Such currents are called \emph{smooth currents}, or also \emph{diffuse currents}.
    Note that $\beta$ is compactly supported as a current precisely if it is
    compactly supported as a form.
\end{example}

\begin{example}[Smooth singular chains]
    Every smooth $p$-simplex $c : \Delta_p \to M$ can be considered as a
    (compactly supported) $p$-current
    \[ \Omega^p(M) \to \R : \alpha \mapsto \int_{\Delta_p} c^*\alpha .\]
    More generally, every smooth singular $p$-chain can be considered as a $p$-current.
    Intuitively, these currents are not as concentrated as a Dirac current, and
    not as spread out as a diffuse current.
\end{example}

While currents are sometimes used as generalizations of differential forms
(they are ``distributional forms''), we will take the viewpoint that currents
are generalizations of chains. Indeed, currents can be naturally pushed forward,
but not pulled back.
A Dirac current should be thought of as an infinitesimally small singular $p$-chain
that is infinitely concentrated, where the ``infinitesimally'' and the ``infinitely''
are balanced out to get a finite number when integrating a $p$-form over it
(much like the Dirac delta function is like a peak that is infinitesimally
narrow and infinitely high, where the ``infinitesimally'' and the
``infinitely'' are balanced out to get finite answers when integrating).
A smooth current can be thought of as a singular chain that has been spread out
smoothly over the manifold (it has ``diffused'').

We now focus our attention on the case where $M$ is compact.
In this case, we do not need to keep track of the compactness of supports.
We equip the space $\Cur_p(M)$ with a topology.
We will say that a subset of $\Omega^p(M)$ is bounded if it is bounded relative
to each $C^k$-norm.  The topology on $\Cur_p$ is generated by sets of
the form
\[ \{ c \in \Cur_p(M) \mid \lvert c(\alpha)\rvert < \eps \text{ for all $\alpha \in B$} \} ,\]
where $\eps > 0$ and $B \subset \Omega^p(M)$ is bounded.
This topology on $\Cur_p(M)$ has several nice properties.

\begin{proposition}[{{de Rham \cite[theorem 13]{derham}}}]
    The topology on $\Cur_p(M)$ as described above turns the space of $p$-currents
    into a locally convex topological vector space. It is Hausdorff.
    Moreover, the continuous linear functionals on $\Cur_p(M)$ are precisely
    of the form $\Cur_p(M) \to \R : c \mapsto c(\alpha)$ for some $\alpha\in\Omega^p(M)$.
    In other words, $\Omega^p(M)$ and $\Cur_p(M)$ are each other's continuous dual.
\end{proposition}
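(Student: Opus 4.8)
The plan is to settle the three assertions in turn, after the key reformulation that the topology described on $\Cur_p(M)$ is precisely the one defined by the family of seminorms $p_B(c) = \sup_{\alpha\in B}\lvert c(\alpha)\rvert$, with $B$ ranging over the bounded subsets of $\Omega^p(M)$. First I would verify that each $p_B$ is a finite-valued seminorm: since $M$ is compact, $\Omega^p_c(M) = \Omega^p(M)$ is a Fr\'echet space, and any continuous linear functional on a metrizable topological vector space maps bounded sets to bounded sets, so $p_B(c) < \infty$ for every current $c$. A topology defined by a family of seminorms is automatically locally convex and makes $\Cur_p(M)$ a topological vector space, which is the first assertion; it is Hausdorff because this family separates points, as for $c \neq 0$ one picks $\alpha$ with $c(\alpha) \neq 0$ and the singleton $\{\alpha\}$ is bounded, whence $p_{\{\alpha\}}(c) > 0$.

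For the duality statement, one inclusion is immediate: for fixed $\alpha\in\Omega^p(M)$ the evaluation $c \mapsto c(\alpha)$ is bounded by $p_{\{\alpha\}}$ and hence continuous, and distinct forms give distinct functionals (already on Dirac currents). The content is the reverse inclusion, and here I would observe that, with the topology above, $\Cur_p(M)$ is exactly the strong dual of $\Omega^p(M)$: as a vector space $\Cur_p(M) = \big(\Omega^p_c(M)\big)' = \big(\Omega^p(M)\big)'$ because $M$ is compact, and the topology is, by construction, that of uniform convergence on bounded subsets of $\Omega^p(M)$. Thus the assertion that every continuous functional on $\Cur_p(M)$ is of the form $c \mapsto c(\alpha)$ is precisely semireflexivity of $\Omega^p(M)$, i.e.\ the statement that the canonical injective map from $\Omega^p(M)$ into the continuous dual of $\Cur_p(M)$ is surjective. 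This holds because $\Omega^p(M) = \Gamma(\Lambda^p T^*M)$ is a Fr\'echet--Montel space --- it is Fr\'echet for the $C^k$-seminorms, and any sequence of smooth sections bounded in every $C^k$-norm has a $C^\infty$-convergent subsequence by an Arzel\`a--Ascoli argument applied in each order of differentiation, so closed bounded sets are compact --- and Fr\'echet--Montel spaces are reflexive, hence semireflexive. Combined with $\Cur_p(M) = \big(\Omega^p(M)\big)'$, this is the ``each other's dual'' statement.

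The main obstacle is exactly this last step: extracting the representing form $\alpha$ from an abstract continuous functional $\Lambda$ on $\Cur_p(M)$. Packaging it as semireflexivity of a Fr\'echet--Montel space is the most economical route but conceals the analytic core. A more hands-on alternative, closer to de Rham's original argument, would instead proceed as follows: continuity of $\Lambda$ supplies a bounded $B$ and $C > 0$ with $\lvert\Lambda(c)\rvert \le C\,p_B(c)$ for all currents $c$; restricting $\Lambda$ to the Dirac currents $\delta_v$, $v\in\Lambda^p T_xM$, defines a rough section $\alpha$ of $\Lambda^p T^*M$ by $\alpha_x(v) := \Lambda(\delta_v)$; then de Rham's regularization operators $R_\eps$ on currents (for which $R_\eps c$ is a smooth current and $R_\eps c \to c$ in $\Cur_p(M)$), together with the estimate above, are used to show that $\alpha$ is smooth and that $\Lambda(c) = c(\alpha)$, first for smooth currents and then, by density, for all $c$. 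In either approach the crux is the interplay between the single continuity estimate for $\Lambda$ and the approximation of arbitrary currents by smooth ones.
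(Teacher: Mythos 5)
The paper itself offers no proof of this proposition: it is quoted directly from de Rham (theorem 13 of his book), so there is no internal argument to compare against. Your proposal is correct and would serve as a self-contained justification. The reformulation of the topology via the seminorms $p_B(c)=\sup_{\alpha\in B}\lvert c(\alpha)\rvert$ is exactly right, and local convexity and Hausdorffness follow as you say; finiteness of $p_B$ only needs that a continuous linear functional is bounded on bounded sets, which holds in any topological vector space. The essential content is indeed the surjectivity of $\Omega^p(M)\to(\Cur_p(M))'$, and packaging it as semireflexivity of the Fr\'echet--Montel space $\Omega^p(M)=\Gamma(\Lambda^pT^*M)$ (Montel by the Arzel\`a--Ascoli diagonal argument on a compact manifold, hence reflexive), combined with the observation that $\Cur_p(M)$ carries precisely the strong dual topology, is a clean and standard modern route. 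Your alternative sketch --- extracting a rough section $\alpha_x(v)=\Lambda(\delta_v)$ from the Dirac currents and upgrading it to a smooth representing form via the regularization operators and the continuity estimate --- is closer in spirit to de Rham's original argument, and is the version that generalizes to non-compact $M$ where one must keep track of supports; for the compact case treated here, either route suffices.
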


We have seen that currents generalize singular chains.
As in the case of chains, we can define the \emph{boundary} of a current, as follows.
Exterior differentiation
$ d : \Omega^p(M) \to \Omega^{p+1}(M) $
has a dual map
$ \partial : \Cur_{p+1}(M) \to \Cur_p(M) $
defined by
\[ (\partial c)(\alpha) =  c(d\alpha) .\]
This map $\partial$ is the boundary operator on currents.

\begin{definition}
    A current $c \in \Cur_p(M)$ is \emph{closed} if $\partial c = 0$.
    A current $c \in \Cur_p(M)$ is \emph{exact} if $c = \partial c'$ for some
    current $c' \in \Cur_{p+1}(M)$.
    We will write $\ZZ_p(M)$ for the space of closed $p$-currents on $M$,
    and $\BB_p(M)$ for the space of exact $p$-currents on $M$.
\end{definition}

\begin{lemma}[{{see \cite[lemma 10.1.24]{candelconlon}}}]
    The sets $\ZZ_p(M)$ and $\BB_p(M)$ are closed subspaces of $\Cur_p(M)$.
\end{lemma}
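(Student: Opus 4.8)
The statement to prove is that $\ZZ_p(M)$ and $\BB_p(M)$ are closed subspaces of $\Cur_p(M)$. My plan is to exploit the defining property of the topology on $\Cur_p(M)$ established in the preceding de Rham proposition: convergence of currents (or rather, membership in a closed set) is tested against bounded families of forms, and in particular the evaluation map $c \mapsto c(\alpha)$ is continuous for every fixed $\alpha \in \Omega^p(M)$. This reduces everything to showing that the relevant sets are intersections (possibly infinite) of preimages of closed sets under continuous maps.

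\textbf{Closedness of $\ZZ_p(M)$.} First I would observe that
\[ \ZZ_p(M) = \{ c \in \Cur_p(M) \mid (\partial c)(\beta) = 0 \text{ for all } \beta \in \Omega^{p-1}(M) \} = \bigcap_{\beta \in \Omega^{p-1}(M)} \ker\big( c \mapsto c(\d\beta) \big). \]
For each fixed $\beta$, the map $c \mapsto c(\d\beta)$ is one of the continuous linear functionals on $\Cur_p(M)$ described in the de Rham proposition (it is evaluation against the fixed form $\d\beta \in \Omega^p(M)$), so its kernel is a closed linear subspace. An arbitrary intersection of closed linear subspaces is a closed linear subspace, which gives the claim for $\ZZ_p(M)$.

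\textbf{Closedness of $\BB_p(M)$.} This is the part I expect to require slightly more care, since $\BB_p(M)$ is defined as an image, $\BB_p(M) = \partial(\Cur_{p+1}(M))$, and images of closed sets under continuous maps are not closed in general. The clean route is to identify $\BB_p(M)$ as an annihilator. By the duality statement in the de Rham proposition, the continuous dual of $\Cur_p(M)$ is $\Omega^p(M)$, and a form $\alpha$ annihilates $\BB_p(M)$ precisely when $(\partial c')(\alpha) = c'(\d\alpha) = 0$ for all $c' \in \Cur_{p+1}(M)$, i.e. precisely when $\d\alpha = 0$ as a current, i.e. when $\alpha$ is closed (here I use that $\Omega^{p+1}(M)$ embeds into $\Cur_{p+1}(M)$ via smooth currents, or more simply that $c'(\d\alpha)=0$ for the Dirac currents forces $\d\alpha = 0$ pointwise). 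Thus the annihilator of $\BB_p(M)$ in $\Omega^p(M)$ is exactly $\ZZ^p(M)$, the closed $p$-forms. Now in a locally convex Hausdorff space the bi-annihilator of a linear subspace equals its closure; hence $\overline{\BB_p(M)}$ is the annihilator in $\Cur_p(M)$ of $\ZZ^p(M) \subseteq \Omega^p(M)$. So it remains to check that $\BB_p(M)$ already equals this bi-annihilator, i.e. that every current vanishing on all closed $p$-forms is exact. This is precisely the content of de Rham's theory (the pairing between currents and forms computes the same cohomology as the pairing between forms and forms): a current annihilating all closed forms is exact. I would cite this from \cite{derham} or \cite{candelconlon} rather than reprove it.

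\textbf{Main obstacle.} The genuinely non-formal input is the last claim — that a $p$-current annihilating every closed $p$-form is a boundary. If one is unwilling to invoke de Rham's regularization/homotopy operator, then the fallback is the softer argument that only proves what is literally asked: $\BB_p(M) \subseteq \overline{\BB_p(M)} = \{c : c(\alpha) = 0 \ \forall \alpha \in \ZZ^p(M)\}$, and the reverse inclusion $\overline{\BB_p(M)} \subseteq \BB_p(M)$ is exactly the de Rham isomorphism theorem. Either way the topological half is routine via the bipolar theorem for locally convex spaces; the analytic half is where the real work sits, and for the purposes of this lemma it is legitimate to quote it from the cited references.
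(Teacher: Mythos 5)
Your proof is correct, and it is essentially the argument behind the result — note that the thesis itself gives no proof here, it simply quotes Candel--Conlon, so the only fair comparison is with the standard argument in that reference, which runs along the same lines as yours. The $\ZZ_p(M)$ half is exactly right: each $c\mapsto c(\d\beta)$ is continuous (a singleton $\{\d\beta\}$ is a bounded set, or equivalently this is one of the functionals in de Rham's duality statement), and a kernel-intersection is closed. For $\BB_p(M)$ you correctly isolate the one non-formal input: a $p$-current that annihilates every closed $p$-form is automatically closed (test against exact forms) and then exact by the injectivity half of de Rham's isomorphism $H_p^{\dR}(M)\to H^p(M;\R)'$, which is proved by regularization and does not rely on the present lemma, so there is no circularity in quoting it. One small simplification: once you have that fact, the bipolar theorem is dispensable, since it gives the identity
\[ \BB_p(M)=\{\,c\in\Cur_p(M)\mid c(\alpha)=0 \text{ for all closed }\alpha\in\Omega^p(M)\,\} \]
outright (the inclusion $\subseteq$ is trivial from $(\partial c')(\alpha)=c'(\d\alpha)=0$), and the right-hand side is again an intersection of kernels of continuous functionals, hence closed — exactly parallel to the $\ZZ_p$ case. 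Your polar/bipolar detour is harmless and proves the same thing, but it repackages rather than avoids the de Rham input, which is where the real work sits in either formulation.
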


Under the inclusion of singular chains into the space of currents,
this boundary operator corresponds precisely to the usual boundary operator
(this is Stokes' theorem).
Moreover, a differential form is closed iff it is closed as a current,
and a differential form is exact iff it is exact as a current.

The following definition and result are due to de Rham \cite{derham}.

\begin{definition}
    The \emph{de Rham homology} of $M$ is defined as
    \[ H_p^{\dR}(M) = \frac{\ZZ_p(M)}{\BB_p(M)} .\]
\end{definition}

\begin{theorem}[de Rham]
    The pairing
    $H_p^{\dR}(M) \times H^p(M;\R) \to \R : ([c],[\alpha]) \mapsto c(\alpha)$
    is well-defined and determines an isomorphism
    \[ H_p^{\dR}(M) \to H^p(M;\R)' \cong H_p(M;\R) .\]
    The inclusion of ordinary $p$-cycles into $\Cur_p(M)$ descends to homology
    to give the inverse map
    \[ H_p(M;\R) \to H_p^{\dR}(M) .\]
\end{theorem}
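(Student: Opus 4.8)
The plan is to establish the isomorphism $H_p^{\dR}(M) \to H^p(M;\R)'$ by exhibiting the natural pairing and then showing it is non-degenerate in both arguments, with all functional-analytic subtleties handled by the duality result already quoted (the proposition stating that $\Omega^p(M)$ and $\Cur_p(M)$ are each other's continuous dual). First I would check that the pairing $([c],[\alpha]) \mapsto c(\alpha)$ is well-defined: if $c = \partial c'$ is exact then $c(\alpha) = c'(d\alpha) = 0$ for closed $\alpha$, and if $\alpha = d\beta$ is exact then $c(\alpha) = c(d\beta) = (\partial c)(\beta) = 0$ for closed $c$. This is immediate from the definitions of $\partial$ on currents and of closedness/exactness for currents. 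So we get a linear map $\Phi : H_p^{\dR}(M) \to H^p(M;\R)'$.

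Next I would prove injectivity of $\Phi$. Suppose $[c] \in H_p^{\dR}(M)$ pairs to zero with every closed form, i.e.\ $c(\alpha) = 0$ for all $\alpha \in \ZZ^p(M)$ (closed $p$-forms). The claim is that $c$ is then exact as a current. Consider the quotient map $\Omega^p(M) \to \Omega^p(M)/\d\Omega^{p-1}(M) = H^p(M) \oplus (\text{something})$; more cleanly, $c$ vanishes on $\ZZ^p(M)$, so $c$ factors through $\Omega^p(M)/\ZZ^p(M) \cong \d\Omega^{p}(M) \subseteq \Omega^{p+1}(M)$ (using that $d$ induces an isomorphism $\Omega^p/\ZZ^p \xrightarrow{\sim} \d\Omega^p$). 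This produces a continuous linear functional on $\d\Omega^p(M)$. The key analytic input is that $\d\Omega^p(M)$ is a closed subspace of $\Omega^{p+1}(M)$ (this uses ellipticity / the closed range of $d$ on a compact manifold, which is part of the standard Hodge-theoretic package and is implicit in the cited treatment in \cite{candelconlon}), so by Hahn–Banach this functional extends to a continuous functional on $\Omega^{p+1}(M)$, which by the duality proposition is given by some $c' \in \Cur_{p+1}(M)$. Unwinding the construction gives $c(\alpha) = c'(d\alpha)$ for all $\alpha$, i.e.\ $c = \partial c'$, so $[c] = 0$.

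For surjectivity of $\Phi$, given a functional $\ell \in H^p(M;\R)'$, pull it back along $\ZZ^p(M) \twoheadrightarrow H^p(M)$ to a functional on $\ZZ^p(M)$ that kills $\d\Omega^{p-1}(M)$; extend by Hahn–Banach to a continuous functional on all of $\Omega^p(M)$, hence (again by the duality proposition) to a current $c \in \Cur_p(M)$; then $c$ vanishes on exact forms so $\partial c = 0$, i.e.\ $c \in \ZZ_p(M)$, and by construction $[c]$ pairs with closed forms to recover $\ell$. Finally, the identification $H^p(M;\R)' \cong H_p(M;\R)$ is the ordinary (finite-dimensional) de Rham theorem together with universal coefficients, and the statement that the inclusion of smooth singular $p$-cycles induces the inverse map follows because Stokes' theorem makes that inclusion a chain map and the composite pairing $\int_c \alpha$ is exactly the classical de Rham pairing, known to be perfect.

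The main obstacle is the functional-analytic step in injectivity and surjectivity: one must know that $d$ has closed range (equivalently that $\d\Omega^{\bullet}(M)$ is a closed subspace), so that Hahn–Banach extensions of functionals defined on $\ZZ^p(M)$ or $\d\Omega^p(M)$ exist and so that the duality proposition can be applied to convert extended functionals back into currents. On a compact manifold this is standard elliptic theory, and I would cite \cite[chapter 10]{candelconlon} rather than reprove it; everything else is formal diagram-chasing with the boundary operator.
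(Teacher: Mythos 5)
The paper itself offers no proof of this statement: it is quoted as classical background, with the argument deferred to de Rham's book \cite{derham} (and chapter 10 of \cite{candelconlon}), so there is no internal proof to compare against. Judged on its own merits, your proposal is correct, but it follows a genuinely different route from the classical one the chapter implicitly relies on. De Rham's proof goes through the regularization operators $R$ and $A$ that appear later in this very chapter: $R$ is a chain map on currents, chain-homotopic to the identity via $A$, landing in smooth currents, so the homology of the current complex is identified with the cohomology of forms and the theorem reduces to the finite-dimensional pairing with singular homology. Your route instead trades regularization for functional analysis: the closed range of $\d$ on a compact manifold plus Hahn--Banach gives injectivity and surjectivity of the pairing map directly. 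Both work; de Rham's approach has the advantage of producing the smoothing statement (needed elsewhere in the chapter) as a byproduct, while yours is shorter if one is willing to import the closed-range theorem from elliptic theory.

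Two points of bookkeeping should be sharpened, though neither is a gap in substance. First, the closedness of $\d\Omega^p(M)$ in $\Omega^{p+1}(M)$ is not what makes Hahn--Banach available (continuous functionals on arbitrary subspaces of a locally convex space always extend); what it buys, via the open mapping theorem for Fr\'echet spaces, is that $\d$ induces a \emph{topological} isomorphism from $\Omega^p(M)/\ZZ^p$ onto $\d\Omega^p(M)$, hence that the functional $\d\alpha \mapsto c(\alpha)$ in your injectivity step is continuous --- only then may Hahn--Banach be invoked; likewise in the surjectivity step, closedness of $\d\Omega^{p-1}(M)$ is what makes $H^p(M;\R)$ finite-dimensional Hausdorff, so that the pulled-back functional on closed forms is continuous. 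Second, converting the extended functional on $\Omega^{p+1}(M)$ (resp.\ $\Omega^p(M)$) into a current is simply the definition of a current on a compact manifold, not an application of the quoted duality proposition, which goes the other way (functionals on $\Cur_p(M)$ are forms). With those clarifications, and with the closed-range fact cited as you do, the argument is complete.
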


(Here, $H^p(M;\R)'$ is the dual of $H^p(M;\R)$. Note that these are finite-dimensional
because $M$ is assumed to be compact.)

\section{Foliation currents}

Suppose that $M$ is a compact manifold equipped with an oriented foliation $\F$
of dimension $p$.
There is a special class of currents associated to $\F$, the so-called
\emph{foliation currents}, first introduced by Sullivan \cite{sullivan}.
The book \cite{candelconlon} is also a good reference for this material.

\begin{definition}
    A \emph{Dirac foliation current for $\F$} (or \emph{Dirac $\F$-current} for short) is
    a $p$-current of the form
    \[ \Omega^p(M) \to \R : \alpha \mapsto \alpha(v_x) ,\]
    where $x\in M$ and $v\in \Lambda^pT_x\F$ are fixed, and $v$ is positively oriented
    with respect to the orientation of $\F$.
\end{definition}

Intuitively, a Dirac foliation current is a Dirac current that measures
``along $\F$''. To define what a foliation current is in general, we need to consider
the cone generated by Dirac foliation currents.

\begin{definition}
    A \emph{cone} in a real vector space is a non-empty subset of the vector
    space that is invariant under multiplication by a non-negative number.
\end{definition}

The cone generated by the Dirac foliation currents for $\F$ (i.e.\ the smallest cone
in $\Cur_p(M)$ that contains all Dirac foliation currents for $\F$) is not closed.
Note also that this cone intersects $\ZZ_p(M)$ only at 0 (if $p\geq 1$).
Once we take the closure of this cone, we obtain the cone of $\F$-currents.

\begin{definition}
    A \emph{foliation current for $\F$} (or \emph{$\F$-current} for short)
    is a $p$-current in the closure of the cone in $\Cur_p(M)$ spanned by Dirac
    foliation currents.
    We write $\Cur_{\F}$ for the space of foliation currents.
    We also write $\ZZ_{\F} = \Cur_{\F} \cap \ZZ_p$ for the space of \emph{foliation cycles}
    for $\F$, and $\BB_{\F} = \Cur_{\F} \cap \BB_p$ for the space of
    \emph{foliation boundaries} for $\F$.
\end{definition}

\begin{example}[Compact leaf]
    Suppose that $\F$ is an oriented $p$-dimensional foliation on $M$,
    and suppose that $\F$ has a compact leaf $L$.
    Then this leaf determines a current
    \[ c : \Omega^p(M) \to \R : \alpha \mapsto \int_L \alpha .\]
    This current is closed.
    It is also an $\F$-current:
    the integral $\int_L \alpha$ can be approximated by Riemann sums,
    and this approximation shows how to approximate $c$ by Dirac foliation currents.

    If, instead, we take some open subset $U \subset L$ of the leaf, we can consider
    the current $\alpha \mapsto \int_U \alpha$. It is still a foliation current,
    but it is not, in general, closed.
\end{example}

\begin{example}[Compact oriented manifold]
    Suppose that $M$ is a compact oriented manifold.
    We can consider the codimension-0 foliation on $M$ (its only leaf is $M$).
    One of the foliation currents for this foliation is the manifold itself:
    \[ \alpha \mapsto \int_M \alpha .\]
    This is just a particular case of the example above, but for this (rather
    uninteresting) foliation, all foliation cycles are easily understood:
    they are precisely the multiples of this one
    (this statement is a variation of exercise 10.1.25 in \cite{candelconlon}).
    One way to see this is as follows: suppose that $c$ is a foliation cycle
    on $M$. Then there is a constant $\lambda \in \R$ such that
    $[c] = \lambda [M] \in H_m(M;\R)$. Therefore, the foliation cycle $c - \lambda M$
    is exact. But if a current is exact, it vanishes on all closed forms, and all
    top-degree forms are closed. Therefore $c - \lambda M$ must be the current 0,
    showing that $c = \lambda M$ as currents.

    This observation agrees with the intuition that the only way to piece together
    infinitely many infinitesimally small top-dimensional singular simplices,
    and end up without boundary, is to end up with a multiple of the entire manifold.
\end{example}

The importance of foliation currents is illustrated by the following theorem.

\begin{theorem}[{{Sullivan, \cite[theorem II.2]{sullivan}}}]
Let $(M,\F)$ be a compact, connected manifold with an oriented and co-oriented
codimension-1 foliation on it. Then the following are equivalent:
\begin{enumerate}
\item there is a closed (resp.\ exact) $p$-form on $M$ that is non-vanishing on every leaf;
\item 0 is the only foliation boundary (resp.\ cycle) for $\F$.
\end{enumerate}\label{thm:sullivan-cycle}
\end{theorem}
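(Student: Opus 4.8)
The plan is to establish this as a duality statement, using the Hahn–Banach separation theorem in the locally convex space $\Cur_p(M)$, exactly in the spirit of de Rham's duality between forms and currents. The two implications ``(1) $\Rightarrow$ (2)'' are straightforward, so I would dispatch them first. Suppose $\beta$ is a closed $p$-form non-vanishing on every leaf; after adjusting by the co-orientation we may assume $\beta(v_x) > 0$ for every positively oriented $v_x \in \Lambda^p T_x\F$. If $c$ is a foliation cycle, then $c(\beta)$ is a limit of non-negative numbers (values of Dirac $\F$-currents on $\beta$), hence $c(\beta) \geq 0$; but $c$ is closed and $\beta$ is closed, and one checks (using a leafwise partition-of-unity argument, or that $c(\beta)$ depends only on the de Rham class and $\beta$ is exact in the ``exact'' case) that $c(\beta) = 0$ forces $c = 0$ — more carefully, one shows that on the cone $\Cur_\F$ the functional $c \mapsto c(\beta)$ is strictly positive away from $0$, which pins down $c = 0$. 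The ``exact'' variant is identical with ``closed form'' replaced by ``exact form'' and ``cycle'' by ``boundary''.

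The substance is the converse, ``(2) $\Rightarrow$ (1)''. Here I would argue by contraposition via separation of convex sets. Consider the closed convex cone $\Cur_\F \subseteq \Cur_p(M)$ of foliation currents and the closed subspace $\BB_p(M)$ (resp.\ $\ZZ_p(M)$) of boundaries (resp.\ cycles). The hypothesis (2) says $\Cur_\F \cap \BB_p = \{0\}$ (resp.\ $\Cur_\F \cap \ZZ_p = \{0\}$). I want to produce a continuous linear functional on $\Cur_p(M)$ — which by the de Rham duality proposition quoted above is exactly evaluation against some $\beta \in \Omega^p(M)$ — that is positive on the nonzero Dirac $\F$-currents and vanishes on $\BB_p$ (resp.\ on $\ZZ_p$); vanishing against all exact currents means $\beta$ is closed, vanishing against all cycles means $\beta$ is exact, and positivity on Dirac $\F$-currents means precisely that $\beta$ is positive, hence non-vanishing, on every leaf. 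To get such a functional, one applies a Hahn–Banach separation argument to the compact convex ``base'' $K$ of the cone $\Cur_\F$ — the slice of Dirac $\F$-currents and their convex combinations cut out by normalizing, say, against a fixed leafwise volume form — which is disjoint from the closed subspace $\BB_p$ (resp.\ $\ZZ_p$); compactness of $K$ comes from compactness of $M$ and of the unit sphere bundle of $\Lambda^p T\F$, together with the fact (noted in the text) that the relevant cone meets $\ZZ_p$ only at $0$.

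The main obstacle I anticipate is the functional-analytic bookkeeping needed to make the separation argument produce a strictly positive functional rather than merely a non-negative one: Hahn–Banach in a locally convex space separates a compact convex set from a disjoint closed convex set strictly, so the key is to verify (i) that the chosen base $K$ of $\Cur_\F$ is genuinely compact in the current topology, and (ii) that $K$ is disjoint from $\BB_p$ (resp.\ $\ZZ_p$) — which is where hypothesis (2) enters, since $0 \notin K$. Once strict separation gives $\beta$ with $c(\beta) > 0$ for all $c \in K$ and $c(\beta) = 0$ for all $c \in \BB_p$ (resp.\ $\ZZ_p$), a compactness/continuity argument upgrades this to a uniform positive lower bound on $K$, and then $\beta$ is the desired closed (resp.\ exact) $p$-form positive on every leaf. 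I would also double-check the ``resp.'' case carefully, since passing from boundaries to cycles swaps which of ``closed'' and ``exact'' we land in, via the fact that exact forms are precisely those pairing to zero with all cycles and closed forms are precisely those pairing to zero with all boundaries.
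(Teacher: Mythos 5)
Your hard direction is essentially the paper's own route: the thesis does not reprove Sullivan's theorem directly, but sketches exactly this argument for the more general cone-structure version (\cref{thm:sullivan-hahn-banach}), namely Hahn--Banach separation of the compact base of the cone of foliation currents (compactness being \cref{lem:compact-base}) from the closed subspace $\BB_p$ (resp.\ $\ZZ_p$), with de Rham's duality identifying the separating functional with a $p$-form, and with ``vanishes on $\BB_p$ $\Rightarrow$ closed'', ``vanishes on $\ZZ_p$ $\Rightarrow$ exact''. That part of your outline, including the remark that strict separation plus $0\in\BB_p$ gives a uniform positive lower bound on the base, is sound and matches the source.

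The genuine problem is in your easy direction, which is mispaired and rests on a false step. The correct pairing --- which you do state in your last sentence --- is: closed forms annihilate foliation \emph{boundaries} (if $c=\partial c'$ then $c(\beta)=c'(d\beta)=0$), while exact forms annihilate foliation \emph{cycles} (if $\beta=d\gamma$ then $c(\beta)=(\partial c)(\gamma)=0$). In the body you instead take $\beta$ closed, $c$ a foliation \emph{cycle}, and assert that ``$c$ is closed and $\beta$ is closed'' yields $c(\beta)=0$; this is not true, since a closed current pairs with a closed form through (co)homology and this pairing is typically nonzero. Concretely, for the foliation of $\Torus^2$ by the circles $\{y=\mathrm{const}\}$, the compact-leaf cycle pairs to $1$ with the closed form $dx$; more generally, whenever a closed calibrating form exists, nonzero foliation cycles usually exist as well --- only foliation \emph{boundaries} are excluded. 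The repair is simply to swap the roles (and drop the sentence claiming the ``exact'' variant replaces ``cycle'' by ``boundary''): for $\beta$ closed and positive on the cone, every foliation boundary $c$ has $c(\beta)=0$, and strict positivity of $\beta$ on nonzero foliation currents --- via the domination $\lvert c(\alpha)\rvert\le K_\alpha\, c(\beta)$ coming from compactness of the set of directions --- forces $c=0$; the exact/cycle case is parallel. With that correction, and the observation that ``non-vanishing on every leaf'' can be normalized to ``positive'' by connectedness of $M$, your proof is the same as Sullivan's and the paper's.
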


This result shows the equivalence of
items (\ref{item:criteria-tautness-cycle}) and
(\ref{item:criteria-tautness-form}) in \cref{thm:criteria-tautness}.
The next result characterizes the cohomology classes of closed forms that
can be realized by closed forms as in \cref{thm:sullivan-cycle},
in certain cases.

\begin{theorem}[{{Sullivan, \cite[theorem I.7]{sullivan}}}]
    Let $(M,\F)$ be a compact, connected manifold with an oriented and co-oriented
    codimension-1 foliation on it. Suppose that there exists a closed $p$-form
    on $M$ that is non-vanishing on every leaf, but not an exact one.
    Let $C_{\F}$ be the image of $\ZZ_{\F}$ inside $H_p(M;\R)$.
    Then $C_{\F}$ is a compact, convex cone.
    The dual cone
    \[ C^{\F} = \{ [\alpha] \in H^p(M;\R) \mid [\alpha](c) \geq 0 \text{ for
        all $c\in C_{\F}$} \} \subset H^p(M;\R) \]
    has non-empty interior consisting of precisely those cohomology classes that can be
    represented by a form that is non-vanishing on every leaf. \label{thm:sullivan-cone}
\end{theorem}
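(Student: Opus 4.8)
The plan is to work throughout in the topological duality between $\Omega^p(M)$ and $\Cur_p(M)$ (here $p=m-1$) and, after projecting to (co)homology, to reduce both assertions to finite-dimensional convex geometry together with one Hahn--Banach separation. First I would record the structural facts: $\ZZ_\F = \Cur_\F \cap \ZZ_p$ is a closed convex cone, and by \cref{thm:sullivan-cycle} the existence of a closed $p$-form nonvanishing on every leaf forces $\BB_\F = \{0\}$; hence the de Rham quotient map $q \colon \ZZ_p \to \ZZ_p/\BB_p = H_p^{\dR}(M) \cong H_p(M;\R)$ is injective on $\ZZ_\F$ and identifies $\ZZ_\F$ with $C_\F$ as convex cones. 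I would also fix once and for all a Riemannian metric and a closed $p$-form $\beta$ that is positive on every leaf (this is exactly the hypothesis, after the usual orientation normalization, which uses connectedness of $M$).

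For compactness of $C_\F$: the set $K$ of unit positively oriented Dirac $\F$-currents is compact, being the total space of a sphere bundle over the compact manifold $M$, so its closed convex hull $B$ is compact as well --- here one uses that $\Cur_p(M)$, being the strong dual of a Fréchet space, is quasi-complete, so Krein's theorem applies. Since $c(\beta) \ge \min_K \beta > 0$ for every $c \in B$, the form $\beta$ separates $0$ from $B$, and a standard fact then gives that $\Cur_\F = \R_{\ge 0}\cdot B$ is a closed cone over the compact base $B$. Consequently $\{\, c \in \ZZ_\F : c(\beta)=1 \,\}$ is a compact convex set not containing $0$, and its image under $q$ exhibits $C_\F$ as a closed convex cone with compact base --- which is what "compact cone" means here. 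The class $[\beta]\in H^p(M;\R)$ pairs with $q(c)$ to give $c(\beta)$, so it is a linear functional strictly positive on $C_\F\setminus\{0\}$; thus $C_\F$ is salient, $\mathrm{int}(C^\F)\neq\emptyset$, and by standard duality $\mathrm{int}(C^\F) = \{\,[\alpha] : \langle [\alpha], z\rangle > 0 \text{ for all } z\in C_\F\setminus\{0\}\,\}$.

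It then remains to identify $\mathrm{int}(C^\F)$ with the classes admitting a leafwise-nonvanishing representative. One inclusion is easy: if $\alpha$ is a closed representative positive on every leaf, then $c(\alpha)\ge \min_K\alpha > 0$ for every nonzero $c\in\ZZ_\F$, so $[\alpha]\in\mathrm{int}(C^\F)$. The other inclusion is the heart of the theorem. Let $P\subset\Omega^p(M)$ be the open convex cone of $p$-forms positive on every leaf (open because positivity is tested on the compact set $K$, nonempty because $\beta\in P$). Assume $[\alpha]\in\mathrm{int}(C^\F)$ but the affine subspace $\alpha + d\Omega^{p-1}(M)$ of closed representatives of $[\alpha]$ misses $P$. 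Since $d\Omega^{p-1}(M)$ is a closed subspace of $\Omega^p(M)$, Hahn--Banach separation produces a nonzero continuous functional, i.e.\ a current $c$, bounded above and hence constant on $\alpha + d\Omega^{p-1}(M)$ --- so $c$ is closed --- with $c \ge 0$ on $P$ and $c(\alpha)\le 0$. The bipolar theorem, using that $\Cur_\F^{*}=\overline{P}$, gives $c\in\Cur_\F$, so $c$ is a nonzero foliation cycle with $\langle [\alpha], q(c)\rangle = c(\alpha)\le 0$, contradicting $[\alpha]\in\mathrm{int}(C^\F)$. Hence $[\alpha]$ has a closed representative positive on every leaf.

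The main obstacle is this last direction, and specifically that the separation argument must be run in the infinite-dimensional spaces $\Omega^p(M)$ and $\Cur_p(M)$: I expect the real work to be in verifying the supporting topological facts --- closedness of $d\Omega^{p-1}(M)$, the quasi-completeness needed for the compact-convex-hull step, and the identity $\Cur_\F=(\overline{P})^{*}$ --- after which the convex geometry is routine. I would also isolate a small lemma handling the orientation bookkeeping ("nonvanishing on every leaf" versus "positive on every leaf") using that $M$ is connected.
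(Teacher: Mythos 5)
The paper itself does not prove this statement (it is quoted from Sullivan), and its only indication of method is the compact-base lemma and the Hahn--Banach separation sketch given for the companion theorems; your argument --- compactness of the base via Krein's theorem in the dual pair $(\Omega^p(M),\Cur_p(M))$, separation of the open convex cone of leafwise-positive forms from the affine subspace $\alpha + d\Omega^{p-1}(M)$, and the bipolar identification of the separating current as a nonzero foliation cycle killed by $[\alpha]$ --- is correct and is essentially that same (Sullivan's) strategy. Two harmless slips worth noting: $\BB_{\F}=\{0\}$ gives only $\ker q \cap \ZZ_{\F}=\{0\}$, not injectivity of $q$ on $\ZZ_{\F}$ (which you never actually need), and the bound $c(\alpha)\geq \min_K\alpha$ holds only on the normalized base, not for arbitrary nonzero cycles, though strict positivity on $C_{\F}\setminus\{0\}$ together with uniform positivity on the compact base is all your interiority argument requires; also, closedness of $d\Omega^{p-1}(M)$ is not needed, since the geometric Hahn--Banach theorem only requires one of the two disjoint convex sets to be open.
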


\begin{example}[Linear foliation of the 2-torus]
    We will use the linear foliation of the 2-torus (an example we gave on
    page~\pageref{ex:linear-foliation-torus}).
    We orient the torus as $dx\wedge dy>0$.
    Consider the vector field
    \[ V = \cos(\theta)\dd{x}+\sin(\theta)\dd{y} .\]
    The foliation $T\F$ is spanned by $V$, and we will orient the foliation
    by prescribing that this vector field be positively oriented.  (So
    changing $\theta$ to $\theta + \pi$ gives the same foliation, with opposite
    orientation.)
    Now $H^1(M;\R)$ is 2-dimensional and spanned by $[dx]$ and $[dy]$.
    The dual basis in $H_1(M;\R)$ is given by $\{[\gamma_x],[\gamma_y]\}$,
    circles running in the $x$- and $y$-directions respectively.
    We will calculate the cones $C_\F$ and $C^\F$.

    Consider the foliation current
    \[ c : \Omega^1(M) \to \R : \beta \mapsto \int_0^1 \int_0^1 \beta(V) \,dx dy .\]
    It is closed. Moreover,
    $c(dx) = \cos(\theta)$ and $c(dy) = \sin(\theta)$
    so that the class of $c$ in $H_1(M;\R)$ is
    $\cos(\theta)[\gamma_x]+\sin(\theta)[\gamma_y]$.  We conclude that
    \[ \tag{$\dag$}\label{eq:bound-on-clowerf} C_\F \supset \{ \lambda
        (\cos(\theta)[\gamma_x]+\sin(\theta)[\gamma_y]) \mid \lambda \geq 0 \} .\]

    Now consider the closed form
    \[ a\,dx + b\,dy,\]
    where $a,b\in\R$.
    Whenever $a \cos(\theta) + b \sin(\theta) > 0$, this is a closed form that
    is positive on every leaf.
    Because $C^{\F}$ is closed, this shows that
    \[ \tag{$\ddag$}\label{eq:bound-on-cupperf} C^\F \supset \{ a [dx] + b [dy]
        \mid a \cos(\theta) + b\sin(\theta) \geq 0 \} .\]
    Since $C^\F$ is a closed cone dual to $C_\F$, we can conclude that the inclusions
    (\ref{eq:bound-on-clowerf}) and (\ref{eq:bound-on-cupperf}) are in fact equalities.
    These cones are shown in \cref{fig:cones-for-2-torus}.
\end{example}

\begin{figure}
    \centering
    \begin{tikzpicture}[scale=1.5]
        \begin{scope}
            \clip (-1.5,-1.5) rectangle (1.5,1.5);
        \end{scope}
        \begin{scope}
            \clip (-1,-1) rectangle (1.5,1.5);
            \draw[style=very thick] (0,0) -- (8.66,5);
        \end{scope}
        \draw[style=thin] (-1.5,0) -- (1.5,0);
        \draw[style=thin] (0,-1.5) -- (0,1.5);
        \draw[style=thick,->] (0,0) -- (1,0) node[below] {$[\gamma_x]$};
        \draw[style=thick,->] (0,0) -- (0,1) node[left] {$[\gamma_y]$};
        \draw[style=thin] (0.3,0) node[label={[shift={(0.26,-0.23)}]$\theta$}] {} arc (0:30:0.3);
    \end{tikzpicture}
    \hspace{1em}
    \begin{tikzpicture}[scale=1.5]
        \begin{scope}
            \clip (-1.5,-1.5) rectangle (1.5,1.5);
            \draw[style=thin] (5,-8.66) -- (-5,8.66) node[left] {$[\alpha]$};
            \fill[lightgray] (5,-8.66) -- (-5,8.66) -- (10,8.66) -- (10,-8.66);
        \end{scope}
        \draw[style=thin] (-1.5,0) -- (1.5,0);
        \draw[style=thin] (0,-1.5) -- (0,1.5);
        \draw[style=thick,->] (0,0) -- (1,0) node[below] {$[dx]$};
        \draw[style=thick,->] (0,0) -- (0,1) node[right] {$[dy]$};
        \draw[style=thick,->] (0,0) -- (-0.5,0.866) node[left] {$[\alpha]$};
    \end{tikzpicture}
    \caption{On the left a picture of $H_1(\Sphere^1 \times \Sphere^1;\R)$. The
        thick ray is the cone $C_{\F}$.
        On the right a picture of $H^1(\Sphere^1 \times \Sphere^1;\R)$. The
        shaded region, including its boundary, is the cone $C^{\F}$.
        Its interior consists of those classes that can be realized with a closed 1-form
        that is positive on each leaf.}
    \label{fig:cones-for-2-torus}
\end{figure}
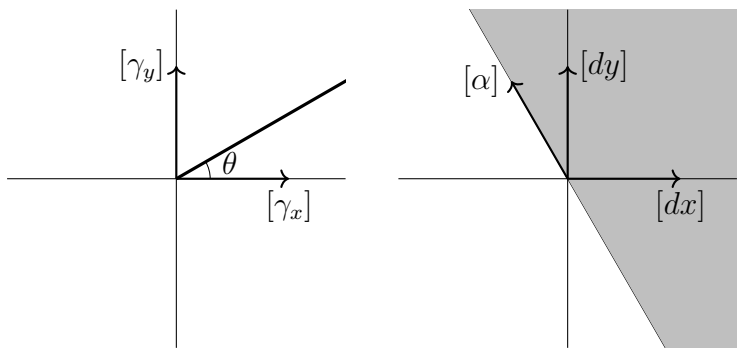

We saw earlier that on a compact oriented manifold, all foliation cycles of
the codimension-0 foliation are multiples of the manifold.
While that is an easy consequence of de Rham homology, there is vast generalization
of this statement that is much harder to prove. It is also due to Sullivan.
We first need to introduce some language.

Suppose that $\F$ is an oriented foliation. Fix a regular oriented foliated
atlas for $\F$, indexed by $i\in I$. \phantomsection\label{page:where-we-need-a-regular-atlas}
Pick a complete transversal $T_i$ in each chart (so
that $T_i$ is the space of plaques in this chart).

\begin{definition}
    A \emph{transverse measure} for $\F$ is a measure on $\bigsqcup_{i\in I} T_i$
    that is invariant under holonomy.
\end{definition}

Such transverse measures are often called \emph{transverse invariant measures} to
stress their invariance under holonomy. Transverse measures should be thought of
as measures on the leaf space $M/\F$.
Note that the choice of transversals does not matter: if we choose two different families
of transversals, there is a bijection between holonomy-invariant measures for
one family and the other.

If we have a transverse measure $\mu$ that is finite on compact sets, we can
associate to it a foliation cycle $c_{\mu}$ as follows
(see \cite[section 10.1]{candelconlon}).
Choose a partition of unity $\{f_i\}_{i\in I}$ subordinate to the foliation atlas.
The current $c_{\mu}$ is defined as
\[ c_{\mu} : \Omega^p(M) \to \R : \alpha \mapsto \sum_{i\in I} \int_{T_i}
\left( \int_{\text{plaque}} f_i \alpha \right) \,d\mu{\restriction_{T_i}} .\]
That is, in each chart we integrate $f_i\alpha$ over each plaque. This results in
a function on the transversal $T_i$, which we then integrate using the measure $\mu$.
We sum the contributions of all charts, giving us $c_{\mu}(\alpha)$.
This current does not depend on the choice of partition of unity, and it is a
foliation cycle.

We are now ready to state Sullivan's result.

\begin{theorem}[{{Sullivan, \cite{sullivan}}}]
    The correspondence $\mu \mapsto c_{\mu}$ is a bijection between \label{thm:transverse-measures}
    \[ \left\{ \parbox{15em}{\centering transverse measures for $\F$ that are finite on compact sets} \right\} \text{ and }
        \left\{ \text{foliation cycles for $\F$} \right\}.
    \]
\end{theorem}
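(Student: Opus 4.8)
The plan is to invert the map $\mu\mapsto c_\mu$ by reading a transverse measure off the local structure of a foliation cycle, and then to verify that the two constructions are mutually inverse. Fix the regular oriented foliated atlas $(U_i)_{i\in I}$ used to define $c_\mu$, and write each $U_i$ in foliated coordinates as $P_i\times T_i$ with plaque coordinates $x=(x_1,\dots,x_p)$ and transverse coordinate $z$, so that $T\F$ is spanned over $U_i$ by $\partial/\partial x_1,\dots,\partial/\partial x_p$. The first observation is that, because a foliation current is by definition a limit of finite sums of Dirac foliation currents $\alpha\mapsto\alpha(v_x)$ with $v_x\in\Lambda^pT_x\F$ positively oriented, any foliation current $c$, restricted to forms supported in $U_i$, \emph{sees only the leafwise-top component} of the test form (the coefficient $f$ in $\alpha=f\,dx_1\wedge\cdots\wedge dx_p+(\text{terms containing some }dz)$) and is \emph{nonnegative} on such forms with $f\geq 0$. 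Combining these two properties with continuity, the Riesz representation theorem produces, for each $i$, a positive Radon measure $\lambda_i$ on $U_i$ with $c(f\,dx_1\wedge\cdots\wedge dx_p)=\int_{U_i}f\,d\lambda_i$.

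Next I would use closedness. If $c\in\ZZ_\F$, then evaluating $c$ on $d\beta$ with $\beta=h\,dx_1\wedge\cdots\wedge\widehat{dx_j}\wedge\cdots\wedge dx_p$ a compactly supported leafwise $(p-1)$-form gives $\int_{U_i}(\partial h/\partial x_j)\,d\lambda_i=0$ for all such $h$ and all $j$; hence $\lambda_i$ is invariant under translations in the plaque directions and therefore disintegrates as the product of Lebesgue measure in the $x$-directions with a unique positive measure $\mu_i$ on $T_i$, finite on compact sets. On an overlap $U_i\cap U_j$ the transition map has the form $(x,z)\mapsto(f(x,z),g(z))$, and the leafwise-top coefficient of a form transforms by the (positive) plaque-Jacobian $J_{ij}=\det(\partial x^{(i)}/\partial x^{(j)})$; comparing the two representations of $c$ gives $\lambda_i=J_{ij}\lambda_j$, and since the leafwise Lebesgue factors also differ exactly by $J_{ij}$, this factor cancels and forces $\mu_j$ to be the pullback of $\mu_i$ under the holonomy transition $g$. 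Thus the $\mu_i$ glue to a single holonomy-invariant transverse measure $\mu_c$, finite on compact sets.

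It then remains to check that $c\mapsto\mu_c$ and $\mu\mapsto c_\mu$ are mutually inverse. That $c_{\mu_c}=c$ follows by unwinding the definition of $c_\mu$ with a partition of unity $\{f_i\}$ subordinate to the atlas and substituting the product description of $\lambda_i$ from the previous step; the only subtlety is that $\sum_i f_i=1$ together with the holonomy-invariance of $\mu_c$ lets the chart-by-chart contributions recombine into $c(\alpha)$. For the other composition, evaluating $c_\mu$ on a form supported in a single chart and of the shape $\phi(x)\psi(z)\,dx_1\wedge\cdots\wedge dx_p$ returns $\bigl(\int\phi\,dx\bigr)\bigl(\int\psi\,d\mu\bigr)$ by the same computation, so $\mu_{c_\mu}=\mu$; in particular $\mu\mapsto c_\mu$ is injective, which also follows directly from this last formula.

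The main obstacle is the content of the first two paragraphs: establishing that a foliation cycle is \emph{locally nothing but the leafwise volume integrated against a transverse measure}, and that these local pieces are genuinely holonomy-compatible. The local statement is a structure (disintegration) theorem for positive currents tangent to $\F$, which one proves by hand in a chart starting from the description of $\Cur_\F$ as the closure of a cone of Dirac currents; the gluing is the bookkeeping that converts ``invariant under leafwise translations, compatible on overlaps'' into ``holonomy-invariant,'' and this is where orientations and the chart-dependence of the leafwise volume form must be tracked with care — that chart-dependence being precisely the reason the correspondence lands in transverse measures rather than in measures on $M$.
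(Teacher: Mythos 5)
Your argument is correct in outline and is essentially the standard Ruelle--Sullivan proof: note that this thesis does not prove the theorem itself but quotes it from \cite{sullivan} (see also chapter 10 of \cite{candelconlon}), and the proof given there proceeds exactly as you describe --- tangency and positivity of a foliation current give, via the Riesz representation theorem, a positive Radon measure in each foliated chart against the leafwise coordinate volume; closedness kills the plaque-direction derivatives, so the local measure disintegrates as leafwise Lebesgue measure times a measure on the transversal; and the plaque-Jacobian cancellation on overlaps yields holonomy invariance, with the two constructions mutually inverse. The only points worth making explicit are that a positive distribution is automatically of order zero (so Riesz applies), and that invariance under the chart-to-chart transitions of a regular foliated atlas suffices for full holonomy invariance because these transitions generate the holonomy pseudogroup; both are standard and your sketch already uses them implicitly.
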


\begin{example}[Foliation cycles for the Reeb foliation of $\Sphere^3$]
    Let $\F$ be the Reeb foliation of $\Sphere^3$.
    Write $L$ for the torus leaf (this is the only compact leaf).
    Choose $x\in L$.
    Choose a short compact transversal $T$ through $x$.
    This transversal intersects every leaf, so a transverse measure is completely
    determined by its restriction to $T$.

    We claim that every transverse measure is supported on $\{x\}$.
    Choose $y_0$ in the interior of $T$ close to (but different from) $x$.
    We will show that $y_0$ is not in the support of $\mu$.
    One of the elements $[\gamma] \in \pi_1(L,x)$ has holonomy that maps $y_0$ to a point
    $y_1 \in T$ that is closer to $x$.
    Transport this point $y_1$ to obtain another point $y_2 \in T$, even closer to $x$.
    Continuing this way, we obtain a sequence of points $y_0, y_1, y_2, ...$ in $T$
    converging to $x$.
    By holonomy-invariance, each of the intervals $(y_{2k},y_{2k+2})$ ($k\in\Z_{\geq 0}$)
    has the same measure.
    They are disjoint, and their union has finite measure. Therefore, each of them
    has measure 0. This shows that $y_1$ is not in the support of $\mu$,
    and therefore $y_0$ is not either.
    This proves that any transverse measure for $\F$ is a multiple of the torus leaf
    (when considered as a transverse measure, it is a multiple of the Dirac measure
    supported on the torus leaf).
\end{example}

\section{Structure currents}

Foliation currents are a very special class of currents.
Sullivan developed a more general theory, that of \emph{structure currents}.
These structure currents can be used to study more general structures on manifolds
than just foliations.
We review the essential parts of this theory.

Let $M$ be a manifold of dimension $m$, and let $0\leq p\leq m$.

\begin{definition}
    A \emph{cone structure} on $M$ is a subset $C$ of $\Lambda^pTM$
    that satisfies the following properties:
    \begin{enumerate}
        \item for each $x\in M$, the fiber $C_x = C\cap \Lambda^pT_xM$ is a convex cone in $\Lambda^pT_xM$;
        \item there exists a $\beta\in\Omega^p(M)$ such that $\beta(v) > 0$
            for every non-zero $v\in C$.
    \end{enumerate}
\end{definition}

This definition is more general than Sullivan's \cite[definition 1.2]{sullivan} in two ways:
\begin{itemize}
    \item the continuity requirement on the cones is replaced by the second condition above;
        this is a weaker condition, yet it suffices in all the proofs;
    \item we do not require that each cone $C_x$ be compact.
\end{itemize}

We will require the compactness separately when needed.

\begin{definition}
    We say that a cone structure is \emph{compact} if the image of $C\setminus\{0\}$
    under the map
    \[ \Lambda^pTM\setminus\{0\} \to (\Lambda^pTM\setminus\{0\}) / \R_{>0} \]
    is compact.
\end{definition}

In other words, a cone structure is compact if the set of all its directions is compact.
Suppose that $C$ is a cone structure on $M$. The following are generalizations of
(Dirac) foliation currents.

\begin{definition}
    A \emph{Dirac structure current} for $C$ is a current of the form
    \[ \Omega^p(M) \to \R : \alpha \mapsto \alpha(v) \]
    where $v\in C$.
\end{definition}
\begin{definition}
    A \emph{structure current} for $C$ (or \emph{$C$-current}) is a current that lies in the closure
    of the cone in $\Cur_p(M)$ spanned by Dirac structure currents for $C$.
    We write $\Cur_C = \Cur_C(M)$ for the space of structure currents.
    We also write $\ZZ_C = \Cur_C(M) \cap \ZZ_p$ for the \emph{structure cycles}
    (or $C$-cycles), and $\BB_C = \Cur_C(M) \cap \BB_p$ for the \emph{structure boundaries}
    (or $C$-boundaries).
\end{definition}

If $\beta\in\Omega^p(M)$ is as in the definition of a cone structure,
we will refer to $\Cur_C \cap \beta^{-1}(1)$ as a \emph{base} of the cone $\Cur_C$ (see \cref{fig:base-of-cone}).
Here is a simple but important observation about compact cone structures.

\begin{lemma}[{{see \cite{sullivan}}}]
    If $C$ is a compact cone structure, the base $\Cur_C\cap\beta^{-1}(1)$ is
    \label{lem:compact-base} compact.
\end{lemma}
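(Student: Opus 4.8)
The plan is to exhibit the base $\Cur_C\cap\beta^{-1}(1)$ as a closed and bounded subset of $\Cur_p(M)$, and then to conclude by the Montel property of $\Cur_p(M)$. Since $M$ is compact, $\Omega^p(M)$ is a nuclear Fréchet space, hence Fréchet--Montel; the topology placed on $\Cur_p(M)$ above is precisely its strong dual topology (uniform convergence on bounded sets), and the strong dual of a Fréchet--Montel space is again a Montel space. In a Montel space every closed bounded set is compact, so this suffices. (Alternatively, one may simply invoke Sullivan \cite{sullivan}, where this compactness is proved; the argument below is essentially an unpacking of that.)

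Closedness of the base is immediate: $\Cur_C$ is closed by definition, and the evaluation $c\mapsto c(\beta)$ is a continuous functional on $\Cur_p(M)$ (by de Rham's identification of $\Omega^p(M)$ with the continuous dual of $\Cur_p(M)$), so $\beta^{-1}(1)$ is closed, and hence so is their intersection.

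The substance is boundedness. Put $K=\{v\in C : \beta(v)=1\}$. Since $\beta>0$ on $C\setminus\{0\}$ and each $C_x$ is a cone, every ray of $C$ meets $\beta^{-1}(1)$ in exactly one point, so $K$ is the image of the projectivization of $C$ under the continuous map $[v]\mapsto v/\beta(v)$; as $C$ is a compact cone structure, that projectivization is compact, hence $K$ is compact. It follows that $p_K(\alpha):=\sup_{v\in K}\lvert\alpha(v)\rvert$ is a continuous seminorm on $\Omega^p(M)$, being dominated by a constant multiple of the $C^0$-norm. Every Dirac structure current $c_v$ with $v\in K$ satisfies $\lvert c_v(\alpha)\rvert\le p_K(\alpha)$. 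A current $c$ in the base is a limit of elements of the cone generated by the $c_v$, $v\in C$; their pairings with $\beta$ tend to $c(\beta)=1$, so after rescaling (harmless in the limit) the approximants become convex combinations of Dirac currents $c_v$ with $v\in K$, and the bound $\lvert\,\cdot\,(\alpha)\rvert\le p_K(\alpha)$ passes to them and then to $c$. Thus $\lvert c(\alpha)\rvert\le p_K(\alpha)$ for every $c$ in the base and every $\alpha$. Hence for any bounded $B\subset\Omega^p(M)$ one has $\sup_{\alpha\in B}p_K(\alpha)<\infty$, and this supremum dominates $\lvert c(\alpha)\rvert$ for all $c$ in the base and $\alpha\in B$; that is exactly the assertion that the base is bounded in $\Cur_p(M)$.

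The only genuine subtlety is getting the functional-analytic framework right: one must pass from boundedness to compactness using that $\Cur_p(M)$ is Montel, rather than via Banach--Alaoglu, which would only give compactness in the weak-$*$ topology, strictly coarser than the topology carried by $\Cur_p(M)$.
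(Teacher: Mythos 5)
Your proof is correct, and it is essentially the argument that the thesis itself defers to Sullivan and de Rham for: the base is closed, it is bounded because (after the harmless rescaling you describe, using $\beta>0$ on $C\setminus\{0\}$) every element is a limit of convex combinations of Dirac currents $c_v$ with $v$ in the compact slice $K=\{v\in C:\beta(v)=1\}$, hence satisfies $\lvert c(\alpha)\rvert\le p_K(\alpha)$, and closed bounded subsets of $\Cur_p(M)$ are compact since it carries the strong dual topology of the Fr\'echet--Montel space $\Omega^p(M)$. The only point to phrase carefully is that $\Cur_p(M)$ is not metrizable, so ``limit of approximants'' should be read in terms of nets (or of arbitrary neighborhoods of $c$ meeting the convex cone), but your rescaling and the passage of the bound $\lvert\,\cdot\,(\alpha)\rvert\le p_K(\alpha)$ to the limit work verbatim in that setting.
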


\begin{figure}
    \centering
    \includegraphics[width=0.4\linewidth]{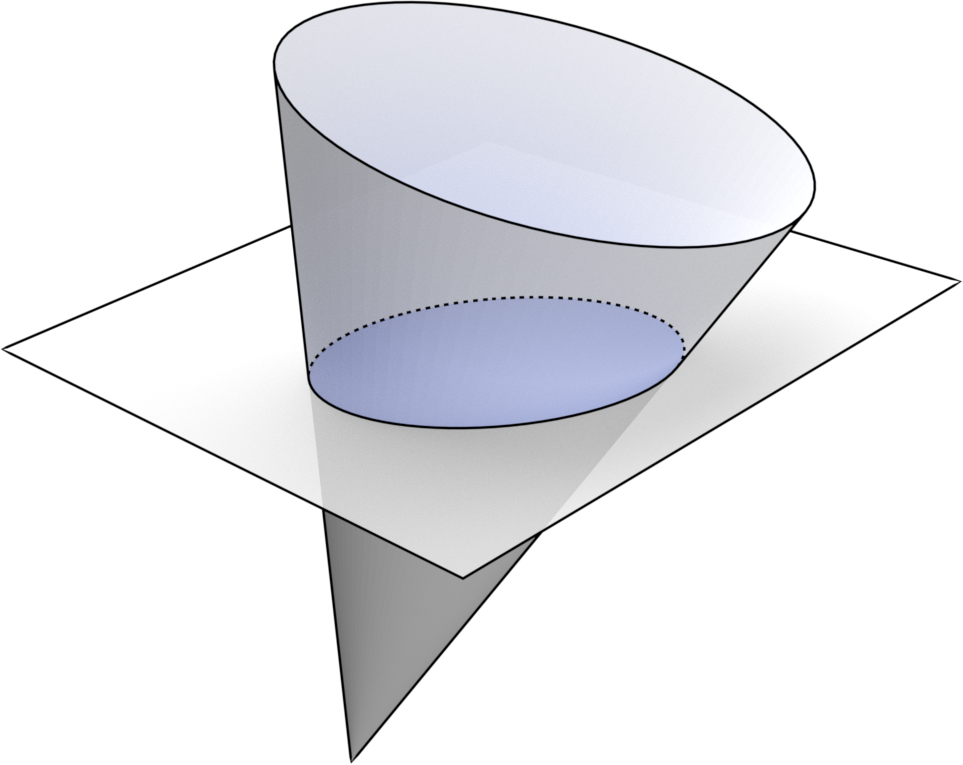}
    \caption{Low-dimensional analogue of the cone $\Cur_C$
        inside of $\Cur_p$. The cone has its tip at the origin.
        Intersecting it with the affine hyperplane $\beta^{-1}(1)$ gives us
        the base of the cone, shown in blue. This base is convex.
        If $C$ is a compact cone structure, the base of $\Cur_C$ is also compact.}
    \label{fig:base-of-cone}
\end{figure}

\begin{definition}
    Suppose that $C$ is a cone structure on $M$.
    A $p$-form $\alpha$ on $M$ is said to be \emph{positive on $C$} if $\alpha(v) > 0$
    for all non-zero $v\in C$.
\end{definition}

\begin{example}
    Suppose that $M$ is a manifold equipped with an oriented codimension-1 foliation $\F$
    (so $p = m-1$). Then $\Lambda^pTM$ is a line bundle over $M$.
    Taking $C_x$ to be the ray in $\Lambda^pT_xM$ determined by the chosen orientation of $\F$,
    we see that structure currents are precisely foliation currents in this case,
    and the existence of a closed form that is positive on $C$ amounts to tautness of $\F$.
\end{example}

\begin{example}
    Let us give a toy example. We will use $M = \R^2$ and $p=1$.
    Choose a non-vanishing vector field $X$ on $\R^2$.
    Set
    \[ C = \{ \lambda X_x \mid \lambda\in\R_{\geq 0}, x\in \R^2 \} .\]
    In other words, $C$ consists of the rays in $T\R^2$ determined by $X$.
    Note that $C$ is really the structure cone for the foliation obtained as the flow of $X$.

    We claim that there exists a closed form positive on $C$ precisely if there is
    a dimension-1 foliation $\F$ of the plane such that $\F$ and $X$ are transverse.
    If there exists a closed form that is positive on $C$, then its kernel
    defines such a foliation $\F$.
    Suppose, conversely, that there is such a foliation $\F$.
    Every foliation of the plane is the collection of (the connected components of)
    level sets of a function $f : \R^2\to\R$ with non-vanishing gradient \cite{kamke}
    (see also \cite[theorem 42]{kaplan} and \cite[section 2]{haefligerreeb}
    for more information about foliations of the plane).
    Then $df$ (or $-df$) is a closed form that is positive on $C$.
\end{example}

\begin{example}
    Suppose that $M$ is a manifold equipped with an almost complex structure $J : TM \to TM$,
    $J^2=-\id$.
    Choose
    \[ C = \{ v\wedge J(v) \mid v \in TM \} .\]
    Then a form $\alpha$ is positive on $C$ precisely if $\alpha(v,J(v)) > 0$ for all
    non-zero $v\in TM$.
    The existence of a closed form that is positive on $C$ therefore amounts to
    the existence of a symplectic structure on $M$ that tames $J$.
    This example appears in \cite[\S 10]{sullivan}.
\end{example}

\begin{example}
    Generalizing the previous example,
    suppose that $(M,\F)$ is a foliated manifold, and $J : T\F \to T\F$ is a leafwise
    almost complex structure.
    We choose
    \[ C = \{ v\wedge J(v) \mid v\in T\F \} .\]
    Then a closed form that is positive on $C$ is precisely a calibrated leafwise
    symplectic form that tames $J$.
\end{example}

\begin{example}
    We can also specialize to (the closure of) one specific leaf.
    Suppose that $(M,\F)$ is a foliated manifold and that $L$ is a leaf of $\F$.
    Suppose moreover that $J : T\F \to T\F$ is a leafwise almost complex structure.
    We choose
    \[ C = \{ v\wedge J(v) \mid v \in T_x\F, x\in \cl{L} \} .\]
    Then a closed form is positive on $C$ precisely if its restriction to $L$ is
    a symplectic form for which $\omega(v,J(v))$ is bounded away from 0 for unit-length $v$.
\end{example}

The following results are the generalizations of \cref{thm:sullivan-cycle} and
\cref{thm:sullivan-cone} to structure currents. Their proofs can be found in
\cite[theorem I.7]{sullivan} (and one can check that their proofs go through under our
slightly weaker assumptions).

\begin{theorem}
    Suppose that $C$ is a compact cone structure on a compact manifold $M$.
    \label{thm:sullivan-hahn-banach}
    Then the following are equivalent:
    \begin{enumerate}
        \item there is a closed (resp.\ exact) $p$-form on $M$ that is positive on $C$;
        \item 0 is the only structure boundary (resp.\ cycle) for $C$.
    \end{enumerate}
\end{theorem}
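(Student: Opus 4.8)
The plan is to prove both equivalences simultaneously by the Hahn--Banach separation idea underlying Sullivan's arguments, keeping the ``closed/boundary'' and ``exact/cycle'' cases in parallel. Fix a form $\beta\in\Omega^p(M)$ witnessing that $C$ is a cone structure (so $\beta>0$ on $C\setminus\{0\}$), and let $K=\Cur_C\cap\beta^{-1}(1)$ be the base of the cone of structure currents. By \cref{lem:compact-base}, $K$ is compact; it is also convex, and $0\notin K$. Recall from earlier that $\BB_p(M)\subseteq\ZZ_p(M)\subseteq\Cur_p(M)$ are closed subspaces, and that $\Omega^p(M)$ is the continuous dual of $\Cur_p(M)$.

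For $(1)\Rightarrow(2)$: given a closed (resp.\ exact) $p$-form $\alpha$ positive on $C$, I would first use compactness of the cone structure to choose $\eps>0$ so that $\alpha-\eps\beta$ is still positive, hence non-negative, on $C$. Then for any structure current $c$ one has $c(\alpha-\eps\beta)\ge 0$, since $c$ is a limit of non-negative combinations of Dirac structure currents and $(\alpha-\eps\beta)(v)\ge 0$ for $v\in C$; thus $c(\alpha)\ge\eps\,\beta(c)$, and $\beta(c)>0$ for every nonzero $c\in\Cur_C$ because the base $K$ is compact and avoids $0$. On the other hand, if $c=\partial c'$ is a structure boundary then $c(\alpha)=c'(d\alpha)=0$ in the closed case, and if $c$ is a structure cycle and $\alpha=d\gamma$ then $c(\alpha)=(\partial c)(\gamma)=0$ in the exact case. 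In either situation $c(\alpha)=0$ forces $c=0$.

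For $(2)\Rightarrow(1)$: assume $0$ is the only structure boundary (resp.\ cycle), so that $K$ is disjoint from the closed subspace $\BB_p(M)$ (resp.\ $\ZZ_p(M)$). Since $K$ is compact and convex, the Hahn--Banach separation theorem in the locally convex space $\Cur_p(M)$ produces a continuous linear functional that must vanish on the subspace (a nonzero value on a subspace would be unbounded) and is strictly positive on $K$. By de Rham's duality this functional is $c\mapsto c(\alpha)$ for a unique $\alpha\in\Omega^p(M)$. Vanishing on $\BB_p(M)$ gives $c'(d\alpha)=0$ for all $c'\in\Cur_{p+1}(M)$, hence $d\alpha=0$; in the cycle case, vanishing in addition on all of $\ZZ_p(M)$ together with the isomorphism $H_p^{\dR}(M)\cong H^p(M;\R)'$ forces $[\alpha]=0$, i.e.\ $\alpha$ is exact. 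Finally, strict positivity on $K$ means $\alpha(v)>0$ for every $v\in C$ with $\beta(v)=1$, and by scaling $\alpha$ is positive on all of $C$.

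The main obstacle, and the place where the hypotheses are genuinely used, is the separation step in $(2)\Rightarrow(1)$: it requires the cone $\Cur_C$ to have a compact convex base, which is exactly \cref{lem:compact-base} and is why compactness of the cone structure (and of $M$) is assumed; it also requires $\BB_p$ and $\ZZ_p$ to be closed, and de Rham's identification of $\Cur_p(M)'$ with $\Omega^p(M)$ to convert the abstract separating functional back into a smooth form. Beyond that, only routine bookkeeping remains --- checking that $0\notin K$ and that the separating constant may be normalized to $0$.
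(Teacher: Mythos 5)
Your proof is correct and follows essentially the same route as the paper, which only sketches the argument (attributing the details to Sullivan): positivity of a closed/exact form on nonzero structure currents in one direction, and Hahn--Banach separation of the compact convex base $\Cur_C\cap\beta^{-1}(1)$ from the closed subspace $\BB_p$ (resp.\ $\ZZ_p$), combined with de Rham's identification $\Cur_p(M)'\cong\Omega^p(M)$, in the other. The extra details you supply (the $\alpha-\eps\beta$ trick, the vanishing of the separating functional on a subspace, and the use of $H_p^{\dR}(M)\cong H^p(M;\R)'$ to get exactness) are exactly the steps the paper leaves implicit, and they are handled correctly.
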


\begin{theorem}
    Let $C$ be a compact cone structure on a compact manifold $M$.
    Suppose that there is a closed $p$-form that is positive on $C$,
    but not an exact one.
    Let $H_C$ be the image of $\ZZ_C$ inside $H_p(M;\R)$.
    Then $H_C$ is a convex cone with compact base.
    The dual cone
    \[ H^C = \{ [\alpha]\in H^p(M;\R) \mid \alpha(c) \geq 0 \text{ for all $c\in H_C$} \}
        \subset H^p(M;\R) \]
    has non-empty interior consisting of precisely those cohomology classes that
    can be represented by a form that is positive on $C$.
    \label{thm:sullivan-cone-C}
\end{theorem}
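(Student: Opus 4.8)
This is a convex-duality argument carried out in the dual pair of locally convex spaces $(\Cur_p(M),\Omega^p(M))$, which are each other's continuous dual by de Rham's theorem quoted above; so the Hahn--Banach separation theorem and the bipolar theorem are available. Two preliminaries are needed. Since $C$ is a compact cone structure, \cref{lem:compact-base} shows the base $B_C=\Cur_C\cap\beta^{-1}(1)$ is compact (it is convex, and nonempty by the hypothesis). Fixing a metric on $\Lambda^pTM$, the set $D$ of unit-length elements of $C$ is homeomorphic to the compact space of directions of $C$, hence compact, and $\beta$ is continuous and strictly positive there, so $\beta\geq c_0>0$ on $D$; estimating on finite positive combinations of Dirac currents and passing to the limit then gives, for every $p$-form $\psi$, the bound $\lvert c(\psi)\rvert\leq(\sup_D\lvert\psi\rvert/c_0)\,c(\beta)$ for all $c\in\Cur_C$. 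In particular $\beta$ is strictly positive on $\Cur_C\setminus\{0\}$, and if $\psi$ is moreover positive on $C$ then the same computation gives $c(\psi)\geq(\inf_D\psi/\sup_D\beta)\,c(\beta)>0$ for all $c\in\Cur_C\setminus\{0\}$. Finally, since a closed $p$-form positive on $C$ exists, \cref{thm:sullivan-hahn-banach} gives $\BB_C=\{0\}$, and since no \emph{exact} such form exists, $\ZZ_C\neq\{0\}$.

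Next I would settle the statements about $H_C$. The linear map $\ZZ_C\to H_p(M;\R)$, $c\mapsto[c]$, is continuous with kernel $\ZZ_C\cap\BB_p=\BB_C=\{0\}$, hence injective; it carries the compact convex set $\ZZ_C\cap\beta^{-1}(1)$ onto a compact convex set $K\subset H_p(M;\R)$ with $0\notin K$ (a preimage of $0$ would be an exact structure cycle, so $0$, contradicting $\beta=1$ there). Thus $H_C=\R_{\geq0}\cdot K$ is the cone over a compact convex set avoiding the origin, so it is a closed, convex, pointed cone with compact base $K$. Standard finite-dimensional convex geometry then yields that its dual cone $H^C$ has nonempty interior and that $\interior(H^C)=\{[\alpha]\in H^p(M;\R):\alpha(z)>0\text{ for all }z\in H_C\setminus\{0\}\}$; and since $c(\alpha)$ depends only on the class $[\alpha]$ when $c$ is a closed current, and $\ZZ_C\to H_C$ is a bijection, $[\alpha]\in\interior(H^C)$ is equivalent to: some (equivalently every) closed representative $\alpha$ satisfies $c(\alpha)>0$ for all $c\in\ZZ_C\setminus\{0\}$.

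It remains to match $\interior(H^C)$ with the classes represented by a form positive on $C$. One inclusion is immediate from the preliminary estimate: if $\alpha$ is closed and positive on $C$, then $c(\alpha)>0$ for all $c\in\Cur_C\setminus\{0\}$, in particular for all nonzero structure cycles, so $[\alpha]\in\interior(H^C)$; this also shows $\interior(H^C)\neq\emptyset$, using the hypothesis. For the converse, take $[\alpha]\in\interior(H^C)$ with closed representative $\alpha$, so $c(\alpha)>0$ for every nonzero structure cycle $c$, and let $U\subset\Omega^p(M)$ be the open convex cone of $p$-forms positive on the compact set $B_C$; it is nonempty because $\beta\in U$. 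I claim the affine subspace $\alpha+d\Omega^{p-1}(M)$ meets $U$. If not, Hahn--Banach separation yields a nonzero current $c\in\Cur_p(M)$ bounded above on $\alpha+d\Omega^{p-1}(M)$ and bounded below on $U$; boundedness on the affine subspace forces $c(d\gamma)=0$ for all $\gamma$, i.e.\ $\partial c=0$, so $c\in\ZZ_p(M)$, and $c(\alpha)\leq\inf_U c\leq0$ since $U$ is a cone with $0$ in its closure; moreover $c\geq0$ on $U$, hence on its closure $\{\psi:c'(\psi)\geq0\text{ for all }c'\in\Cur_C\}$, so by the bipolar theorem $c\in\Cur_C$. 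Then $c\in\Cur_C\cap\ZZ_p=\ZZ_C$ is a nonzero structure cycle with $c(\alpha)\leq0$, contradicting $c(\alpha)>0$. Hence some $\alpha+d\gamma$ is positive on $B_C$, and rescaling Dirac currents to the base makes it positive on all of $C$; as $[\alpha+d\gamma]=[\alpha]$, this is the desired representative.

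\textbf{Main obstacle.} The crux is the separation argument just described: one must use the de Rham duality $\Cur_p(M)^*\cong\Omega^p(M)$ to realize the separating functional as a current, observe that boundedness on $\alpha+d\Omega^{p-1}(M)$ forces it to be a \emph{closed} current, and apply the bipolar theorem to land it in $\Cur_C$---all of which rely on $B_C$ being compact (\cref{lem:compact-base}, hence on compactness of $C$), on $U$ being open, and on having secured $\BB_C=\{0\}$ beforehand, so that a nonzero closed current in $\Cur_C$ really is a nonzero \emph{structure cycle} and the inequality $c(\alpha)\leq0$ is a genuine contradiction.
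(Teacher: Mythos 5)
Your proposal is correct, but note that the paper itself contains no proof of this statement: it is quoted from Sullivan \cite[theorem I.7]{sullivan} with the remark that the proof goes through under the weaker hypotheses, and the only argument sketched in the text is the Hahn--Banach separation behind \cref{thm:sullivan-hahn-banach}. Your route --- compactness of the base from \cref{lem:compact-base}, the de Rham duality between $\Cur_p(M)$ and $\Omega^p(M)$, separation of the affine subspace $\alpha+d\Omega^{p-1}(M)$ from the open cone of forms positive on $C$, and the bipolar theorem to place the separating current in $\Cur_C$ --- is exactly the duality argument of the cited proof and of that sketch, so there is no methodological divergence to report. One small slip: the map $\ZZ_C\to H_p(M;\R)$ need not be injective ($\BB_C=\{0\}$ only says its ``kernel'' meets the cone trivially; distinct homologous structure cycles exist, e.g.\ distinct fibers of a fibration), but you never actually use injectivity --- all you need is that nonzero structure cycles have nonzero classes and that every nonzero element of $H_C$ comes from a nonzero structure cycle, both of which you have --- so the argument stands as written.
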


This theorem shows that structure cycles can be used in trying to understand forms
that are positive on a chosen cone structure $C$.
The technical tool underlying the theorems above is a separation theorem
that follows from the Hahn-Banach theorem of functional analysis.
Even though we will not need to know anything about the proof of
\cref{thm:sullivan-hahn-banach}, we will give a
sketch of what is going on behind the scenes.
Here is the technical heart of the matter.

\begin{theorem}[Hahn-Banach separation theorem]
    Let $W$ be a locally convex topological real vector space.
    Suppose that $X \subset W$ is a non-empty compact convex subset,
    and $V\subset W$ a non-empty closed convex subset.
    If $X$ and $V$ are disjoint, then there is a continuous linear functional
    $\alpha : W \to \R$ and a real number $d$ such that $\alpha(V) < d < \alpha(X)$.
\end{theorem}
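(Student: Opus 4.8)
The plan is to reduce the statement to the \emph{first geometric form} of the Hahn--Banach theorem: any nonempty open convex subset of $W$ and any disjoint nonempty convex subset can be separated by a closed affine hyperplane. The reduction works by thickening the compact set $X$ to an open set. Since $X$ is compact, $V$ is closed, and $X\cap V=\emptyset$, the open set $W\setminus V$ contains $X$; by continuity of addition and local convexity, for each $x\in X$ one can pick a convex balanced open neighbourhood $U_x$ of $0$ with $x+U_x+U_x+U_x\subseteq W\setminus V$. Cover $X$ by finitely many $x_i+U_{x_i}$ and set $U=\bigcap_i U_{x_i}$, again a convex balanced open neighbourhood of $0$. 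A short diagram chase --- writing an arbitrary point of $X$ as $x_j+w$ with $w\in U_{x_j}$, and using that $U\subseteq U_{x_j}$ and $U_{x_j}$ is balanced --- shows that $A:=X+U$ and $B:=V+U$ are nonempty, open, convex, and disjoint. Note that compactness of $X$ is used precisely here, to fatten $X$ by a single uniform $U$.

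Next I would invoke the first geometric form for this $A$ and $B$; this is where the genuine content sits, and I would not reproduce the routine details. The set $A-B$ is open, convex, and avoids $0$ (disjointness of $A$ and $B$). Translating it to contain the origin and forming the Minkowski gauge $\mu$ of the translate gives a continuous sublinear functional whose open ``unit ball'' is exactly that translate; applying the analytic Hahn--Banach extension theorem to a suitable linear functional defined on a one-dimensional subspace and dominated by $\mu$ produces a linear functional $\alpha\colon W\to\R$ with $\alpha\le\mu$. The bound $\alpha\le\mu$ together with continuity of $\mu$ forces $\alpha$ to be continuous and nonzero, and unwinding the gauge inequality yields $\alpha(b)<\alpha(a)$ for all $a\in A$ and $b\in B$ --- after replacing $\alpha$ by $-\alpha$ if necessary, so that $\alpha$ is the larger of the two on $A=X+U$. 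In particular, setting $s:=\sup_B\alpha$ and $t:=\inf_A\alpha$, we have $s\le t$ and both are finite.

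Finally I would upgrade this to strict two-sided separation. A continuous nonzero linear functional on a topological vector space is an open map onto $\R$, so $\alpha(A)$ and $\alpha(B)$ are open intervals; hence $\alpha(x)>t$ for every $x\in X\subseteq A$ and $\alpha(v)<s$ for every $v\in V\subseteq B$. Choosing any $d\in[s,t]$ (for instance $d=\tfrac12(s+t)$) then gives $\alpha(v)<s\le d\le t<\alpha(x)$, hence $\alpha(v)<d<\alpha(x)$ for all $v\in V$ and $x\in X$, as required.

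The main obstacle is the first geometric form itself, i.e.\ the construction and continuity of the Minkowski gauge and the appeal to the analytic Hahn--Banach theorem; once that black box is in hand, the compactness argument producing $U$ and the openness argument producing the strict inequalities are both elementary. In the thesis this theorem is only quoted as a standard tool, so in practice one simply cites a functional analysis reference and does none of the above.
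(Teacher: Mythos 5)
Your proposal is correct, but note that the thesis does not prove this statement at all: the Hahn--Banach separation theorem is quoted there as a standard result from functional analysis and used purely as a black box in the sketch of Sullivan's duality arguments, so there is no proof in the paper to compare against. Your argument is the standard textbook one and it holds up: compactness of $X$ enters exactly where you say, to produce a single convex balanced neighbourhood $U$ of $0$ with $(X+U)\cap(V+U)=\emptyset$ (your three-fold thickening $x+U_x+U_x+U_x$ together with balancedness of $U$ makes the disjointness check go through); the first geometric form, via the Minkowski gauge of a translate of $A-B$ and the analytic Hahn--Banach extension, separates the two open convex thickenings; and since a nonzero continuous linear functional is an open map, $\alpha(X+U)$ and $\alpha(V+U)$ are open intervals, so with $s=\sup_{V+U}\alpha\le t=\inf_{X+U}\alpha$ any $d\in[s,t]$ gives the strict two-sided inequality $\alpha(v)<s\le d\le t<\alpha(x)$ for all $v\in V$, $x\in X$. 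In short: a correct, self-contained route to a statement the paper simply cites.
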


\begin{proof}[Sketch of proof of {{\cref{thm:sullivan-hahn-banach}}}]
    Suppose there is a closed (resp. exact) $p$-form $\omega$ on $M$ that is positive on $C$.
    Then $\omega$ takes positive values on all non-zero $C$-currents.
    However, it must vanish on exact (resp. closed) currents, and therefore 0 is the only
    structure boundary (resp. cycle) for $C$.

    Conversely, suppose that 0 is the only structure boundary (resp. cycle) for $C$.
    This means that $\Cur_C$ only touches the space $\BB_p$ (resp. $\ZZ_p$) at the origin.
    We can therefore, by Hahn-Banach, find a hyperplane containing $\BB_p$
    (resp. $\ZZ_p$) that doesn't intersect the base of the cone. The corresponding linear
    functional $\Cur_p \to \R$ is precisely the closed (resp. exact) $p$-form we are looking for.
\end{proof}

\Cref{fig:cone-situations} illustrates the possible ways that $\Cur_C$ can be positioned
relative to $\ZZ_p$ and $\BB_p$.

Let us list several interesting cone structures.

\begin{figure}
    \centering
    \begin{minipage}[b]{.3\linewidth}
        \centering
        \includegraphics[width=0.9\linewidth]{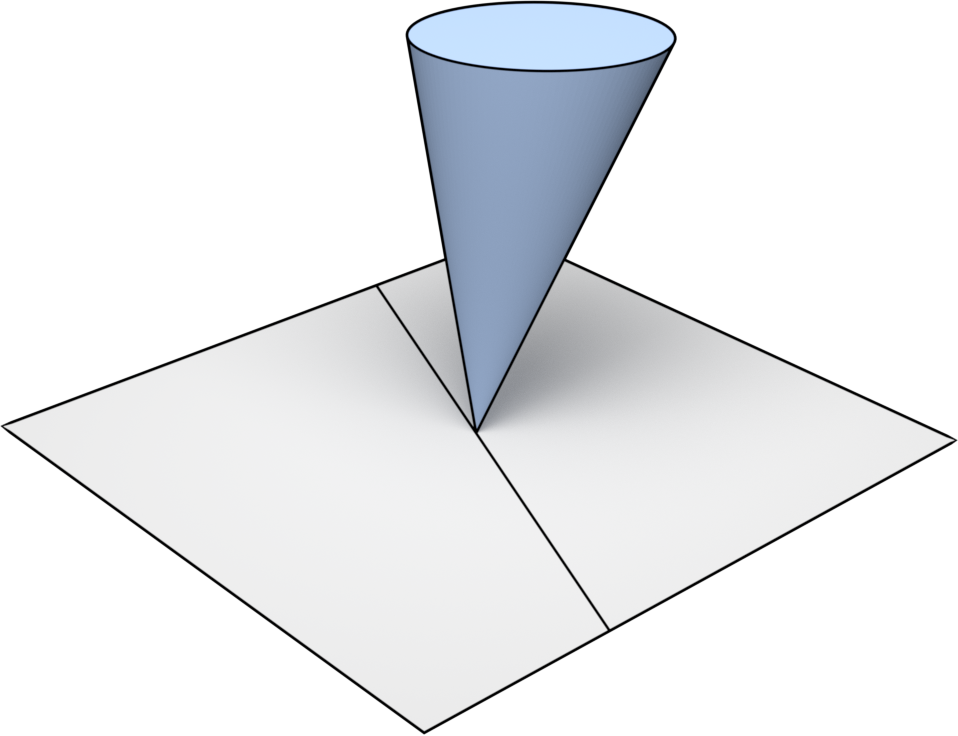}
    \end{minipage}%
    \begin{minipage}[b]{.3\linewidth}
        \centering
        \includegraphics[width=0.9\linewidth]{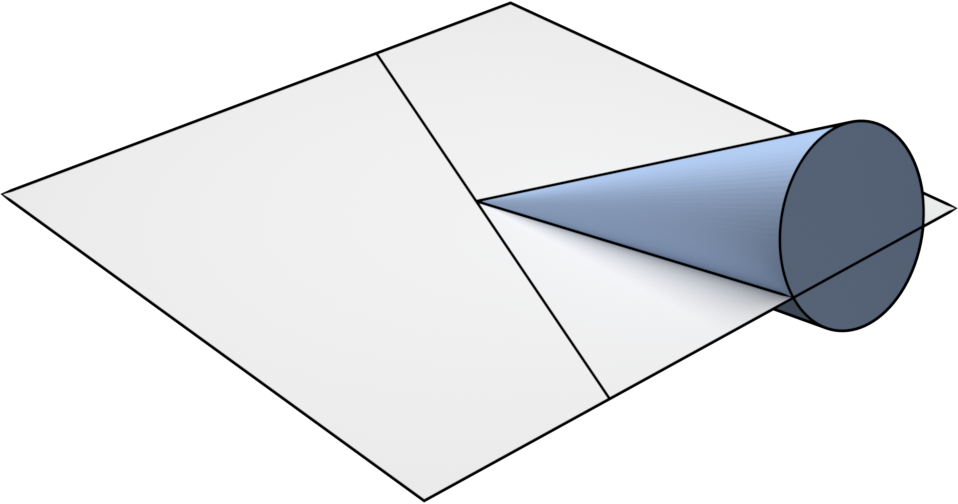}
    \end{minipage}%
    \begin{minipage}[b]{.3\linewidth}
        \centering
        \includegraphics[width=0.9\linewidth]{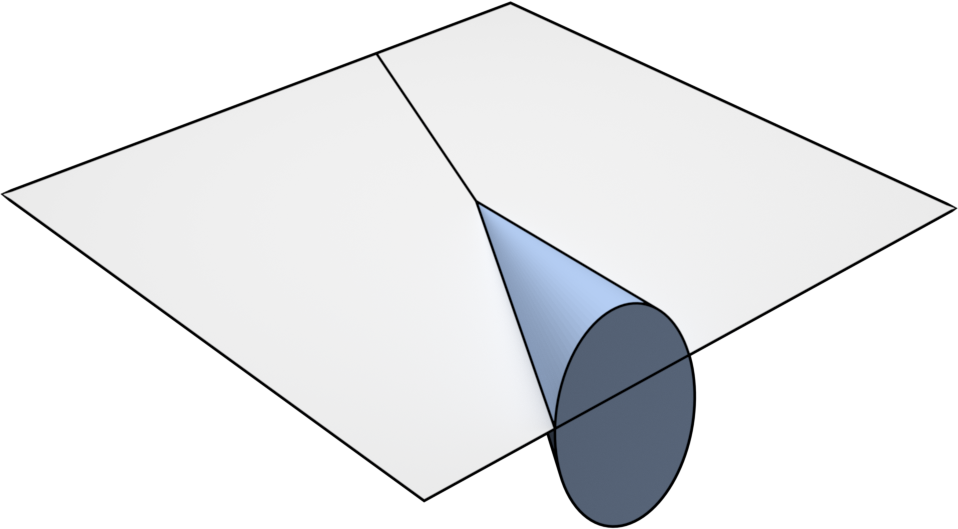}
    \end{minipage}%
    \caption{The possible ways that the cone $\Cur_C$ can be positioned
    relative to $\ZZ_p$ and $\BB_p$. The blue cone represents $\Cur_C$.
    The space $\ZZ_p$ is represented by the plane, and its subspace $\BB_p$
    is in this case shown as a line.
    \emph{Left:} the cone intersects $\ZZ_p$ only at the tip; in this case there is
    an exact form positive on $C$.
    \emph{Middle:} the cone intersects $\ZZ_p$ non-trivially, but only touches $\BB_p$
    at the tip; in this case there is a closed, but not an exact form that is positive
    on $C$.
    \emph{Right:} the cone intersects $\BB_p$ non-trivially; there is no closed form that
    is positive on $C$.}
    \label{fig:cone-situations}
\end{figure}

The following result gives some insight into the general form of structure currents.

\begin{proposition}[{{\cite[proposition I.8]{sullivan}}}]
    Any structure current $c$ can be represented as
    \[ c=\int_M s\,d\mu ,\]
    (meaning $c(\alpha) = \int_M \alpha(s) \,d\mu$)
    where $\mu$ is a non-negative measure on $M$, and $s$ is a $\mu$-integrable
    section of $\Lambda^pTM$ taking values in $C$. \label{prop:general-form-structure-current}
\end{proposition}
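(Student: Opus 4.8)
The plan is to realize $c$ as a limit of ``atomic'' structure currents, reinterpret those as measures on a compact space of normalized directions, pass to a weak-$*$ limit, and finally collapse that limit to a measure on $M$ equipped with a section by disintegrating over $M$ and taking fiberwise barycenters. Throughout, fix a $p$-form $\beta\in\Omega^p(M)$ with $\beta(v)>0$ for every non-zero $v\in C$, as provided by the definition of a cone structure.

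First I would unwind the definition of a structure current. Such a $c$ is the limit of a net $(c_i)$ lying in the cone spanned by Dirac structure currents, and each $c_i$ is a finite sum $\alpha\mapsto\sum_j\alpha(w_{i,j})$ with $w_{i,j}\in C$; combining terms with the same base point (using convexity of the fibers $C_x$) we may assume the points $x_{i,j}\in M$ below $w_{i,j}$ are distinct. Rescaling $w_{i,j}$ by $\beta(w_{i,j})>0$, we rewrite $c_i(\alpha)=\int_M\alpha(s_i)\,d\mu_i$, where $\mu_i=\sum_j\beta(w_{i,j})\,\delta_{x_{i,j}}$ is a finite atomic measure and $s_i$ is a $\mu_i$-measurable section of $\Lambda^pTM$ taking values in the normalized cone $\Gamma:=\{v\in C : \beta(v)=1\}$. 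Since $\mu_i(M)=c_i(\beta)\to c(\beta)$, the total masses $\mu_i(M)$ are eventually bounded.

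Next, push the measures forward along the sections: set $\nu_i=(s_i)_*\mu_i$, a non-negative measure on $\Gamma$ of the same total mass. The key structural input is that $\Gamma$ is compact --- this is where compactness of $M$ (and, in Sullivan's original hypotheses, of the cone structure) enters: $\beta>0$ on $C\setminus\{0\}$ prevents the normalized directions from escaping to infinity. By Banach--Alaoglu some subnet of $(\nu_i)$ converges weak-$*$ to a finite non-negative measure $\nu$ on $\Gamma$. For every $\alpha\in\Omega^p(M)$ the map $v\mapsto\alpha(v)$ (the pairing of the form with the $p$-vector) is continuous on $\Gamma$, and $c_i(\alpha)=\int_\Gamma\alpha(v)\,d\nu_i(v)$ by the change-of-variables formula, so passing to the limit gives
\[ c(\alpha)=\int_\Gamma\alpha(v)\,d\nu(v)\qquad\text{for all }\alpha\in\Omega^p(M) .\]

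It remains to collapse $\nu$ back down to $M$. Let $\mu$ be the pushforward of $\nu$ under the bundle projection $\Gamma\to M$, a finite non-negative measure on $M$, and disintegrate $\nu=\int_M\nu_x\,d\mu(x)$ with each $\nu_x$ a probability measure on the fiber $\Gamma_x\subset\Lambda^pT_xM$. Put $s(x)=\int_{\Gamma_x}v\,d\nu_x(v)$, the barycenter: since $\Gamma_x$ lies in the convex cone $C_x$ the vector $s(x)$ again lies in $C_x$, and $\beta(s(x))=\int_{\Gamma_x}\beta(v)\,d\nu_x(v)=1$, so $s(x)\neq 0$; moreover $s$ is measurable (disintegration theorem) and bounded ($\Gamma$ compact), hence $\mu$-integrable. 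Using linearity of the form at $x$,
\[ c(\alpha)=\int_M\!\left(\int_{\Gamma_x}\alpha(v)\,d\nu_x(v)\right)d\mu(x)=\int_M\alpha(s(x))\,d\mu(x) ,\]
which is precisely $c=\int_M s\,d\mu$. The only non-formal ingredient is the disintegration-and-barycenter step (together with the verification that one may work over a compact $\Gamma$); I expect that to be the main point to handle carefully, while the rest is bookkeeping. An alternative to the weak-$*$ compactness argument is to apply Choquet's theorem directly to the compact convex base $\Cur_C\cap\beta^{-1}(1)$, whose extreme points are normalized Dirac structure currents, and then push the representing probability measure down to $M$ in the same way.
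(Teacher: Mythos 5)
Your proposal is correct in substance, but there is nothing in the thesis to compare it against: the proposition is quoted from Sullivan (his Proposition I.8) and used as a black box, with no proof reproduced in the chapter. Your route --- writing a structure current as a limit of finite non-negative combinations of Dirac structure currents, normalizing by $\beta$ so each approximant becomes a finite atomic measure on $\Gamma=\{v\in C:\beta(v)=1\}$, extracting a weak-$*$ limit $\nu$ on $\Gamma$, and then disintegrating $\nu$ over $M$ and taking fiberwise barycenters --- is sound, and it sits naturally alongside the compact-base/Krein--Milman/Choquet ideas the chapter develops immediately after this proposition. Two remarks. First, as you yourself flag, the argument needs $\Gamma$ compact, i.e.\ that $C$ is a \emph{compact} cone structure; this hypothesis is not printed in the statement but is part of Sullivan's standing assumptions and holds in every later use of the proposition, and it does yield compactness of $\Gamma$ (since $C$ is scale-invariant, its intersection with the unit-sphere bundle is the preimage of the compact projectivized image, hence compact, and $v\mapsto v/\beta(v)$ maps it continuously onto $\Gamma$). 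Second, in your closing alternative, the claim that the extreme points of the base $\Cur_C\cap\beta^{-1}(1)$ are normalized Dirac currents is not immediate: it requires identifying the base with the closed convex hull of the (compact) set of normalized Diracs and then invoking Milman's partial converse to Krein--Milman, so the main argument you give is the safer one. Apart from making the measurability/integrability of the barycentric section explicit via the disintegration theorem, no step of your proof appears to fail.
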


It is possible to multiply a structure current by a bounded measurable function,
by multiplying the function $s$ above by it.

\section{Extreme currents}

While foliation cycles can be understood using \cref{thm:transverse-measures},
no such theorem is available for structure cycles in general.
Nevertheless, some things can be said about them.
\Cref{thm:transverse-measures} describes foliation cycles as ``mixtures'' of leaves.
It is still true for structure cycles that they can be described as ``mixtures''
of certain structure currents. In fact, this is just a special case of a standard
result in the analysis of locally convex topological vector spaces, the Krein-Milman
theorem.

\begin{definition}
    Let $A\subset V$ be a subset of a vector space. A point $a\in A$ is an
    \emph{extreme point} of $A$ if for all $b,c\in A$ and $t\in(0,1)$ such that
    \[ a = tb + (1-t)c \]
    we must have $a=b=c$. In other words, a point is an extreme point if it is not
    a convex combination of two other points in $A$.
\end{definition}

\begin{theorem}[Krein-Milman]
    A compact convex set in a locally convex topological vector space
    is the closure of the convex hull of its extreme points.
    \label{thm:krein-milman}
\end{theorem}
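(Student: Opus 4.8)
The plan is to run the classical functional-analytic argument for Krein--Milman, whose two ingredients are a Zorn's-lemma extraction of extreme points and the Hahn--Banach separation theorem stated just above. Write $K$ for the nonempty compact convex set in question, inside the locally convex space $W$. I would first set up the auxiliary notion of an \emph{extreme subset} of $K$: a nonempty closed $F\subseteq K$ such that whenever $x\in F$ is written $x=tb+(1-t)c$ with $b,c\in K$ and $t\in(0,1)$, then already $b,c\in F$. The elementary facts I need are: $K$ itself is an extreme subset; a singleton $\{a\}$ is an extreme subset precisely when $a$ is an extreme point; the intersection of any family of extreme subsets is extreme (when nonempty); and if $F$ is an extreme subset and $\Lambda$ is a continuous linear functional attaining its maximum value $\mu$ on $F$, then $\{z\in F:\Lambda(z)=\mu\}$ is again an extreme subset.

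Next I would show that every extreme subset $F$ of $K$ contains an extreme point of $K$. Order by reverse inclusion the collection of extreme subsets of $K$ contained in $F$; it is nonempty since it contains $F$. Any chain has an upper bound, namely its intersection, which is closed, is nonempty because $K$ is compact (finite intersection property), and is again an extreme subset. Zorn's lemma then produces a minimal element $F_0$. I claim $F_0$ is a single point: if it contained distinct $x,y$, the Hahn--Banach theorem would give a continuous linear functional $\Lambda\colon W\to\R$ with $\Lambda(x)\neq\Lambda(y)$, and since $\Lambda$ attains its maximum $\mu_0$ on the compact set $F_0$, the set $\{z\in F_0:\Lambda(z)=\mu_0\}$ would be an extreme subset of $K$ properly contained in $F_0$ --- contradicting minimality. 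Hence $F_0=\{a\}$ with $a$ an extreme point of $K$ lying in $F$; taking $F=K$ shows that extreme points exist at all.

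Finally I would close the argument by separation. Let $E$ be the set of extreme points of $K$ and put $L=\overline{\operatorname{conv}}(E)$; since $K$ is closed and convex, $L\subseteq K$. If there were an $x_0\in K\setminus L$, the Hahn--Banach separation theorem applied to the compact convex set $\{x_0\}$ and the closed convex set $L$ would yield a continuous linear functional $\Lambda$ and a real number $c$ with $\Lambda(y)<c<\Lambda(x_0)$ for all $y\in L$. Then $\mu:=\max_K\Lambda\geq\Lambda(x_0)>c$, and the extreme subset $\{z\in K:\Lambda(z)=\mu\}$ would, by the previous step, contain an extreme point $e$ of $K$; but $e\in E\subseteq L$ gives $\Lambda(e)<c$, while $\Lambda(e)=\mu>c$ --- a contradiction. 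Therefore $K=L=\overline{\operatorname{conv}}(E)$.

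The step I expect to be the main obstacle is the Zorn's-lemma extraction of a minimal extreme subset: one must check with care that the intersection of a chain of closed extreme subsets is still nonempty --- which is exactly where compactness of $K$ enters --- as well as still closed and extreme, and one must verify the two stability properties of extreme subsets (under intersection, and under passing to a maximum-value slice of a continuous linear functional) so that the extreme point produced inside a slice $\{z\in K:\Lambda(z)=\mu\}$ really is an extreme point of $K$ itself. The remaining steps are routine invocations of the Hahn--Banach theorem.
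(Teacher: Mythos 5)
Your argument is the standard and correct proof of Krein--Milman; note that the paper itself offers no proof of this statement at all, quoting it as a classical background theorem (it is only \emph{used}, via \cref{lem:compact-base}, to deduce \cref{prop:krein-milman-currents}), so there is no in-paper argument to compare against. Your two-stage scheme --- a Zorn's-lemma extraction of a minimal extreme subset (face), forced to be a singleton by separating two putative distinct points, followed by Hahn--Banach separation of a hypothetical point of $K\setminus\overline{\ConvexHull}(E)$ from the closed convex hull and a contradiction via the maximum slice --- is exactly the textbook route, and the stability properties of extreme subsets you flag (closure under intersections of chains, with nonemptiness from compactness, and under passing to the maximizing slice of a continuous functional) are verified correctly in your sketch. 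The only point worth making explicit is that both separation steps require continuous linear functionals to distinguish points, i.e.\ the locally convex space must be Hausdorff; the paper's statement omits this hypothesis, but it is harmless here since the space of currents to which the theorem is applied is Hausdorff by de Rham's theorem quoted earlier in the chapter.
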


If $A \subset V$ is a cone in a vector space, it has no non-zero extreme points.
One can study \emph{extreme rays} instead.

\begin{definition}
    Let $A\subset V$ be a cone in a vector space. Let $a\in A\setminus\{0\}$.
    We say that the ray $[a] = \{ \lambda a \mid \lambda\in\R_{>0} \}$ is an
    \emph{extreme ray} of $A$ if for all $b,c\in A$ and $t\in(0,1)$ such that
    \[ a = tb + (1-t)c \]
    we must have $b,c\in[a]$.
\end{definition}

This leads us to the following definition.

\begin{definition}
    A structure cycle $c \in \ZZ_C(M)$ is called \emph{an extreme cycle} if its ray
    $[c]$ is an extreme ray of $\ZZ_C(M)$.
    A structure boundary $c \in \BB_C(M)$ is called \emph{an extreme boundary} if its ray
    $[c]$ is an extreme ray of $\BB_C(M)$.
\end{definition}

From the Krein-Milman theorem, together with \cref{lem:compact-base},
we then have the following result.

\begin{proposition}
    Suppose that $C$ is a compact cone structure.
    Every structure cycle for $C$ can be approximated arbitrarily closely
    by a sum of positive multiples of extreme cycles.
    Every structure boundary for $C$ can be approximated arbitrarily closely
    by a sum of positive multiples of extreme boundaries.
    \label{prop:krein-milman-currents}
\end{proposition}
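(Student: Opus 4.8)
The plan is to reduce the statement to a direct application of the Krein--Milman theorem (\cref{thm:krein-milman}) to a suitably chosen base of the cone, so that extreme \emph{rays} of the cone correspond to extreme \emph{points} of the base. First I would fix a $p$-form $\beta \in \Omega^p(M)$ as in the definition of a cone structure, so that $\beta(v) > 0$ for every non-zero $v \in C$. Since every Dirac structure current $\alpha \mapsto \alpha(v)$ evaluates $\beta$ to $\beta(v) > 0$ (for $v \neq 0$), and structure currents are limits of non-negative combinations of such Dirac currents, every non-zero $C$-current $c$ satisfies $c(\beta) \geq 0$, and in fact $c(\beta) > 0$ once one rules out that a non-zero $C$-current can pair to $0$ with $\beta$ (this uses \cref{prop:general-form-structure-current}: writing $c = \int_M s\,d\mu$ with $s$ valued in $C$, we get $c(\beta) = \int_M \beta(s)\,d\mu$, and $\beta(s) \geq 0$ with $\beta(s) > 0$ wherever $s \neq 0$, so $c(\beta) = 0$ forces $s = 0$ $\mu$-a.e., i.e. $c = 0$). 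Thus the hyperplane $\beta^{-1}(1) \subset \Cur_p(M)$ meets every ray of $\Cur_C(M)$ exactly once, and the base $B_C := \Cur_C(M) \cap \beta^{-1}(1)$ is a convex set which by \cref{lem:compact-base} is compact (using that $C$ is a compact cone structure).

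Next I would apply Krein--Milman to $B_C$: since $B_C$ is a compact convex subset of the locally convex Hausdorff space $\Cur_p(M)$, it is the closed convex hull of its extreme points. I then need the dictionary ``extreme points of $B_C$ $\leftrightarrow$ extreme rays of $\Cur_C(M)$'': if $b \in B_C$ is an extreme point and $b = t b_1 + (1-t) b_2$ were a decomposition with $b_1, b_2 \in \Cur_C(M)$ and $t \in (0,1)$, normalize using $\beta$ (replace $b_i$ by $b_i / b_i(\beta)$, noting $b(\beta) = 1 = t b_1(\beta) + (1-t) b_2(\beta)$ forces a convex combination of the normalizations too) to land back inside $B_C$, and extremality of $b$ as a point forces $b_1, b_2 \in [b]$; conversely an extreme ray meets $B_C$ in an extreme point. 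The same argument, but intersecting with the appropriate closed subspace, applies to cycles and to boundaries.

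Now for the cycle statement: $\ZZ_C(M) = \Cur_C(M) \cap \ZZ_p(M)$, and $\ZZ_p(M)$ is a closed subspace by \cite[lemma 10.1.24]{candelconlon} (quoted earlier). Hence $\ZZ_C(M) \cap \beta^{-1}(1) = B_C \cap \ZZ_p(M)$ is a closed convex subset of the compact set $B_C$, so it is itself compact and convex. Krein--Milman then says it equals the closed convex hull of its extreme points; translating back through the ray/point dictionary, every structure cycle can be approximated arbitrarily closely by finite convex combinations of extreme cycles, and rescaling $\beta$-values turns ``convex combination'' into ``sum of positive multiples''. The boundary statement is identical with $\BB_C(M) = \Cur_C(M) \cap \BB_p(M)$ and $\BB_p(M)$ closed (same lemma).

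I do not expect a serious obstacle here; the one point needing a little care is the first paragraph --- verifying that $c(\beta) > 0$ strictly for non-zero $C$-currents (not merely $\geq 0$), which is what makes $\beta^{-1}(1)$ a genuine base and lets the ray/point correspondence be a bijection rather than something degenerate. This is handled cleanly by \cref{prop:general-form-structure-current}. Everything else is bookkeeping: checking that the subspaces $\ZZ_p$, $\BB_p$ are closed so their intersections with the compact base stay compact, and checking that the notions of ``extreme ray of the cone'' and ``extreme point of the base'' match up under the $\beta$-normalization.
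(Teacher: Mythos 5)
Your proposal is correct and follows essentially the same route as the paper: apply the Krein--Milman theorem to the compact convex base $\beta^{-1}(1)\cap\ZZ_C$ (resp.\ $\beta^{-1}(1)\cap\BB_C$) and rescale, the paper simply normalizing $c/\beta(c)$ without spelling out the positivity of $\beta$ on non-zero $C$-currents or the extreme-point/extreme-ray dictionary that you verify explicitly. Those verifications are worthwhile but routine details; no difference in substance.
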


\begin{proof}
    We prove the result for cycles. The result for boundaries is analogous.
    Suppose that $c$ is a structure cycle for $C$.
    Let $\beta$ be as in the definition of a cone structure.
    Applying the Krein-Milman theorem to the base $\beta^{-1}(1) \cap \ZZ_C$ of
    the cone $\ZZ_C$ (this is a compact convex set) shows that $c / \beta(c)$
    can be approximated arbitrarily closely
    by a sum of positive multiples of extreme cycles.
    Therefore, the same holds for $c$.
\end{proof}

The following theorem allows us to be more precise about ``mixtures'' of extreme currents.
It says, roughly speaking, that every point in a compact convex set can be obtained
as a weighted barycenter of extreme points.

\begin{theorem}[Choquet-Bishop-de Leeuw, {{\cite[theorem 5.6]{bishop-deleeuw}}}]
    Let $X$ be a compact convex subset of a real locally convex topological vector
    space. Let $E$ be the set of extreme points of $X$. Let $\mathcal{S}$ be
    the $\sigma$-algebra generated by $E$ and by the Baire subsets of $X$.
    Then each $x\in X$ has a representation of the form
    \[ \int y \,d\mu(y) \]
    for some non-negative measure $\mu$ on $\mathcal{S}$ that satisfies
    \[ \mu(E) = \mu(X) = 1.\]
\end{theorem}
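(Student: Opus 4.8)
The plan is to run the classical Choquet-theory argument in the form due to Bishop and de Leeuw: for the given $x\in X$ first produce a Radon probability measure representing $x$ that is maximal in the Choquet ordering, and then show that any maximal measure is pseudo-supported on the extreme points, the non-metrizability of $X$ being absorbed by passing to the enlarged $\sigma$-algebra $\mathcal{S}$. First I would set up the standard machinery. For $f\in C(X)$ let $\hat f(y)=\inf\{a(y):a\ \text{continuous affine on}\ X,\ a\geq f\}$; then $\hat f$ is concave, upper semicontinuous, satisfies $\hat f\geq f$ with equality when $f$ is concave, and $f\mapsto\hat f$ is subadditive and positively homogeneous. For a Radon probability measure $\mu$ on the compact convex set $X$, the barycenter $b(\mu)=\int y\,d\mu(y)$ is the unique point of $X$ with $a(b(\mu))=\int a\,d\mu$ for every continuous affine $a$; I say $\mu$ \emph{represents} $x$ when $b(\mu)=x$, and one records the identity $\hat f(x)=\sup\{\int f\,d\mu:\mu\ \text{represents}\ x\}$, used repeatedly below.

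\textbf{Maximal representing measures.} Order probability measures on $X$ by declaring $\mu\prec\nu$ when $\int f\,d\mu\leq\int f\,d\nu$ for every continuous convex $f$. The set of measures representing $x$ is convex and weak-$*$ compact, so a Zorn's lemma argument (every $\prec$-chain has an upper bound, by weak-$*$ compactness together with a routine net argument) produces a $\prec$-maximal representing measure $\mu$. By Mokobodzki's criterion, $\mu$ is maximal if and only if $\int(\hat f-f)\,d\mu=0$ for all $f\in C(X)$; since $\hat f-f$ is upper semicontinuous and nonnegative, this says precisely that $\mu$ gives measure zero to the $G_\delta$ set $\{y:\hat f(y)>f(y)\}$ for every $f\in C(X)$.

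\textbf{Concentration on $E$, and conclusion.} By Bauer's theorem the extreme points are exactly $E=\{y:\hat f(y)=f(y)\ \text{for all}\ f\in C(X)\}$: if $y=\tfrac12(y_1+y_2)$ with $y_1\neq y_2$, choose a continuous linear functional $\ell$ with $\ell(y_1)\neq\ell(y_2)$; then $f=\ell^2$ is continuous and convex, and concavity of $\hat f$ combined with strict convexity of $t\mapsto t^2$ forces $\hat f(y)\geq\tfrac12(\ell(y_1)^2+\ell(y_2)^2)>\ell(y)^2=f(y)$, so $X\setminus E\subseteq\bigcup_f\{\hat f>f\}$, while the reverse inclusion holds because an extreme point is represented only by its own Dirac mass. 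For the maximal $\mu$ of the previous step each set $\{\hat f>f\}$ is $\mu$-null, but $E$ itself may fail to be Baire-measurable when $X$ is non-metrizable, so the statement I actually prove is: \emph{every Baire set $B\subseteq X$ disjoint from $E$ satisfies $\mu(B)=0$}. This I would obtain by reducing to the classical, metrizable Choquet theorem: a Baire set lies in the $\sigma$-algebra generated by countably many zero sets, so there is a metrizable compact convex quotient $\pi:X\to Y$, built from a suitable separable space of continuous functions carrying the relevant data, with $B=\pi^{-1}(B_0)$ for a Borel set $B_0\subseteq Y$ disjoint from the (now genuinely Borel, $G_\delta$) extreme points of $Y$; transporting the relevant properties of $\mu$ through $\pi$ and applying the metrizable theorem on $Y$ yields $\mu(B)=(\pi_*\mu)(B_0)=0$. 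Finally I extend $\mu$ from the Baire $\sigma$-algebra to $\mathcal{S}=\sigma(E,\text{Baire})$ by setting $\tilde\mu(S)=\mu(B)$ whenever $S$ and $E\cap B$ differ by a $\mu$-null Baire set; well-definedness and countable additivity of $\tilde\mu$ are exactly the pseudo-support statement just proved, and then $\tilde\mu(E)=\tilde\mu(X)=1$ while $\int y\,d\tilde\mu(y)=b(\mu)=x$, as required.

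\textbf{Main obstacle.} The delicate part is the last step, the Bishop--de Leeuw refinement. When $X$ is metrizable, $E$ is an honest $G_\delta$, the measure $\mu$ is literally carried by it, and the whole theorem reduces to Choquet's, provable by a fairly direct envelope and convexity argument. The extra work in the general case lies entirely in (a) the bookkeeping with the enlarged $\sigma$-algebra $\mathcal{S}$ and the verification that $\tilde\mu$ is a genuine measure, and (b) the reduction of an arbitrary Baire set disjoint from $E$ to a metrizable affine quotient of $X$, including the check that maximality and extremality behave correctly under that quotient. Everything else -- existence of barycenters, the Zorn argument, Mokobodzki's criterion, Bauer's characterization of $E$ -- is standard soft functional analysis.
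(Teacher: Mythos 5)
The paper does not prove this statement at all: it is quoted verbatim as Theorem 5.6 of Bishop--de Leeuw and used as a black box, so there is no internal proof to compare yours against. Measured against the standard literature proof (Choquet's envelope machinery as in Phelps' \emph{Lectures on Choquet's Theorem}), your sketch follows exactly the classical route: barycenters and the upper envelope $\hat f$, a Zorn's lemma construction of a representing measure maximal in the Choquet order, Mokobodzki's criterion, Bauer's characterization of the extreme points via strictly convex functions $\ell^2$, and the Bishop--de Leeuw refinement that a maximal measure vanishes on every Baire set disjoint from $E$, followed by the extension of $\mu$ to $\mathcal{S}=\sigma(E,\text{Baire})$. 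At the level of a sketch this is sound, up to two small slips: $\{\hat f>f\}$ is an $F_\sigma$ (since $\hat f-f$ is upper semicontinuous), not a $G_\delta$, though for a Radon maximal measure it is still a null Borel set; and Mokobodzki's criterion is usually stated for continuous \emph{convex} $f$ (the version for all $f\in C(X)$ is a consequence, not the definition of maximality).

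The one place where your argument is genuinely thin is, as you yourself flag, the heart of the theorem: showing that a maximal $\mu$ annihilates every Baire set disjoint from $E$. Your proposed reduction to ``a metrizable compact convex quotient built from a suitable separable space of continuous functions'' needs real care: a quotient separating the data of a Baire set and of the relevant convex functions cannot be taken by affine functions alone, the image need not be a convex set on which the metrizable Choquet theorem applies, the image of an extreme point need not be extreme downstairs, and the pushforward of a maximal measure under an affine surjection is not automatically maximal. The classical argument instead reduces (by regularity of Baire measures) to a compact $G_\delta$ set $K$ disjoint from $E$ and handles it directly with a countable family of functions and the envelope identities, precisely to avoid these quotient issues. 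So as written, your proposal correctly reproduces the architecture of the Bishop--de Leeuw proof but leaves its genuinely hard step at the level of an unverified plan; if you intend this as a complete proof rather than a roadmap, that step is the gap to fill.
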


\begin{remark}
    The theorem requires the use of the $\sigma$-algebra $S$ because, in general,
    the set of extreme points of $X$ need not be Borel.
    It suffices to take the $\sigma$-algebra generated by the Baire sets
    (recall that Baire sets are Borel), and the set of extreme points.
\end{remark}

The theorem above immediately leads to the following variation of \cref{prop:krein-milman-currents}.

\begin{proposition}
    Suppose that $C$ is a compact cone structure.
    Write $E$ for the set of extreme structure cycles for $C$.
    Let $\mathcal{S}$ be the $\sigma$-algebra generated by $E$ and by the Baire
    subsets of $\ZZ_C$.
    Every structure cycle for $C$ can be written in the form
    \[ \int c'\,d\mu(c') ,\]
    where $\mu$ is a measure such that
    \[ \mu(E) = \mu(\ZZ_C) = 1 .\]
\end{proposition}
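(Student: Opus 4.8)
The plan is to mimic the proof of \cref{prop:krein-milman-currents}, replacing the Krein--Milman theorem by the Choquet--Bishop--de Leeuw theorem, and then to transport the resulting measure from the base of the cone $\ZZ_C$ back to $\ZZ_C$ itself. I may assume that the given structure cycle $c$ is nonzero; the case $c=0$ is vacuous, since any probability measure $\mu$ with $\mu(E)=1$ has barycenter $b(\mu)$ with $\beta(b(\mu))=\int\beta(c')\,d\mu(c')>0$ (the form $\beta$ below being strictly positive on $E$), so $b(\mu)\neq 0$.

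First I would fix a form $\beta\in\Omega^p(M)$ as in the definition of a cone structure and observe that $\beta$ is positive on all of $\ZZ_C\setminus\{0\}$: by \cref{prop:general-form-structure-current} any structure current is of the form $\int_M s\,d\mu$ with $s$ valued in $C$, so $\beta(c)=\int_M\beta(s)\,d\mu\geq 0$, with equality forcing $s=0$ $\mu$-almost everywhere, i.e.\ $c=0$. Hence the base $X=\ZZ_C\cap\beta^{-1}(1)$ is convex, and compact by \cref{lem:compact-base}, and every nonzero structure cycle $c$ is the positive multiple $\beta(c)\,\bar c$ of the base point $\bar c:=c/\beta(c)\in X$. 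I would then identify the extreme points of $X$ with the extreme cycles: if $y\in X$ and $[y]$ is an extreme ray of $\ZZ_C$, any decomposition $y=tu+(1-t)v$ with $u,v\in X$ and $t\in(0,1)$ forces $u,v\in[y]$, and $\beta(u)=\beta(v)=1$ then forces $u=v=y$; conversely an extreme point of $X$ plainly spans an extreme ray. Thus the set of extreme points of $X$ is exactly $E_X:=E\cap X$.

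Next I would apply the Choquet--Bishop--de Leeuw theorem to the compact convex set $X$ inside the locally convex space $\Cur_p(M)$: the point $\bar c$ is represented as a barycenter $\bar c=\int_X y\,d\nu(y)$ for some non-negative measure $\nu$ on the $\sigma$-algebra $\mathcal{S}_X$ generated by $E_X$ and the Baire subsets of $X$, with $\nu(E_X)=\nu(X)=1$. Finally I would push $\nu$ forward along the scaling map $\Phi\colon\Cur_p(M)\to\Cur_p(M)$, $\Phi(y)=\beta(c)\,y$. Being linear with $\beta(c)>0$, $\Phi$ is a homeomorphism carrying $\ZZ_C$ onto itself and each extreme ray to an extreme ray, so $\Phi(X)\subset\ZZ_C$, $\Phi^{-1}(E)=E$, and $\Phi(E_X)=E\cap\Phi(X)$; consequently $\mu:=\Phi_*\nu$ is a non-negative measure on $\mathcal{S}$ with $\mu(\ZZ_C)=\nu(X)=1$ and $\mu(E)=\nu(E_X)=1$. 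Since a continuous linear map commutes with barycenters,
\[ \int_{\ZZ_C}c'\,d\mu(c')=\Phi\!\left(\int_X y\,d\nu(y)\right)=\Phi(\bar c)=\beta(c)\,\bar c=c, \]
which is the required representation.

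The positivity of $\beta$ on $\ZZ_C\setminus\{0\}$ and the identification of $E_X$ with the extreme points of the base are routine. The step that deserves genuine care, and which I expect to be the main obstacle, is the measure-theoretic transport: one must check that $\nu$, which is defined on the $\sigma$-algebra generated by $E_X$ and the Baire subsets of the \emph{base} $X$, pushes forward to a measure defined on precisely the $\sigma$-algebra $\mathcal{S}$ on $\ZZ_C$ stipulated in the statement (using that $\Phi$ is Baire-bimeasurable, that $\Phi^{-1}(E)=E$, and that Baire sets restrict well to the compact subset $X$), and that the barycentric identity for vector-valued integrals survives the pushforward. Both are consequences of standard facts about Baire--Radon measures on compact convex subsets of locally convex spaces, but they are where the real bookkeeping lives.
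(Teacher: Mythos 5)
Your proposal is correct and follows essentially the same route as the paper, which treats this proposition as an immediate consequence of the Choquet--Bishop--de Leeuw theorem applied to the compact convex base $\beta^{-1}(1)\cap\ZZ_C$, exactly mirroring the Krein--Milman argument in \cref{prop:krein-milman-currents}. Your extra care about the zero cycle and about transporting the measure from the base back to the cone only spells out bookkeeping the paper leaves implicit.
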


Let us now focus on extreme currents for cone structures that lie tangent to a foliation.
There is a straightforward restriction on what the support of an extreme cycle can look like.

\begin{definition}
    Two leaves $L_1$, $L_2$ of a foliation are called \emph{estranged} if
    there are saturated opens $U_1 \supset L_1$ and $U_2 \supset L_2$ such that
    $U_1 \cap U_2 = \emptyset$.
\end{definition}

\begin{proposition}
Suppose that $C\subset \Lambda^pT\F$ is a compact cone structure on $M$.
If $c$ is an extreme $C$-cycle, no two estranged leaves intersect its support.
\end{proposition}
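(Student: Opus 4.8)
The plan is to argue by contradiction using the decomposition-of-currents philosophy behind extreme rays. Suppose $c$ is an extreme $C$-cycle whose support meets two estranged leaves $L_1$ and $L_2$, and let $U_1 \supset L_1$, $U_2 \supset L_2$ be disjoint saturated open sets. The key idea is that a saturated open set $U$ lets us ``cut'' a structure cycle: since $U$ is a union of leaves and $C$ lies tangent to $\F$, restricting a $C$-current to $U$ by multiplying by the indicator function $\mathbbm{1}_U$ produces another $C$-current. First I would make this precise using \cref{prop:general-form-structure-current}: write $c = \int_M s\,d\mu$ with $s$ a $\mu$-integrable section of $\Lambda^p T\F$ taking values in $C$, and set $c_i = \int_M \mathbbm{1}_{U_i} s\,d\mu$ for $i = 1, 2$, i.e.\ $c_i(\alpha) = \int_{U_i} \alpha(s)\,d\mu$. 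Each $c_i$ is again a structure current for $C$ (its defining section still takes values in $C$, being either $s$ or $0$ pointwise), and each is non-zero because the support of $c$ genuinely meets $L_i \subset U_i$.

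The crucial point is that $c_1$ and $c_2$ are \emph{closed} — this is where saturation of $U_i$ is essential. Here I would show $\partial c_i = 0$, i.e.\ $c_i(d\beta) = 0$ for every $\beta \in \Omega^{p-1}(M)$. Morally, $\partial(\mathbbm{1}_{U_i} c) = \mathbbm{1}_{U_i}\,\partial c \pm (\text{boundary term along } \partial U_i)$, but the boundary term vanishes: the topological boundary of the saturated open set $U_i$ is a union of leaves (or accumulations of leaves), and the $C$-current $c$ lies tangent to $\F$, so there is no flux across $\partial U_i$. Concretely, one can approximate $\mathbbm{1}_{U_i}$ from below by smooth functions $f_n$ with $df_n$ supported in a shrinking neighborhood of $\partial U_i$ and $df_n$ conormal to $\F$ there (this uses that $\partial U_i$ is $\F$-saturated); then $c_i(d\beta) = \lim_n c(f_n\,d\beta) = \lim_n c(d(f_n\beta)) - \lim_n c(df_n \wedge \beta)$, the first limit is $(\partial c)(\,\cdot\,) = 0$, and the second vanishes because $df_n \wedge \beta$ pairs to zero against a section of $\Lambda^p T\F$ in the limit. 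So $c_1, c_2 \in \ZZ_C(M)$.

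Now consider the remainder $c_0 = c - c_1 - c_2 = \int_{M \setminus (U_1 \cup U_2)} s\,d\mu$, which is also a $C$-current, and is closed since $c_0 = c - c_1 - c_2$ is a difference of closed currents. If $c_0 \neq 0$, then $c = c_0 + (c_1 + c_2)$ expresses $c$ as a sum of two non-zero $C$-cycles not proportional to each other (since $c_1$ is supported in $U_1$, which is disjoint from the support of $c_0 + c_2$), contradicting extremality of the ray $[c]$. If $c_0 = 0$, then $c = c_1 + c_2$ with $c_1, c_2$ non-zero $C$-cycles having disjoint supports, so neither lies in $[c]$ (as $c_1$ is not a multiple of $c_1 + c_2$ when $c_2 \neq 0$), again contradicting extremality. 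Either way we reach a contradiction, proving the proposition.

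The main obstacle I anticipate is the closedness claim $\partial c_i = 0$: making rigorous the assertion that there is ``no flux across the saturated boundary $\partial U_i$'' requires care, since $\partial U_i$ need not be a submanifold — it can be a complicated accumulation of leaves. The cleanest route is probably the approximation argument sketched above, exploiting that on a saturated set the cutoff functions can be chosen with differentials conormal to $\F$ near the boundary, so that they annihilate any $p$-vector tangent to $\F$; combined with \cref{prop:general-form-structure-current} this localizes the computation to pointwise pairings of $\Lambda^p T\F$ with $d f_n \wedge \beta$, which vanish. Everything else is routine bookkeeping with the integral representation of structure currents and the definition of an extreme ray.
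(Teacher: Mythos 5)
Your overall strategy is exactly the one the paper uses: cut $c$ with the indicator function of a saturated open set, observe that each piece is again a $C$-current (via the representation $c=\int_M s\,d\mu$), show the pieces are closed, and contradict extremality. The extremality bookkeeping at the end is fine (if slightly more elaborate than the paper's two-term decomposition $c=\ind_{U_1}c+(1-\ind_{U_1})c$). The problem is the step you yourself flag as the crux: closedness of $\ind_{U_i}c$. Your proposed proof needs smooth cutoffs $f_n$ approximating $\ind_{U_i}$ whose differentials are supported near $\partial U_i$ and are conormal to $\F$ wherever they are nonzero. Such a function is leafwise constant everywhere (its leafwise derivative vanishes both where $df_n=0$ and where $df_n$ is conormal), and smooth leafwise-constant functions separating a saturated open set from its complement simply need not exist. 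Concretely, for the Reeb foliation of $\Sphere^3$ the two open solid-torus components are disjoint saturated opens containing estranged leaves, yet every continuous leafwise-constant function on $\Sphere^3$ is constant (each interior Reeb leaf accumulates on the whole torus leaf, forcing the value on the leaf space to be constant near the boundary, hence constant). So there are no nontrivial cutoffs of the kind your argument requires, and with generic smooth cutoffs the flux term $c(df_n\wedge\beta)$ is not controlled, since $df_n$ blows up near $\partial U_i$ and $\partial U_i$ may even carry positive $\mu$-mass.

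The paper closes this gap with a separate lemma: if $f$ is any bounded measurable function that is constant along leaves and $c$ is a closed $C$-current with $C\subset\Lambda^pT\F$, then $fc$ is closed. Its proof does not try to regularize $f$ at all; instead it works locally in a foliation chart, disintegrates the representing measure $\mu$ into transverse and leafwise parts, and shows that for almost every transverse parameter the leafwise slice current is itself closed (using that $dh\wedge\beta$ annihilates $\Lambda^pT\F$ for $h$ pulled back from the transverse direction, plus separability of $\Omega^{p-1}_c$). Closedness of $fc$ then follows because $f$ factors out of the leafwise integral. So to make your proof complete you would need to either prove this multiplication lemma (or something equivalent), or replace the cutoff construction by the slicing/disintegration argument; the cutoff route as written does not go through.
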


\begin{proof}
    Suppose that $c$ is $C$-cycle whose support intersects the estranged leaves
    $L_1$ and $L_2$. Let $U_1 \supset L_1$ and $U_2\supset L_2$ be disjoint saturated opens.
    By the lemma below, the decomposition
    \[ c = \ind_{U_1} c + (1-\ind_{U_1}) c \]
    shows that $c$ can be written as a non-trivial sum of two $C$-cycles.
    Therefore, $c$ is not extreme (here, $\ind_{U_1}$ is the indicator function of
    the set $U_1$).
\end{proof}

If we think of currents as generalizations of singular chains, the following
should come as no surprise. The proof is similar to that on pages 246--249 of
\cite{candelconlon}.

\begin{lemma}
    Suppose that $(M,\F)$ is a compact foliated manifold, and let $C \subset \Lambda^pT\F$
    be a compact cone structure (so that $C$ ``lies tangent to'' $\F$).
    Suppose that $f : M \to \R$ is a bounded measurable function that is
    constant along leaves.
    If $c \in \Cur_C(M)$ is closed, then so is $fc$.
\end{lemma}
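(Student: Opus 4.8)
The plan is to reduce the statement to a local computation, using the structural description of structure currents from \cref{prop:general-form-structure-current}. First I would write $c = \int_M s\,d\mu$ for a non-negative measure $\mu$ on $M$ and a $\mu$-integrable section $s$ of $\Lambda^p TM$ taking values in $C$. Since $C \subset \Lambda^p T\F$, the section $s$ is everywhere tangent to the foliation. Multiplying by the bounded measurable function $f$ simply gives $fc = \int_M (fs)\,d\mu$, which is again a structure current; the only content of the lemma is that $\partial(fc) = 0$ when $\partial c = 0$ and $f$ is leafwise-constant.

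The key idea is that $\partial(fc)$ ought to equal ``$f\,\partial c$'' because $f$ is constant along leaves and $c$ lies tangent to $\F$, so the leafwise exterior derivative hitting $f$ produces nothing. To make this precise I would argue as in \cite[pp.\ 246--249]{candelconlon}: work in a foliation chart with coordinates adapted to $\F$, where one can write any test form $\alpha \in \Omega^{p-1}(M)$ and compute $(\partial(fc))(\alpha) = (fc)(d\alpha) = \int_M (d\alpha)(fs)\,d\mu = \int_M f\,(d\alpha)(s)\,d\mu$. Because $s$ is tangent to $\F$, when evaluating $(d\alpha)(s)$ only the leafwise part $d_{\F}$ of the differential contributes; and since $f$ is leafwise-constant, $d_{\F}(f\alpha|_{\F}) = f\,d_{\F}(\alpha|_{\F})$ on each plaque, i.e.\ $(d(f\alpha))(s) = f\,(d\alpha)(s)$ pointwise. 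Hence $(\partial(fc))(\alpha) = \int_M (d(f\alpha))(s)\,d\mu = (\partial c)(f\alpha) = 0$. One subtlety: $f\alpha$ need not be smooth, so this identity cannot be fed directly into $\partial c = 0$; I would instead approximate $f$ by smooth leafwise-constant (or at least leafwise-slowly-varying) functions, or carry out the argument purely at the level of the densities $(d\alpha)(fs)$ and a partition of unity subordinate to a foliation atlas, integrating plaque by plaque and using that the transverse variable is held fixed inside each plaque integral.

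The main obstacle is precisely this regularity issue: $fc$ is defined by a measurable, not continuous, rescaling, so one must be careful that the formal manipulation ``$\partial(fc) = f\,\partial c = 0$'' is legitimate. The clean way around it is to avoid ever writing $f\alpha$ as a test form: fix a regular foliation atlas $(U_i)$ with partition of unity $\{g_i\}$, write $c(\beta) = \sum_i \int (\text{plaque integral of } g_i\beta \text{ against } s)\,d\mu_i$ over transversals, and observe that since $f$ is constant on each plaque it pulls out of the inner (plaque) integral as a measurable function of the transverse variable only; differentiating $\alpha$ inside the plaque integral then commutes with this scalar exactly as in the smooth case, because on a single plaque $d(f\alpha) = f\,d\alpha$ holds honestly ($f$ being locally constant there). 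Summing and integrating over the transversals against $\mu_i$ shows $(\partial(fc))(\alpha)$ is a $\mu$-average of plaque-wise boundary terms that already vanish because $\partial c = 0$. This is the same bookkeeping as in \cite[pp.\ 246--249]{candelconlon}, just with an extra bounded leafwise-constant weight carried along, and it is routine once set up.
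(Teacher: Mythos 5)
Your frame is the right one (representing $c=\int_M s\,d\mu$ as in \cref{prop:general-form-structure-current}, working chart by chart, splitting $\mu$ into plaque-wise measures over a transversal, and refusing to use $f\alpha$ as a test form), but the last step of your chart-wise argument conceals the entire content of the lemma. You assert that $(\partial(fc))(\alpha)$ is an average over the transversal of plaque-wise boundary terms ``that already vanish because $\partial c=0$.'' That does not follow. Writing, in a foliation chart $X\times Y$, $c_y(\beta)=\int_X \beta(s(x,y))\,d\mu_y(x)$ for the disintegrated slices (and note that this disintegration of $\mu$ is itself a theorem that should be invoked), the hypothesis $\partial c=0$ only tells you that the \emph{unweighted} transverse integral $\int_Y (\partial c_y)(\beta)\,d\nu(y)$ vanishes for each test form; it does not tell you that $(\partial c_y)(\beta)=0$ for $\nu$-almost every $y$, which is exactly what you need before you may insert the measurable weight $f(y)$. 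Establishing this almost-everywhere vanishing is the crux: one tests $c$ against $d(h\beta)$ for \emph{arbitrary} smooth functions $h$ of the transverse variable, uses that $dh\wedge\beta$ annihilates $\Lambda^pT\F$ (this is where tangency of $C$ to $\F$ enters beyond your pointwise identity $d(f\alpha)(s)=f\,(d\alpha)(s)$ on a plaque) to get $c(h\,d\beta)=c(d(h\beta))=0$, concludes that the transverse signed measure $g_\beta\nu$ with $g_\beta(y)=(\partial c_y)(\beta)$ is zero, and then runs over a countable dense family of $\beta$'s to get a single null set. Without some version of this step, the weighted integral $\int_Y f\,g_\beta\,d\nu$ is simply not controlled by $\partial c=0$.

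Your fallback suggestion, approximating $f$ by smooth leafwise-constant functions, also fails in general: for a foliation with a dense leaf every continuous leafwise-constant function is constant, whereas the lemma must be applied to functions such as the indicator $\ind_{U_1}$ of a saturated open set, which is genuinely measurable and non-constant. So neither route in the proposal closes the gap; the missing ingredient is precisely the argument that the disintegrated slices $c_y$ of a closed, $\F$-tangent current are themselves closed for almost every $y$.
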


\begin{proof}
    Using \cref{prop:general-form-structure-current}, let $s$ and $\mu$ be such that
    \[ c : \Omega^p(M) \to \R : \alpha \mapsto \int_M \alpha(s) \,d\mu .\]
    Pick a point in $M$. We will prove that it is not in the support of $\partial(fc)$.
    Let $x_1, \ldots, x_{p}, y_1, \ldots, y_q$ be local foliated coordinates near the chosen point
    (with, let us say, all variables between $-1$ and 1 and the point at the origin).
    We will write $x = (x_1, \ldots, x_{m-q})$ and $y = (y_1, \ldots, y_q)$.
    Write $X = (-1,1)^{p}$ and $Y = (-1,1)^q$ so that points in the charts are parameterized by
    $(x,y) \in X\times Y$.

    Let $\nu$ be the push-forward of $\mu$ along the projection $\pi : X\times Y \to Y$.
    In other words, let $\nu$ be the measure on $Y$ given by
    \[ \nu(S) = \mu(\pi^{-1}(S)) .\]
    By a standard result in measure theory \cite[theorem 1, page 58]{bourbaki},
    the measure $\mu$ can be ``disintegrated'', and there is a family of measures
    $\mu_y$ on $X$, indexed by $y\in Y$
    such that
    \[ \int_{\text{chart}} g \,d\mu = \int_Y \left( \int_X g(x,y) \,d\mu_y(x) \right) \,d\nu(y) \]
    for every $\mu$-integrable function $g$ on the chart.
    
    For $y\in Y$, define the elements $c_y \in \Cur_p(X)$ by
    \[ c_y : \Omega^p_c(X) \to \R : \alpha \mapsto \int_X \alpha(s(x,y)) \,d\mu_y(x) .\]
    We will show that $c_y$ is closed for $\nu$-almost every $y\in Y$,
    using essentially the argument from \cite[lemma 10.2.17]{candelconlon}.
    Let $\beta \in \Omega^{p-1}_c(X)$ and define
    \[ g_{\beta} : Y \to \R : y \mapsto (\partial c_y)(\beta) = c_y(d\beta) .\]
    For any $h \in C^{\infty}(Y)$,
    we have
    \[ c_y(dh \wedge \beta) = \int_X (dh\wedge\beta)(s(x,y)) \,d\mu_y(x) = 0 \]
    because $s$ takes values in $\Lambda^pT\F$ and $dh$ annihilates $T\F$
    (we have used $h$ also as the name for the function $\pi^*h \in C^{\infty}(X\times Y)$).
    Therefore
    \[ c_y(h d\beta) = c_y(dh\wedge\beta + h d\beta) = c_y(d(h\beta)) .\]
    We now integrate $h$ against the measure $g_{\beta}\nu$.
    This results in
    \begin{align*}
        \int_Y h \,d(g_\beta\nu) 
        &= \int_Y h g_\beta \,d\nu \\
        &= \int_Y h c_y(d\beta) \,d\nu \\
        &= \int_Y c_y(d(h\beta)) \,d\nu \\
        &= \int_Y \int_X d(h\beta)(s(x,y)) \,d\mu_y(x) \,d\nu \\
        &= \int_{\text{chart}} d(h\beta)(s(x,y)) \,d\mu \\
        &= c(d(h\beta)) \\
        &= 0
    \end{align*}
    because $c$ is closed.
    This shows that integrating any smooth function against the measure $g_\beta\nu$ gives 0,
    proving that $g_\beta\nu$ is the zero measure.
    Therefore, $(\partial c_y)(\beta) = 0$ for $\nu$-almost all $y\in Y$.
    Because the topological space $\Omega^{p-1}_c(X)$ is separable,
    we can apply this argument for each $\beta$ in a countable dense subset of $\Omega^{p-1}_c(X)$,
    and conclude that for $\nu$-almost all $y\in Y$, the current $c_y$ vanishes
    on all these $\beta$'s simultaneously (and is thus closed).
    This shows that $c_y$ is closed for $\nu$-almost every $y\in Y$.

    It is now easy to show that the current $\partial(fc)$ vanishes at our chosen point.
    Let $\beta \in \Omega^{p-1}_c(X\times Y)$. It suffices to show that $(fc)(d\beta)$
    vanishes.
    But
    \begin{align*}
        (fc)(d\beta)
        &= \int_{\text{chart}} (f d\beta)(s) \,d\mu \\
        &= \int_{Y} \int_{X} (f d\beta)(s) \,d\mu_y(x) \,d\nu \\
        &= \int_{Y} \left( f \int_{X} (d\beta)(s) \,d\mu_y(x) \right) \,d\nu \\
        &= 0
    \end{align*}
    because the inner integral vanishes for $\nu$-almost every $y$.
    This proves the result.
\end{proof}

\begin{definition}
    We say that a foliation $\F$ on $M$ is \emph{pleasant}
    if every closed saturated subset $S \subset M$ that contains no pair of
    estranged leaves, is contained in the closure of some leaf.
\end{definition}

The following are examples of pleasant foliations, showing that plenty of them exist:

\begin{itemize}
    \item simple foliations (i.e.\ those obtained from submersions),
    \item foliations with only compact leaves,
    \item the Reeb foliation,
    \item foliations that have a dense leaf,
    \item the foliation obtained by gluing together (at the boundary) two
        pleasant foliations of manifolds with boundary (with the foliation
        tangent to the boundary components),
    \item the turbulization of a pleasant foliation.
\end{itemize}

Recall that $H_C$ is the set of classes that can be represented by a $C$-cycle.

\begin{proposition}
    Suppose that $\F$ is a pleasant foliation of a manifold $M$.
    Let $C \subset \Lambda^pT\F$ be a compact cone structure.
    Then we have
    \[ H_C = \cl{\ConvexHull\left( \bigcup_{\text{leaf $L$}} H_{C{\restriction_L}} \right)} ,\]
    where for each leaf $L$, the cone $C{\restriction_L}$ is the intersection of $C$
    with $\Lambda^pT\cl{L}$.
    \label{prop:pleasant-leafwise}
\end{proposition}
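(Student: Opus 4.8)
The plan is to prove the two inclusions separately; the inclusion ``$\supseteq$'' is essentially formal, and ``$\subseteq$'' is where pleasantness enters.

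For ``$\supseteq$'': since $C{\restriction_L} = C \cap (\Lambda^p T\F){\restriction_{\cl L}}$ is contained in $C$, every Dirac structure current for $C{\restriction_L}$ is a Dirac structure current for $C$, so $\Cur_{C{\restriction_L}} \subseteq \Cur_C$, whence $\ZZ_{C{\restriction_L}} \subseteq \ZZ_C$ and $H_{C{\restriction_L}} \subseteq H_C$ for every leaf $L$. The set $\ZZ_C$ is a closed convex cone in $\ZZ_p(M)$ and the quotient map $\ZZ_p(M) \to H_p^{\dR}(M) \cong H_p(M;\R)$ is continuous and linear, so $H_C$ is a convex cone; hence $\ConvexHull(\bigcup_L H_{C{\restriction_L}}) \subseteq H_C$. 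For the closure of the left side to still land in $H_C$ one needs $H_C$ closed: when $C$ admits a closed but non-exact positive form this is \cref{thm:sullivan-cone-C} (which gives $H_C$ a compact base), and the case of an exact positive form forces $\ZZ_C = \{0\}$ by \cref{thm:sullivan-hahn-banach}; the remaining case requires a small separate argument.

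For ``$\subseteq$'', I would reduce to extreme cycles. By \cref{prop:krein-milman-currents} every $C$-cycle is a limit of finite non-negative combinations of extreme $C$-cycles; since $\ZZ_C(M) \to H_p(M;\R)$ is continuous and $\ConvexHull(\bigcup_L H_{C{\restriction_L}})$ is a convex cone (hence closed under non-negative combinations), it suffices to show the class of each extreme $C$-cycle $c$ lies in some $H_{C{\restriction_L}}$. For such $c$, the proposition on estranged leaves gives that no two estranged leaves meet $\supp c$. Replacing $\supp c$ by the closure $S$ of its $\F$-saturation --- a closed, saturated set that still contains no pair of estranged leaves, since any saturated open meeting $S$ must meet $\supp c$ itself --- pleasantness of $\F$ yields $\supp c \subseteq S \subseteq \cl L$ for a single leaf $L$. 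Finally, writing $c = \int_M s\,d\mu$ by \cref{prop:general-form-structure-current}, the vanishing of $c$ on all $p$-forms supported in the open set $M \setminus \cl L$ forces $s = 0$ for $\mu$-almost every point of $M \setminus \cl L$, so $c = \int_{\cl L} s\,d\mu$ with $s$ valued in $C{\restriction_L}$; thus $c$ is a structure current for $C{\restriction_L}$, and being closed it is a $C{\restriction_L}$-cycle, so $[c] \in H_{C{\restriction_L}}$.

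The main obstacle is the last step: turning ``$\supp c \subseteq \cl L$'' into ``$c$ is a $C{\restriction_L}$-cycle'' requires the same kind of disintegration/localization argument as the lemma showing $fc$ is closed, together with the converse of \cref{prop:general-form-structure-current} (any $\int s\,d\mu$ with $s$ valued in a cone structure being a structure current for it). A secondary point needing care is the closedness of $H_C$ in the degenerate case where $C$ admits no closed positive form, which is what allows the two inclusions to combine into the stated equality rather than only $H_C \subseteq \cl{\ConvexHull(\cdots)} \subseteq \cl{H_C}$.
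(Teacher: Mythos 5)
Your proposal follows essentially the same route as the paper's proof: reduce via Krein--Milman to extreme cycles, use the estranged-leaves proposition together with pleasantness to place the support of an extreme cycle inside the closure $\cl{L}$ of a single leaf, and conclude that its class lies in $H_{C{\restriction_L}}$, with the reverse inclusion being formal. The two points you flag as obstacles (closedness of $H_C$ in the degenerate case, and upgrading $\supp(c)\subseteq\cl{L}$ to $c$ being a $C{\restriction_{\cl{L}}}$-cycle, for which your saturation argument and the representation $c=\int s\,d\mu$ are exactly the right tools) are simply asserted without comment in the paper's much terser proof, so your write-up is, if anything, the more careful one.
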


\begin{proof}
    By the Krein-Milman theorem (theorem \ref{thm:krein-milman}),
    the cone $\ZZ_C$ is the closure of the convex hull of its extreme rays.
    Therefore the image $H_C$ of $\ZZ_C$ under the projection $\ZZ_p \to H_p$
    is the convex hull of the images of these extreme rays.
    Suppose now that $c \in \ZZ_C$ is extreme.
    Because $\F$ is pleasant, this means that the support of $c$ is contained
    in the closure of a single leaf $L$.
    But then $c$ is also a $(C{\restriction_{\cl{L}}})$-cycle, and so
    $[c] \in H_{C{\restriction_{\cl{L}}}} \subset H_p$.
    This proves the result.
\end{proof}


\section{Regularization of currents}

Recall that a smooth $p$-current on an oriented manifold is a current of the form
\[ \alpha \mapsto \int_M \beta\wedge\alpha \]
for a fixed $\beta\in\Omega^q(M)$ ($p+q=m$).
Because smooth currents are easier to handle than other currents, it is convenient
to be able to approximate currents by smooth currents.
In \cite{derham}, de Rham proved that such approximation is always possible,
and that every closed current is homologous to a smooth one.

\subsection*{De Rham's regularization}

For the reader's convenience, we give here a variation of the statement from
\cite[theorem 12, page 80]{derham}. The original statement is stronger and a little
more general, but this version will do for our purposes.

\begin{theorem}[de Rham]
    Let $M$ be a smooth compact manifold.
    Then one can construct linear operators $R : \Cur_p \to \Cur_p$ and $A : \Cur_p \to \Cur_{p+1}$, depending on a positive parameters $\eps$, with the following properties:
    \begin{enumerate}
        \item For any $p$-current, $R(c)$ is smooth and
            \[ R(c) - c = \partial A(c) + A(\partial c) .\]
        \item For any neighborhood of the support $c$,
            one can find a sufficiently small $\eps > 0$ such that the supports
            of $R(c)$ and $A(c)$ are contained in this neighborhood.
    \end{enumerate}
\end{theorem}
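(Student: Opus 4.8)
The plan is to reduce the statement to the Euclidean model, in which the regularizing operator is just convolution against a mollifier, and then to patch these local operators together over a finite atlas of the compact manifold $M$.

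\emph{The local model.} First I would work in $\R^n$. Fix a smooth function $\rho\geq 0$ with $\int_{\R^n}\rho = 1$, supported in the unit ball, and put $\rho_\eps(y) = \eps^{-n}\rho(y/\eps)$. On differential forms, define $R_\eps\alpha = \int_{\R^n}\rho_\eps(y)\,\tau_y^*\alpha\,dy$, where $\tau_y$ is translation by $y$. This is the usual mollification: it sends every form to a smooth form, and it commutes with $d$ since each $\tau_y^*$ does. The straight-line homotopy $x\mapsto x+ty$, $t\in[0,1]$, from the identity to $\tau_y$ gives the standard homotopy operator $K_y$ with $\tau_y^*\alpha - \alpha = dK_y\alpha + K_y d\alpha$; averaging, $A_\eps\alpha = \int_{\R^n}\rho_\eps(y)\,K_y\alpha\,dy$, and since $\int\rho_\eps = 1$ we get the chain-homotopy identity $R_\eps\alpha - \alpha = dA_\eps\alpha + A_\eps d\alpha$ on forms. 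Transposing $R_\eps$ and $A_\eps$ produces operators on currents, $R(c)(\alpha) = c(R_\eps\alpha)$ and $A(c)(\alpha) = c(A_\eps\alpha)$, and dualizing the identity (using $c(d\beta) = (\partial c)(\beta)$) gives exactly $R(c) - c = \partial A(c) + A(\partial c)$. That $R(c)$ is smooth is the statement that convolving a distribution against $\rho_\eps$ produces a smooth function; and since $\supp\rho_\eps$ sits in the $\eps$-ball, the supports of $R(c)$ and $A(c)$ lie in the $\eps$-neighborhood of $\supp c$.

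\emph{Globalization.} Next I would cover the compact manifold $M$ by finitely many coordinate charts $U_1,\dots,U_N$ and compact sets $K_i\subset U_i$ whose interiors still cover $M$. For each $i$ and each small $\eps$, choose a smooth family of diffeomorphisms $\phi^i_y$ of $M$, parameterized by $y$ in a ball of $\R^n$, that equals the chart translation $\tau_y$ on a neighborhood of $K_i$ and equals the identity outside $U_i$. Set $R_i(c) = \int\rho_\eps(y)\,(\phi^i_y)_* c\,dy$. This operator commutes with $\partial$, leaves $c$ unchanged outside $U_i$, mollifies $c$ over a neighborhood of $K_i$, and is chain-homotopic to the identity through an operator $A_i$ built from the rescaling homotopies $\phi^i_{ty}$, $t\in[0,1]$, exactly as in the local model. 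Taking $R = R_N\circ\cdots\circ R_1$, the composite is again a chain map, and the telescoping identity gives $R - \id = \partial A + A\partial$ with $A = \sum_{i=1}^{N} R_N\cdots R_{i+1}\,A_i\,R_{i-1}\cdots R_1$. Since each $R_i$ and $A_i$ moves support by $O(\eps)$, for $\eps$ small enough the supports of $R(c)$ and $A(c)$ stay inside any prescribed neighborhood of $\supp c$.

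\emph{The main obstacle.} The delicate part is not the Euclidean mollification but the patching, where I expect two points to require care. First, the local smoothing operator must be made genuinely equal to the identity outside its chart — so that the composition $R_N\cdots R_1$ is well defined, the telescoping homotopy formula holds, and the support statement survives — while remaining chain-homotopic to the identity; this is exactly what forces the use of diffeomorphisms interpolating between a chart translation near $K_i$ and the identity near $\partial U_i$, rather than honest translations. Second, one must ensure that smoothness, once obtained on the interior of $K_1$, is not destroyed by the later operators $R_2,\dots,R_N$: since each $R_i$ is an average of push-forwards by smooth diffeomorphisms, this holds on a slightly shrunken region, and one has to arrange the charts, the $K_i$, and the parameter $\eps$ so that after all $N$ steps $R(c)$ is smooth on all of $M = \bigcup_i \interior(K_i)$. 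Everything else — the chain-homotopy algebra and the elementary continuity and support properties of mollification — is routine.
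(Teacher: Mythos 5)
Your proposal is correct and follows essentially the same route as the paper, which itself only sketches de Rham's construction of $R$ (deferring details, including the construction of $A$, to de Rham's book): smooth locally by averaging push-forwards under a family of diffeomorphisms that act as translations where smoothing is needed and as the identity outside a chart, compose these local operators over a finite cover, and dualize the chain homotopy. The only difference is cosmetic: where you build the interpolating diffeomorphisms by cutting off the translation vector field and keep track of supports directly, the paper's outline conjugates the translations by a diffeomorphism $h:\R^m\to B^m$ so that $h t_y h^{-1}$ extends by the identity outside the ball, and localizes via the splitting $c\mapsto (1-f)c + R_{B^m,\eps}(fc)$ with a cutoff function $f$.
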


\begin{corollary}[de Rham]
    Every closed current on a smooth compact manifold is homologous to a smooth current.
\end{corollary}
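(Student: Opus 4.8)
The plan is to read off the corollary directly from the regularization theorem that precedes it. Let $c \in \Cur_p(M)$ be a closed current on the smooth compact manifold $M$. Fix any value of the parameter $\eps > 0$; since $M$ is compact, the support condition in part (2) of the theorem is automatic, so the choice of $\eps$ is irrelevant here. Apply the theorem to obtain the smooth current $R(c)$ together with the current $A(c) \in \Cur_{p+1}(M)$.

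Next I would invoke the homotopy formula from part (1) of the theorem, namely
\[ R(c) - c = \partial A(c) + A(\partial c) . \]
Because $c$ is closed, $\partial c = 0$, and since $A$ is a linear operator it sends $0$ to $0$, so the second term on the right vanishes. Hence $R(c) - c = \partial A(c)$, which exhibits $R(c) - c$ as an exact current. In particular $c$ and $R(c)$ represent the same class in $H_p^{\dR}(M)$, i.e.\ they are homologous, and $R(c)$ is smooth by part (1). This proves the corollary.

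There is essentially no obstacle: the entire content is packaged in the preceding theorem, and the only point worth stating carefully is that linearity of $A$ kills the term $A(\partial c)$ once $\partial c = 0$. If one wanted to be thorough, one could also remark that $R(c)$ is genuinely closed as well (it must be, being homologous to the closed current $c$, since $\partial(R(c)) = \partial c + \partial\partial A(c) = 0$), so the statement is consistent with $R(c)$ defining a class in de Rham homology; but this is not needed for the statement as phrased.
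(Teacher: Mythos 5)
Your proof is correct and is exactly the argument the paper intends: the corollary is stated as an immediate consequence of the regularization theorem, obtained by setting $\partial c = 0$ in the homotopy formula so that $R(c) - c = \partial A(c)$ with $R(c)$ smooth. Nothing further is needed.
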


Let us outline the construction of the regularization operator $R$ as it is done by de Rham.
We refer the reader to de Rham's book \cite{derham} for the details.
\begin{itemize}
    \item Suppose $c$ is a compactly supported $p$-current on $\R^m$.
        Let $\rho$ be a smooth bump function supported in the ball of radius $\eps$ at
        the origin of $\R^m$, of integral 1.
        Let $t_{y}$ be the translation $\R^m\to\R^m:x\mapsto x+y$.
        Then the convolution of $c$ with $\rho$,
        \[ \alpha \mapsto c\left( \int_y \rho(y) t_y^*\alpha \right) , \]
        is a smooth $p$-current on $\R^m$.
    \item Let $B^m\subset \R^m$ be the unit ball.
        There is is a suitable diffeomorphism $h : \R^m \to B^m$, such that the
        maps $h t_y h^{-1}$ extend
        to smooth maps $\varphi_y : \R^m\to\R^m$ that are the identity outside $B^m$.
        Then if $c$ is a compactly supported $p$-current,
        \[ \tag{$\ast$} R_{B^m,\eps}(c) : \alpha \mapsto c\left( \int_y \rho(y) \varphi_y^*\alpha \right) \]
        is a compactly supported $p$-current. It is smooth in $B^m$. It equals
        $c$ outside of $\cl{B^m}$.

        Using $R_{B^m,\eps}$ we can smoothen a current locally on a manifold:
        suppose $B\subset M$ is the unit ball in a chart for $M$.
        Let $f \in C^\infty(M,[0,1])$ be supported in the chart, and $f=1$ on $B$.
        Then we have an operator
        \[ R_{B,\eps} : \Cur_p(M) \to \Cur_p(M) : c \mapsto (1-f)c + R_{B^m,\eps}(fc) .\]
        This operator gives currents that are smooth in $B$.
    \item Given our manifold $M$, we cover it using balls $B_1, ..., B_k$.
        Then the smoothing operator $R$ is given by
        \[ R = R_{B_k, \eps_k} \circ ... \circ R_{B_1, \eps_1} .\]
\end{itemize}

We do not describe the construction of $A$, since the details are not important to us.

\subsection*{Regularization of structure currents}

Suppose that $C$ is a compact cone structure on $M$.
It is not generally true that every $C$-cycle is homologous to a smooth
$C$-cycle.  However, the following result shows that $C$-cycles are
homologous to smooth currents that are very close to $C$-currents.

\begin{proposition}[see {{\cite[{{Theorem I.9}}]{sullivan}}}]
    Let $M$ be a smooth compact manifold.
    Suppose that $C \subset \Lambda^pTM$ is a compact cone structure.
    Let $D$ be another compact cone structure, such that $C \subset \interior(D)$.
    Then by choosing the parameters $\eps_i$ for $R$ sufficiently small,
    $R(c)$ will be a smooth $D$-current for every $C$-current $c$.
    \label{prop:regularization-C-currents}
\end{proposition}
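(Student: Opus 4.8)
The plan is to follow de Rham's construction of $R$ step by step and control its effect on the cone of directions. Recall that $R = R_{B_k,\eps_k}\circ\cdots\circ R_{B_1,\eps_1}$, where $B_1,\dots,B_k$ are unit balls in charts covering $M$, and for each ball $R_{B,\eps}(c)=(1-f)c+R_{B^m,\eps}(fc)$ with $f\in C^\infty(M,[0,1])$ supported in the chart and equal to $1$ on $B$, and
\[
R_{B^m,\eps}(c)(\alpha)=c\Bigl(\int_y\rho(y)\,\varphi_y^*\alpha\,\d y\Bigr),
\]
where $\rho$ is a bump of integral $1$ supported in the $\eps$-ball and $\varphi_y\colon\R^m\to\R^m$ are diffeomorphisms with $\varphi_0=\id$, equal to the identity outside $B^m$, depending continuously on $y$. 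First I would reduce everything to a single local step: it suffices to show that whenever $D_1\subset\interior(D_2)$ are compact cone structures, there is $\eps_0>0$, \emph{independent of the current}, such that $R_{B,\eps}$ maps every $D_1$-current to a $D_2$-current for $\eps<\eps_0$. Granting this, I would interpolate $C\subset\interior(D)$ by compact cone structures $C=D^{(0)}\subset\interior(D^{(1)})\subset\cdots\subset\interior(D^{(k)})=D$ (obtained by thickening $C$ slightly and fiberwise-convexly in $k$ small steps inside $\interior(D)$), choose $\eps_i<\eps_0(D^{(i-1)},D^{(i)},B_i)$, and compose; de Rham's regularization theorem above guarantees $R(c)$ is smooth regardless, so $R(c)$ is then a smooth $D$-current. (Equivalently, one can bypass the chain and simply track the cumulative perturbation of directions through the $k$ local steps.)

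For the single step, represent a $D_1$-current as $c=\int_M s\,\d\mu$ with $\mu\geq 0$ a finite measure and $s$ a measurable section of $\Lambda^pTM$ valued in $D_1$ (this is \cref{prop:general-form-structure-current}; conversely, any such integral is readily seen to be a $D_1$-current, being an average of Dirac $D_1$-currents). Since $1-f\geq0$ and each $(D_1)_x$ is a cone, $(1-f)c=\int_M(1-f)s\,\d\mu$ is again a $D_1$-current, hence a $D_2$-current. Working in the chart so that $fc$ is a compactly supported current on $\R^m$, unwinding the definition of $R_{B^m,\eps}$ gives
\[
R_{B^m,\eps}(fc)(\alpha)=\int_M\int_y\rho(y)\,\alpha_{\varphi_y(x)}\!\bigl(\Lambda^p(\d\varphi_y)_x\,s(x)\bigr)\,\d y\,\d\mu(x),
\]
so $R_{B^m,\eps}(fc)$ is an average of Dirac currents sitting at $\varphi_y(x)$ in the directions $\Lambda^p(\d\varphi_y)_x\,s(x)$. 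It is therefore a $D_2$-current as soon as $\Lambda^p(\d\varphi_y)_x\,s(x)\in(D_2)_{\varphi_y(x)}$ for all $x$ in the chart and all $y$ with $|y|<\eps$.

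It remains to secure that last membership, and this is where the hypothesis $C\subset\interior(D)$ enters quantitatively. Because $D_1\subset\interior(D_2)$ as subsets of the \emph{total space} $\Lambda^pTM$, and because $\{v\in D_1:|v|=1\}$ and $M$ are compact, there is $\delta>0$ such that $(x',v')\in D_2$ whenever $(x,v)\in D_1$ with $|v|=1$, $d(x,x')<\delta$, and $\|v-v'\|<\delta$ (distances read in the relevant chart). On the other hand $\varphi_y$ depends continuously on $y$ with $\varphi_0=\id$ and is the identity off $B^m$, so $\sup_x d(x,\varphi_y(x))\to0$ and $\sup_x\|\Lambda^p(\d\varphi_y)_x-\id\|\to0$ as $y\to0$; choosing $\eps_0$ small enough that both are $<\delta$ for $|y|<\eps_0$, and normalizing $s(x)$ to unit length (harmless, since the fibers are cones), we obtain $\Lambda^p(\d\varphi_y)_x\,s(x)\in(D_2)_{\varphi_y(x)}$. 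This completes the single step, and the composition argument above then finishes the proof. The crux — and the main obstacle — is precisely this estimate: one must bound how much the diffeomorphisms $\varphi_y$ arising in de Rham's construction rotate $p$-vectors, uniformly in the base point and, above all, independently of the structure current, so that a single small $\eps_i$ works for all currents at each of the $k$ stages and the stages can be chained together.
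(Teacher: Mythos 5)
Your proposal is correct and follows essentially the same route as the paper: interpolating a chain of compact cone structures between $C$ and $D$, choosing each $\eps_i$ so small that the diffeomorphisms $\varphi_y$ push the previous cone into the next, and observing that the local operator $R_{B,\eps}$ is an average of such pushforwards (plus the harmless $(1-f)c$ term). You simply spell out, via the integral representation of structure currents and a compactness estimate, the step the paper compresses into ``$(\varphi_y)_*C\subset D$ guarantees $R_{B^m,\eps}(c)$ is a $D$-current.''
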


This proposition follows from the following local result.

\begin{proposition}
    Let $C \subset S\Lambda^pT\R^m$ be a compact cone structure,
    and let $D$ be a cone structure whose interior contains $C$.
    Let $\eps > 0$ be so small that $(\varphi_y)_*C \subset D$ for all $y$ with $\lvert y\rvert \leq \eps$.
    Then for any compactly supported $C$-current on $\R^m$,
    the current $R_{B,\eps}(c)$ is smooth in $B$, and over $B$ it is a $D$-current.
    \label{prop:local-regularization-C-currents}
\end{proposition}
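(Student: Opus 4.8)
The plan is to reduce the statement to the case of a single Dirac $C$-current, push it through the explicit formula for $R_{B^m,\eps}$, and then recover the general case by linearity, continuity, and the fact that the cone of $D$-currents is closed.

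First note that the smoothness of $R_{B,\eps}(c)$ over $B$ is not new content: it is exactly what de Rham's construction, recalled above, already provides. So the only thing to prove is that $R_{B,\eps}(c)$ is a $D$-current whenever $c$ is a $C$-current. Write $R_{B,\eps}(c) = (1-f)c + R_{B^m,\eps}(fc)$. Since $0\le f\le 1$ and each fiber $C_x$ is a cone, multiplication by $f$ (or by $1-f$) sends a Dirac $C$-current $\delta_v$ with $v\in C_x$ to $f(x)\,\delta_v = \delta_{f(x)v}$, which is again a Dirac $C$-current (or $0$); as multiplication by a fixed smooth function is continuous on $\Cur_p$, both $fc$ and $(1-f)c$ are $C$-currents (alternatively, multiply the section in the representation of Proposition~\ref{prop:general-form-structure-current} by a non-negative function). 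Because $C\subset\interior(D)\subseteq D$ we have $\Cur_C\subseteq\Cur_D$, so $(1-f)c$ is already a $D$-current, and it remains to prove that $R_{B^m,\eps}$ maps $C$-currents to $D$-currents.

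For this, the key computation is on a single Dirac $C$-current $c=\delta_v$ with $v\in C_x$. Unwinding the definition,
\[ R_{B^m,\eps}(\delta_v)(\alpha) \;=\; \delta_v\!\left(\int_y \rho(y)\,\varphi_y^*\alpha\,dy\right) \;=\; \int_y \rho(y)\,\alpha\big((\varphi_y)_*v\big)\,dy , \]
so $R_{B^m,\eps}(\delta_v)$ is a $\rho$-weighted average of the Dirac currents $\delta_{(\varphi_y)_*v}$ over $\lvert y\rvert\le\eps$. By the standing hypothesis $(\varphi_y)_*v\in(\varphi_y)_*C\subset D$ for every such $y$ (this also holds trivially when the base point lies outside $B^m$, where $\varphi_y$ is the identity and $C\subset D$), so each $\delta_{(\varphi_y)_*v}$ is a Dirac $D$-current. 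Approximating the integral by Riemann sums $\sum_i \rho(y_i)\,\lvert Q_i\rvert\,\delta_{(\varphi_{y_i})_*v}$, which are non-negative combinations of Dirac $D$-currents and hence lie in the cone spanned by them, and passing to the limit in $\Cur_p$, exhibits $R_{B^m,\eps}(\delta_v)$ as a $D$-current. Since $R_{B^m,\eps}$ is linear and continuous — it is dual to the continuous operator $\alpha\mapsto\int_y\rho(y)\,\varphi_y^*\alpha\,dy$ on forms — it maps the cone spanned by Dirac $C$-currents into $\Cur_D$; and because $\Cur_D$ is closed, it maps the closure of that cone, namely all of $\Cur_C$, into $\Cur_D$. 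Combining this with the previous paragraph, $R_{B,\eps}(c) = (1-f)c + R_{B^m,\eps}(fc)$ is a sum of two $D$-currents, hence a $D$-current.

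I do not anticipate a genuine obstacle: the argument is a direct computation followed by a standard density-and-continuity routine. The two points needing care are (a) that the cutoff $f$ really preserves the structure cone, which is exactly where $0\le f\le 1$ and the cone property of the fibers are used, and (b) the bookkeeping in passing from Dirac currents to arbitrary $C$-currents — the Riemann-sum approximation of the $y$-integral and the continuity of $R_{B^m,\eps}$ — all carried out in the topology on $\Cur_p$ fixed earlier in the chapter. Once this local statement is established, Proposition~\ref{prop:regularization-C-currents} follows by applying it successively over the balls covering $M$, shrinking the parameters $\eps_i$ so that the intermediate cone structures still satisfy the hypothesis at each step.
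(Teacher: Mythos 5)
Your proof is correct and follows the same route as the paper: the paper's own proof is a one-line remark that the hypothesis $(\varphi_y)_*C\subset D$ makes the convolution formula $(\ast)$ produce a $D$-current, and your Dirac-current computation plus the density/continuity argument (and the cutoff bookkeeping) simply spells out the details behind that observation. Nothing in your write-up deviates from or conflicts with the paper's argument.
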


\begin{proof}[Proof of proposition \ref{prop:regularization-C-currents} from proposition \ref{prop:local-regularization-C-currents}]
    Cover $M$ with balls $B_1, ..., B_k$. Choose compact sets $C \subset C_1 \subset C_2 \subset ... \subset C_k = D$ such that
    $C_1$ is a neighborhood of $C$ and $C_{i+1}$ is a neighborhood of $C_i$.
    By taking $\eps_1$ small enough, we can ensure that $R_{B_1,\eps_1}$ maps
    $C$-currents to $C_1$-currents.
    By taking $\eps_2$ small enough, we can ensure that $R_{B_2, \eps_2}$ maps $C_1$-currents
    to $C_2$-currents.
    We continue this way, finally picking $\eps_k$ small enough such that $R_{B_k,\eps_k}$ maps
    $C_{k-1}$-currents to $D$-currents.
    By construction, this means that $R$ will map $C$-currents to $D$-currents.
    This proves the result.
\end{proof}

\begin{proof}[Proof of proposition \ref{prop:local-regularization-C-currents}]
    The crucial observation is that $(\varphi_y)_*C_y \subset D_y$ guarantees
    that $R_{B^m,\eps}(c)$,
    given by formula ($\ast$),
    is a $D$-current.
\end{proof}

\section{The local structure of structure cycles}

In light of theorems \ref{thm:sullivan-hahn-banach} and \ref{thm:sullivan-cone-C},
it is interesting to better understand structure cycles.
For foliation cycles, Sullivan's result (\cref{thm:transverse-measures})
completely describes what they look like.
For other structures, no such result is available.
There are, however, conjectures.
We state two conjectures, and generalize them to a single conjecture about structure cycles.

\subsection*{Unifying existing conjectures}

In \cite{sullivan}, Sullivan discusses the case of a compact complex manifold $M^{2n}$.
For each $k\in\{0,...,n\}$, there is a natural cone structure in dimension $p=2k$.
At a point $x\in M$, it is the compact convex cone in $\Lambda^pT_xM$ generated by
complex subspaces of complex dimension $k$.
The corresponding structure cycles are referred to as \emph{complex cycles}.

\begin{conjecture}[Sullivan, {{\cite[III.9]{sullivan}}}]
    Complex cycles can be locally represented near $x$ in $M$ as
    $c = \int c'\,d\nu$, where the $c'$ are irreducible subvarieties passing near $x$
    and $\nu$ is a non-negative measure on the space of such (regarded as a subspace of
    the currents on some neighborhood of $x$).
    \label{conj:sullivan}
\end{conjecture}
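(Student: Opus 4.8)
The statement is a conjecture, so what follows is an attack plan rather than a proof. The plan is to first translate the problem into the language of several complex variables, then reduce to an ``indecomposable'' case by Choquet theory, and finally try to run the classical slicing/analyticity machinery on the indecomposable pieces. First I would localize: fix a coordinate ball $U$ around $x$ biholomorphic to a ball in $\C^n$ and work only with complex cycles on $U$. By \cref{prop:general-form-structure-current}, any complex cycle $c$ may be written $c=\int_U s\,d\mu$ with $s$ a $\mu$-integrable section of $\Lambda^{2k}TU$ valued in the complex cone $C$; since $C_y$ is by definition generated by (positively oriented) complex $k$-planes, this says exactly that $c$ is a \emph{strongly positive, closed current of bidimension $(k,k)$} in the sense of Lelong. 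The target is then: represent such a $c$ as $\int [V]\,d\nu$, with $\nu$ a non-negative measure on the space of irreducible $k$-dimensional analytic subsets of $U$.

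\textbf{Reduction to extreme cycles.} Over $\cl U$ the cone structure $C$ is compact (a point together with an oriented complex $k$-plane), so \cref{prop:krein-milman-currents} and the Choquet--Bishop--de Leeuw theorem apply: every complex cycle is a barycenter $\int c'\,d\nu(c')$ of extreme complex cycles. It therefore suffices to prove the conjecture for an \emph{extreme} complex cycle $c$, i.e.\ to show that an extreme strongly positive closed $(k,k)$-current on $U$ is a positive multiple of the current of integration over a single irreducible $k$-dimensional subvariety. Extremality should be exploited just as in the proof that an extreme $C$-cycle meets no pair of estranged leaves: whenever $0\le f\le 1$ is bounded measurable and $fc$ is again a strongly positive closed $C$-cycle, extremality forces $fc$ to be a multiple of $c$, hence $f$ is $c$-a.e.\ constant; the work is in manufacturing enough such cut-offs $f$ from the local potentials of $c$ to pin down its support.

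\textbf{Identifying the support.} For the structure of such an extreme $c$ I would invoke the theory of closed positive currents. Siu's semicontinuity theorem gives that each superlevel set of the Lelong number, $E_\lambda(c)=\{\,y\in U:\nu(c,y)\ge\lambda\,\}$, is an analytic subvariety of dimension $\le k$, and the Siu decomposition yields
\[ c=\sum_{j\ge 1}\lambda_j\,[V_j]+R, \]
with the $V_j$ irreducible $k$-dimensional subvarieties and $R$ a closed positive current whose superlevel sets all have dimension $<k$. Extremality should then force $c$ to be a single summand: either $c=\lambda_j[V_j]$ with $V_j$ necessarily irreducible (otherwise $[V_j]$ splits, contradicting extremality), or $c=R$ is ``diffuse''. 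In the diffuse case one would slice by a generic linear projection $\pi:\C^n\to\C^k$: the slices $\langle c,\pi,w\rangle$ are strongly positive closed $0$-currents, i.e.\ positive measures, which for a current of integration are finite sums of point masses on $\pi^{-1}(w)\cap V$. The hope is to reconstruct an analytic $V$ from these slices (a King/Harvey--Shiffman-type patching), using extremality to guarantee the slices are \emph{atomic} rather than genuinely spread out, together with a Radó/Hartogs argument to upgrade the resulting rectifiable set to an analytic one; the $k=0$ case (arbitrary positive measures, trivially $\int\delta_y\,d\mu(y)$) and the $k=n-1$ case (Poincaré--Lelong) serve as anchors for an induction on $k$.

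\textbf{The main obstacle.} The crux is the diffuse case, and it is genuinely dangerous: for $k=n-1$ the restriction to $U$ of a Kähler form is a smooth strongly positive closed $(n-1,n-1)$-current with all top-dimensional Lelong numbers equal to zero, so the conjecture can only survive if such a current is \emph{never} extreme among complex cycles, equivalently if every such current really does disintegrate as an integral of currents of integration over subvarieties. Proving this ``disintegration into subvarieties'' for strongly positive closed currents — and making the slicing-and-patching step actually output honest irreducible analytic sets rather than mere integral $2k$-rectifiable cycles — is where all the difficulty concentrates, and is presumably why the statement is only conjectural. Should it fail, the natural fallback is to enlarge the representing family, replacing ``irreducible subvarieties'' by limits of such (Ahlfors-type currents / laminations by entire curves), which would at least yield a structure theorem in the spirit of \cref{thm:transverse-measures}.
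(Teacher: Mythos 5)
This statement is not a theorem of the thesis: it is Sullivan's conjecture, recorded verbatim (with attribution) and left open, so there is no proof in the paper to compare against --- the surrounding chapter only generalizes the statement and explicitly says no deep results in this direction have been obtained. Your submission, to its credit, does not pretend otherwise: it is an attack plan, and a reasonable one. The translation of complex cycles into strongly positive closed currents of bidimension $(k,k)$ is correct, the reduction to extreme cycles via \cref{prop:krein-milman-currents} and Choquet--Bishop--de Leeuw is exactly the kind of step the thesis itself uses for structure cycles, and bringing in Siu's decomposition and slicing is the natural way to try to identify the extreme pieces with integration currents.

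The genuine gap is the one you name yourself, and it is worth stressing that it is not a technical loose end but the entire content of the conjecture: in the diffuse case (all $k$-dimensional Lelong numbers zero) nothing in your plan produces a subvariety, and extremality alone is not known to rule this case out or to force the slices to be atomic. A smooth strongly positive closed $(k,k)$-form is the basic test case --- for the standard K\"ahler form a Crofton-type averaging over complex $k$-planes does give the desired representation, but no argument you give (nor any known one) shows that a general diffuse extreme cycle disintegrates into integration currents, and the King/Harvey--Shiffman machinery you invoke needs rectifiability and integer multiplicities as input, which you have no way to supply here. So the proposal should be read as a correct reduction of the conjecture to its hard core (``extreme diffuse complex cycles do not exist, or themselves disintegrate''), not as progress on that core; since the paper offers no proof either, there is no discrepancy with the source, only the shared open problem.
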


An analogous conjecture for the case of a leafwise (almost) complex structure was made
by \'Etienne Ghys, at the \emph{Workshop on topological aspects of symplectic foliations}
that took place at Universit\'e de Lyon 1 (France) in September 2017.
Ghys' conjecture was not stated in precise mathematical terms.
The following formulation is an attempt at making the conjecture more precise,
by phrasing it similarly to Sullivan's conjecture.
Any mistakes in formulating this conjecture are solely the author's.

\begin{conjecture}[Ghys]
    Suppose $\F$ is a foliation of a manifold $M$, with a leafwise (almost)
    complex structure $J$. Let $C$ be the structure cone associated to $J$:
    \[ C = \{ v\wedge J(v) \mid v\in T\F \} .\]
    Then every $C$-cycle can be locally represented near $x\in M$
    as $c = \int c' \,d\nu$, where the $c'$ are pseudoholomorphic curves passing near $x$
    and $\nu$ is a non-negative measure on the space of such (regarded as a subspace of the
    currents on some neighborhood of $x$).
    \label{conj:ghys}
\end{conjecture}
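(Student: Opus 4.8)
The plan is necessarily a strategy for attacking an open problem: \cref{conj:ghys}, like \cref{conj:sullivan}, is not settled here, and what follows is a roadmap rather than a complete argument. The overarching idea is to imitate the structure of the proof of \cref{thm:transverse-measures}, which represents a foliation cycle as a ``transverse mixture of leaves'', but with leaves replaced by pseudoholomorphic curves. Since the cone $C = \{v\wedge J(v)\mid v\in T\F\}$ lies tangent to $\F$, the disintegration and regularization machinery developed above should let one split the problem into a transverse part, handled by a disintegration of measures, and a leafwise part, which is the genuinely hard part: a structure theorem for closed positive $2$-currents calibrated by a leafwise almost complex structure.

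\emph{Step 1: reduction to a single plaque.} Fix $x\in M$ and a foliated chart $X\times Y$ around $x$, with $X$ the plaque and $Y$ a transversal. By \cref{prop:general-form-structure-current} write the given $C$-cycle as $c=\int_M s\,d\mu$ with $s$ a section of $\Lambda^2T\F$ valued in $C$. Disintegrating $\mu$ over $Y$ exactly as in the proof of \cref{prop:pleasant-leafwise} yields $c=\int_Y c_y\,d\nu(y)$, where $c_y$ is a positive $2$-current supported in the plaque $X\times\{y\}$ and tangent to $C$, and, by the disintegration argument there, $c_y$ is closed for $\nu$-almost every $y$. Thus it suffices to prove the conjecture for a single closed positive current on the almost complex manifold $(X,J)$ tangent to $\{v\wedge J(v)\}$, and then re-assemble the representations $c_y=\int c'\,d\nu_y(c')$ over $y\in Y$ by Fubini to obtain the desired $c=\int c'\,d\nu(c')$.

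\emph{Step 2: the leafwise structure theorem.} One now wants: a closed positive $2$-current $c_y$ on $(X,J)$ whose generalized tangent planes are $J$-complex lines is of the form $\int c'\,d\nu_y$ with the $c'$ currents of integration over pseudoholomorphic disks through $X$. In the integrable case this is essentially \cref{conj:sullivan} in bidimension $(1,1)$, and is closely tied to the structure theory of positive closed $(1,1)$-currents: King's theorem and the Harvey--Shiffman characterization give that the \emph{rectifiable} such currents are holomorphic chains, and the remaining content is a Choquet-type decomposition \emph{inside} the closed convex cone of positive closed $(1,1)$-currents carried by analytic curves, identifying its extreme rays with germs of irreducible curves. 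In the almost complex case the holomorphic-chain theory must be replaced by its pseudoholomorphic analogue: one would use the monotonicity (mean-value) inequality for $J$-curves together with Gromov compactness to realize the space of $J$-disks meeting a fixed neighborhood of $x$ as a \emph{compact} subset of $\Cur_2(X)$, so that \cref{thm:krein-milman} and the Choquet--Bishop--de Leeuw theorem apply, and then invoke the local graph description of $J$-curves to show the extreme $C$-cycles are currents of integration over such disks. The measure $\nu_y$ is then the Choquet measure representing $c_y$ as a barycenter of these extreme currents; \cref{prop:regularization-C-currents} can be used along the way to reduce to the dense class of currents that are smooth and nearly $C$-tangent.

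\emph{Main obstacle.} The crux is Step 2 in the non-integrable case: a rectifiability and structure theorem for closed positive $2$-currents calibrated by a merely almost complex structure. This is genuinely hard --- even the integrable version, \cref{conj:sullivan}, is open in general --- and would presumably require the machinery of calibrated geometry (Harvey--Lawson), Rivi\`ere-type rectifiability results for $J$-calibrated integral currents, and Taubes's analysis of pseudoholomorphic subvarieties for degenerate almost complex data. A second, more technical difficulty is to equip ``the space of pseudoholomorphic curves passing near $x$'' with enough structure to make the representing measure $\nu$ meaningful: one must control bubbling near $x$ via Gromov compactness and show this moduli space embeds as a Borel (or at least Baire-measurable) subset of the currents on a neighborhood of $x$, so that the Choquet--Bishop--de Leeuw representation can be applied as in the passage from \cref{prop:krein-milman-currents} to its measure-theoretic refinement.
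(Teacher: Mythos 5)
The statement you were asked about is not a theorem of the thesis: \cref{conj:ghys} is an open conjecture, attributed to Ghys, which the text only formulates precisely and then unifies with \cref{conj:sullivan} into the weaker \cref{conj:final}; no proof is given or claimed anywhere in the paper, so there is no proof of the paper to compare yours against. Your proposal, to its credit, says this openly, and as a research plan it is reasonable and consistent with the tools the thesis actually develops: your Step 1 (disintegrate $\mu$ over a transversal and reduce to a single plaque) is essentially the argument used in the lemma on multiplying a closed $C$-current by a leafwise-constant function (the Bourbaki disintegration step), not really \cref{prop:pleasant-leafwise}, and combining it with \cref{prop:general-form-structure-current}, \cref{thm:krein-milman} and the Choquet--Bishop--de Leeuw theorem is exactly in the spirit of how the thesis suggests such a statement might be attacked.

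The genuine gap is that your Step 2 \emph{is} the conjecture: a structure/rectifiability theorem for closed positive $(1,1)$-type currents tangent to a (possibly non-integrable) leafwise $J$, identifying the extreme rays of the cone of such cycles with currents of integration over pseudoholomorphic curves. Citing King, Harvey--Shiffman, Gromov compactness and Taubes indicates where one might look, but none of these is shown to apply here (in particular, local $C$-cycles need not be rectifiable or of finite mass on the plaque, which is the hypothesis all of those structure theorems require), so nothing is actually proved. Two further technical points you only partially address: (i) after disintegration you must make a measurable choice $y\mapsto\nu_y$ of representing measures before ``Fubini'' reassembles them into a single measure $\nu$ on a space of curves, and the extreme points of the relevant compact base need not form a Borel set (this is precisely why the thesis invokes the Choquet--Bishop--de Leeuw $\sigma$-algebra); and (ii) the thesis itself is cautious here --- its counterexample to the naive generalization and the hypotheses $\ConvexHull(C_x^s)=C_x$ and integrability of $C$ in \cref{conj:final} exist because one must first guarantee ``sufficiently many'' curves tangent to $C$, so any serious attempt should say explicitly that Nijenhuis--Woolf local existence of $J$-curves is what plays this role in Ghys' setting. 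In short: the proposal is a sensible roadmap that matches the paper's framework, but it does not prove the statement, and the paper does not either.
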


The conjectures above have a natural variation for general structure cycles.
We need the following definition.

\begin{definition}
    Suppose that $C \subset \Lambda^pTM$ is a compact cone structure on $M$.
    An embedded $p$-dimensional submanifold $S \subset M$ is said to be
    \emph{tangent to $C$} if for each point $x\in S$, we have $\Lambda^pT_xS \subset C$.
\end{definition}

One straightforward variation for more general cone structures is the following.

\begin{conjecture*}[first version]
    Suppose that $C \subset \Lambda^pTM$ is a compact cone structure on $M$.
    Then every $C$-cycle can be locally represented near $x\in M$
    as $c = \int c' \,d\nu$, where the $c'$ are submanifolds of $M$, tangent to $C$,
    passing near $x$, and $\nu$ is a non-negative measure on the space of such (regarded as
    a subspace of the currents on some neighborhood of $x$).
    \label{conj:version1}
\end{conjecture*}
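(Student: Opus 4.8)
The plan is to treat this as a research program rather than a routine verification: even the complex-cycle special case (\cref{conj:sullivan}) is open, so I would first carry out the structural reductions that are within reach and then isolate the genuinely hard analytic core. First I would localize. Since only a representation of $c$ on a neighbourhood of $x$ is demanded, I may pass to a chart $U \cong \R^m$ and replace $c$ by $fc$ for a smooth cutoff $f$ equal to $1$ near $x$ and supported in $U$, dropping the closedness requirement (which is not part of the conclusion). By \cref{prop:general-form-structure-current} the localized current is $c = \int_U s\,d\mu$ with $\mu\geq 0$ and $s(z)\in C_z$ for $\mu$-a.e.\ $z$. Next I would reduce to extreme cycles: by \cref{prop:krein-milman-currents} every $C$-cycle is approximated by sums of positive multiples of extreme $C$-cycles, and the Choquet--Bishop--de Leeuw theorem upgrades this to an honest barycentric integral over the extreme cycles, so it suffices to establish the desired representation for an extreme $C$-cycle (with $\nu$ a point mass). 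Finally, a fibrewise Choquet decomposition of the convex cone $C_z$, glued measurably, lets me assume after a further disintegration that $s(z)$ is a \emph{decomposable} $p$-vector a.e., so that $s$ determines a measurable field of $p$-planes $P(z)\subset T_zM$ with $\Lambda^pP(z)$ lying along $C_z$.

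The heart of the matter is then to show that, for an extreme closed current directed by such a field $P$, the field is, off a null set, tangent to a measured family of $p$-dimensional submanifolds tangent to $C$ — a ``Frobenius-cum-rectifiability theorem for closed positive currents'' — after which $c=\int c'\,d\nu$ follows by disintegrating $\mu$ transversely to this family exactly as in the proof of \cref{thm:transverse-measures}; the slicing argument in the closedness lemma for $fc$ is precisely the right template. In dimension $p=1$ this step is essentially Smirnov's decomposition of a divergence-free normal $1$-current into elementary curves, and directedness forces those curves to be tangent to $C$, so the one-dimensional case of the conjecture is genuinely within reach. For $p>1$ I would attempt an induction on $p$: choose coordinates in which $C$ is nearly horizontal (possible since a cone structure admits a positive $p$-form), disintegrate $c$ along the vertical $(m-p)$-planes using the measure-disintegration machinery already employed above, recognize the slices as lower-dimensional structure cycles, decompose them inductively, and reassemble the pieces into $p$-submanifolds via the resulting transverse measure.

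I expect the reassembly/integrability step to be the true obstacle, and I do not expect the first version of the conjecture to survive unscathed: a merely measurable plane field has no Frobenius theorem, and closed positive currents of dimension $>1$ need not be laminated by smooth submanifolds — already for complex cycles this is the content of the Harvey--Lawson--King structure theory, which needs an integrality (or density / Lelong-number) hypothesis to rule out a diffuse remainder. The realistic outcome of the program is therefore a theorem under an extra hypothesis, either a regularity assumption on $C$ (e.g.\ real-analytic) or a density assumption on $c$, which is presumably why a refined version of the conjecture is stated afterwards. As a consistency check, in the foliated situation $C\subset\Lambda^pT\F$ with $\F$ integrable by fiat the obstacle disappears entirely and the plan collapses onto \cref{thm:transverse-measures} (and is compatible with \cref{prop:pleasant-leafwise}); one should demand that any general proof specialize correctly to that case.
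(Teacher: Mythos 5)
Your proposal sets out to prove something that the paper does not prove; it \emph{refutes} it. Immediately after stating the first version, the text exhibits a counterexample: on $M=\R^4$ with coordinates $(x_1,y_1,x_2,y_2)$, let $C$ be the cone structure whose every fiber is the ray spanned by $\dd{x_1}\wedge\dd{y_1}+\dd{x_2}\wedge\dd{y_2}$. This is a perfectly good compact cone structure (take $\beta=dx_1\wedge dy_1+dx_2\wedge dy_2$), but $C_x$ contains no non-zero decomposable $2$-vectors, so there is \emph{no} submanifold of $M$ tangent to $C$ at all; nevertheless the smooth current $\alpha\mapsto\int_{\R^4}\alpha\bigl(\dd{x_1},\dd{y_1}\bigr)+\alpha\bigl(\dd{x_2},\dd{y_2}\bigr)$ is a non-zero $C$-cycle. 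Hence no representation $c=\int c'\,d\nu$ by submanifolds tangent to $C$ can exist, locally or otherwise, and the statement as posed is simply false.

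The place where your program breaks is identifiable and instructive: the claim that a ``fibrewise Choquet decomposition of the convex cone $C_z$'' lets you assume $s(z)$ is a decomposable $p$-vector a.e. Choquet-type decompositions can only re-express elements of $C_z$ in terms of its extreme rays, and extreme rays of a convex cone in $\Lambda^pT_zM$ need not be decomposable — in the example above the unique extreme ray is non-decomposable — so this reduction to a measurable plane field $P(z)$ is unavailable in general. Your closing hedge does point in the right direction, but the needed repair is not real-analyticity of $C$ or a density/Lelong-type hypothesis on $c$: it is the algebraic hypothesis that the directions realizable by tangent submanifolds already generate the cone, $\ConvexHull(C_x^s)=C_x$, together with (in the paper's final formulation) integrability of $C$ in the sense that $C$ is locally constant in suitable charts. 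That is exactly how the paper amends the conjecture, and even the amended versions are left as conjectures there — so a ``research program'' is a reasonable response to the later versions, but for this particular statement the correct answer is a counterexample, not a proof strategy. (Your consistency check is fine: in the foliated, integrable case the obstruction disappears and one is back to Sullivan's description of foliation cycles by transverse measures.)
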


As stated above, the conjecture is false.
Here is a counterexample.
Let $M = \R^4$ with coordinates $(x_1,y_1,x_2,y_2)$, and let $C$ be the cone structure with
\[ C_x = \spn \left\{ \dd{x_1}\wedge\dd{y_1} + \dd{x_2}\wedge\dd{y_2} \right\} .\]
There is no submanifold that is tangent to $C$.
Nevertheless, there are $C$-cycles, like
\[ \Omega^2_c(M) \to \R : \alpha \mapsto \int_{\R^4} \alpha\left( \dd{x_1}, \dd{y_1}\right) + \alpha\left(\dd{x_2}, \dd{y_2} \right) .\]
This example shows that, if we want the conjecture to have any chance of being true,
we need to require the existence of ``sufficiently many'' submanifolds
tangent to $C$ near any point $x\in M$.

In the example above, there were no submanifolds through $x$ tangent to $C$.
Indeed, if $S^p$ is a submanifold of $M$ through $x$, then $\Lambda^pT_xS$
only contains pure tensors, but $C_x$ contains no non-zero pure tensors.
(By a \emph{pure tensor}, we means a decomposable tensor, i.e.\ a tensor that can be
written as a wedge of 1-vectors.)
For any $x\in M$, let $C_x^s \subset C_x$ be the set of all $v\in C_x$ such that
there is a submanifold $S^p$ through $x$, tangent to $C$, such that $\Lambda^p
T_xS = \spn\{v\}$.
In other words, $C_x^s \subset C_x$ is the set of all directions at $x$ that
can be realized by a submanifold tangent to $C$.
The only elements of $C_x$ that can be realized by a sum of (positive multiples of)
submanifolds tangent to $C$ are those in the convex hull of $C_x^s$.
We amend our conjecture as follows.

\begin{conjecture*}[second version]
    Suppose that $C \subset \Lambda^pTM$ is a compact cone structure on $M$.
    Assume that for each $x\in M$, we have
    \[ \ConvexHull(C_x^s) = C_x .\]
    Then every $C$-cycle can be locally represented near $x\in M$
    as $c = \int c' \,d\nu$, where the $c'$ are submanifolds of $M$, tangent to $C$,
    passing near $x$, and $\nu$ is a non-negative measure on the space of such (regarded as
    a subspace of the currents on some neighborhood of $x$).
    \label{conj:version2}
\end{conjecture*}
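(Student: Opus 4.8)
The plan is to mimic the proof of Sullivan's transverse-measure theorem (\cref{thm:transverse-measures}) and the disintegration argument used above for multiplying closed $C$-currents by leafwise-constant functions, replacing the role of the leaf by a (non-canonical) local submanifold tangent to $C$. Since the statement is local near $x$, fix a coordinate ball $U\ni x$ and work with the restriction of $c$ to $U$.

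First I would reduce to \emph{extreme} $C$-cycles. By \cref{prop:krein-milman-currents} and the Choquet-Bishop-de Leeuw theorem, an arbitrary $C$-cycle is a barycentric integral of extreme $C$-cycles; if each extreme cycle admits the desired local representation $\int c'\,d\nu$, then composing the two integral representations yields one for $c$. (As with ergodic transverse measures in \cref{thm:transverse-measures}, an extreme $C$-cycle need not be supported on a single submanifold --- think of an irrational linear flow on a torus --- but the hope is that it still disintegrates \emph{locally} into submanifold pieces.) So assume $c$ is extreme.

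Next, apply \cref{prop:general-form-structure-current} to write $c = \int_U s\,d\mu$ with $\mu\geq 0$ and $s(y)\in C_y$ for $\mu$-almost every $y$. The hypothesis $\ConvexHull(C_y^s)=C_y$ lets one write, pointwise, $s(y)=\sum_{i=1}^{N}\lambda_i(y)\,v_i(y)$ with each $v_i(y)\in C_y^s$ realized by a submanifold germ through $y$ tangent to $C$; by a standard measurable-selection argument one can arrange $y\mapsto(\lambda_i(y),v_i(y))$ to be measurable, splitting $c$ into finitely many $C$-currents whose defining section is submanifold-realizable at every point. For such a current one wants to choose, measurably in $y$, a germ of submanifold $S_y$ tangent to $C$ realizing $v_i(y)$, and then to disintegrate $\mu$ transversally to the family $\{S_y\}$ exactly as in the displayed identity $\int_{\text{chart}} g\,d\mu=\int_Y\int_X g\,d\mu_y\,d\nu$ above, checking --- by the same computation using that $dh$ annihilates the tangent spaces of the $S_y$ and that $c$ is closed --- that each slice current is closed, hence a multiple of the corresponding submanifold current; integrating the slices against the transverse measure then gives the representation $\int c'\,d\nu$.

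The main obstacle is precisely the step where the foliation proof used the \emph{leaf}: there a point lies in a canonical maximal integral submanifold and the transverse parameter is genuine data, whereas here $C$ need not be integrable, there is no canonical submanifold through a given point, and the family $\{S_y\}$ must be \emph{constructed} --- compatibly and measurably --- across $U$ and then shown to assemble into an honest transverse disintegration of $\mu$, with slices that are again submanifolds tangent to $C$. In the complex-analytic case this is exactly what the deep structure theory of positive closed currents (Siu's analyticity theorem, King's formula) supplies, and there is no known substitute for a general cone structure. I therefore expect the full conjecture to require either an additional integrability or rigidity hypothesis forcing the support of an extreme cycle to be locally a single submanifold, or the development of such a structure theory; a reasonable intermediate target is the case where $C$ is the cone tangent to a (possibly singular) foliation, where \cref{thm:transverse-measures} can be applied leafwise.
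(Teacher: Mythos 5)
This statement is an open conjecture in the thesis, not a theorem: the text offers no proof, and in fact immediately argues that even this ``second version'' is likely too strong, which is why the final version adds the hypothesis that $C$ be integrable (so that, in a suitable chart, the cone is translation-invariant and the candidate submanifolds exist in abundance and fit together). So there is no proof of the paper's to compare yours against, and your submission is, by its own admission, a program rather than a proof. Judged as such, it correctly identifies where the difficulty lies, but the gap it leaves open is the entire content of the conjecture, and the intermediate steps you treat as routine are themselves problematic.

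Concretely: (i) after the Carath\'eodory/measurable-selection splitting $s(y)=\sum_i\lambda_i(y)v_i(y)$, the individual currents $c_i=\int\lambda_i v_i\,d\mu$ are $C$-currents but are no longer closed, and the disintegration argument you want to imitate (the lemma on multiplying closed currents by leafwise-constant functions) uses closedness of the current essentially, in the step $\int_Y h\,d(g_\beta\nu)=c(d(h\beta))=0$; so the reduction destroys the very hypothesis the slicing argument needs. (ii) Even for a single unsplit closed $C$-current, that computation requires a function $h$ pulled back from a transverse parameter space $Y$ whose differential annihilates the tangent planes $s(x,y)$ at \emph{every} point of the chart simultaneously; this exists precisely because, in the foliated case, all the planes lie in $T\F$ and $\pi:X\times Y\to Y$ is a genuine transverse projection. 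For an ad hoc measurable family of germs $\{S_y\}$ tangent to a non-integrable $C$ there is no such common projection, hence no measure $\nu$ on a transverse space to disintegrate against, and ``slice current'' has no meaning --- this is exactly the integrability assumption the thesis inserts into the final version, so your sketch in effect proves (at best) a special case of the weaker conjecture, not the statement as given. (iii) The reduction to extreme cycles via Choquet--Bishop--de~Leeuw also needs more care than stated: one must produce a \emph{measurable} assignment of local representing measures to extreme cycles before the two integral representations can be composed, and restriction to a neighborhood of $x$ does not preserve closedness, so ``extreme $C$-cycle'' is a global notion being used in a local argument. In short, the proposal is a reasonable research outline, with an accurate diagnosis of the obstruction (and of why the complex-analytic case is special, via Siu/King), but it does not establish the conjecture and should not be presented as progress beyond the reformulations already in the text.
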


Note that this extra condition is satisfied in Sullivan's conjecture \ref{conj:sullivan}
because the cone $C$ is \emph{defined} as the convex hull of $C_x^s$ (which is generated
by complex subspaces).
Now even if $\ConvexHull(C_x^s) = C_x$ for all $x\in M$, it is not clear that
there are sufficiently many local submanifolds tangent to $C$: indeed, it is possible
that the cones $C_x$ vary from point to point in such a way that no submanifold through
$x$ can be found tangent to $C$, even though there are many pure tensors in $C$.
We make the following definition.

\begin{definition}
    Let $C \subset \Lambda^pTM$ be a compact cone structure.
    We say that $C$ is \emph{integrable} if for every point $x\in M$ there
    is a chart around $x$ in which $C$ becomes constant
    (meaning that, in this chart, $C$ is invariant under translations).
\end{definition}

\begin{example}
    Suppose that $D$ is a $p$-dimensional oriented distribution on $M$.
    Then $\Lambda^pD$ is a line bundle over $M$, and we can take $C_x$ to be the
    ray in $\Lambda^pD$ corresponding to the chosen orientation.
    This results in a compact cone $C$.
    The cone structure is integrable precisely if $D$ is integrable,
    in which case this example reduces to that of foliation currents.
\end{example}

In the case of integrable $C$, the set $C_x^s$ is precisely the intersection of $C_x$
with the decomposable tensors, because in a chart that makes $C$ translation-invariant,
affine $p$-dimensional subspaces are suitable submanifolds.
Therefore, the condition $\ConvexHull(C_x^s) = C_x$ is also satisfied in Ghys' conjecture
\ref{conj:ghys} if we require integrability of $C$, because the cone $C_x$ is defined as
the convex hull of a set of pure tensors.
(For example, in Ghys' conjecture, one could ask that $J$ be integrable.)
The following conjecture, weaker than the second version, is our final version.

\begin{conjecture}[final version]
    Suppose that $C \subset \Lambda^pTM$ is an integrable compact cone structure on $M$.
    Assume that for each $x\in M$, we have
    $\ConvexHull(C_x^s) = C_x$.
    Then every $C$-cycle can be locally represented near $x\in M$
    as $c = \int c' \,d\nu$, where the $c'$ are submanifolds of $M$, tangent to $C$,
    passing near $x$, and $\nu$ is a non-negative measure on the space of such (regarded as
    a subspace of the currents on some neighborhood of $x$).
    \label{conj:final}
\end{conjecture}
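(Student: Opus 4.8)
The plan is to use integrability to reduce to the case of a \emph{constant} cone on a Euclidean ball, then to apply the Choquet--Bishop--de Leeuw theorem to the base of the cone of structure cycles, so that the statement becomes the assertion that the \emph{extreme} $C$-cycles are precisely the single submanifolds tangent to $C$. I expect the identification of the extreme cycles to be essentially all of the difficulty.

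First I would localize. Since $C$ is integrable, around the given point $x$ I pick a chart in which $C$ becomes a constant cone $C_0\subset\Lambda^p\R^m$, and I let $B$ be a small ball around $x$ in that chart (enlarging slightly and then restricting, so that $\cl B$ sits inside a compact manifold on which $C_0$ is a compact cone structure, making \cref{lem:compact-base}, \cref{prop:general-form-structure-current} and \cref{prop:krein-milman-currents} applicable). The decomposable elements $C_0^s\subset C_0$ form a closed subcone, and under the hypothesis $\ConvexHull(C_0^s)=C_0$ they span the whole of $C_0$; each of its rays is generated by the unit $p$-vector of an oriented $p$-plane $V\subset\R^m$, and since $C_0$ is constant the affine translates $V+a$ form a linear foliation $\F_V$ of $B$ whose leaves are exactly (pieces of) the submanifolds of $B$ tangent to $C$ passing near $x$.

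Next I would pass to extreme cycles: the base $\beta^{-1}(1)\cap\ZZ_{C_0}$ of the cone $\ZZ_{C_0}$ is a compact convex subset of a locally convex space, so by Choquet-Bishop-de Leeuw every $C_0$-cycle can be written as $\int c'\,d\nu(c')$ for a probability measure $\nu$ carried by the extreme $C_0$-cycles. It then suffices to prove that every extreme $C_0$-cycle on $B$ is a nonnegative multiple of integration over a single connected submanifold of $B$ tangent to $C$. The route I would try: first show that the section $s$ of an extreme cycle (written $c=\int_B s\,d\mu$ via \cref{prop:general-form-structure-current}) is $\mu$-almost everywhere decomposable --- if not, use $\ConvexHull(C_0^s)=C_0$ and a measurable Carath\'eodory selection to split $s$, and hence $c$, into finitely many $C_0$-currents, the delicate point being to arrange that the pieces are again \emph{closed}, so that extremality of $c$ is contradicted. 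Granting decomposability of $s$, the direction map $x\mapsto V(x)$ takes values in a compact finite-dimensional family of planes; disintegrating $\mu$ along it and applying \cref{thm:transverse-measures} leaf-by-leaf to each $\F_V$ writes $c$ as a mixture of leaves of the various $\F_V$, and extremality forces this mixture onto a single leaf $V_0+a_0$, which is a submanifold tangent to $C$.

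The main obstacle is exactly this structural claim about extreme cycles, and within it the step ``$s$ is decomposable a.e.'': producing a \emph{closed} refinement of the Carath\'eodory splitting of a closed $C_0$-current is a regularity statement of the same flavour as \cref{conj:sullivan} (which is open even in the complex-analytic case), and the subsequent disintegration over the Grassmannian is delicate because slicing a closed current by the direction map need not a priori preserve closedness --- it is the combination of the constancy of $C_0$ coming from integrability with the extremality of $c$ that I would rely on to make the slices closed. The remaining ingredients --- the measurable selection, the disintegration of $\mu$ (as in the proof of \cref{prop:pleasant-leafwise}), the separability arguments, and the manifold-with-boundary bookkeeping for $\cl B$ --- I expect to be routine.
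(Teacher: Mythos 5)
There is a genuine gap, and it is unavoidable: the statement you are arguing for is not a theorem of the thesis but an open \emph{conjecture} (it is stated precisely to unify Sullivan's Conjecture~\ref{conj:sullivan} and Ghys' Conjecture~\ref{conj:ghys}, the first of which is open even in the complex-analytic setting), so there is no proof in the paper to compare against, and your proposal does not close the gap either. Your own summary already locates the hole correctly: after localizing to a constant cone $C_0$ on a ball and invoking Choquet--Bishop--de Leeuw, everything hinges on the claim that an extreme $C_0$-cycle has $\mu$-a.e.\ decomposable section and can be disintegrated along the direction map into \emph{closed} slices carried by single linear leaves. Neither step is supplied. Splitting $s(x)$ pointwise via Carath\'eodory into decomposable pieces destroys closedness in general (closedness is a global linear condition coupling the pieces, which is exactly why the counterexample preceding the second version of the conjecture has cycles with nowhere-decomposable sections), and slicing a closed current by a merely measurable direction map $x\mapsto V(x)$ has no analogue of the leafwise disintegration lemma in the paper, which crucially uses that the slicing variable is constant along a fixed foliation. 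So the ``delicate points'' you flag are not technical residue; they are the entire content of the conjecture.

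Two smaller cautions about the reduction itself. First, working on a ball introduces boundary effects: a cycle restricted to $B$ need not be a cycle, leaves of the linear foliations $\F_V$ meet $\partial B$ in open plaques whose associated currents are not closed, and the conjecture's local representation deliberately allows such non-closed pieces $c'$ --- so recasting the problem as ``extreme \emph{cycles} on $B$ are single leaves'' is not an equivalent formulation without extra argument. Second, even for a fixed foliation, extreme foliation cycles correspond to \emph{ergodic} transverse measures, not to single leaves (think of the irrational linear foliation of $\Torus^2$); it is only after the local product structure of a foliation chart is exploited that Sullivan's \cref{thm:transverse-measures} yields a mixture of plaques, and your argument would need that chart-level statement for a direction field that varies measurably from point to point, which is again the open core. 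In short: the outline is a sensible reduction and is consistent with how one would expect the conjecture to be attacked, but it is a research program, not a proof, and the paper itself offers none.
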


\begin{example}[Contact structure]
    Suppose $D = \ker(\alpha)$ is a contact structure on $M^3$ given by
    the contact form $\alpha$.
    The associated cone $C$ is not integrable.
    For every $x\in M$, we do have $\ConvexHull(C_x^s) = C_x$.
    Note that there are \emph{no} submanifolds of $M$ tangent to $C$.
    Nevertheless, the conjecture happens to hold for this particular case, because there
    also don't exist any non-zero $C$-cycles. Indeed, there is an exact form
    that is positive on $D$, namely $d\alpha$, so by \cref{thm:sullivan-hahn-banach},
    there are no non-zero $C$-cycles.
\end{example}

\subsection*{The case of smooth currents}

Let us now focus on \emph{smooth} currents.
Write $q = m - p$ (recall that $m$ is the dimension of $M$, and that
$C \subset \Lambda^pTM$).
The following lemma says that smooth cycles are locally sums of foliation cycles.

\begin{lemma}
Suppose that $\beta$ is a closed $q$-form on $M$, regarded as a closed $p$-current.
Let $x\in M$.
Near $x$, we have
\[ \beta = \sum_{i=1}^k c_i ,\]
where the $c_i$ are foliation cycles of foliations of open subsets of $M$.
\end{lemma}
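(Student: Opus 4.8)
The plan is to localize completely. The statement concerns a neighbourhood of $x$, so I work in a chart $(x_1,\dots,x_m)$ on a ball $B$ centred at $x$ and regard $\beta$ as a closed $q$-form on $B$. Shrinking $B$ if needed, the Poincaré lemma produces $\gamma\in\Omega^{q-1}(B)$ with $\beta=d\gamma$, and expanding $\gamma=\sum_{|J|=q-1}g_J\,dx_J$ in the coordinate coframe (the sum over increasing multi-indices $J$ of length $q-1$, with $dx_J=dx_{j_1}\wedge\cdots\wedge dx_{j_{q-1}}$) gives
\[ \beta=\sum_{J}d(g_J\,dx_J)=\sum_{J}dg_J\wedge dx_J, \]
a finite sum of exact $q$-forms. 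An exact form is closed as a current, so each term is a closed $p$-current, and it remains to write each as a (signed) sum of foliation cycles near $x$.

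For a fixed $J$ the natural candidate foliation for $dg_J\wedge dx_J$ is the one by common level sets of $(g_J,x_{j_1},\dots,x_{j_{q-1}})$, but this is a foliation only where these $q$ functions have independent differentials, i.e.\ where $dg_J\wedge dx_J\neq 0$, and it genuinely degenerates at critical points of $g_J$ restricted to the coordinate leaves $\{x_j=\text{const}:j\in J\}$. Repairing this — making every term an honest foliation cycle on a whole neighbourhood of $x$ — is the one real obstacle, and I would overcome it by a linear shift. Pick an index $k_0\notin J$ and a constant $\lambda_J\notin\{0,\,-\partial_{k_0}g_J(x)\}$, put $\ell_J:=\lambda_J x_{k_0}$, and rewrite $g_J\,dx_J=(g_J+\ell_J)\,dx_J-\ell_J\,dx_J$, so that
\[ \beta=\sum_{J}d\big((g_J+\ell_J)\,dx_J\big)-\sum_{J}d(\ell_J\,dx_J). \]
Now every summand has the shape $d(h\,dx_J)$ with $h\in\{g_J+\ell_J,\ \ell_J\}$ and $\partial_{k_0}h(x)\neq 0$, so $dh\wedge dx_J$ — whose $dx_{k_0}\wedge dx_J$-coefficient is $\pm\partial_{k_0}h$ — is nonvanishing on some neighbourhood $V_J$ of $x$. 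The finitely many genericity choices (one or two excluded values of $\lambda_J$ per $J$) are obviously compatible, so there is a common neighbourhood $V=\bigcap_J V_J$ of $x$.

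Finally I would check that each $d(h\,dx_J)$ is a foliation cycle on $V$. There the $q$ forms $dh,dx_{j_1},\dots,dx_{j_{q-1}}$ are independent, so $\Phi:=(h,x_{j_1},\dots,x_{j_{q-1}})\colon V\to\R^{q}$ is a submersion and its fibres form a $p$-dimensional foliation $\F_{h,J}$ of $V$. Writing the smooth current $d(h\,dx_J)=dh\wedge dx_J$ in the form $\alpha\mapsto\int_{V}\alpha(W)\,dx_1\wedge\cdots\wedge dx_m$, the $p$-vector field $W$ determined by $(dh\wedge dx_J)\wedge\alpha=\alpha(W)\,dx_1\wedge\cdots\wedge dx_m$ is decomposable and, at each point of $V$, spans $\ker(dh)\cap\bigcap_{j\in J}\ker(dx_j)=T\F_{h,J}$; in particular $W$ is nowhere zero on $V$. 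Orienting $\F_{h,J}$ so that $W$ is a positive section of $\Lambda^{p}T\F_{h,J}$, a Riemann-sum approximation of the integral exhibits $d(h\,dx_J)$ as a limit of finite sums of Dirac $\F_{h,J}$-currents (concretely it is one of the cycles $c_\mu$ attached to a holonomy-invariant transverse measure, namely the pushforward of a density along $\Phi$); being exact it is closed, hence a foliation cycle. Intersecting the finitely many neighbourhoods and using that the negative of a foliation cycle is again one (reverse the leafwise orientation), one gets on $V$ an expression $\beta=\sum_{i=1}^{k}c_i$ with $k\le 2\binom{m}{q-1}$ and each $c_i$ a foliation cycle of a foliation of the open set $V$, as claimed. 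The step that needs genuine care is the middle one: without the shift, the terms $d(g_J\,dx_J)$ need not be foliation cycles near a point $x$ lying on the boundary of $\{dg_J\wedge dx_J\neq 0\}$; everything else is bookkeeping and standard facts about smooth currents.
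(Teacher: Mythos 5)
Your proof is correct, and its core is the same as the paper's: apply the Poincar\'e lemma to write $\beta = d\gamma$ near $x$, expand $\gamma = \sum_J g_J\,dx_J$ in coordinates, and observe that each term $dg_J\wedge dx_J$ is (where it is nonzero, and with the right orientation) a foliation cycle for the simple foliation by common level sets of $g_J$ and the $x_j$, $j\in J$. The one place you diverge is the linear-shift step: the paper does not need it, because the lemma only asks that each $c_i$ be a foliation cycle of a foliation of \emph{some} open subset of $M$, and the paper simply takes that open set to be $\{c_i\neq 0\}$, which need not contain $x$; the degeneracy you worried about is thus not an obstacle to the statement as written. Your repair is sound, though, and it buys a slightly stronger conclusion — every term becomes a foliation cycle of a foliation defined on an honest neighbourhood of $x$ (at the mild cost of roughly doubling the number of terms) — which could be convenient in applications where one wants the foliations to pass through $x$ itself.
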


\begin{proof}
    Locally, $\beta = d\gamma$. In local coordinates, $\gamma$ is given by
    \[ \sum_{i=1}^k f^i dx_{j_1^i}\wedge...\wedge dx_{j_{q-1}^i} .\]
    Let $c_i$ be the differential of the $i$'th term, meaning
    \[ c_i = df^i \wedge dx_{j_1^i} \wedge ... \wedge dx_{j^i_{q-1}} .\]
    Then $\beta = \sum_i c_i$.
    But each $c_i$ is a foliation cycle of a foliation on an open subset of $M$:
    it is a foliation cycle for the foliation
    \[ \ker(df^i) \cap \ker(dx_{j^i_1}) \cap ... \cap \ker(dx_{j^i_{q-1}}) \]
    on the set $\{ x\in M \mid c_i(x)\neq 0\}$.
\end{proof}

One would like to be able to relate the foliations that appear to the cone structure $C$.
Ideally, the foliations are tangent to the cone structure.

\begin{definition}
    Suppose that $C \subset \Lambda^pTM$ is a compact cone structure on $M$.
    A $p$-dimensional foliation of $M$ is said to be
    \emph{tangent to $C$} if each of its leaves is tangent to $C$.
\end{definition}

The lemma above says that every smooth cycle is locally the sum of foliation cycles.
This raises the following question: is every smooth $C$-cycle locally the sum
of foliation cycles tangent to $C$?
For example, if $q=1$, the answer is ``yes'', because a 1-form is already decomposed.
If $q=2$ and $m\leq 3$, the answer is also ``yes'', because a 2-form on a manifold of dimension at most 3
is always decomposable.

\chapter{Local Lie groupoids}
\label{chapter:llg}

In this chapter, we discuss the basics of local Lie groupoids.
First, we define them.
Next, we discuss their associativity properties, which will be important
in all that follows.
In the third section, we go over regularity and connectedness assumptions for local
Lie groupoids.
Finally, we explain how to associate a Lie algebroid to every local Lie groupoid,
in the same way as for global Lie groupoids.

\section{Definitions}

We start by defining local Lie groupoids. Instead of using the symbol $\G$ as we did for
(global) Lie groupoids, we will now use $G$ to indicate that this is only a \emph{local}
Lie groupoid.
There are several different definitions of local Lie groupoids.
We will adopt the following.

\begin{definition}
    A \emph{local Lie groupoid} $G$ over a manifold $M$ is a manifold $G$,
    together with maps
    \begin{itemize}
        \item $s, t : G \to M$ submersions (the \emph{source} and \emph{target} maps),
        \item $u : M \to G$ a smooth map (the \emph{unit} map),
        \item $m : \U \to G$ a submersion (the \emph{multiplication}), where
            $\U\subset  G\timesst G$ is an open neighborhood of
            \[ (G\timesst M) \cup (M\timesst G)
                = \bigcup_{g\in G} \{ (g,u(s(g))), (u(t(g)), g) \} ,\]
        \item $i : \V \to \V$ a smooth map (the \emph{inversion}), where $\V \subset  G$ is an open neighborhood of $u(M)$
            such that $\V\timesst \V \subset \U$,
    \end{itemize}
    such that the following axioms hold:
    \begin{itemize}
        \item $s(m(g,h)) = s(h)$ and $t(m(g,h)) = t(g)$ for all $(g,h) \in \U$,
        \item $m(m(g,h),k) = m(g,m(h,k))$ in a neighborhood of
        \[ \tag{$\dag$} (M\timesst M\timesst G) \cup (M\timesst G\timesst M) \cup (G\timesst M\timesst M)\]
         in $G\timesst G\timesst G$,
        \item $m(g,u(s(g))) = m(u(t(g)),g) = g$ for all $g\in G$,
        \item $s(i(g)) = t(g)$ and $t(i(g)) = s(g)$ for all $g\in\V$,
        \item $m(i(g),g) = u(s(g))$ and $m(g,i(g)) = u(t(g))$ for all $g\in \V$.
    \end{itemize}\label{def:llg}
\end{definition}

A local Lie groupoid where $M$ is a singleton is a \emph{local Lie group}.

\begin{remark}
It follows from the definition that $u : M \to G$ is an embedding,
and we will consider $M$ as a submanifold of $G$ using this embedding.
In particular, if $x\in M$, we will write the unit at $x$ as just $x$, leaving
out the $u$.
For the multiplication we usually write $gh$ instead of $m(g,h)$.
For the inversion, we write $g^{-1}$ instead of $i(g)$.
\end{remark}

\begin{remark}
Our definition of local Lie groupoids is just one of many possible versions.
Here are some variations.
\begin{itemize}
\item One could merely require that $m$ be defined in a neighborhood of $M\timesst M \subset G\timesst G$. This makes it possible to have elements $g\in G$ that cannot be multiplied by
\emph{anything}, effectively excluding them completely from the groupoid structure.
In our definition, for every $g\in G$ the products $g s(g)$ and $t(g)g$ are defined (and they
equal $g$).
\item It follows from our definition that $i : \V \to \V$ is an involutive diffeomorphism.
An element $g\in \V$ will be called \emph{invertible}.
One could weaken the definition by asking for an inversion map $i : \V \to G$ instead of $i : \V \to \V$. It is then possible to have an invertible element $g\in \V$ such that $g^{-1}$ is not invertible (i.e.\ is not in $\V$).
This change does not change the theory of local Lie groupoids, because it is possible to
recover an inversion $i : \V \to \V$ by restricting the inversion map.
\item Some authors require $\V = G$, so that every groupoid element is invertible.
We do not need this stronger requirement.
We will, however, later require that our local Lie groupoid be strongly connected.
As we will see later, a consequence of strong connectedness is that every
element is some product of invertible elements.
\item Instead of requiring associativity in a neighborhood of ($\dag$), one
could ask that $(gh)k = g(hk)$ whenever both sides are defined.
This is a stronger requirement. However, we will see later
that this does not change the theory of local Lie groupoids because this stronger requirement
is always satisfied after restricting $m$ and $i$ sufficiently.
\end{itemize}
\end{remark}

\begin{remark}
We do not assume that the manifold $G$ is Hausdorff.
All other manifolds, including $M$ and all source and target fibers, are assumed to
be Hausdorff.
\end{remark}

There are two different ways of obtaining another local Lie groupoid $G'$
from a given local Lie groupoid $G$. Both of them are relevant for us.
\begin{itemize}
    \item We say that $G'$ is obtained by \emph{restricting} $G$, if both local
        groupoids have the same manifolds of arrows and objects, the same
        source and target maps, and the multiplication and inversion in $G'$
        are obtained by restricting the ones of $G$ to smaller domains.
    \item We say that $G'$ is obtained by \emph{shrinking} $G$ if $G'$ is an
        open neighborhood of $M$ in $G$, the source and target maps are the
        restrictions of $s$ and $t$ to $G'$, multiplication is the restriction
        of $m$ to ${G'}^{(2)}=\U \cap (G' \timesst G')\cap m^{-1}(G')$, and
        inversion is the restriction of $i$ to $\V'=(\V \cap G') \cap i(\V \cap
        G')$.
\end{itemize}

When restricting a local Lie groupoid $G$, its space of arrows does not change.
When shrinking a local Lie groupoid $G$, its space of arrows becomes smaller.
It is important to clearly distinguish these two operations.

\begin{example}[Restriction of a Lie groupoid]
Let $\G\tto M$ be a Lie groupoid. Any open $\U\subset \G^{(2)}$ containing
$(\G\timesst M) \cup (M\timesst \G)$ determines a restriction $G$ of $\G$. On
the other hand, any open neighborhood $G'\subset \G$ of the identity manifold
$M$ determines a local Lie groupoid $G'$ shrinking $\G$. 
\end{example}

\begin{example}[$\R \cup \{\infty\}$]
Consider $G = \R \cup\{\infty\}$, equipped with addition
(where $r+\infty=\infty+r=\infty$ for all $r\in\R$).
This is a local Lie group.
The domain of multiplication is $G\times G \setminus \{(\infty,\infty)\}$.
The domain of inversion is $\R$.
Note that it is not possible to extend the multiplication to include $\infty +
\infty = \infty$ because this would make the map $m$ discontinuous.

More generally, one can turn the one-point compactification of a non-compact connected
Lie group into a local Lie group by setting $g\cdot\infty=\infty\cdot g=\infty$.
\end{example}

Morphisms between local Lie groupoids are defined in a more or less obvious way.
\begin{definition}
    Suppose that $G_1$ and $G_2$ are local Lie groupoids over $M_1$ and $M_2$,
    respectively.
    A \emph{morphism of local Lie groupoids} is a pair $(F,f)$, where
    $F : G_1\to G_2$ and $f:M_1\to M_2$ are smooth maps
    such that
    \begin{itemize}
        \item $F \circ u_1 = u_2 \circ f$,
        \item $f \circ s_1 = s_2 \circ F$ and $f\circ t_1 = t_2\circ F$,
        \item $(F\times F)(\U_1) \subset \U_2$ and $F(m_1(g,h)) = m_2(F(g),F(h))$ whenever $(g,h) \in \U_1$,
        \item $F(\V_1) \subset \V_2$ and $F \circ i_1 = i_2\circ F$.
    \end{itemize}
\end{definition}

\section{Degrees of associativity}
\label{sec:assoc}

In our definition of a local Lie groupoid, we required that $(gh)k=g(hk)$ for all
$(g,h,k)$ in some neighborhood of
\[ (M\timesst M\timesst G) \cup (M\timesst G\timesst M) \cup (G\timesst M\timesst M) \]
in $G\timesst G\timesst G$.
This condition is called \emph{local associativity}, and it is the weakest
associativity condition we will use.
There are other notions of associativity for local Lie groupoids, and they are (perhaps
surprisingly) not equivalent.
We give an overview of all the notions we will need.

\begin{definition}
    A local Lie groupoid is called \emph{3-associative} if
    \[ (gh)k=g(hk) \]
    whenever both sides of the equation are defined (in other words, whenever
    $(g,h)$,$(h,k)$,$(gh,k)$ and $(g,hk)$ are all in the domain of multiplication $\U$).
\end{definition}

This definition is perhaps the most natural choice for an associativity requirement.
Not every local Lie groupoid is 3-associative. We will give an example later in this
section.
A stronger notion than that of 3-associativity is the following.
The following definition is taken from \cite{olver}, with group replaced by groupoid.
\begin{definition}
    A local Lie groupoid is called \emph{associative to order $n$} (or
    \emph{$n$-associative})
    if for every $3 \leq m\leq n$
    and every ordered $m$-tuple of groupoid elements $(x_1,...,x_m)\in G^m$,
    all corresponding
    defined $m$-fold products are equal (in other words, if an $m$-fold product can be
    evaluated by parenthesizing in multiple ways, all result in the same answer).
\end{definition}

For every $n\geq 3$ there are $n$-associative local Lie groupoids that are not
$(n+1)$-associative (this is already true in the case of local Lie groups,
\cite[section 3]{olver}).
The following notion is the strongest version of associativity we will use.

\begin{definition}
    A local Lie groupoid is called \emph{globally associative} if it is associative to every
    order $n\geq 3$.
\end{definition}

\begin{remark}
    At this point, one can ask why 3-associativity does not imply
    $n$-associativity for higher $n$, the way it does for Lie groupoids.
    Let us illustrate why 3-associativity does not imply 4-associativity for local groups.
    There are five different ways of parenthesizing a 4-fold product, as shown
    in this diagram:

    \begin{center}
    \begin{tikzpicture}
        \node[] at (0,0) (A) {$(ab)(cd)$};
        \node[] at (2,1) (B) {$a(b(cd))$};
        \node[] at (-2,1) (E) {$((ab)c)d$};
        \node[] at (-1.2,2.2) (D) {$(a(bc))d$};
        \node[] at (1.2,2.2) (C) {$a((bc)d)$};
        \draw[-] (A) -- (B);
        \draw[-] (B) -- (C);
        \draw[-] (C) -- (D);
        \draw[-] (D) -- (E);
        \draw[-] (E) -- (A);
    \end{tikzpicture}
    \end{center}

    In the diagram, there is an edge between two parenthesizations if their equality
    follows from 3-associativity, and in a (global) groupoid, this proves 4-associativity.
    However, in a local groupoid, it is possible that some of these 4-fold products are not
    defined. The graph above can then become disconnected, like so:

    \begin{center}
    \begin{tikzpicture}
        \node[] at (0,0) (A) {$(ab)(cd)$};
        \node[draw,cross out] at (2,1) (B) {$a(b(cd))$};
        \node[] at (-2,1) (E) {$((ab)c)d$};
        \node[draw,cross out] at (-1.2,2.2) (D) {$(a(bc))d$};
        \node[] at (1.2,2.2) (C) {$a((bc)d)$};
        \draw[-] (A) -- (B);
        \draw[-] (B) -- (C);
        \draw[-] (C) -- (D);
        \draw[-] (D) -- (E);
        \draw[-] (E) -- (A);
    \end{tikzpicture}
    \end{center}

    In this case, the product may be different on different components of the graph
    (so that $a((bc)d) \neq ((ab)c)d = (ab)(cd)$ in this example).
\end{remark}

Let us give an example of a locally associative local Lie group that is not
3-associative. The example is a slightly modified version of an example given
in \cite{olver}.
Let $X$ be the complex plane $\C$, with a small ball centered at $1$ taken out.
Let $G$ be the universal cover of $X$.
Let $\tilde{0}\in G$ be a preimage of $0\in X$ under the projection.
This point $\tilde{0}$ will be the neutral element in our local Lie group.
Let $B \subset X$ be the open ball of radius $\frac34$ centered at $0$,
and let $\tilde{B}$ be the ball above $B$ containing $\tilde{0}$.

The multiplication will be defined on a subset of $(G\times\tilde{B}) \cup (\tilde{B}\times G)$.
Suppose that $(\tilde{g},\tilde{h}) \in G\times\tilde{B}$, and write $g$ and $h$ for the
projections to $M$. The product $\tilde{g}\tilde{h}$ will be defined
if the line segment (in $\C$) from $g$ to $g+h$ lies completely in $X$.
In that case, the product $\tilde{g}\tilde{h}$
is obtained as the endpoint of the lift of this line segment,
starting at $\tilde{g}$.
Similarly, for $(\tilde{g},\tilde{h}) \in \tilde{B}\times\G$, the product is defined as
the endpoint of the lift of the line segment from $h$ to $g+h$,
starting at $\tilde{h}$ (if that segment lies completely in $X$).
Inverses are defined on $\tilde{B}$, in the obvious way.

This local Lie group is locally associative.
However, it is not 3-associative.
By picking $a,b,c \in \tilde{B}$ appropriately, one can ensure that $(ab)c\neq a(bc)$,
as shown in figure \ref{fig:lallg}.

\begin{figure}
    \centering
    \begin{overpic}[width=0.6\linewidth]{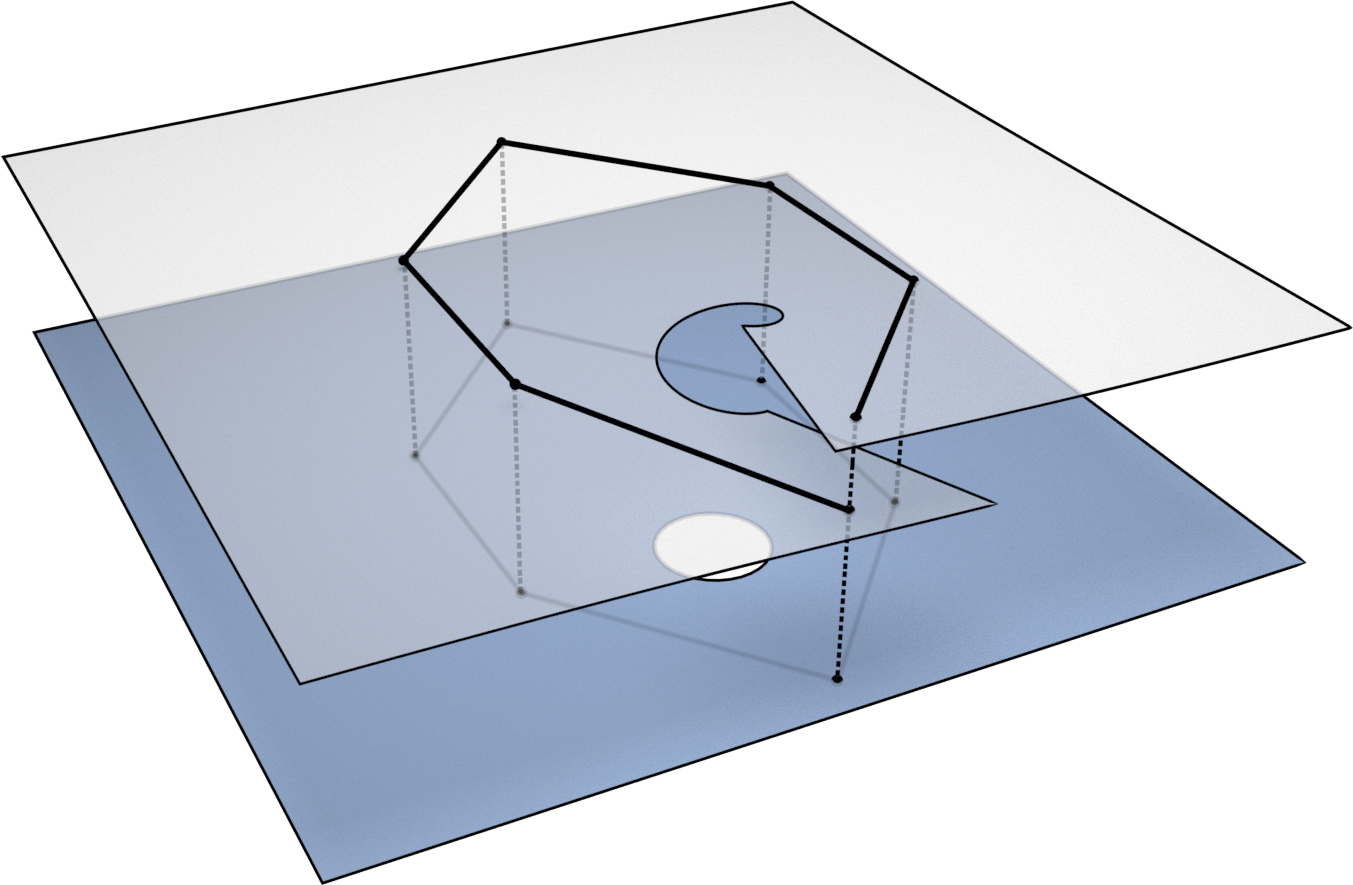}
        \put (37,56.3) {$\tilde{0}$}
    \end{overpic}
    \caption{A locally associative local Lie groupoid that is not 3-associative.
    The manifold $X$ is the complex plane with a disk removed (shown at the bottom).
    The local Lie group is the universal cover of $X$ (shown on top).
    Starting at the neutral element $\tilde{0}$, we can evaluate a triple
    product in two different ways, ending up over the same point of $X$,
    but on different sheets. This local Lie group is not 3-associative,
    but it is locally associative.}
    \label{fig:lallg}
\end{figure}

While local associativity is not the same as $3$-associativity,
the following proposition shows that they are the same if we consider local groupoids
up to restriction.

\begin{proposition}
    Suppose that $G$ is a local Lie groupoid. Then there is a restriction of
    $G$ that is $3$-associative.
\end{proposition}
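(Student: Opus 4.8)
The plan is to exploit local associativity, which gives us a neighborhood $N$ of the "thick diagonal"
\[ (M\timesst M\timesst G) \cup (M\timesst G\timesst M) \cup (G\timesst M\timesst M) \]
inside $G\timesst G\timesst G$ on which $(gh)k = g(hk)$ holds. The idea is to shrink the domain of multiplication $\U$ to a smaller open neighborhood $\U'$ of $(G\timesst M)\cup(M\timesst G)$ so small that whenever all four products $gh$, $hk$, $(gh)k$, $g(hk)$ are defined using $\U'$, the triple $(g,h,k)$ is forced to lie in $N$. Since associativity then follows from local associativity, the restricted local Lie groupoid is $3$-associative.

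Concretely, first I would note that by continuity of $m$ and of the structure maps, the set of $(g,h,k)$ for which $gh$, $hk$, $(gh)k$ and $g(hk)$ are all defined is an open subset of $G\timesst G\timesst G$, and on this set the map $(g,h,k)\mapsto\big((gh)k,\,g(hk)\big)$ is continuous. The locus where this map hits the diagonal of $G\times G$ is therefore a \emph{closed} subset of that open set, and it contains the thick diagonal above (this uses the unit axioms: at a point of $M\timesst M\timesst G$, say $(x,y,k)$, one has $(xy)k = xk = (xy)k$ trivially, and similarly for the other two pieces, so these points genuinely lie in the associativity locus). So the "failure of $3$-associativity" locus is disjoint from the thick diagonal. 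The heart of the argument is then a point-set topology step: I want to choose $\U'$ so that the associated set of "fully defined triples" avoids this failure locus. For each point $p = (g, u(s(g)))$ or $p = (u(t(g)), g)$ of $(G\timesst M)\cup(M\timesst G)$, I would use local associativity and continuity to pick a basic open box $W_p \times W_p' \subset \U$ around $p$ (in $G\timesst G$) small enough that any triple formed from elements of these boxes that is fully defined lands in $N$; then take $\U' = \bigcup_p (W_p\times W_p')$. One must check $\U'$ is still an open neighborhood of $(G\timesst M)\cup(M\timesst G)$, which is immediate, and that restricting $m$ to $\U'$ (and $i$ to a correspondingly shrunk $\V'$ with $\V'\timesst\V'\subset\U'$) still satisfies all the axioms of \cref{def:llg}; the only axiom that is not automatic is associativity in a neighborhood of $(\dag)$, and this is precisely what we have arranged.

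The main obstacle is the book-keeping in the shrinking step: one needs the new domain $\U'$ to be \emph{uniformly} small enough near \emph{every} point of the non-compact set $(G\timesst M)\cup(M\timesst G)$ so that \emph{triple} products, not just pairs, are controlled — a triple $(g,h,k)$ is fully defined via $\U'$ means $(g,h)\in\U'$, $(h,k)\in\U'$, $(gh,k)\in\U'$, $(g,hk)\in\U'$, so the constraint on $\U'$ couples several nearby regions at once. I would handle this by working with a locally finite cover of $(G\timesst M)\cup(M\timesst G)$ by such boxes and exploiting that, for a triple to be fully defined, its "middle" element $h$ must be close to $M$ (since $gh$ and $hk$ must be defined, and near $M$ the multiplication is a local diffeomorphism onto a neighborhood of the diagonal), so only finitely many boxes are relevant near any given triple; this reduces the problem to the already-established local associativity on $N$. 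Once the middle element is confined near $M$, the rest is continuity, and the verification that $\U'$-associativity equals local associativity on the restricted groupoid is then formal.
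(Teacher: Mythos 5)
Your overall strategy---restrict the multiplication to a small neighborhood $\U'$ of the two axes $(G\timesst M)\cup(M\timesst G)$ so that every $\U'$-fully-defined triple is forced into the region where local associativity holds---is the same strategy the paper uses. The execution, however, rests on a claim that is false: it is not true that a fully defined triple must have its middle element close to $M$. With $\U'$ any neighborhood of the axes, take $h\in G$ arbitrary (far from $M$) and $g,k$ sufficiently close to the units $u(t(h))$, $u(s(h))$; then $(g,h)$, $(h,k)$, $(gh,k)$, $(g,hk)$ all lie near the axes, so the triple is fully defined while its middle element is large. That particular configuration happens to be harmless (it sits near $M\timesst G\timesst M$, which is part of the thick diagonal), but it shows that the statement your ``only finitely many boxes are relevant'' reduction is built on is wrong, and local finiteness by itself does not address the real coupling problem.

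The genuinely dangerous configuration is the one your argument never controls: if $g$ is far from $M$, full definedness via $\U'$ only forces $h$ and the product $hk$ to be small; a priori $k$ could still be large, and then $(g,h,k)$ sits near $G\timesst M\timesst G$, which is \emph{not} contained in the thick diagonal, so the local associativity axiom says nothing there (and associativity can genuinely fail at such triples---this is exactly the shape of the counterexamples to $3$-associativity). The missing ingredient is the inversion trick at the heart of the paper's proof: choose the small neighborhood $B$ of $M$ to consist of invertible elements with the property that $x,y\in B$ implies $x^{-1}y$ is defined and lies in a prescribed neighborhood $K$; then from $h,hk\in B$ one gets $k=h^{-1}(hk)\in K$, so the triple lands in $G\timesst K\timesst K$, which (by a tube-lemma argument for compact $G$, and an exhaustion by precompact local subgroupoids in general) is arranged to lie inside the associativity locus. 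Your proposal contains no substitute for this step, and it also leaves the uniformity over a non-compact $G$ (where the tube lemma fails, so no single ``small enough'' width works globally) to a vague appeal to a locally finite cover; the paper needs the compact exhaustion precisely for this. So as written there is a genuine gap at the core of the argument.
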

\begin{proof}
    \emph{Case 1: if $G$ is compact.}
    Let $G_0$ be a neighborhood of $M$ in $G$ that is 3-associative.
    Let $K$ be a neighborhood of $M$ in $G_0$ such that the set
    \[ (G\timesst K\timesst K) \cup (K\timesst G\timesst K) \cup (K\timesst K\timesst G) \]
    is contained in the domain of 3-associativity (this is the set
    $\{(a,b,c)\in G^3\mid (ab)c=a(bc)\}$).
    Let $B\subset K$ be a neighborhood of $M$ in $G$, consisting of invertible elements,
    small enough to ensure that if $x,y\in B$, then $x^{-1}y$ is defined an lies in $K$.
    Now restrict the multiplication to a domain $\U'$ that is contained in
    \[ (G\timesst B) \cup (B\timesst G) .\]
    We claim that this makes the multiplication 3-associative.
    Suppose that $a,b,c\in G$ are such that $a(bc)$ and $(ab)c$ are both defined.
    We will show that $a(bc)=(ab)c$.
    If all three of $a,b,c$ are in $G_0$, this is immediate.
    We may therefore assume that one of $a,b,c$ is not in $G_0$:
    \begin{itemize}
        \item if $a\notin G_0$, we must have $b\in B$ because $ab$ is defined.
            We must also have $bc\in B$ because $a(bc)$ is defined.
            By our choice of $B$, the product $b^{-1}(bc)$ must be defined and lie in $K$.
            But this product equals $c$, so we have $c\in K$.
            This shows that $(a,b,c)\in G\timesst K\timesst K$, and so $(a,b,c)$ lies in
            the domain of 3-associativity.
        \item if $b\notin G_0$, we must have $a\in B\subset K$, because $ab$ is defined.
            We must alo have $c\in B\subset K$ because $bc$ is defined.
            This shows that $(a,b,c)\in K\timesst G\timesst K$, and so $(a,b,c)$ lies in
            the domain of 3-associativity.
        \item the case $c\notin G_0$ is analogous to the first one, by symmetry.
    \end{itemize}

    \emph{Case 2: if $G$ is not compact.}
    Let $G_1\subset G_2\subset ...\subset G_k\subset ... \subset G$ be an exhaustion
    of $G$ by open, precompact local subgroupoids.
    The proof of case 1 shows that for each $k$ there is a set $\U'_k \subset
    G_k\timesst G_k$ where 3-associativity holds. Then
    \[ \U' = \bigcup_{k=1}^\infty \U'_k \]
    gives an appropriate restriction of $G$ to ensure 3-associativity.
\end{proof}

\begin{remark}
    If we are working with local Lie groupoids up to restriction, the result above
    says that we can always assume our local groupoid to be 3-associative.
    We will make use of this fact by assuming 3-associativity whenever we need it,
    keeping in mind that we merely need to restrict our groupoid to ensure this.
    Note that we do not need to shrink the groupoid to obtain 3-associativity.
\end{remark}

\begin{remark}
    In fact, the proof above generalizes to show that every local Lie groupoid
    has a restriction that is $n$-associative for any $n\geq 3$. This generalized
    proof quickly becomes unwieldy, though. We will not use the result for $n>3$.

    In the article \cite{the-article}, a (somewhat) different proof of the result
    for $n=3$ is given. That proof also generalizes, and is perhaps less unwieldy
    for $n>3$.
\end{remark}

\section{Regularity and connectedness}

In this section, we study several basic properties of local Lie groupoids.
Most of the material in this section is an adaptation of material in \cite{olver}
to the groupoid case.

Consider $\R \cup \{\infty\}$ as before.
The point at infinity plays a special role here: left (or right) multiplication by $\infty$
is a constant map.
We want to exclude these special points in our study of local Lie groupoids.
We write $T^sG$ for the kernel of the map $ds : TG \to TM$, and $T^tG$ for the kernel of $dt : TG\to TM$.

\begin{definition}
    Let $G$ be a local Lie groupoid over $M$.
    Let $g\in G$.
    We say that $G$ is \emph{right-regular at $g$} if
    right multiplication by $g$ induces an isomorphism between
    $T^s_{t(g)} G$ and $T^s_g{G}$.
    We say that $G$ is \emph{left-regular at $g$} if
    left multiplication by $g$ induces an isomorphism between
    $T^t_{s(g)} G$ and $T^t_g G$.
    We say that $G$ is \emph{bi-regular at $g$} if
    it is both left- and right-regular at $g$.
    The local groupoid $G$ is called \emph{bi-regular} if it is bi-regular at all $g\in G$.
\end{definition}

In the above example, the groupoid is bi-regular at all points except $\infty$.

\begin{definition}
    An element $g$ of a local Lie groupoid is called \emph{inversional}
    if it can be written as a well-defined product of invertible elements.
    A local Lie groupoid is called \emph{inversional} if all of its elements are inversional.
\end{definition}

For $\R \cup \{\infty\}$ as above, all elements except $\infty$ are inversional.
Every Lie groupoid is obviously inversional.
Restricting a local groupoid (by restricting its multiplication and inversion maps)
can change it from inversional to not inversional.
For example, we can consider the Lie group $\R \times \Z$ as a local Lie groupoid (with
the usual addition). If we restrict its inversion map to a domain $\V'$,
the resulting local Lie groupoid is inversional precisely if
$\{ n\in \Z \mid \R\times\{n\} \cap \V' \neq \emptyset \} \subset \Z$
generates $\Z$.

\begin{definition}
    We say that a local Lie groupoid is \emph{source-connected} (or \emph{$s$-connected}) if
    all of its source fibers are connected.
    We say that a local Lie groupoid is \emph{target-connected} (or \emph{$t$-connected}) if
    all of its target fibers are connected.
\end{definition}

To define connectedness of a local Lie groupoid, one could just ask that $G$ be connected
as a manifold. However, this is not the correct notion for our purposes. In \cite{olver},
connectedness of a local Lie group requires that every neighborhood of the identity generate
the local group (in the same way that in a connected Lie group, every
neighborhood of the identity generates the group). We have the analogous
criterion for groupoids.

\begin{definition}
    Let $U$ be a neighborhood of $M$ in $G$.
    We say that \emph{$U$ generates $G$} if every element of $g$ can be written
    as a well-defined product of elements in $U$.
\end{definition}

The notion of connectedness that we require is the following.
We call it \emph{strong connectedness} to avoid any ambiguity.

\begin{definition}
    A local Lie groupoid $G$ over $M$ is \emph{strongly connected} if
    \begin{enumerate}[(a)]
        \item \label{item:Mconn} $M$ is connected
        \item the domains $\U$ and $\V$ of the multiplication and inversion maps
            are connected,
        \item \label{item:stconn1} the set $\{ (g,h) \in \U \mid s(g) = t(h) = x \}$ is connected for all $x \in M$,
        \item \label{item:stconn2} $G$ is $s$-connected and $t$-connected, and
        \item $G$ is bi-regular.
    \end{enumerate}
\end{definition}

In the case of a local Lie group, condition (\ref{item:Mconn}) is automatic,
and conditions (\ref{item:stconn1}) and (\ref{item:stconn2}) both amount to connectedness of the manifold $G$.

We have the following result. It implies, in particular, that a strongly connected
local Lie groupoid is inversional.

\begin{proposition}
    If $G$ is right-regular and $s$-connected,
    then any neighborhood of $M$ in $G$ generates $G$.
    If $G$ is left-regular and $t$-connected, the same conclusion holds.
\end{proposition}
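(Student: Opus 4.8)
The plan is to argue along source fibers using connectedness. Let $U$ be a neighborhood of $M$ in $G$, which we may assume open, and let $H\subseteq G$ be the set of elements that can be written as a well-defined product of elements of $U$. Since $u(M)\subseteq U$, the set $H$ meets every source fiber (it contains every unit), so it suffices to show that $H\cap s^{-1}(x)$ is both open and closed in the connected (Hausdorff) fiber $s^{-1}(x)$ for each $x\in M$; then $H\cap s^{-1}(x)=s^{-1}(x)$ for all $x$, i.e.\ $H=G$.

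The engine is right-regularity. Fix $g\in G$ and write $x=s(g)$. Right multiplication $R_g\colon h\mapsto hg$ is defined near $u(t(g))$ because $\U$ contains $M\timesst G$; it maps $s^{-1}(t(g))$ into $s^{-1}(x)$, sends $u(t(g))$ to $g$, and by right-regularity at $g$ its differential at $u(t(g))$ is an isomorphism $T^s_{t(g)}G\to T^s_gG$. By the inverse function theorem, $R_g$ therefore restricts to a diffeomorphism from a neighborhood $V_0$ of $u(t(g))$ in $s^{-1}(t(g))$ onto a neighborhood $N_g=R_g(V_0)$ of $g$ in $s^{-1}(x)$. I would then shrink $V_0$ so that simultaneously $V_0\subseteq U\cap\V$, $i(V_0)\subseteq U$, and for every $b\in V_0$ the pair $(b^{-1},bg)$ lies in $\U$ and the associativity identity $(b^{-1}b)g=b^{-1}(bg)$ holds. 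Each of these is an open condition that is satisfied in the limit $b\to u(t(g))$ (where $b^{-1}\to u(t(g))$ and $bg\to g$), so such a $V_0$ exists.

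Given such a $V_0$, both topological conclusions follow at once. If $g\in H$, then for $b\in V_0$ the product $b\cdot g$ is a well-defined product of elements of $U$ (append $b\in U$ to the given expression for $g$), so $N_g\subseteq H$; thus $H\cap s^{-1}(x)$ is open. Conversely, if $g\notin H$, suppose some $g'=bg\in N_g$ lay in $H$; then $b$ is invertible, $b^{-1}\in U$, and $g=(b^{-1}b)g=b^{-1}(bg)=b^{-1}g'$ exhibits $g$ as a well-defined product of elements of $U$, a contradiction — so $N_g\cap H=\emptyset$, and $H\cap s^{-1}(x)$ is closed. Connectedness of $s^{-1}(x)$ finishes the right-regular case, and the left-regular, $t$-connected statement follows by the mirror-image argument with left multiplication and target fibers (equivalently, by passing to the opposite local Lie groupoid). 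The step requiring the most care is the choice of $V_0$ in the second paragraph: one must keep the triples $(b^{-1},b,g)$ inside the neighborhood of $(M\timesst M\timesst G)\cup(M\timesst G\timesst M)\cup(G\timesst M\timesst M)$ on which the definition guarantees associativity, since only \emph{local} associativity is assumed — this is exactly where the weak associativity hypothesis must be handled honestly rather than invoked freely.
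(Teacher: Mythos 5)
Your proof is correct and follows essentially the same route as the paper's: an open-and-closed argument in each source fiber $s^{-1}(x)$, with right-regularity (via the inverse function theorem for $R_g$) producing the neighborhoods $\{bg \mid b\in V_0\}$ and invertible elements near the units used to both extend and cancel products. If anything, you are more explicit than the paper about the one delicate point, namely shrinking $V_0$ so that the triple $(b^{-1},b,g)$ lies in the open set where local associativity is guaranteed, which the paper's closedness step uses only implicitly.
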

\begin{proof}
    We will discuss the case where $G$ is right-regular and $s$-connected. The
    other case is analogous.
    Let $U$ be a neighborhood of $M$ in $G$.
    Pick an element $x\in M$. We will show that the set
    \[ A = \{ g\in s^{-1}(x) \mid \text{$g$ can be written as a product of elements in $U$} \} \]
    is both open and closed in $s^{-1}(x)$ (and therefore, by $s$-connectedness, it is the
    entire source fiber).
    This suffices to prove the result.

    By right-regularity of $G$, it is clear that $A$ is open.
    Pick $g\in \partial A$. We will show that $g\in A$.
    There is a neighborhood $V$ of $t(g)$ in $s^{-1}(t(g))$, consisting
    of invertible elements, contained in $U$, such that
    for all $h \in V$ we have
    \[ (h,g) \in \U \quad\text{and}\quad (h^{-1},hg) \in \U .\]
    By right-regularity of $G$, the set $\{ hg \mid h \in V \}$ is a neighborhood
    of $g$ in $s^{-1}(x)$, and so contains an element of $A$.
    Pick such an element $a\in A$.
    Then $a = hg$ for some $h\in V$, and by our choice of $V$, the product $g = h^{-1}a$ is defined.
    But since $a$ can be written as a product of elements of $U$, this last equation
    shows that $g$ also can.
\end{proof}

The following lemma shows that, in some sense,
little information is lost by restricting a strongly connected local Lie groupoid.

\begin{lemma}
    Suppose that we have two strongly connected local Lie groupoid
    structures on $G$ over $M$, with the same soure, target and unit maps,
    and the same domains for multiplication and inversion.
    Assume that both are 3-associative.
    In other words, we have two tuples $(s,t,u, \U, \V,m_j : \U \to G,i_j : \V\to G)$
    for $j\in\{1,2\}$, each making $G$ into a strongly connected local Lie groupoid
    over $M$.
    If the two structures have a common restriction, so if there is some
    $\U' \subset \U$ and $\V'\subset \V$ such that
    $m_1{\restriction_{\U'}} = m_2{\restriction_{\U'}}$ and
    $i_1{\restriction_{\V'}} = i_2{\restriction_{\V'}}$
    (neighborhoods of $M$ and $M\timesst G \cup G\timesst M$),
    then $m_1 = m_2$ and $i_1 = i_2$.
    \label{lem:stronglyconnecteddeterminedbyrestriction}
\end{lemma}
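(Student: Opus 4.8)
The plan is to prove $m_1 = m_2$ first and then deduce $i_1 = i_2$ by a purely algebraic argument, so the topological work is confined to the multiplication. For $m$, I would argue fibrewise over the ``middle'' point of a product. Writing $\U_x := \{(g,h)\in\U : s(g)=t(h)=x\}$, we have $\U = \bigcup_{x\in M}\U_x$, so it suffices to prove $m_1 = m_2$ on each $\U_x$; and each $\U_x$ is connected and nonempty by the hypothesis of strong connectedness that says precisely this (it contains $(u(x),u(x))$). Fix $x$ and set $E_x := \{(g,h)\in\U_x : m_1(g,h)=m_2(g,h)\}$. For $(g,h)\in\U_x$ both $m_1(g,h)$ and $m_2(g,h)$ lie in the source fibre $s^{-1}(x)$, which is a Hausdorff embedded submanifold of $G$; hence $E_x$ is closed in $\U_x$. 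It also contains $\U'\cap\U_x$, a neighbourhood of $(u(x),u(x))$. The whole point is therefore to show $E_x$ is open, after which $E_x=\U_x$ by connectedness, and running over all $x$ gives $m_1=m_2$.

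For openness I would perturb a point of $E_x$ by small invertible translations. Let $(g_0,h_0)\in E_x$ and $k_0 := m_1(g_0,h_0)=m_2(g_0,h_0)$. For $a\in\V$ near $u(t(g_0))$ and $b\in\V$ near $u(s(h_0))$, all of the pairs $(a,g_0)$, $(h_0,b)$, $(k_0,b)$, $(a,k_0b)$ and $(a,g_0(h_0b))$ lie in $\U'$ --- because $\U'$ is an open neighbourhood of $(G\timesst M)\cup(M\timesst G)$ and, for $a,b$ small, each of these pairs is close to one of that form --- while $(g_0,h_0)$, $(g_0,h_0b)$ and $(ag_0,h_0b)$ lie in $\U$ because $\U$ is open and these are close to $(g_0,h_0)\in\U$. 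Applying $3$-associativity twice inside each structure to regroup $(ag_0)(h_0b) = a\bigl(g_0(h_0b)\bigr) = a\bigl((g_0h_0)b\bigr) = a(k_0b)$, and using that every product that ever appears in an inner position is computed from a pair lying in $\U'$ (so has the same value for both structures), one gets $m_1(ag_0,h_0b) = m_2(ag_0,h_0b)$, i.e.\ $(ag_0,h_0b)\in E_x$. By bi-regularity, right translation by $g_0$ maps a neighbourhood of $u(t(g_0))$ in $s^{-1}(t(g_0))$ diffeomorphically onto a neighbourhood of $g_0$ in $s^{-1}(x)$, and left translation by $h_0$ maps a neighbourhood of $u(s(h_0))$ in $t^{-1}(s(h_0))$ onto a neighbourhood of $h_0$ in $t^{-1}(x)$; hence the pairs $(ag_0,h_0b)$ sweep out a neighbourhood of $(g_0,h_0)$ in $\U_x$, so $E_x$ is open.

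Once $m_1=m_2=:m$, I would obtain $i_1=i_2$ with no further topology. For any $g\in\V$ the pairs $(i_1(g),g)$ and $(g,i_2(g))$ lie in $\V\timesst\V\subseteq\U$, while $(u(s(g)),i_2(g))$ and $(i_1(g),u(t(g)))$ lie in $(M\timesst G)\cup(G\timesst M)\subseteq\U$, so $3$-associativity together with the unit and inverse axioms gives
\[ i_2(g) = m\bigl(m(i_1(g),g),\,i_2(g)\bigr) = m\bigl(i_1(g),\,m(g,i_2(g))\bigr) = i_1(g). \]

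I expect the main obstacle to be the bookkeeping in the openness step: one has to verify carefully that each auxiliary product produced by the translations and by the two regroupings actually lands in the common restriction $\U'$ (so that the two multiplications genuinely coincide on it), which rests on $\U'$ and $\U$ being open and on the local structure of $G$ along its unit section. The other input that needs care is the claim that the translated pairs $(ag_0,h_0b)$ fill an entire neighbourhood of $(g_0,h_0)$ \emph{inside} $\U_x$: this is exactly bi-regularity. Finally, the reduction to the slices $\U_x$ is what makes the argument work in the non-transitive case, where there need not be any near-identity arrows between distinct points of $M$ and so ``conjugating'' a pair to move its middle point is not available.
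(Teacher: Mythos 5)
Your proof is correct and follows essentially the same route as the paper's: fix $x$, show the agreement locus inside the connected slice $\{(g,h)\in\U : s(g)=t(h)=x\}$ is closed and open --- openness by translating by small near-unit elements so that, after regrouping with 3-associativity, every inner product is taken on a pair in $\U'$, and then invoking bi-regularity to see the translated pairs fill a neighbourhood --- and finally recover $i_1=i_2$ from $m_1=m_2$ (your explicit computation here is if anything cleaner than the paper's one-line appeal to 3-associativity). The only slip is cosmetic: for $(g,h)$ in that slice the two products $m_1(g,h),m_2(g,h)$ lie in the Hausdorff source fibre $s^{-1}(s(h))$, not in $s^{-1}(x)$, and it is that common fibre your closedness argument should invoke.
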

\begin{proof}
    Let us first show that the multiplication maps agree.
    Pick a point $x\in M$.
    Let $F = \{ (g,h) \in \U \mid s(g) = t(h) = x \}$.
    By strong connectedness of $G$, the set $F$ is connected.
    We will show that $m_1{\restriction_F} = m_2{\restriction_F}$.
    Since $x$ was arbitrary, this will prove that $m_1 = m_2$.
    We argue that the set
    \[ A = \{ (g,h) \in F \mid m_1(g,h) = m_2(g,h) \} \]
    is open and closed (so that it is all of $F$).
    Closedness of $A$ is obvious, so we just have to show that $A$ is open.

    Pick $(g,h) \in A$.
    Let us write $gh$ for $m_1(g,h) = m_2(g,h)$.
    Let $N_h$ be a compact neighborhood of $s(h)$ in $t^{-1}(s(h))$, small enough
    such that for all $\overline{h} \in N_h$ we have
    \begin{itemize}
        \item $(h,\overline{h}) \in \U'$ (so that we can write $h\overline{h}$ without ambiguity),
        \item $(g,h\overline{h}) \in \U$,
        \item $(gh, \overline{h}) \in \U'$.
    \end{itemize}
    Then
    \begin{align*}
        m_1(g,h\overline{h})
        &= m_1(gh,\overline{h}) \\
        &= m_2(gh,\overline{h}) \\
        &= m_2(g,h\overline{h}),
    \end{align*}
    showing that $(g,h\overline{h}) \in A$
    (so that we can write $gh\overline{h}$ without ambiguity).
    Now let $N_g$ be a neighborhood of $t(g)$ in $s^{-1}(t(g))$ small enough
    such that for all $\overline{h} \in N_h$ and $\overline{g} \in N_g$ we have
    \begin{itemize}
        \item $(\overline{g},g) \in \U'$ (so that we can write $\overline{g}g = m_1(\overline{g},g) = m_2(\overline{g},g)$),
        \item $(\overline{g},gh\overline{h}) \in \U'$,
        \item $(\overline{g}g,h\overline{h}) \in \U$.
    \end{itemize}
    Then
    \begin{align*}
        m_1(\overline{g}g,h\overline{h})
        &= m_1(\overline{g},gh\overline{h}) \\
        &= m_2(\overline{g},gh\overline{h}) \\
        &= m_2(\overline{g}g,h\overline{h}),
    \end{align*}
    showing that $(\overline{g}g,h\overline{h}) \in A$.
    But by bi-regularity of $m_1$ and $m_2$, elements of the form $(\overline{g}g,h\overline{h})$ form
    a neighborhood of $(g,h)$ in $F$.
    This shows that $A$ is open, and therefore that the multiplications agree.
    Because of 3-associativity, the multiplication map completely determines the inversion
    map (up to restriction). This shows that the inversion maps also agree.
\end{proof}

\begin{remark}
The above lemma shows that we do not lose any information by restricting a
local Lie groupoid,
on the condition that it is sufficiently connected.
This essentially means that we can identify a local Lie groupoid with any of
its restrictions without changing the theory of local Lie groupoids.

The situation is different when talking about \emph{shrinking} local Lie groupoids.
If we identify $G$ and $G'$ whenever $G'$ can be obtained by shrinking $G$, we are
studying \emph{germs} of local Lie groupoids.
When doing this, one misses out on some of the subtler points of the theory of associativity
for local Lie groupoids.
As we will see, the associativity properties of a local Lie groupoid can change drastically
upon shrinking.
\end{remark}

\section{The Lie algebroid of a local Lie groupoid}

In the same way as for Lie groupoids, one can associate a Lie algebroid to each
local Lie groupoid. However, there are some subtleties about left- and right-invariant
vector fields that we will discuss explicitly.

Suppose that $G$ is a local Lie groupoid over $M$.
Just like for Lie groupoids, the vector bundle underlying the algebroid is
\[ A = T^s_MG ,\]
the restriction of $T^sG$ to $M$.
The anchor is the map $\rho : A \to TM$ obtained by restricting $dt : TG \to TM$ to $A$.

In order to define the algebroid bracket, we must discuss invariant vector fields on $G$.
To each section $\alpha \in \Gamma(A)$, we can associate a vector field
\[ \tilde{\alpha}_g = d_{t(g)}R_g(\alpha_{t(g)}) ,\]
where $R_g$ is right translation by $g\in G$. Note that $d_{t(g)}R_g$ maps
$T^s_{t(g)}G$ to $T^s_gG$, so that $\tilde{\alpha}$ is a well-defined vector field
that is tangent to the source fibers.
A vector field $X \in \mathfrak{X}(G)$ is \emph{right-invariant} if it is tangent
to the source fibers and
\[ dR_h(X_g) = X_{gh} \quad\text{for all $g,h\in\U$} .\]

\begin{lemma}
    Suppose that $G$ is a
    3-associative local Lie groupoid.
    Then the right-invariant vector fields are precisely of the form $\tilde{\alpha}$
    for $\alpha \in \Gamma(A)$.
\end{lemma}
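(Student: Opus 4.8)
The plan is to prove both inclusions. First, every vector field of the form $\tilde\alpha$ is right-invariant: this is essentially the content of the construction, but one must check the invariance relation $dR_h(\tilde\alpha_g) = \tilde\alpha_{gh}$ rather than just the fact that $\tilde\alpha$ is tangent to source fibers. The natural approach is to observe that $\tilde\alpha_g = d_{t(g)}R_g(\alpha_{t(g)})$ and $\tilde\alpha_{gh} = d_{t(gh)}R_{gh}(\alpha_{t(gh)})$. Since $t(gh) = t(g)$, both are built from the same vector $\alpha_{t(g)} \in A_{t(g)}$. The relation we want, $dR_h \circ dR_g = dR_{gh}$ at the relevant point, is exactly the differential of the identity $(kg)h = k(gh)$ (applied with $k$ ranging over a source fiber through $t(g)$); here is where 3-associativity enters, since we need $(kg)h = k(gh)$ to hold for all $k$ in a neighborhood within the source fiber $s^{-1}(s(g))$, i.e.\ whenever these triple products are defined. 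I would make this precise by writing $R_h \circ R_g = R_{gh}$ as maps between (open subsets of) source fibers and differentiating.

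For the converse, suppose $X$ is right-invariant. Define $\alpha \in \Gamma(A)$ by $\alpha_x = X_x$ for $x \in M$ (using $u(M) \subset G$ and noting $X_x \in T^s_xG = A_x$ since $X$ is tangent to source fibers). I claim $X = \tilde\alpha$. For any $g \in G$, apply right-invariance with the pair $(u(t(g)), g)$: since $u(t(g)) \cdot g = g$ and $(u(t(g)), g) \in \U$, right-invariance gives $dR_g(X_{u(t(g))}) = X_g$. But $X_{u(t(g))} = X_{t(g)} = \alpha_{t(g)}$ by definition of $\alpha$, so $X_g = dR_g(\alpha_{t(g)}) = \tilde\alpha_g$. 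This direction needs no associativity beyond what is built into the definition of a local Lie groupoid (the identity axiom $m(u(t(g)),g) = g$ and $(u(t(g)),g) \in \U$).

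The one point requiring care — and the main obstacle — is that right translation $R_g$ is only a partially defined map: $R_g(k) = m(k,g)$ is defined only for $k$ with $s(k) = t(g)$ and $(k,g) \in \U$. So $R_h \circ R_g$ and $R_{gh}$ are maps between open subsets of source fibers, and one must check their domains overlap in a neighborhood of the relevant points before differentiating; this is where one invokes that $\U$ is an open neighborhood of $M \timesst G \cup G \timesst M$ together with 3-associativity to guarantee that on a small enough neighborhood the composite $(kg)h$ is defined and equals $k(gh)$. I would phrase the invariance computation locally: fix $g$, restrict to a neighborhood $N$ of $t(g)$ in $s^{-1}(t(g))$ small enough that for $k \in N$ the products $kg$, $(kg)h$, $k(gh)$ are all defined (possible by openness of $\U$ and continuity), apply 3-associativity on $N$, and differentiate at $k = u(t(g))$. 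Everything else is the routine identification of $d_x u(TM) \oplus T^s_xG$-type splittings and the observation that $d_{t(g)}R_g$ restricts to an isomorphism $T^s_{t(g)}G \to T^s_gG$ — but we need not even invoke bi-regularity here, only that $dR_g$ maps $T^s$ into $T^s$, which follows from $s \circ R_g$ being constant.
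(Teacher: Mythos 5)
Your proposal is correct and matches the paper's (very terse) argument: the paper also treats ``right-invariant $\Rightarrow$ of the form $\tilde\alpha$'' as the easy direction via restriction to units, and attributes the converse, $dR_h(\tilde\alpha_g)=\tilde\alpha_{gh}$, to 3-associativity exactly as you do by differentiating $(kg)h=k(gh)$ on a small neighborhood of $t(g)$ in the source fiber where all products are defined. One small slip: in the middle of your first paragraph the fiber should be $s^{-1}(t(g))$ rather than $s^{-1}(s(g))$, as you in fact write correctly in your final paragraph.
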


\begin{proof}
It is clear that every right-invariant vector field must be of the form $\tilde{\alpha}$
for some $\alpha\in\Gamma(A)$. The converse follows from 3-associativity in the usual way.
\end{proof}

The Lie bracket of two right-invariant vector fields is again right-invariant,
and we define the algebroid bracket by requiring
\[ \widetilde{[\alpha,\beta]} = [\tilde{\alpha},\tilde{\beta}] .\]
Then $(A,\rho,[,])$ is the Lie algebroid of $G$.
Note that for a local Lie groupoid that is not 3-associative, we can define its Lie
algebroid by going to a 3-associative restriction.

\chapter{Associativity and globalizability}
\label{chapter:associativity}

In this chapter, we discuss the \emph{globalizability} of local Lie groupoids,
i.e.\ whether they are part of a global Lie groupoid or not.
The main result of this chapter is the generalization of Mal'cev's theorem,
which answers this question for local Lie groups,
to local Lie groupoids.

\section{Mal'cev's theorem for local Lie groups}
\label{sec:malcev}

Perhaps the most natural examples of local Lie groupoids are open neighborhoods of
the identity manifold in Lie groupoids, and restrictions thereof.
We would like to characterize the local Lie groupoids that arise in this way.

\begin{definition}
    We call a local Lie groupoid \emph{globalizable} if it is a restriction
    of an open neighborhood of the identity manifold in a Lie groupoid.
\end{definition}

A theorem by Mal'cev characterizes globalizable local Lie groups.
In the language we will define in a moment, the theorem states the following.

\begin{theorem}[Mal'cev]
    A strongly connected local Lie group is globalizable if and only if it is
    globally associative.
\end{theorem}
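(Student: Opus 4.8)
The plan is to treat the two implications separately. \textbf{Globalizable $\Rightarrow$ globally associative} is immediate: if $G$ is (isomorphic to) a restriction of an open neighbourhood $U$ of the identity in a Lie group $\G$, then for any tuple of elements of $G$ and any two parenthesizations of their product that are both defined in $G$, every intermediate product lies in $U\subseteq\G$ and equals the honest product computed in the group $\G$; associativity in $\G$ then forces the two $G$-values to agree, so $G$ is associative to every order. No connectedness is needed for this direction.

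\textbf{Globally associative $\Rightarrow$ globalizable: the enveloping group.} For the converse the object to produce is the enveloping group. Concretely, let $F$ be the free group on the underlying set of $G$, write $[g]\in F$ for the generator attached to $g$, let $N\trianglelefteq F$ be generated by the words $[g][h][gh]^{-1}$ for $(g,h)\in\U$ together with $[e]$ (where $e$ is the identity of $G$), set $\G=F/N$, and let $\iota\colon G\to\G$ be the induced map; then $\iota(g)\iota(h)=\iota(gh)$ whenever $gh$ is defined, and $(\G,\iota)$ is universal among partial homomorphisms of $G$ into groups. This $\G$ is precisely the associative completion $\AsCo(G)$ developed later in this chapter, specialized to a one-point base, so that here it really is a group. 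A strongly connected local Lie group satisfies the hypotheses of Theorem~A (it is bi-regular, and its products are connected to the identity), and strong connectedness also gives, via the generation proposition above, that every element of $G$ is a well-defined product of elements from any fixed neighbourhood of $e$; in particular the image under $\iota$ of any such neighbourhood already generates $\G$.

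\textbf{The crux: injectivity of $\iota$, and smoothness.} Two points remain. First, $\iota$ is injective, and this is exactly where global associativity is indispensable: one shows that any reduction of a word $[g_1]^{\pm1}\cdots[g_k]^{\pm1}$ to the empty word by inserting and deleting the defining relators can be reorganized --- using that every defined product in $G$ is independent of parenthesization --- into a chain of moves each of which carries a well-defined value in $G$ that is preserved throughout, so that $\iota(g)=\iota(g')$ forces $g=g'$. I expect this combinatorial bookkeeping --- controlling at each stage which partial products are actually defined in $G$, while freely moving parentheses by global associativity --- to be the main obstacle; it is the step that collapses without the hypothesis, and it is exactly what the associators $\Assoc(G)$ measure (global associativity says $\Assoc(G)$ is trivial, hence a fortiori uniformly discrete). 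Second, $\G=\AsCo(G)$ is a Lie group: once the associators are uniformly discrete this is Theorem~A, whose proof also supplies the smooth structure --- transport a chart $\iota|_{G_0}$ about $e$ around by the left translations $L_{\iota(g)}$, and use $3$-associativity together with smoothness of $m$ to see that the transition maps, hence the multiplication and inversion of $\G$, are smooth.

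\textbf{Conclusion.} Given injectivity and smoothness, $\iota$ is a local diffeomorphism near $e$, and therefore, being a homomorphism, everywhere; being also injective it is an open embedding of $G$ onto a neighbourhood of the identity in the Lie group $\G=\AsCo(G)$, carrying the partial multiplication of $G$ to the restriction of that of $\G$. Thus $G$ is globalizable, and Mal'cev's theorem follows.
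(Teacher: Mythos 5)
Your overall architecture mirrors the paper's (pass to the associative completion, get injectivity from global associativity, get smoothness from Theorem \ref{thm:smoothness-of-asco} via triviality, hence uniform discreteness, of the associators), but the one step you explicitly defer is precisely the heart of the matter, so the proof is incomplete. Injectivity of $\iota\colon G\to\AsCo(G)$ does \emph{not} follow directly from global associativity: if $(g)\sim(h)$, the connecting chain of expansions and contractions passes through intermediate words that in general admit \emph{no} defined evaluation in $G$ at all, so there is no ``value preserved throughout'' to track, and the free-group formulation only makes this worse (reductions may use formal inverses and cancellations with no counterpart among defined products). The bridge is \cref{prop:equivalence-relation}: every equivalence $(g)\sim(h)$ can be reorganized into a sequence of expansions followed by a sequence of contractions; only then does global associativity apply, since the middle word evaluates (by two parenthesizations) to both $g$ and $h$. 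Proving that reorganization is where the geometric hypotheses enter --- bi-regularity, source-connectedness and products connected to the axes are used in \cref{lem:contractionintoexpansion} and \cref{lem:switchblocks} to insert and transport ``blocks'' $(a_1,\dots,a_k,a_k^{-1},\dots,a_1^{-1})$ along paths in source fibers. Your sentence ``I expect this combinatorial bookkeeping \dots to be the main obstacle'' names the obstacle but supplies no argument for it, so the globally-associative-implies-globalizable direction is not established.

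A secondary gap: you assert that a strongly connected local Lie group automatically has products connected to the axes. That is not part of the definition of strong connectedness (connectedness of the domain $\U$ does not give paths inside the slices $\U\cap(G\times\{h\})$ or $\U\cap(\{g\}\times G)$ reaching the identity), and the paper does not claim it; instead one \emph{restricts} the multiplication to obtain a local group $G'$ with products connected to the axes, proves $G'\hookrightarrow\AsCo(G')$ is an embedding onto a restriction of an open part of a Lie group, and then uses \cref{lem:stronglyconnecteddeterminedbyrestriction} (uniqueness of a strongly connected structure with a given restriction) to conclude that $G$ itself, not merely $G'$, is globalizable. That last transfer step is absent from your argument and is needed unless you can actually justify the ``products connected to the axes'' claim for $G$.
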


We will generalize this result to local Lie groupoids.
Our proof will be analogous to the proof for local groups, though
a little bit of extra effort is required in proving smoothness of the
constructed groupoid.

\section{Example of a non-globalizable local Lie groupoid}
\label{sec:example-non-globalizable}

Let us illustrate the globalization problem and the various notions of associativity
with some examples of local Lie groupoids.

\subsection*{A Lie groupoid}

We start by describing a very simple global Lie groupoid.
We take $M = \Sphere^2$, equipped with its usual area form $\omega$ of total area $4\pi$.
We will write $A$ for the map that calculates the area enclosed by a loop:
\[ A : \Omega(M) \to \frac{\R}{4\pi} : \gamma \mapsto \int_{\Gamma} \omega ,\]
where $\gamma : [0,1] \to M$ is a loop and $\Gamma : [0,1]\times[0,1] \to M$
is any homotopy contracting $\gamma$ to the trivial loop
at $\gamma(0) = \gamma(1)$.
Note that this area is well-defined up to $4\pi$, because any two such homotopies will
have areas differing by an element $\int_{[\alpha]} \omega \in 4\pi\Z$ where $[\alpha]\in\pi_2(\Sphere^2)$.

Now let
\[ P = \{ \text{piecewise smooth paths in $M$} \} \times \frac{\R}{4\pi} .\]
We say that two elements $(\gamma_1,a_1), (\gamma_2,a_2) \in P$ are equivalent for ${\sim}$
if $\gamma_1(0) = \gamma_2(0)$ and $\gamma_1(1) = \gamma_2(1)$ and
\[ a_2 = a_1 + A(\gamma_2^{-1}\cdot\gamma_1) ,\]
where $\cdot$ represents concatenation of paths, and $\gamma_2^{-1}$ is the reverse of $\gamma_2$.

Let
\[ \G = P/{\sim} .\]
This is a smooth manifold of dimension 5.
It is a Lie groupoid over $M$ with multiplication
\[ [\gamma_1,a_1] [\gamma_2,a_2] = [\gamma_1\cdot\gamma_2, a_1+a_2] .\]

The Lie algebroid of $\G$ is the one associated to the area form $\omega$ (example 2.26
in \cite{lectures-integrability-lie}).
More precisely, as a bundle it is $TM \oplus \R$ (with anchor the projection
to the first summand),
and the bracket is given by
\[ [(X,f),(Y,g)] = ([X,Y],\mathcal{L}_X(g) - \mathcal{L}_Y(f) + \omega(X,Y)) .\]

\subsection*{A globalizable local Lie groupoid}

We now focus our attention on an open part of the $\G$ we just constructed.
Note that any element of $\G$ that does not have antipodal source and target
has a unique representative whose path is a geodesic for the usual round metric.
We can restrict to the elements whose source and target are not antipodal,
and obtain a (globalizable) local Lie groupoid.
If we represent each such element using the geodesic representative, we can write
down this local Lie groupoid explicitly as
\[ G' = \{ (y,x) \in M\times M \mid x + y\neq 0 \} \times \frac{\R}{4\pi} .\]
The multiplication of $(z,y,a), (y,x,a') \in G'$ is then defined whenever $x+z\neq 0$,
and is given by
\[ (z,y,a) \cdot (y,x,a') = (z,x,a+a'+A(\Delta xyz)) ,\]
where $A(\Delta xyz) \in \R/4\pi$ is the signed area of the spherical triangle $\Delta xyz$.
This local Lie groupoid $G'$ is globalizable, because it is an open part of $\G$
under the inclusion
\[ (y,x,a) \mapsto [\text{geodesic from $x$ to $y$}, a] .\]

\subsection*{A non-globalizable local Lie groupoid}
\label{sss:non-glob}

By introducing a slight variation, we can make the above local Lie groupoid non-globalizable.
We will no longer quotient out the areas by $4\pi$.
Explicitly, we still have $M = \Sphere^2$ and we set
\[ G'' = \{(x,y)\in M\times M \mid x+y\neq 0 \} \times \R .\]
We will define the multiplication of $(z,y,a)$ and $(y,x,a')$ only if $x+z\neq 0$
and $-\pi < A(\Delta xyz) < \pi$. \phantomsection\label{example:gprimeprime}
In that case, it is defined as above, by the formula
\[ (z,y,a) \cdot (y,x,a') = (z,x,a+a'+A(\Delta xyz)) .\]
To check that this local Lie groupoid is 3-associative, let
$(z,y,a_1), (y,x,a_2), (x,w,a_3) \in G''$.
Then
\begin{align*}
    ((z,y,a_1)\cdot(y,x,a_2))\cdot(x,w,a_3) &= (z,w,a_1+a_2+a_3+A(\Delta xyz)+A(\Delta wxz)) \\
    (z,y,a_1)\cdot((y,x,a_2)\cdot(x,w,a_3)) &= (z,w,a_1+a_2+a_3+A(\Delta wxy)+A(\Delta wyz))
\end{align*}
(if both are defined)
and $A(\Delta xyz)+A(\Delta wxz)=A(\Delta wxy)+A(\Delta wyz) \pmod{4\pi}$, because both equal the area
of the quadrangle $wxyz$ (which is defined up to $4\pi$).
But by the restriction that the areas have to be in $(-\pi,\pi)$,
this implies $A(\Delta xyz)+A(\Delta wxz)=A(\Delta wxy)+A(\Delta wyz)$,
proving 3-associativity.

\begin{figure}
    \centering
    \begin{overpic}[width=0.4\linewidth]{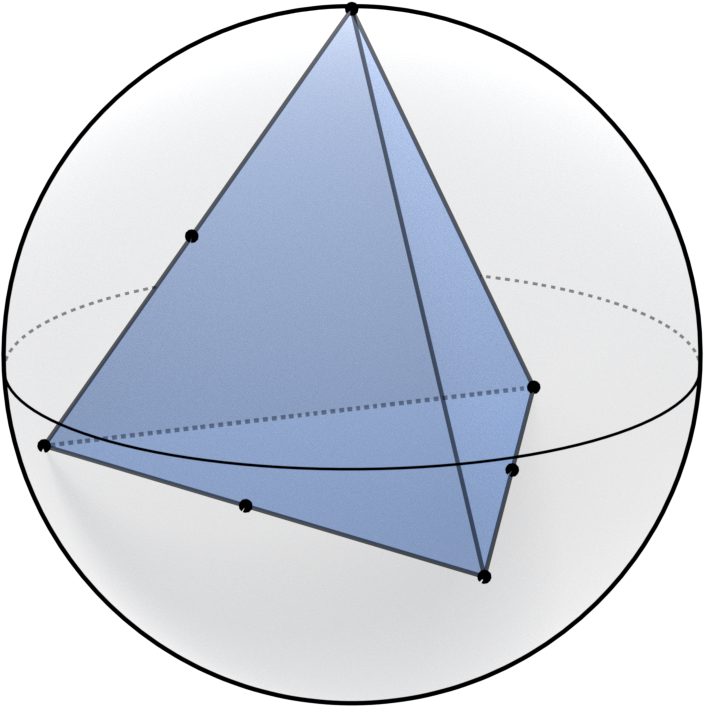}
        \put (50,100.6) {1}
        \put (21,64.5) {2}
        \put (5.4,29.8) {3}
        \put (31.9,21.5) {4}
        \put (70.5,13) {5}
        \put (74.4,30) {6}
        \put (77,45) {7}
    \end{overpic}
    \caption{Demonstrating the lack of 6-associativity. The tetrahedron is inscribed in a sphere, and the even-numbered points are midpoints of the edges they lie on.}
    \label{fig:tetrahedron}
\end{figure}

This local Lie groupoid is not globally associative.
Consider figure \ref{fig:tetrahedron}, which shows a regular tetrahedron inscribed in $\Sphere^2$.
Let $x_j \in \Sphere^2$ be the normalization of the $j$'th marked point in the picture (i.e.\ the radial projection of the point to the circumscribed sphere).
Let
\begin{align*}
    A &= (x_2,x_1,0) & D &= (x_5,x_4,0) \\
    B &= (x_3,x_2,0) & E &= (x_6,x_5,0) \\
    C &= (x_4,x_3,0) & F &= (x_7,x_6,0).
\end{align*}
Then
\[ (x_7,x_1,2\pi) = F(E((D(CB))A)) \neq ((F((ED)C))B)A = (x_7,x_1,-2\pi) .\]

Of course, global associativity is necessary for globalizability.
Our groupoid $G''$ is therefore not globalizable.
Note, however, that there is a neighborhood of the identity manifold in $G''$
that \emph{is} globalizable:
if $G''' = \{ (y,x,a) \in G'' \mid -2\pi < a < 2\pi \}$,
then $G'''$ is globalizable (and in fact isomorphic to an open part of $G'$ and hence of $\G$).
As we will see later, this is always the case for local Lie groupoids whose algebroid
is integrable.

\section{Associative completions}
\label{sec:asco}

In this section, we review a construction that associates to a local Lie
groupoid a global groupoid. In \cite[appendix A]{bailey-gualtieri}, this construction is called
the \emph{formal completion} of the local groupoid (and it is used only for
globally associative structures). The construction is also used in
\cite{olver}, though it is not explicitly named.
We will call this construction the \emph{associative completion} of the local Lie
groupoid, to highlight that it has the effect of
\begin{itemize}
    \item making the structure globally associative,
    \item globalizing the structure to a (global) groupoid, so that the
        multiplication is complete.
\end{itemize}

Suppose that $G$ is a local Lie groupoid. Then $\bigsqcup_{n\geq 1} G^n$ is the set of words on $G$.
A word $(w_1,...,w_n)$ is called \emph{well-formed} if $s(w_i) = t(w_{i+1})$ for all $i\in\{1,...,n-1\}$.
The set of well-formed words on $G$ of length $n$ is nothing but
\[ \underbrace{G \timesst G \timesst \cdots \timesst G}_{\text{$n$ copies}} .\]
Write
\[ W(G) = \{ \text{well-formed words on $G$} \} .\]
Note that if $G$ happens to be an open part of a global Lie groupoid, then a groupoid
globalizing it
may be obtained as a quotient of $W(G)$ by an appropriate equivalence relation, as follows.

\begin{definition}
If $w = (w_1,...,w_k,w_{k+1},...,w_n)$ is a well-formed word and $(w_k,w_{k+1}) \in \mathcal{U}$,
we will say that the word
\[ w' = (w_1,...,w_kw_{k+1},...,w_n) \]
is obtained from $w$ by \emph{contraction}.
We also say that $w$ is obtained from $w'$ by \emph{expansion}.
If two words are related by an expansion or contraction, we will say that they are \emph{elementarily equivalent}.

We say that two words are \emph{equivalent} (written $w\sim w'$) if they are equivalent under the equivalence
relation generated by elementary equivalences.
\end{definition}

\begin{definition}
If $G$ is a local Lie groupoid, then
\[ \AsCo(G) = W(G)/{\sim} \]
is called the \emph{associative completion} of $G$.
\end{definition}

\begin{proposition}
    If $G$ is an inversional local Lie groupoid, then $\AsCo(G)$ is a groupoid
    (in the algebraic sense).
\end{proposition}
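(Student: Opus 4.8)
The plan is to exhibit $\AsCo(G)=W(G)/{\sim}$ directly as an algebraic groupoid over $M$. Source and target of the class of a well-formed word are given by $s([(w_1,\dots,w_n)])=s(w_n)$ and $t([(w_1,\dots,w_n)])=t(w_1)$; multiplication is induced by concatenation of words; the unit at $x\in M$ is the class of the length-one word $(u(x))$. First I would check that $s$ and $t$ descend to $\sim$-classes: a contraction $(w_1,\dots,w_k,w_{k+1},\dots,w_n)\mapsto(w_1,\dots,w_kw_{k+1},\dots,w_n)$ changes neither $t$ of the first letter nor $s$ of the last letter (using $t(w_1w_2)=t(w_1)$ and $s(w_{n-1}w_n)=s(w_n)$ when the contracted pair meets an end), and likewise for expansions, so $s,t\colon\AsCo(G)\to M$ are well defined.

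Next I would check that concatenation descends to a multiplication. Declare $[w]$ and $[w']$ composable exactly when $s([w])=t([w'])$; this is precisely the condition that the concatenated word $w\cdot w'$ be well-formed. Any elementary equivalence performed inside $w$ (resp.\ inside $w'$) is still an elementary equivalence inside $w\cdot w'$, so $[w]\cdot[w']:=[w\cdot w']$ is well defined. Associativity is then immediate — and is the entire point of the construction — because concatenation of words is strictly associative and the composability conditions chain correctly. For the units, if $x=t(w_1)$ then $(u(x),w_1,\dots,w_n)$ is well-formed and the pair $(u(x),w_1)=(u(t(w_1)),w_1)$ lies in $\U$ (it belongs to $M\timesst G$), so contracting it via the axiom $m(u(t(w_1)),w_1)=w_1$ gives $[(u(x))]\cdot[w]=[w]$; symmetrically $(w_n,u(s(w_n)))\in G\timesst M\subset\U$ and $m(w_n,u(s(w_n)))=w_n$ give $[w]\cdot[(u(s(w_n)))]=[w]$. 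Thus $\AsCo(G)$ is a category with object set $M$.

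It remains to produce inverses, and this is exactly where the inversional hypothesis is used. Given $[w]=[(w_1,\dots,w_n)]$, each letter $w_i$ is by assumption equal to a well-defined product of invertible elements, i.e.\ there is a word $(v_{i,1},\dots,v_{i,k_i})$ of elements of $\V$ and a complete bracketing of it, all of whose partial products are defined and whose total value is $w_i$; the corresponding sequence of adjacent-pair contractions shows $(v_{i,1},\dots,v_{i,k_i})\sim(w_i)$. Concatenating over $i$ yields $w\sim v:=(v_1,\dots,v_m)$ with $v$ well-formed and every letter invertible. Put $v^{-1}:=(v_m^{-1},\dots,v_1^{-1})$; using $s(g^{-1})=t(g)$ and $t(g^{-1})=s(g)$ one checks $v^{-1}$ is well-formed with $s([v^{-1}])=t([v])$ and $t([v^{-1}])=s([v])$. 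In the concatenation $v\cdot v^{-1}=(v_1,\dots,v_m,v_m^{-1},\dots,v_1^{-1})$ the central pair $(v_m,v_m^{-1})$ lies in $\V\timesst\V\subset\U$ and contracts via $m(v_m,v_m^{-1})=u(t(v_m))$; since $t(v_m)=s(v_{m-1})$, the pair $(v_{m-1},u(s(v_{m-1})))\in G\timesst M\subset\U$ then contracts via $m(v_{m-1},u(s(v_{m-1})))=v_{m-1}$, and iterating this telescoping collapses $v\cdot v^{-1}$ to $(u(t(v_1)))$. Hence $[v]\cdot[v^{-1}]=\mathrm{id}_{t([w])}$, and the symmetric argument from the other end, using $m(i(g),g)=u(s(g))$, gives $[v^{-1}]\cdot[v]=\mathrm{id}_{s([w])}$. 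Thus $[w]=[v]$ has a two-sided inverse, and the standard cancellation argument in a category shows this inverse is unique, independent of all the choices made. Therefore $\AsCo(G)$ is a groupoid.

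The main obstacle is not a deep one — the construction is engineered so that associativity and units cost nothing — but one must be careful in the last step: that "inversional" genuinely lets one replace an arbitrary word by a word of invertible letters, that the reversed word of inverses is again well-formed, and that each contraction in the telescoping is a legitimate elementary equivalence, which relies precisely on the inclusions $\V\timesst\V\subset\U$ and $(G\timesst M)\cup(M\timesst G)\subset\U$ built into the definition of a local Lie groupoid.
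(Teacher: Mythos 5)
Your proof is correct and follows essentially the same route as the paper: source, target, units and multiplication defined on classes of words via first/last letters and concatenation, with the inversional hypothesis used to replace each letter by a word of invertible elements whose reversal of inverses gives the inverse class. The paper simply writes down this inverse word without spelling out the telescoping contractions you verify explicitly.
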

\begin{proof}
    The source and target maps are given by
    \[ s([w_1,...,w_n]) = s(w_n) \quad\text{and}\quad t([w_1,...,w_n]) = t(w_1) .\]
    The multiplication map is juxtaposition.
    The unit at $x\in M$ is given by $[x]$.
    It remains to show that $[w_1,...,w_n]$ has an inverse.
    Because $G$ is inversional, every $w_i$ can be written as a product of invertible elements
    $w_i^1,...,w_i^{k_i}$.
    Then we have
    \[ [w_1,...,w_n]^{-1} = [(w_n^{k_n})^{-1},...,(w_n^1)^{-1},...,(w_1^{k_1})^{-1},...,(w_1^1)^{-1}] .\]
    These operations turn $\AsCo(G)$ into a groupoid.
\end{proof}

If $f : G \to H$ is a morphism of local Lie groupoids,
we have a commutative diagram
\begin{center}
\begin{tikzpicture}
    \matrix(m)[matrix of math nodes, row sep=2.2em, column sep=3.6em]{
        G & H \\
        \AsCo(G) & \AsCo(H). \\
    };
    \path[->] (m-1-1) edge node[auto]{$f$} (m-1-2);
    \path[->] (m-1-1) edge (m-2-1);
    \path[->] (m-2-1) edge node[auto]{$\AsCo(f)$} (m-2-2);
    \path[->] (m-1-2) edge (m-2-2);
\end{tikzpicture}
\end{center}

It is interesting to study the kernel of the map $G \to \AsCo(G)$ (i.e.\ those elements
that are mapped to a unit).
The following elements are clearly in this kernel.

\begin{definition}
    Suppose that $G$ is a local Lie groupoid over $M$, and that $x\in M$.
    Then an element $g\in G_x = s^{-1}(x)\cap t^{-1}(x) \subset G$ is
    called an \emph{associator at $x$} if there is a word
    \[ (w_1,...,w_k) \in G^k \]
    that can be evaluated (in $G$) to both $x$ and $g$ (by choosing different
    ways of putting in brackets).
    We write
    $\Assoc_x(G)$
    for the set of all associators at $x$.
    We write $\Assoc(G)$ for the set of all associators in $G$.
\end{definition}

\begin{definition}
    Suppose $G$ is a local Lie groupoid. We say that $G$ \emph{has products connected to the axes}
    if for every $(g,h) \in \U$, there is a path $\gamma$ from $t(h)$ to $g$ in $G$
    such that $(\gamma(\tau),h) \in \U$ for all $\tau$,
    or there is a path $\gamma$ from $s(g)$ to $h$ in $G$ such that
    $(g,\gamma(\tau)) \in \U$ for all $\tau$.
\end{definition}

Note that every local Lie groupoid has a restriction with products connected to the axes.

The following result seems fairly innocent, but it is non-trivial and at the heart
of many results about associativity for local groupoids.

\begin{proposition}
    Suppose $G$ is a bi-regular local Lie groupoid that has products connected to the axes.
    Let $g,h \in G$.
    Then $(g) \sim (h)$ if and only if there is a sequence of expansions,
    followed by a sequence of contractions, starting at $(g)$ and ending at $(h)$.
    \label{prop:equivalence-relation}
\end{proposition}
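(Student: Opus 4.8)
The plan is to show that the equivalence relation $\sim$ on $W(G)$, which is generated by elementary equivalences (single expansions and contractions), can in fact always be realized by going ``up'' first and then ``down'': every equivalence $(g) \sim (h)$ between one-letter words factors as a sequence of expansions $(g) = v_0 \to v_1 \to \cdots \to v_r$ followed by a sequence of contractions $v_r \to \cdots \to w_0 = (h)$. The reverse implication is trivial, so the content is the forward direction. The natural strategy is a confluence/diamond-type argument: show that whenever we have a contraction followed by an expansion, $w \to w'$ (contraction) and $w' \to w''$ (expansion), we can replace this by an expansion followed by (possibly several) contractions, $w \to \tilde{w}$ (expansions) and $\tilde{w} \to \cdots \to w''$ (contractions). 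Given such a local rewriting lemma, a general zig-zag of arbitrary shape connecting $(g)$ and $(h)$ can be massaged, one ``down-then-up'' bend at a time, into a single ``up-then-down'' shape. Since the words involved grow only in a controlled way, a careful bookkeeping argument (or an induction on, say, the number of descents in the zig-zag, with the total length as a secondary measure) closes the combinatorial loop.

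First I would set up the local commutation lemma precisely. Suppose $w \xrightarrow{\text{contract at position } i} w'$ and $w' \xrightarrow{\text{expand at position } j} w''$. If the positions $i$ and $j$ are ``far apart'' — i.e.\ the expansion in $w'$ touches a letter untouched by the contraction — the two moves commute on the nose: expand first, then contract, and we land at the same $w''$. The only interesting case is when the expansion splits (a descendant of) the very letter $w_i w_{i+1}$ that was just produced by the contraction, or an adjacent letter. Here is where bi-regularity and ``products connected to the axes'' enter. The point is that $w_i w_{i+1}$ can be re-expressed: because products are connected to the axes, the pair $(w_i, w_{i+1})$ can be joined by a path in $\U$ to a pair with one entry on an axis (a unit), and bi-regularity guarantees that the relevant multiplication maps are local diffeomorphisms along this path, so we can ``slide'' the parenthesization. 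Concretely, I expect to argue that $w_i w_{i+1}$, together with a third nearby factor, lies in the $3$-associativity domain (after the restriction we are free to assume, cf.\ the proposition on $3$-associativity), so that the split of the product can be traded for a split of one of its factors — realizing the bend as an expansion followed by contractions.

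The main obstacle — and where I would spend the bulk of the effort — is precisely this local case: controlling \emph{where} one is allowed to re-associate. The subtlety is that a product like $w_i w_{i+1}$ might be defined while the "obvious" re-bracketing is not, so one cannot naively invoke $3$-associativity; one must genuinely use the path supplied by "products connected to the axes" and track that every intermediate product stays in $\U$, using bi-regularity to transport neighborhoods. A clean way to organize this is: (1) reduce to the situation of three consecutive factors $a,b,c$ with $(ab)$ and $(bc)$ both defined and the contraction producing $ab$ or $bc$; (2) use the connecting path to move $(a,b)$ (or $(b,c)$) toward an axis, where everything becomes trivially associative; (3) lift the homotopy of bracketings back along bi-regular multiplication maps. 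Once this lemma is in hand, the global "straightening" of a zig-zag is a finite induction: each application of the lemma strictly decreases the number of "valleys" in the zig-zag (a contraction immediately followed by an expansion), possibly at the cost of lengthening the "up" and "down" runs, which is harmless; when no valleys remain, the zig-zag is exactly "up then down", which is the claim.
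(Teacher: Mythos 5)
There are two genuine gaps here, one local and one global. Locally, the critical case of your commutation lemma is not the one you reduce to: when the expansion re-splits the letter $w_iw_{i+1}$ that the contraction just produced, it splits it into an \emph{arbitrary} new factorization $(a,b)$ with $ab=w_iw_{i+1}$, which need have no relation to the pair $(w_i,w_{i+1})$. So the real problem is to relate two different factorizations of the same element, and no amount of $3$-associativity among three consecutive letters can do that. What is actually needed is a connectedness argument along the path provided by ``products connected to the axes'': one shows (open-and-closed in the path parameter, using bi-regularity to transport small invertible factors) that there exist invertible $a_1,\dots,a_l$ with $w_i=a_l(\cdots(a_1\,s(w_i)))$ and $w_{i+1}=a_1^{-1}(\cdots(a_l^{-1}(w_iw_{i+1})))$, so that $(w_i,w_{i+1})$ can be \emph{expanded} to $(a_l,\dots,a_1,a_1^{-1},\dots,a_l^{-1},w_iw_{i+1})$; that is, a contraction can be simulated by expansions at the cost of inserting a ``block''. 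This telescoping construction is the technical heart (it is Lemma~\ref{lem:contractionintoexpansion}), and your sketch (``slide the parenthesization'', ``lift the homotopy of bracketings'') does not supply it.

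Globally, even granting your valley lemma (contraction-then-expansion replaced by expansions-then-contractions), the ``careful bookkeeping'' is not a detail: what you need is a Church--Rosser property for the expansion relation, which is non-terminating, so Newman-type arguments are unavailable and local confluence alone does not give the global statement. Concretely, removing one valley replaces it by a run of contractions that again sits in front of the later expansions of your chain, so new valleys appear; the number of valleys, and natural inversion counts, can strictly increase, and the measures you propose (descents, with length as tie-breaker) do not decrease. The paper avoids this by never interleaving: it combines the block-insertion lemma above with Lemma~\ref{lem:switchblocks} (a block can be moved past a letter using expansions only) and runs a single induction along the given chain $w^0=(g),\dots,w^n=(h)$, maintaining the invariant that a word of the form (blocks, $w^i$) is reachable from $(g)$ by expansions alone, with all contractions deferred to a final cleanup of the accumulated blocks. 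To salvage your route you would have to strengthen your local lemma to this simulation-plus-block form and adopt such an invariant, or else exhibit an actually terminating strategy for valley removal; as written, neither is present.
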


We will prove this proposition in section \ref{sec:proof-malcev}.
It has the following corollary.

\begin{corollary}
    For a bi-regular local Lie groupoid $G$ with products connected to the axes,
    the kernel of $G\to\AsCo(G)$ is precisely $\Assoc(G)$.
\end{corollary}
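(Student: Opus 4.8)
The plan is to unwind the definition of the kernel and then reduce everything to Proposition~\ref{prop:equivalence-relation}. An element $g\in G$ lies in the kernel of $G\to\AsCo(G)$ exactly when $[(g)]$ is a unit of $\AsCo(G)$, i.e.\ when $(g)\sim(x)$ for some $x\in M$. Since the source and target maps descend to $\AsCo(G)$ (via $s([w_1,\dots,w_n])=s(w_n)$, $t([w_1,\dots,w_n])=t(w_1)$), such an $x$ must satisfy $x=s(g)=t(g)$, so in fact $g\in G_x$ and the kernel condition becomes: $g\in G_x$ and $(g)\sim(x)$ with $x=s(g)=t(g)$. I would also record the translation dictionary used throughout: a full bracketing of a well-formed word $w=(w_1,\dots,w_k)$ that evaluates in $G$ is precisely a sequence of contractions taking $w$ down to a one-letter word, and conversely.

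First I would check the easy inclusion $\Assoc(G)\subseteq\ker(G\to\AsCo(G))$. If $g$ is an associator at $x$, there is a well-formed word $w=(w_1,\dots,w_k)$ that can be evaluated in $G$ both to $g$ and to $x$; by the dictionary above this gives chains of contractions $w\rightsquigarrow(g)$ and $w\rightsquigarrow(x)$, hence $(g)\sim w\sim(x)$ and $[(g)]=[(x)]$. This direction uses neither bi-regularity nor products connected to the axes.

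The content is the reverse inclusion, and here I would invoke Proposition~\ref{prop:equivalence-relation}. Let $g\in\ker(G\to\AsCo(G))$; as noted, $x:=s(g)=t(g)\in M$ and $(g)\sim(x)$. By the proposition there is a sequence of expansions from $(g)$ reaching some well-formed word $w=(w_1,\dots,w_k)$, followed by a sequence of contractions from $w$ to $(x)$. Reading the first sequence in reverse exhibits a chain of contractions from $w$ to $(g)$, so $w$ evaluates to $g$ under the corresponding bracketing; the second sequence is a bracketing of $w$ evaluating to $x$. Hence $g\in\Assoc_x(G)\subseteq\Assoc(G)$, which together with the first inclusion proves $\ker(G\to\AsCo(G))=\Assoc(G)$.

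The only real obstacle is concentrated in Proposition~\ref{prop:equivalence-relation}: it is what guarantees that the equivalence $(g)\sim(x)$, a priori witnessed by an arbitrary zig-zag of expansions and contractions of unbounded length, can always be straightened into ``all expansions, then all contractions,'' which is exactly what produces a single word evaluating to both $g$ and $x$. The hypotheses of bi-regularity and products connected to the axes enter only through that proposition; the corollary itself is then a short formal argument.
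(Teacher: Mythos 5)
Your proof is correct and follows exactly the route the paper intends: the corollary is stated as an immediate consequence of Proposition~\ref{prop:equivalence-relation}, with the easy inclusion being the observation that associators obviously die in $\AsCo(G)$, and the reverse inclusion obtained by straightening $(g)\sim(x)$ into expansions-then-contractions and reading the expansions backwards to get a single word evaluating to both $g$ and $x$. Nothing is missing; the bookkeeping that $x=s(g)=t(g)$ so $g\in G_x$ is exactly the right detail to record.
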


In that case, we have the following relations:
\begin{align*}
    & \text{$G \to \AsCo(G)$ is injective} \\
    \iff & \text{$G$ is globally associative} \\
    \Longrightarrow\; & \text{$\Assoc(G)$ is trivial (i.e.\ $\Assoc(G) = M \subset G$)}.
\end{align*}

Note that triviality of $\Assoc(G)$ does not imply that $G$ is globally associative.
(A morphism of local Lie groupoids with trivial kernel is not necessarily injective.
For a simple example, consider the morphism of Lie groups $\R \to \R/\Z$.
Its restriction to $(-0.6,0.6)$ has trivial kernel, but it is not injective.)

\begin{definition}
    We say that $\Assoc(G) \subset G$ is \emph{uniformly discrete} if
    there is an open neighborhood $U$ of $M$ in $G$ such that
    $U\cap\Assoc(G) = M$.
\end{definition}

We can now state one of our main results.
We will prove it in the next section.

\begin{maintheorem}
    If $G$ is a bi-regular local Lie groupoid with products connected to the axes,
    then $\AsCo(G)$ is
    smooth if and only if $\Assoc(G)$ is uniformly discrete in $G$
    (and in that case, $G \to \AsCo(G)$ is a local diffeomorphism).
    \label{thm:smoothness-of-asco}
\end{maintheorem}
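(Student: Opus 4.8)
The plan is to produce a smooth atlas on $\AsCo(G)$ by transporting the smooth structure of $G$ along the natural map $q : G \to \AsCo(G)$, and to show this works exactly under the uniform discreteness hypothesis. First I would use \cref{prop:equivalence-relation} (together with bi-regularity and products connected to the axes), which tells us that two single-letter words $(g)$ and $(h)$ are equivalent iff one passes from $(g)$ to $(h)$ by a sequence of expansions followed by a sequence of contractions; by the corollary to that proposition, the fibers of $q$ over $q(g)$ are exactly $\{ hg \mid h \in \Assoc_{t(g)}(G), \ (h,g)\in\U \}$ (and symmetrically on the other side), i.e.\ left/right translates of associators. Combined with bi-regularity, this shows $q$ restricted to a suitably small neighborhood of any $g\in G$ is injective precisely when no nontrivial associator lies close enough to the identity; this is where the uniform discreteness of $\Assoc(G)$ enters as the natural necessary-and-sufficient condition.

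Second, I would establish the ``if'' direction. Assume $U$ is an open neighborhood of $M$ in $G$ with $U\cap\Assoc(G)=M$. One must check that $q$ is then a local homeomorphism for the quotient topology on $\AsCo(G)$, and that the resulting charts (pushed forward from charts on $G$) have smooth transition maps. The key point is that every arrow of $\AsCo(G)$ is in the image of $q$: since $G$ is inversional (a consequence of bi-regularity and connectedness, or can be arranged by restriction) and $\AsCo(G)$ is obtained from well-formed words, any class $[w_1,\dots,w_n]$ can be rewritten — after expanding each $w_i$ into invertible pieces and repeatedly contracting — as a class of a single letter, so $q$ is onto. Then I would show that for $g, g'$ with $q(g)=q(g')$ there are neighborhoods $N\ni g$, $N'\ni g'$ and a diffeomorphism $\phi: N\to N'$ (a partially-defined translation by the associator connecting them, smooth by bi-regularity) with $q=q\circ\phi$ on $N$; gluing the charts $q\circ(\text{chart on }N)^{-1}$ gives a smooth atlas whose transition functions are these translations, hence smooth. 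The structure maps $s,t,m,u,i$ of $\AsCo(G)$ are then smooth because they are locally modeled on the corresponding maps of $G$, and juxtaposition of words is locally a multiplication in $G$. That $q=G\to\AsCo(G)$ is a local diffeomorphism is immediate from the construction of the atlas.

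Third, the ``only if'' direction: suppose $\AsCo(G)$ is smooth. Then $q: G\to\AsCo(G)$ is a morphism of local Lie groupoids inducing an isomorphism on Lie algebroids (both algebroids are $A = T^s_M G$, since the construction changes nothing infinitesimally near $M$), and $q$ is a submersion near the units; in particular $q$ is a local diffeomorphism near $M$. Hence there is an open $U\ni M$ in $G$ on which $q$ is injective. Any nontrivial associator in $U$ would be a nonunit element of $U$ mapped by $q$ to a unit, contradicting injectivity; therefore $U\cap\Assoc(G)=M$, i.e.\ $\Assoc(G)$ is uniformly discrete.

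The main obstacle I expect is the ``if'' direction's topological bookkeeping: verifying that the quotient topology on $\AsCo(G)=W(G)/{\sim}$ is Hausdorff on source-fibers and that $q$ is genuinely a covering-like local homeomorphism rather than merely a continuous open surjection. This hinges entirely on \cref{prop:equivalence-relation} — the statement that equivalence of single letters is realized by ``expansions then contractions'' — because without that normal form one cannot control which elements of $G$ get identified, and the fiber could fail to be discrete or the map could fail to be open. Once that proposition is in hand, uniform discreteness of $\Assoc(G)$ bounds the fibers away from the diagonal, and the smooth atlas falls out; so the real work is organizing the translation-by-associator maps into a coherent cocycle of chart transitions and checking they are defined on overlapping domains.
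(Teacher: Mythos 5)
There is a genuine gap in your ``if'' direction: you claim that every class $[w_1,\dots,w_n]\in\AsCo(G)$ can be rewritten as the class of a single letter, so that $q\colon G\to\AsCo(G)$ is onto and charts can simply be pushed forward from $G$. This is false. Expanding each $w_i$ into invertible pieces does not let you contract across letters, because the relevant products need not be defined in $G$; the whole point of the associative completion is that it contains genuinely new elements. A minimal example: $G=(-1,1)$ with partially defined addition is bi-regular with products connected to the axes, $\AsCo(G)\cong\R$, and $q$ is the non-surjective inclusion (the class $[0.9,0.9]$ is not the class of any single letter). Consequently your proposed atlas only covers $q(G)$ and says nothing about smoothness at the remaining points. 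The actual proof must build a chart at an \emph{arbitrary} word $(x_1,\dots,x_n)$: choose local bisections $N_i$ through the letters $x_i$, fix an index $k$, let the $k$-th letter $y$ vary in a small neighborhood $U$ of $x_k$, and adjust the other letters along the bisections (via $s_i^{-1},t_i^{-1}$) so the word stays well-formed; injectivity of $y\mapsto[y_1,\dots,y_{k-1},y,y_{k+1},\dots,y_n]$ is where uniform discreteness enters (after cancelling the outer letters one is reduced to $z=y\delta$ with $\delta$ a small associator, hence a unit), and one must then check smoothness of transitions for different bisections, different $k$, and different representatives. Your related claim that the fiber of $q$ over $q(g)$ consists of translates $hg$ with $h$ an associator is likewise only a local statement (the product need not be defined globally), though locally it is exactly what the injectivity argument uses.

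Your ``only if'' direction is closer in spirit to the paper but also under-argued: you assert that $q$ is a submersion near the units and hence a local diffeomorphism, but this needs justification from the definition of the smooth structure (as a quotient of $W(G)$). The paper gets it by a dimension count — the fibers of $G\to\AsCo(G)$ are countable, so $\dim\AsCo(G)\ge\dim G$, while $\AsCo(G)$ being a quotient of $W(G)$ (which has a component of dimension $\dim G$) gives $\dim\AsCo(G)\le\dim G$ — after which $q$ is a local diffeomorphism, $\Assoc(G)=q^{-1}(M)$ is an embedded submanifold, and uniform discreteness follows. You correctly identify the role of the expansions-then-contractions proposition and of the identification of the kernel with $\Assoc(G)$, but the chart construction at points outside the image of $G$ is the heart of the theorem and is missing from your plan.
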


(By ``$\AsCo(G)$ is smooth'', we mean, of course, that $\AsCo(G)$ has a smooth
structure as a quotient of $W(G)$, where $W(G)$ is considered as a manifold with
components of various dimensions.)
The following corollary characterizes globalizable local Lie groupoids.
The results is entirely analogous to the situation for local Lie groups.

\begin{corollary}[Mal'cev's theorem for local Lie groupoids]
    A strongly connected local Lie groupoid is globalizable if and only if it is globally associative.
    \label{cor:malcev-for-groupoids}
\end{corollary}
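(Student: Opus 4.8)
The plan is to deduce Mal'cev's theorem as a corollary of Theorem~\ref{thm:smoothness-of-asco} together with the corollary identifying $\ker(G\to\AsCo(G))$ with $\Assoc(G)$. First I would observe that a strongly connected local Lie groupoid is in particular bi-regular and inversional (bi-regularity is part of the definition of strong connectedness; inversionality follows from the preceding proposition, since $s$-connectedness plus right-regularity force every neighborhood of $M$ to generate $G$). After passing to a restriction — which by Lemma~\ref{lem:stronglyconnecteddeterminedbyrestriction} does not change the local Lie groupoid in any essential way — I may also assume $G$ has products connected to the axes and is $3$-associative. Thus all the hypotheses needed to apply Theorem~\ref{thm:smoothness-of-asco} and its corollary are in force.

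For the direction ($\Leftarrow$), suppose $G$ is globally associative. Then by the chain of implications displayed just before Definition of uniform discreteness, the map $G\to\AsCo(G)$ is injective; in particular $\Assoc(G)=M\subset G$ is trivial, hence uniformly discrete. By Theorem~\ref{thm:smoothness-of-asco}, $\AsCo(G)$ is smooth, and $G\to\AsCo(G)$ is a local diffeomorphism; being also injective, it is an open embedding onto its image, which is an open neighborhood of $M$ in $\AsCo(G)$. Since $\AsCo(G)$ is a (global) Lie groupoid, this exhibits $G$ as a restriction of an open neighborhood of the identity manifold in a Lie groupoid, i.e.\ $G$ is globalizable.

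For the direction ($\Rightarrow$), suppose $G$ is globalizable, say $G$ is a restriction of an open neighborhood $G_0$ of the identity manifold $M$ inside a Lie groupoid $\mathcal H$. Global associativity is inherited by any restriction of a globalizable structure: if $(w_1,\dots,w_k)$ is a well-formed word in $G$ all of whose $k$-fold bracketings are defined in $G$, then each such bracketing can equally be computed in $\mathcal H$, where multiplication is associative, so all bracketings agree already in $G$. Hence $G$ is globally associative. I would spell out why ``defined in $G$'' is enough here: the bracketed products lie in $G_0\subset\mathcal H$ by hypothesis, and the value of each elementary multiplication in $G$ agrees with the corresponding value in $\mathcal H$ by definition of restriction, so a straightforward induction on the bracketing tree shows the two full products coincide.

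The main obstacle I anticipate is not in the logical skeleton above — which is short — but in being careful about the restriction step: one must check that the hypotheses ``products connected to the axes'' and ``$3$-associative'' can be arranged simultaneously by a single restriction, and that doing so does not destroy strong connectedness (in particular bi-regularity and $s$-, $t$-connectedness are unaffected, since they only concern $s$, $t$, and the manifold $G$ itself, none of which change under restriction). Once that bookkeeping is in place, the argument is essentially a formal consequence of the smoothness theorem and the kernel-versus-associators corollary.
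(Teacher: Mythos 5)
Your proposal is correct and takes essentially the same route as the paper: the easy direction by computing all bracketings inside the ambient Lie groupoid, and the hard direction by restricting to get products connected to the axes, using global associativity to get injectivity of $G\to\AsCo(G)$ (hence trivial, so uniformly discrete, associators), and then applying \cref{thm:smoothness-of-asco} to embed $G$ as an open part of the Lie groupoid $\AsCo(G)$. The one place the paper is more explicit is at the end of the hard direction, where \cref{lem:stronglyconnecteddeterminedbyrestriction} is invoked to transfer the conclusion from the restricted groupoid back to the original $G$ (i.e.\ that $G$ itself, with its full multiplication domain, is a restriction of the open image in the completion); your opening remark that restriction ``does not change the groupoid in any essential way'' is exactly this point and is where that lemma should be applied.
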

\begin{proof}
    Call the local Lie groupoid $G$.
    If $G$ globalizable, it is clearly globally associative.
    If $G$ is globally associative, then restrict it to get a local Lie groupoid
    $G'$ with products connected to the axes.
    Then $G' \hookrightarrow \AsCo(G')$ is the inclusion
    of $G'$ as a restriction of an open part $U$ of a global Lie groupoid.
    (Injectivity follows from global associativity.)
    This means that $G$ and $U$ have a common restriction (namely $G'$), so that
    by strong connectedness and lemma~\ref{lem:stronglyconnecteddeterminedbyrestriction}
    the inclusion $G\hookrightarrow\AsCo(G')$ is also the inclusion of a
    restriction of $U$.
    The Lie groupoid $\AsCo(G')$ therefore globalizes $G$.
\end{proof}

Every lie algebroid can be integrated to a local Lie groupoid \cite[corollary 5.1]{crainic-fernandes-lie}.
Because the globalizability of a local Lie groupoid is related to its higher associativity,
this points to a link between associativity and integrability. The following corollary
is a first indication.

\begin{corollary}
    A Lie algebroid is integrable if and only if
    there is a local Lie groupoid integrating it that has uniformly discrete associators.
\end{corollary}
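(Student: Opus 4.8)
The plan is to read off both implications from the smoothness theorem for associative completions (\cref{thm:smoothness-of-asco}), using two soft facts: a global Lie groupoid always has an integrable Lie algebroid, and the Lie algebroid of a local Lie groupoid depends only on its germ along $M$, so it is unchanged by shrinking and by restricting.

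\emph{If $A$ is integrable}, then $A$ is the Lie algebroid of some Lie groupoid $\G$. Pick any open neighbourhood $G$ of the identity manifold $M$ inside $\G$; by the shrinking construction (the example after \cref{def:llg}) this is a local Lie groupoid, and its algebroid is again $A$ since the algebroid is germ-local along $M$. The multiplication of $G$ is a restriction of that of $\G$, so whenever a word in $G$ admits two evaluations, both equal the value of the corresponding unambiguous product computed in $\G$; hence $\Assoc(G) = M$. Trivial associators are a fortiori uniformly discrete, so $G$ is a local Lie groupoid integrating $A$ with uniformly discrete associators. (Alternatively, one could quote Mal'cev's theorem \cref{cor:malcev-for-groupoids}: a globalizable local Lie groupoid is globally associative, hence has trivial associators.)

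\emph{Conversely}, suppose $G$ is a local Lie groupoid with algebroid $A$ such that $\Assoc(G)$ is uniformly discrete, say $U \cap \Assoc(G) = M$ for an open neighbourhood $U$ of $M$. First I would shrink $G$ to an open $G_0 \subseteq U$ containing $M$, chosen small enough (and symmetric under the inversion, and inside its domain $\V$) that $G_0$ is bi-regular and inversional --- bi-regularity holds near $M$ because translation by a unit is the identity and invertibility of the relevant differential is an open condition. Shrinking leaves the algebroid equal to $A$, and since any word evaluable in $G_0$ is evaluable in $G$ with the same values, $\Assoc(G_0) \subseteq U \cap \Assoc(G) = M$, so $G_0$ has trivial associators. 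Next I would restrict $G_0$ to a local Lie groupoid $G'$ with products connected to the axes; restriction shrinks the set of evaluable words (hence keeps the associators trivial) and does not change the algebroid. Now $G'$ meets the hypotheses of \cref{thm:smoothness-of-asco} and has uniformly discrete associators, so $\AsCo(G')$ is a smooth (Lie) groupoid and $G' \to \AsCo(G')$ is a local diffeomorphism. This map is a morphism of local Lie groupoids that is the identity on $M$ and a local diffeomorphism near $M$, so it induces an isomorphism of Lie algebroids; thus the algebroid of the Lie groupoid $\AsCo(G')$ is $A$, proving $A$ integrable.

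\emph{Main obstacle.} The genuinely delicate point --- and essentially the only one --- is the bookkeeping in the converse: I must check that the two adjustments needed to invoke \cref{thm:smoothness-of-asco} (shrinking into $U$ for bi-regularity and inversionality, and restricting for products connected to the axes) simultaneously preserve the algebroid and do not manufacture new associators. Both hold because the algebroid is a germ invariant along $M$ and because shrinking inside $U$ and restricting can only make the associator set smaller; once this is spelled out, the corollary follows formally, all of the substance being already packaged in \cref{thm:smoothness-of-asco}, which we may assume.
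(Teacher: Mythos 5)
Your proof is correct, and both directions follow the paper's strategy: the forward direction is immediate from a global integration, and the converse shrinks the local groupoid so that the (uniformly discrete) associators become trivial and then invokes the machinery of \cref{sec:asco}. The one genuine difference is how you close the converse. The paper's own proof concludes that the shrunk, strongly connected local groupoid with trivial associators "is therefore globalizable" and hence yields a global integration; strictly speaking this leans on Mal'cev's criterion (\cref{cor:malcev-for-groupoids}), which needs global associativity --- and, as the paper itself warns, trivial associators do not imply global associativity, so that step is stated more strongly than it is justified (and more strongly than needed). You instead apply \cref{thm:smoothness-of-asco} directly: after arranging bi-regularity, inversionality and products connected to the axes, uniform discreteness gives that $\AsCo(G')$ is smooth and $G'\to\AsCo(G')$ is a local diffeomorphism, so $\AsCo(G')$ is a Lie groupoid whose algebroid is $A$, without ever claiming $G'$ itself globalizes. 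This is a modest but real improvement in rigor, and your explicit bookkeeping --- that shrinking and restricting preserve the algebroid (a germ invariant along $M$) and can only shrink the associator set, and that bi-regularity holds on a neighborhood of $M$ because the relevant differentials are invertible at units --- fills in exactly the points the paper's two-line argument glosses over.
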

\begin{proof}
    Suppose a Lie algebroid is integrable. Then by definition,
    there is a Lie groupoid integrating it, and this is a local Lie groupoid with uniformly discrete
    (indeed, trivial) associators.

    Suppose that a Lie algebroid has a local Lie groupoid integrating it, with uniformly discrete associators.
    Then by taking an open neighborhood of the units in this local Lie groupoid,
    small enough to intersect the set of associators only in identities, we can get
    a strongly connected local Lie groupoid integrating the Lie algebroid with trivial associators.
    This local Lie groupoid is therefore globalizable, and we can conclude that there exists a
    Lie groupoid integrating our Lie algebroid.
\end{proof}

\section{Proof of Mal'cev's theorem}
\label{sec:proof-malcev}

We now prove Mal'cev's theorem by establishing theorem~\ref{thm:smoothness-of-asco}.
Our first step is to prove proposition~\ref{prop:equivalence-relation},
which in turn requires some lemmas.
They will be about well-formed words on $G$.
Let us call any (sub)word of the form $(a_1,...,a_k,a_k^{-1},...,a_1^{-1})$ a \emph{block}.

\begin{lemma}
    Let $G$ be a strongly connected local Lie groupoid
    with products connected to the axes.
    Let $w$ and $w'$ be well-formed words on $G$.
    Suppose that $w'$ is a contraction of $w$. Then there is a series of expansions starting
    at $w$ and ending at a word that equals $w'$ with a block inserted at some position.
    \label{lem:contractionintoexpansion}
\end{lemma}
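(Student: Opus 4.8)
The plan is to set up the obvious notation and then argue by local connectedness of the multiplication domain, converting a single contraction into a loop of multiplications that can be ``absorbed'' into a block. Write $w=(w_1,\ldots,w_k,w_{k+1},\ldots,w_n)$ and let the contraction merge positions $k$ and $k+1$, so $w'=(w_1,\ldots,w_kw_{k+1},\ldots,w_n)$ with $(w_k,w_{k+1})\in\U$. The target word is $w'$ with a block $(a_1,\ldots,a_\ell,a_\ell^{-1},\ldots,a_1^{-1})$ inserted somewhere; the point of the block is that it is ``invisible'' up to equivalence (it contracts away to a unit), so morally we want to show that the \emph{expansion} $w\leadsto w'$ can be reversed through a path that only ever \emph{expands}. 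First I would reduce to the purely local statement: everything happens in the factors $w_k$ and $w_{k+1}$ and their product, so we may as well work with a two-letter word $(g,h)$ with $p:=gh$ defined, and show that there is a sequence of expansions from $(g,h)$ to a word that equals $(p)$ with a block inserted.

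The key mechanism is the hypothesis that $G$ has products connected to the axes: for the pair $(g,h)\in\U$ there is (say) a path $\gamma$ from $t(h)=s(g)$ to $h$ with $(g,\gamma(\tau))\in\U$ for all $\tau$ (the other case is symmetric). Along this path the product $g\gamma(\tau)$ traces a path in $G$ from $g\cdot t(h)=g$ to $g\cdot h=p$. Now I would chop $\gamma$ into finitely many short subarcs, each small enough that the three relevant products in a local $3$-associativity neighborhood (using the restriction to $3$-associative multiplication guaranteed by Proposition~\ref{prop:equivalence-relation}'s hypotheses) are all defined; passing $(g,\gamma(\tau_j))$ to $(g,\gamma(\tau_{j+1}))$ can then be realized by an expansion followed by a contraction, and we want to arrange, by bi-regularity, that each such elementary move is an \emph{expansion} inserting a cancelling pair $(b_j, b_j^{-1})$ into the first slot. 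Concretely, the move replaces the letter $g$ by $(b_j g')$ where $b_j$ is invertible and $g'=b_j^{-1}g$ still multiplies against the relevant arc; iterating builds up a block $(b_m\cdots b_1,\ldots)$ whose inner cancellations reproduce $g$. Using left/right regularity one checks these moves stay inside $\U$, and the endpoint is $(p)$ with the accumulated block sitting in front of it.

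The main obstacle I expect is the bookkeeping of \emph{which slot} the block lands in and ensuring that every elementary move in the chain is genuinely an expansion rather than a mixed expansion–contraction step: naively subdividing $\gamma$ gives a word related to the next by a zig-zag, and one must cleverly insert the cancelling pairs so that only the expansion half is ever needed, deferring all contractions to the very end (where they are allowed by the statement — we only promised ``$w'$ with a block inserted'', not $w'$ itself). This is exactly the point where bi-regularity does the work: right-regularity lets us solve $g=b\cdot g'$ for an invertible $b$ near the identity with $g'$ close to $g$, so each subdivision step of $\gamma$ corresponds to a genuine expansion $(g,\star)\leadsto(b,g',\star)$, and the compatibility of all these solved equations along the path is what keeps the intermediate words well-formed and in the domain. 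I would organize the induction on the number of subdivision points, with the inductive hypothesis carrying along the partially-built block, and close the argument by noting the total block contracts back to a unit, so the final word is $w'$ with a block inserted, as required.
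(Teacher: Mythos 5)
Your overall strategy is the paper's: use the path supplied by ``products connected to the axes'', traverse it in small invertible steps obtained from bi-regularity, and pay for the contraction with a block built out of those steps and their inverses. (The paper extracts the finite sequence of steps by an open--closed argument in the path parameter rather than by compactness and subdivision; that difference is inessential.) The genuine gap is in the mechanism you propose for the word moves. Your elementary move replaces the letter $g$ by $(b_j,\,b_j^{-1}g)$, i.e.\ it only refactors $g$; iterating it never involves $h$ at all and never produces the letter $p=gh$, so the process cannot terminate at ``$(p)$ with a block inserted''. What must actually happen is that \emph{both} letters are expanded: one letter is expanded into the string of small steps $a_1,\dots,a_l$ accumulated along the path, and the \emph{other} letter is expanded into the inverses of those steps, in reverse order, followed by the product itself --- in the paper's case this rests on the factorization $x_{k+1}=a_1^{-1}(\cdots(a_l^{-1}(x_kx_{k+1})))$ with every partial product defined. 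That factorization is a \emph{second} condition which has to be propagated along the path simultaneously with the first one, $\gamma(\tau)=a_l(\cdots(a_1(s(x_k))))$; the paper's connectedness argument is run for the pair of conditions at once, and it is exactly this pairing that makes the final expansions of the second letter legal. Your sketch never states this condition (it is hidden in the phrase ``compatibility of all these solved equations''), and without it the induction you describe has nothing to close on: you can pad $g$ with cancelling pairs indefinitely without ever reaching a word containing $gh$.

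Two smaller points. First, comparing $(g,\gamma(\tau_j))$ with $(g,\gamma(\tau_{j+1}))$ ``by an expansion followed by a contraction'' is a red herring: these intermediate words never need to be formed, and intermediate contractions are precisely what the lemma forbids; the correct argument does all bookkeeping along the path first and only then performs expansions on the original word. Second, $3$-associativity is neither a hypothesis nor needed: the factorizations above come with explicit bracketings, and each expansion splits one letter according to that bracketing, so no re-association is ever used; in particular there is no need to pass to a $3$-associative restriction (and such a restriction is not part of the hypotheses of the proposition you cite).
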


\begin{lemma}
    Let $G$ be a strongly connected local Lie groupoid.
    If $z\in G$, a well-formed word of the form $(z,\text{block})$ can be transformed
    using expansions only into
    a word of the form $(\text{some other block},z)$.
    \label{lem:switchblocks}
\end{lemma}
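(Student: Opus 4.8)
We are asked to prove Lemma~\ref{lem:switchblocks}: if $z \in G$ (a strongly connected local Lie groupoid) and $(z, a_1, \dots, a_k, a_k^{-1}, \dots, a_1^{-1})$ is a well-formed word, then by expansions alone it can be transformed into a word $(b_1, \dots, b_\ell, b_\ell^{-1}, \dots, b_1^{-1}, z)$ — that is, a block followed by $z$.

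\textbf{Setup and strategy.} Write $\text{block} = (a_1,\dots,a_k,a_k^{-1},\dots,a_1^{-1})$; this word is a loop based at $x := s(z)$, so the word $(z,a_1,\dots,a_1^{-1})$ has source $x$. The plan is to first dispose of the case where $z$ is invertible by an explicit two-expansion move, and then reduce the general case to that one by peeling invertible factors off $z$. Strong connectedness is used only through the generation proposition above: since $G$ is right-regular and $s$-connected, every neighbourhood of $M$ generates $G$, so (taking a neighbourhood of invertible elements) $G$ is inversional, i.e. $z$ is a well-formed product $z = z_1 z_2\cdots z_n$ of invertible elements; I will take this product right-associated, $z = z_1(z_2(\cdots z_n))$.

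\textbf{The invertible case.} Suppose $z\in\V$. Then $z^{-1}=i(z)\in\V$ and $(z^{-1},z)\in\V\timesst\V\subset\U$, with $z^{-1}z=u(s(z))=u(x)$. Starting from $(z,a_1,\dots,a_k,a_k^{-1},\dots,a_1^{-1})$, expand the trailing letter $a_1^{-1}$ into $(a_1^{-1},u(x))$ — legitimate by the unit axiom, since $s(a_1^{-1})=x$ — and then expand $u(x)$ into $(z^{-1},z)$ — legitimate by the inversion axiom. Using expansions only, this produces
\[ (z,a_1,\dots,a_k,a_k^{-1},\dots,a_1^{-1}) \;\longrightarrow\; (z,a_1,\dots,a_k,a_k^{-1},\dots,a_1^{-1},z^{-1},z), \]
and the prefix $(z,a_1,\dots,a_k,a_k^{-1},\dots,a_1^{-1},z^{-1})$ is again a block (it has the required inverse-palindromic shape with outermost letters $z$ and $z^{-1}$), based at $t(z)$. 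So the word has the form $(\text{block}',z)$. The identical move works with $z$ replaced by any invertible $g$ and $\text{block}$ replaced by any block $B$: $(g,B)\to_{\mathrm{exp}}(gBg^{-1},g)$ where $(g,B,g^{-1})$ is a block — one only needs to check that inserting $(g^{-1},g)$ right after the last letter of $B$ keeps the word well-formed and stays inside $\U$, which it does since $\V\timesst\V\subset\U$.

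\textbf{The general case.} Expand $z$, then the piece $z_2\cdots z_n$, and so on, turning $(z,\text{block})$ into $(z_1,z_2,\dots,z_n,\text{block})$. Now push the block leftward past the $z_i$ one at a time, each step being the invertible-case move above applied to a suffix $(z_i,B)$ of the current word: after $n$ steps one is left with $(B',z_1,z_2,\dots,z_n)$, where $B'$ is a block based at $t(z)$. When $z$ itself is invertible one has $n=1$ and the trailing letter is literally $z$, giving the statement as written; for non-invertible $z$ the trailing data is the valid product $z_1\cdots z_n$, which is the form actually used in the surrounding argument (where a subsequent round of contractions reassembles it) — this is the one point where the literal statement should be read with the trailing ``$z$'' standing for a word multiplying to $z$.

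\textbf{Main obstacle.} No single computation is hard; the work is bookkeeping — verifying at every expansion that well-formedness is preserved, that each newly wrapped word $(g,B,g^{-1})$ genuinely has block shape (so the induction stays within the class of blocks), and keeping honest track of the fact that expansions alone cannot re-merge the factors $z_1,\dots,z_n$, so the cleanest fully rigorous statement is ``a block followed by a valid product expanding $z$,'' the single-letter conclusion being exactly the invertible case.
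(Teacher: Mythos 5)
Your invertible case is correct, and it is a genuinely slicker move than what the paper does there: expanding the block's last letter $a_1^{-1}$ by the unit $u(x)$ and then expanding that unit into $(z^{-1},z)$ produces $(\text{block}',z)$ in two expansions. The problem is the general case, and it is a real gap, not a matter of phrasing. Your reduction expands $z$ into invertible factors $z_1,\dots,z_n$ and, after pushing the block leftwards, ends at a word of the form $(\text{block},z_1,\dots,z_n)$. Since only expansions are permitted, the factors can never be re-merged, so the word does not have the form $(\text{block},z)$ and the lemma as stated is not proved. Your fallback — that the weaker conclusion ``block followed by a word multiplying to $z$'' is what the surrounding argument actually uses — is not correct. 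In the proof of Proposition~\ref{prop:equivalence-relation} the lemma is invoked precisely to preserve the invariant that $u^i$ equals $w^i$ preceded by blocks: the letters of $w^i$ must survive \emph{literally}, because the subsequent induction steps mirror expansions of $w^{i}$ letter by letter and apply Lemma~\ref{lem:contractionintoexpansion} to the two literal adjacent letters being contracted; if a letter of $w^i$ has been replaced by a string of invertibles, neither step applies, and the contraction phase that could reassemble it is only allowed once, at the very end. So accepting your weaker statement would force a nontrivial reworking of that proposition's induction, which you have not supplied.

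The paper's proof avoids the issue by never moving $z$ at all. Using $s$-connectedness it takes a path in $s^{-1}(s(z))$ from $z$ to the block's last letter $x_1^{-1}$, and an open-closed argument along this path produces invertible elements $a_1,\dots,a_k$ satisfying \emph{simultaneously} $z=a_1(\cdots(a_k(x_1^{-1})))$ and $x_1^{-1}=a_k^{-1}(\cdots(a_1^{-1}(z)))$. Expanding the leading $z$ via the first identity and the trailing $x_1^{-1}$ via the second yields a word whose last letter is the literal $z$, and after inserting one more unit and a cancelling pair everything preceding it is a single block. The essential point your argument misses is that for non-invertible $z$ there is in general no single letter $y$ with $yz=u(x)$, so the trailing $z$ has to be manufactured by expanding the block's last letter through a chain of invertibles connecting it to $z$ inside the source fiber — which is exactly where strong connectedness enters in the paper, and which your factor-peeling strategy cannot replace.
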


We postpone the proofs of these lemmas to the end of this section.

\begin{proof}[Proof of proposition~\ref{prop:equivalence-relation}]
    Suppose $g\sim h$. Then there is some sequence of elementary equivalences,
    that will transform the word $(g)$ into the word $(h)$ (expansions and contractions
    may show up mixed, in any order).
    Let $w^0 = (g)$, $w^1$, ..., $w^{n-1}$, $w^n = (h)$ be the sequence of words connecting $(g)$ to $(h)$.
    This means that each $w^{i+1}$ is either an expansion or a contraction of $w^i$.

    The plan is now as follows: we will build up a sequence $u^0, ..., u^n$ starting at $u^0 = (g)$,
    where each $u^{i+1}$ is obtained from $u^i$ by a series of expansions, and $u^i$ equals
    $w^i$ preceded by zero or more blocks.
    Then $u^n$ is of the form $(\text{zero or more blocks}, h)$, and by a series of contractions we can
    get rid of the blocks and transform this into $(h)$. This shows that there is a sequence of expansions,
    followed by contractions, connecting $(g)$ to $(h)$.

    To build up the sequence $u^i$, set $u^0 = w^0$. Once $u^i$ has been defined, we obtain $u^{i+1}$
    as follows. If $w^{i+1}$ is an expansion of $w^i$, then $u^{i+1}$ is obtained from $u^i$ by the
    analogous expansion (leaving the blocks in front alone).
    If $w^{i+1}$ is a contraction of $w^i$, the procedure is more complicated. We will make use of the
    lemmas above.

    To find $u^{i+1}$, we first apply lemma \ref{lem:contractionintoexpansion} to the contraction
    of $u^i$ corresponding to the contraction from $w^i$ to $w^{i+1}$ (leaving the blocks in front alone).
    We obtain a word of the form
    \[ (\text{blocks},\text{letters of $w^{i+1}$},\text{block}, \text{letters of $w^{i+1}$}) .\]
    Using lemma \ref{lem:switchblocks} we can move the block that has appeared in $w^{i+1}$ to the front
    using expansions, obtaining a word of the form
    \[ (\text{blocks},w^{i+1}) .\]
    This then is the word we take for $u^{i+1}$.
    This shows how to build up the required sequence $u^0, ..., u^n$, proving the result.
\end{proof}

We are now ready to prove the theorem.

\begin{proof}[Proof of theorem \ref{thm:smoothness-of-asco}]
    Suppose first that $\AsCo(G)$ is smooth (with smooth structure determined
    by the submersion $W(G) \to \AsCo(G)$).
    Note that the fibers of $G \to \AsCo(G)$ are countable,
    so that $\dim(\AsCo(G)) \geq \dim(G)$. Because $W(G)$ has a component
    of dimension $\dim(G)$ and $\AsCo(G)$ is a quotient of $W(G)$,
    we must have $\dim(\AsCo(G)) \leq \dim(G)$, and therefore
    $\dim(\AsCo(G)) = \dim(G)$ (this is of course the only reasonable dimension
    to expect).
    The map $G \to \AsCo(G)$ is therefore a local diffeomorphism.
    The inverse image of $M \subset \AsCo(G)$ under $G \to \AsCo(G)$,
    which is precisely $\Assoc(G)$, is therefore
    an embedded submanifold of $G$. This shows that $\Assoc(G)$ is uniformly discrete
    in $G$.

    Suppose now that $\Assoc(G)$ is uniformly discrete.
    We will prove that $\AsCo(G)$ is smooth.
    Given an element $h\in\AsCo(G)$, represented by a word $(x_1,...,x_n)$, we will construct
    a chart near $h$ modeled on an open part of $G$.
    For each $x_i$, take a small submanifold $N_i \subset G$ of the same dimension as $G$
    through $x_i$ that is
    transverse to both source and target fibers (i.e.\ a local bisection of the local Lie groupoid near $x_i$).
    Then $s$ and $t$ define diffeomorphisms between the $N_i$ and open subsets of $M$.
    Write $s_i = s{\restriction_{N_i}}$ and $t_i = t{\restriction_{N_i}}$ for these diffeomorphisms.

    Let $k\in\{1,...,n\}$.
    Let $U$ be a small neighborhood of $x_k$.
    Then the chart we use near $h$ is defined as
    \[ \varphi : U \to \AsCo(G) : y \mapsto [y_1,...,y_{k-1},y,y_{k+1},...,y_n] \]
    where
    \begin{align*}
        y_{k+1} &= t_{k+1}^{-1}(s(y)) \\
        y_{k+2} &= t_{k+2}^{-1}(s(y_{k+1})) \\
        &\cdots \\
        y_n &= t_n^{-1}(s(y_{n-1}))
    \end{align*}
    and
    \begin{align*}
        y_{k-1} &= s_{k-1}^{-1}(t(y)) \\
        y_{k-2} &= s_{k-2}^{-1}(t(y_{k-1})) \\
                 &\cdots \\
        y_1 &= s_1^{-1}(t(y_2)) .
    \end{align*}
    If we pick $U$ sufficiently small, this is well-defined.
    Note that it maps $x_k$ to $h$.
    We claim that $\varphi$ is injective, if we pick $U$ small enough
    (this is where we need the uniform discreteness).
    Indeed, suppose that $\varphi(y) = \varphi(z)$.
    Write the representatives of $\varphi(y)$ and $\varphi(z)$ as given above as
    \[ (y_1,...,y_{k-1},y,y_{k+1},...,y_n) \quad\text{and}\quad (z_1,...,z_{k-1},z,z_{k+1},...,z_n) .\]
    Because $\varphi(y) = \varphi(z)$, we must have $s(\varphi(y)) = s(\varphi(z))$ and
    therefore $s(y_n) = s(z_n)$.
    But then $y_n$ and $z_n$ are both on $N_n$, and have the same source. Therefore, $y_n = z_n$.
    This implies that $s(y_{n-1}) = s(z_{n-1})$.
    But then $y_{n-1}$ and $z_{n-1}$ are both on $N_{n-1}$, and have the same source.
    Therefore, $y_{n-1} = z_{n-1}$.
    Continuing this way, we find that $y_i = z_i$ for $i > k$.
    The same argument applied starting from the left and using the target map shows that
    $y_i = z_i$ for $i<k$.
    We conclude that
    \[ [y_1,...,y_{k-1},y,y_{k+1},...,y_n] = \varphi(y) = \varphi(z) = [y_1,...,y_{k-1},z,y_{k+1},...,y_n] .\]
    Multiplying both sides by $[y_1,...,y_{k-1}]^{-1}$ (from the left) and by $[y_{k+1},...,y_n]^{-1}$ (from the
    right) shows that $[y] = [z]$.
    If $B$ is a neighborhood of $M$ in $G$ such that $B\cap \Assoc(G) = M$, then for $U$ small enough,
    we must have $z=y\delta$ for $\delta$ in $B$.
    But then $[\delta]$ must be trivial in $\AsCo(G)$, so that it is an associator,
    implying that $\delta \in M$. We conclude that $y=z$.

    Let us check smoothness of the transition maps.
    Given a point in $\AsCo(G)$, a chart is determined by
    \begin{itemize}
        \item a choice of representative,
        \item a choice of $k$,
        \item a choice of local bisections.
    \end{itemize}
    We will show smoothness of the following transitions:
    \begin{itemize}
        \item for a fixed representative and index $k$, the transition between charts coming
            from different choices of local bisections,
        \item for a fixed representative, the transition between charts for two different
            choices of $k$,
        \item the transition between charts constructed using different representatives of
            the same element of $\AsCo(G)$.
    \end{itemize}
    Together, these show that all the transitions are smooth.

    Let us start by checking the first kind of transition map.
    Suppose $(x_1,...,x_n)$ represents $h\in\AsCo(G)$. Pick $k\in\{1,...,n\}$.
    Making one choice of local bisections $N_i$ leads to a chart
    \[ \varphi : y \mapsto [y_1,...,y_{k-1},y,y_{k+1},...,y_n] .\]
    Note that the $y_i$ depend smoothly on $y$.
    Now make a different choice of local bisections $N_i'$, leading to
    a second chart.
    We will calculate the transition map between the two charts.
    Suppose $y$ is near $x_k$.
    Let $z_n$ be the point on $N_n'$ with source $s(y_n) = s(h)$.
    Then
    \[ z_n = \eps_n y_n \]
    for some invertible $\eps_n$ near $t(y_n)$ (on the condition that
    $y$ is sufficiently close to $x_k$).
    Note that $\eps_n$ depends smoothly on $y$, and $\eps_n = t(y_n)$ if $y = x_k$.
    The product $y_{n-1}\eps_n^{-1}$ is defined for $y$ close enough to $x_k$.
    Let $z_{n-1}$ be the point on $N_{n-1}'$ with source $t(z_n)$.
    Then
    \[ z_{n-1} = \eps_{n-1} (y_{n-1} \eps_n^{-1}) \]
    for some invertible $\eps_{n-1}$ near $t(y_{n-1})$ (on the condition that
    $y$ is sufficiently close to $x_k$).
    Note that $\eps_{n-1}$ depends smoothly on $y$, and $\eps_{n-1} = t(y_{n-1})$
    if $y = x_k$.
    Continuing this way, we construct a sequence $z_n, ..., z_{k+1}$ on the $N_i'$ and
    $\eps_n, ..., \eps_{k+1}$
    such that
    \[ z_i = \eps_i(y_i\eps_{i+1}^{-1}) ,\]
    where we will set $\eps_{n+1} = s(y_n)$ for ease of notation.
    All the $\eps_i$ depend smoothly on $y$ (we may have to restrict $y$ to be in a
    smaller and smaller neighborhood of $x_k$).
    Similarly, we start from the left and construct a sequence $z_1, ..., z_{k-1}$
    on the $N_i'$ and $\eps_1, ..., \eps_{k-1}$ such that
    \[ z_i = (\eps_{i-1}^{-1}y_i)\eps_i \]
    with all the $\eps_i$ depending smoothly on $y$.
    Note now that
    \begin{align*}
        \varphi(y) &= [y_1,...,y_{k-1},y,y_{k+1},...,y_n] \\
                   &= [y_1,\eps_1,...,\eps_{i-2}^{-1},y_{k-1},\eps_{k-1},y,\eps_{k+1}, y_{k+1},\eps_{k+2}^{-1},...,\eps_n,y_n] \\
                   &= [z_1,...,z_{k-1},(\eps_{k-1}y\eps_{k+1}),z_{k+1},...,z_n] .
    \end{align*}
    where the product in the middle will be defined if $y$ is sufficiently close to $x_k$.
    This shows that the transition map between our two charts is given by
    \[ y \mapsto \eps_{k-1}y\eps_{k+1} ,\]
    which is smooth in $y$ (because the $\eps_i$ are).
    This proves smoothness of a transition of the first kind.
    
    Let us now check smoothness of a transition between charts for the same
    representative, but different choices of $k$. This is much easier.
    We may assume that the two choices for $k$ differ by 1,
    so notation-wise we will look at the chart associated to the representative
    $(x_1,...,x_n)$ for index $k$ and index $k+1$.
    Again, write
    \[ y \mapsto [y_1,...,y,y_{k+1},...y_n] \]
    for the chart associated to index $k$.
    Suppose that $y$ is near $x_k$.
    Let $y_k$ be the point on $N_k$ with target $t(y)$.
    This point depends smoothly on $y$.
    For $y$ sufficiently close to $x_k$, we may write
    $y_k = y\eps$
    where $\eps$ is near $s(x_k)$ and depends smoothly on $y$.
    Then
    \begin{align*}
        [y_1,...,y,y_{k+1},...y_n] &= [y_1,...,y,\eps,\eps^{-1},y_{k+1},...,y_n] \\
                                   &= [y_1,...,y_k,(\eps^{-1}y_{k+1}),...,y_n]
    \end{align*}
    for $y$ sufficiently close to $x_k$.
    This shows that the transition map is $y\mapsto \eps^{-1}y_{k+1}$, which is smooth.

    We now check the smoothness of the third and final type of transition,
    that between different representatives of the same point.
    This type is the easiest to check.
    Indeed, we only need to check smoothness for the transition for a single contraction
    or expansion, but that is clear from smoothness of $\G$'s multiplication.

    Note, finally, that $\AsCo(G)$ is second-countable because $W(\G)$ is.
    We have proved that $\AsCo(G)$ is smooth.
\end{proof}

\subsection*{Proofs of the lemmas}

Finally, we give the proofs of the lemmas.

\begin{proof}[Proof of lemma \ref{lem:contractionintoexpansion}]
    Suppose that
    \[ w = (x_1,...,x_k,x_{k+1},...,x_n) \quad\text{and}\quad w' = (x_1,...,,x_kx_{k+1},...,x_n) .\]
    Because $G$ has products connected to the axes,
    there is either a curve in $G$ from $t(x_{k+1})$ to $x_k$ in $s^{-1}(t(x_{k+1}))$
    that can be right-translated by $x_{k+1}$, or a curve from $s(x_k)$ to $x_{k+1}$ in
    $t^{-1}(s(x_k))$ that can be left-translated by $x_k$.
    We will assume the former, and show that $w$ can be expanded to a word of the form
    \[ (x_1,...,\text{block},x_kx_{k+1},...,x_n) .\]
    The argument for the latter case is entirely similar, ending with a word of the form
    \[ (x_1,...,x_kx_{k+1},\text{block},...,x_n) .\]
    Let $\gamma$ be such a curve in $s^{-1}(t(x_{k+1}))$ from $t(x_{k+1})$ to $x_k$
    and let $\gamma'$ be the curve obtained from $\gamma$ by right-translating by $x_{k+1}$.

    Let $T \subset [0,1]$ be the set of all $\tau\in[0,1]$ such that
    there is a sequence $a_1, ..., a_l \in G$ such that
    \[ \gamma(\tau) = a_l(\cdots(a_1(s(x_k)))) \]
    and
    \[ x_{k+1} = a_1^{-1}(\cdots(a_l^{-1}(\gamma'(\tau)))) \]
    are defined and true. We claim that $T = [0,1]$.
    We will prove this by showing that $T\subset[0,1]$ is both open and closed (clearly it is
    nonempty, because $0\in T$).

    We first prove that $T$ is open. Pick $\tau_0\in T$.
    Let $a_1,...,a_l$ be a sequence such that $\gamma(\tau_0) = a_1(\cdots(a_l(s(x_k))))$
    and
    $x_{k+1} = a_1^{-1}(\cdots(a_l^{-1}(\gamma'(\tau_0))))$.
    Now $t(\gamma(\tau_0))$ has a neighborhood in $s^{-1}(t(\gamma(\tau_0)))$
    of invertible elements
    such that
    for every $g$ in this neighborhood we have
    \[ (g,\gamma(\tau_0)) \in \U \text{ and } (g^{-1}, g\gamma'(\tau_0)) \in \U .\]
    Then by bi-regularity of $G$, for all $\tau$ close enough to $\tau_0$ we can write
    \[ \gamma(\tau) = g\gamma(\tau_0) = g(a_l(\cdots(a_1(s(x_k))))) \]
    and
    \[ x_{k+1} = a_1^{-1}(\cdots(a_l^{-1}(g^{-1}(\gamma'(\tau))))) .\]
    This shows that $T$ is open.
    
    We now show that $T$ is closed.
    Pick $\tau_0 \in \partial T$.
    Now $t(\gamma(\tau_0))$ has a neighborhood in $t^{-1}(t(\gamma(\tau_0)))$
    of invertible elements
    such that
    for every $g$ in this neighborhood we have
    \[ (g^{-1}, \gamma'(\tau_0)) \in \U \text{ and } (g^{-1}, \gamma(\tau_0)) \in \U \text{ and } (g,g^{-1}\gamma(\tau_0)) \in \U .\]
    Using bi-regularity of $G$ and picking $\tau \in T$ sufficiently close to $\tau_0$ we can find a $g$
    in this neighborhood
    such that $\gamma(\tau) = g^{-1}\gamma(\tau_0)$. If $a_1,...,a_l$ are such that
    \[ \gamma(\tau) = a_l(\cdots(a_1(s(x_k)))) \]
    and
    \[ x_{k+1} = a_1^{-1}(\cdots(a_l^{-1}(\gamma'(\tau)))) ,\]
    then
    \[ \gamma(\tau_0) = g(a_l(\cdots(a_1(s(x_k))))) \]
    and
    \[ x_{k+1} = a_1^{-1}(\cdots(a_l^{-1}(g^{-1}(\gamma'(\tau_0))))) .\]
    This shows that $\tau_0 \in T$, proving that $T$ is closed.
    Therefore, $T=[0,1]$.

    We have proved that $1\in T$, so we can find a sequence $a_1, ..., a_l$ such that
    \[ x_k = a_l(\cdots(a_1(s(x_k)))) = a_l(\cdots(a_1)) \]
    and
    \[ x_{k+1} = a_1^{-1}(\cdots(a_l^{-1}(\gamma'(1)))) = a_1^{-1}(\cdots(a_l^{-1}(x_kx_{k+1}))) .\]
    This shows that we can expand
    \[ (x_1,...,x_k,x_{k+1},...,x_n) \]
    into
    \[ (x_1,...,a_l,...,a_1,a_1^{-1},...,a_l^{-1},x_kx_{k+1},...,x_n) ,\]
    proving the lemma.
\end{proof}

\begin{proof}[Proof of lemma \ref{lem:switchblocks}]
    Suppose the initial word is $(z,x_1,...,x_n,x_n^{-1},...,x_1^{-1})$.
    Note that $z$ and $x_1^{-1}$ are in the same source fiber.
    By source-connectedness, we can take a path $\gamma$ in $s^{-1}(s(z))$
    from $z$ to $x_1^{-1}$.
    Let $T\subset [0,1]$ consist of all $\tau\in[0,1]$ for which there is a sequence
    $a_1,...,a_k$ such that
    \[ \gamma(\tau) = a_k^{-1}(\cdots(a_1^{-1}(z))) \text{ and } z = a_1(\cdots(a_k(\gamma(\tau)))) .\]
    We will show that $T \subset [0,1]$ is both open and closed, and so $T = [0,1]$ (clearly
    it is non-empty, because $0\in T$).

    Let $\tau_0 \in T$. Let $a_1,...,a_k$ be a sequence as above.
    Then there is an open neighborhood of $t(\gamma(\tau_0))$ in $s^{-1}(t(\gamma(\tau_0)))$
    consisting of invertible elements
    such that for each $g$ in this neighborhood we have
    \[ (g^{-1},\gamma(\tau_0)) \in \U \text{ and } (g,g^{-1}\gamma(\tau)) \in \U .\]
    Therefore, using bi-regularity of $G$, for $\tau$ sufficiently close to $\tau_0$, we can find an element $g$
    such that
    \[ \gamma(\tau) = g^{-1}(a_k^{-1}(\cdots(a_1^{-1}(z)))) \text{ and } z = a_1(\cdots(a_k(g(\gamma(\tau))))) .\]
    This proves that $T$ is open.

    Now let $\tau_0 \in \partial T$.
    Then there is a neighborhood of $t(\gamma(\tau_0))$ in $t^{-1}(t(\gamma(\tau_0)))$
    of invertible elements such that for every $g$ in this neighborhood we have
    \[ (g^{-1},\gamma(\tau_0)) \in \U \text{ and } (g,g^{-1}\gamma(\tau_0)) \in \U .\]
    Using bi-regularity of $G$ and picking $\tau \in T$ sufficiently close to $\tau_0$
    we can find a $g$ in this neighborhood
    such that $\gamma(\tau) = g^{-1}\gamma(\tau_0)$. If $a_1,...,a_k$ are such that
    \[ \gamma(\tau) = a_k^{-1}(\cdots(a_1^{-1}(z))) \text{ and } z = a_1(\cdots(a_k(\gamma(\tau)))) ,\]
    then
    \[ \gamma(\tau_0) = g(a_k^{-1}(\cdots(a_1^{-1}(z)))) \text{ and } z = a_1(\cdots(a_k(g^{-1}(\gamma(\tau_0))))) .\]
    This shows that $\tau_0 \in T$.
    Therefore, $T$ is also closed. We conclude that $T = [0,1]$.
    
    Since $1\in T$, we can find a sequence $a_1,...,a_k \in G$ such that
    \[ x_1^{-1} = a_k^{-1}(\cdots(a_1^{-1}(z))) \]
    and
    \[ z = a_1(\cdots(a_k(x_1^{-1}))) .\]
    This means we can expand the initial word into
    \[ (a_1,..., a_k,x_1^{-1},x_1,x_2,...,x_n,x_n^{-1},...,x_2^{-1},a_k^{-1},...,a_1^{-1},z) .\]
    We can expand this further into
    \[ (a_1,..., a_k,x_1^{-1},x_1,x_2,...,x_n,x_n^{-1},...,x_2^{-1},t(a_k^{-1}),a_k^{-1},...,a_1^{-1},z) \]
    and finally
    \[ (\underbrace{a_1,..., a_k,x_1^{-1},x_1,x_2,...,x_n,x_n^{-1},...,x_2^{-1},x_1^{-1},x_1,a_k^{-1},...,a_1^{-1}}_{\text{block}},z) ,\]
    which is of the required form. This proves the lemma.
\end{proof}

\chapter{Classification of local Lie groupoids with integrable algebroids}
\chaptermark{Classification of local Lie groupoids}
\label{chapter:classification}

Olver \cite{olver} has classified local Lie groups in terms of Lie groups.
In this chapter, we generalize this result to local Lie groupoids.
Our results are direct generalizations of results by Olver to the groupoid case
\cite[theorems 19 and 20]{olver}.
The result we obtain can only be used to classify those local Lie groupoids
that have integrable algebroids, a restriction that does not show up in Olver's
result because all Lie algebras are integrable.
Before we can explain the classification result proper, we have to study
coverings of local Lie groupoids.

\section{Coverings of local Lie groupoids}

Our first task is to understand coverings of local Lie groupoids.
Just like in the theory of global Lie groupoids, coverings play an important
role for local Lie groupoids.
Recall that for groupoids, the notion of covering concerns not the space
of arrows, but rather the source fibers (or target fibers).
We will assume that all the local groupoids in this section have connected source
and target fibers.

\begin{definition}
    Suppose that $G$ is a local Lie groupoid over $M$ (with source map $s$,
    target map $t$, unit map $u$).
    A \emph{covering} (resp.\ \emph{generalized covering}) of $G$
    is a tuple $(G',s',t',u')$ and a map $\varphi : G' \to G$, such that
    \begin{itemize}
        \item $u' : M\into G$ is an embedding,
        \item $s',t' : G' \to M$ are surjective submersions such that $s'\circ u' = t'\circ u' = \id$,
        \item $\varphi : G' \to G$ is a smooth map such that $s' = s\circ\varphi$, $t'=t\circ\varphi$, $u=\varphi\circ u'$,
        \item for each $m\in M$, the map $\varphi{\restriction_{(s')^{-1}(m)}} : (s')^{-1}(m) \to s^{-1}(\varphi(m))$ is a covering map (resp.\ local diffeomorphism).
    \end{itemize}
\end{definition}

Our goal for this section is the following result.

\begin{theorem}
    A generalized covering $\varphi : G' \to G$ of a bi-regular local Lie
    groupoid has the structure of a local Lie groupoid making
    $\varphi$ into a morphism of local Lie groupoids.
    The local Lie groupoid $G'$ is also bi-regular.
    \label{thm:coverings}
\end{theorem}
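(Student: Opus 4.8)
The plan is to transfer the local groupoid structure of $G$ to $G'$ along $\varphi$, by lifting the structure maps of $G$ through the source‑fibre coverings; this parallels Olver's construction for local Lie groups and the construction of coverings of (global) Lie groupoids. A preliminary observation makes everything go smoothly: $\varphi$ is automatically a local diffeomorphism. Indeed, $s'=s\circ\varphi$ forces $T^{s'}G'=(d\varphi)^{-1}(T^sG)$, and the generalized‑covering hypothesis says that $d\varphi$ restricts to an isomorphism $T^{s'}_{g'}G'\to T^s_{\varphi(g')}G$ at every $g'$; hence $d\varphi_{g'}$ is injective, and since $s'$ and $s$ are submersions onto the same $M$, a dimension count gives $\dim G'=\dim G$ componentwise, so $d\varphi_{g'}$ is an isomorphism. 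Consequently, near $u'(M)$ the map $\varphi$ already pulls back the source, target, unit, multiplication and inversion of $G$; this fixes the local groupoid structure of $G'$ near the identities (with unit set $u'(M)$), and short paths lift uniquely through $\varphi$.

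The substantial step is extending the multiplication to an open neighbourhood $\U'$ of $(G'\timesst M)\cup(M\timesst G')$. I would first choose $\U'\subset(\varphi\times\varphi)^{-1}(\U)$, containing the axes, so that $G'$ has products connected to the axes (every local Lie groupoid admits such a choice, by the remark preceding \cref{prop:equivalence-relation}); thus for each $(g',h')\in\U'$ either there is a path $\tilde\alpha$ from $u'(t'(h'))$ to $g'$ in the source fibre $(s')^{-1}(t'(h'))$ with $(\tilde\alpha(\tau),h')\in\U'$ throughout, or the dual situation holds. In the first case define $g'h'$ to be the endpoint of the lift, starting at $h'$, of the path $R_{\varphi(h')}\circ\varphi\circ\tilde\alpha$ in $s^{-1}(s(\varphi(h')))$ (here $R_{\varphi(h')}$ is a diffeomorphism of pieces of source fibres of $G$ by bi‑regularity of $G$, and the lift exists because $G'$ is $s$‑connected); near the start of $\tilde\alpha$ this agrees with the lift determined by the local diffeomorphism $\varphi$ near $h'$, so the value is pinned down by analytic continuation along $\tilde\alpha$. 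In the dual case one lifts a left translation symmetrically. Inversion $i'$ is defined on a neighbourhood $\V'$ of $u'(M)$ with $\V'\timesst\V'\subset\U'$ by lifting $g'\mapsto\varphi(g')^{-1}$ through the local diffeomorphism $\varphi$ near the units. The delicate point is well‑definedness of $m'$ — independence of the lifting path $\tilde\alpha$ — which, exactly as in Olver's treatment, follows from $s$‑connectedness, uniqueness of lifts, and the fact that right translation by an element near a unit induces the trivial map on the relevant fundamental group.

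It then remains to verify the axioms of \cref{def:llg}, the morphism property, and bi‑regularity. The compatibilities $s'(g'h')=s'(h')$, $t'(g'h')=t'(g')$, $s'\circ i'=t'$, $t'\circ i'=s'$, as well as the unit identities $g'\,u'(s'(g'))=g'$, $u'(t'(h'))\,h'=h'$ and the inverse identities, all follow from the corresponding statements in $G$ together with uniqueness of lifts. Local associativity is the main check: for $(g',h',k')$ near the locus where it is required, both $(g'h')k'$ and $g'(h'k')$ lie over the single element $(\varphi(g')\varphi(h'))\varphi(k')=\varphi(g')(\varphi(h')\varphi(k'))$ of $G$ (local associativity of $G$), and one assembles the lifting data for the two bracketings so that they coincide; uniqueness of lifts then forces equality. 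By construction $\varphi(m'(g',h'))=m(\varphi(g'),\varphi(h'))$, $\varphi\circ i'=i\circ\varphi$, $\varphi\circ u'=u$, $s\circ\varphi=s'$, $t\circ\varphi=t'$, so $\varphi$ is a morphism of local Lie groupoids over $\id_M$.

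Finally, for bi‑regularity of $G'$: differentiating the identity $\varphi\circ R_{g'}=R_{\varphi(g')}\circ\varphi$ at $u'(t'(g'))$, and using that $d\varphi$ is a source‑tangent isomorphism while $R_{\varphi(g')}\colon T^s_{t(\varphi(g'))}G\to T^s_{s(\varphi(g'))}G$ is an isomorphism by bi‑regularity of $G$, shows $dR_{g'}$ is an isomorphism $T^{s'}_{t'(g')}G'\to T^{s'}_{g'}G'$, so $G'$ is right‑regular; since $i'$ is a local diffeomorphism exchanging $s'$ and $t'$, left translations in $G'$ are conjugate via $i'$ to right translations, and the same argument yields left‑regularity. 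I expect the principal obstacles to be the well‑definedness of $m'$ in the second step and local associativity in the third: everything there reduces to careful bookkeeping of which paths in source fibres are being lifted, and to checking — after shrinking $\U'$ and $\V'$ and using bi‑regularity together with $s$‑connectedness — that the lifts exist, are unique, and are compatible with the two bracketings of a triple product; this is exactly the kind of argument that also underlies \cref{prop:equivalence-relation} and \cref{thm:smoothness-of-asco}.
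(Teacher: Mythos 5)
Your overall strategy (transport the structure of $G$ to $G'$ through $\varphi$) is reasonable, and your preliminary observation that $\varphi$ is a local diffeomorphism is correct and is also what makes the paper's argument work. But the core of your construction has a genuine gap: it is built on \emph{path lifting} through $\varphi$, and the theorem is about \emph{generalized} coverings, where $\varphi$ restricted to source fibres is only a local diffeomorphism, not a covering map. A local diffeomorphism does not have the path-lifting property, so your definition of $g'h'$ as ``the endpoint of the lift of $R_{\varphi(h')}\circ\varphi\circ\tilde\alpha$ starting at $h'$'' need not make sense on the domain $\U'$ you fixed in advance; your stated reason for existence of the lift ($s$-connectedness of $G'$) does not give lifting. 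Likewise, your well-definedness argument appeals to monodromy-type facts (``right translation near a unit induces the trivial map on the relevant fundamental group''), which presuppose unique path/homotopy lifting and are simply unavailable in the generalized setting. There is also a second unaddressed point: when both branches of your definition apply (lifting a right-translated path to $g'$ versus a left-translated path to $h'$), you must show the two answers agree; you never check this, and it is precisely the place where the real content lies.

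The paper avoids both problems by not lifting long paths at all. It pulls back the Maurer--Cartan $s$- and $t$-framings of $G$ along $\varphi$ (this only uses that $\varphi$ is a fibrewise isomorphism on $T^{s}$ and $T^{t}$, hence works for generalized coverings), and then invokes \cref{prop:characterization}: a pair of framings whose right-invariant vector fields close under bracket, whose left-invariant vector fields close under bracket, and whose left- and right-invariant vector fields commute (\cref{lem:brackets-of-invariant-vf}) determines a local Lie groupoid structure, with the multiplication defined by \emph{time-one flows} of invariant vector fields on the open set where those flows exist. Short flows always exist, so the domain is automatically a neighbourhood of the axes, no lifting is needed, and the agreement of the two partial definitions of the product is exactly the commutation of left- and right-invariant vector fields. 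Bi-regularity of $G'$ is then immediate because the pulled-back framings are the Maurer--Cartan forms of the new structure. If you want to salvage your direct approach, you would have to restrict to short paths (equivalently, flows), at which point you recover the paper's construction; as written, your proof does not establish the theorem for generalized coverings. (A small additional remark: your left-regularity argument via conjugation by $i'$ only applies to invertible elements, i.e.\ near the units; argue symmetrically to the right-regular case instead.)
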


\begin{remark}
    Every local Lie groupoid has a source-simply connected cover.
    It is constructed in the same way as for global Lie groupoids (details can be found
    in \cite{lectures-integrability-lie}).
    In short, the source-simply connected cover is obtained as $p^{-1}(M)$,
    where
    \[ p : \Pi_1(\F_s) \to G, \]
    is the source map of the fundamental groupoid of the source foliation of $G$.
    The theorem above therefore shows that every local Lie groupoid is covered by a
    source-simply connected local Lie groupoid.
    As explained in \cite{the-article}, it is possible to give a (simpler)
    proof of the existence of source-simply connected covering groupoids in its
    own right. We refer the reader to that article for the details.
\end{remark}

The key to understanding \cref{thm:coverings} are Maurer-Cartan forms.
For a Lie group, the left- and right-invariant Maurer-Cartan forms are usually considered as Lie algebra valued
1-forms, but we will formulate them as bundle maps $TG \to \pi^*(T_{\{e\}}G)$
where $\pi : G \to \{e\}$ (this is the same thing, of course).
The following are the analogues of the left-invariant and right-invariant
Maurer-Cartan forms, respectively.

\begin{definition}
    An \emph{$s$-framing} is an isomorphism of vector bundles $\omega_s : T^sG
    \to t^*(T^s_MG)$ over the identity such that $\omega_s{\restriction_M} =
    \id$.
    A \emph{$t$-framing} is an isomorphism of vector bundles $\omega_t :
    T^tG \to s^*(T^t_MG)$ over the identity such that $\omega_t{\restriction_M}
    = \id$.
\end{definition}

If $G$ is a bi-regular local Lie groupoid, the left- and right-invariant Maurer Cartan forms
are $s$- and $t$-framings, with
\[ \omega_{MC}^R : T^s_gG \ni v \mapsto (d_{t(g)}\rho_{g})^{-1}(v) \]
and
\[ \omega_{MC}^L : T^t_gG \ni v \mapsto (d_{s(g)}\lambda_{g})^{-1}(v) ,\]
where $\rho_g$ is right multiplication by $g$, and $\lambda_g$ is left multiplication by $g$.
For a local Lie groupoid, the framings used will be these Maurer-Cartan forms.
Given framings, we can talk about invariant vector fields.

\begin{definition}
    Suppose we are given an $s$-framing $\omega_s$.  A section of $T^sG$ is
    called \emph{right-invariant} if its image under the $s$-framing is of the
    form $t^*\sigma$ for some $\sigma \in \Gamma(T^s_MG)$.

    Suppose we are given a $t$-framing.  A section of $T^tG$ is called
    \emph{left-invariant} if its image under the $t$-framing is of the form
    $s^*\sigma$ for some $\sigma \in \Gamma(T^t_MG)$.
\end{definition}

\begin{lemma}
    If $G$ is a local Lie groupoid, then
    \begin{itemize}
        \item the Lie bracket of right-invariant vector fields is right-invariant,
        \item the Lie bracket of left-invariant vector fields is left-invariant,
        \item the Lie bracket of a right-invariant and a left-invariant vector field is zero.
    \end{itemize}\label{lem:brackets-of-invariant-vf}
\end{lemma}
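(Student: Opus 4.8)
The three claims will all be reduced to statements about the flows of the invariant vector fields. Recall that, for the Maurer--Cartan framings used here, a right-invariant section of $T^sG$ is exactly one of the form $\tilde\alpha_g = d_{t(g)}\rho_g(\alpha_{t(g)})$ with $\alpha\in\Gamma(A)$, and dually a left-invariant section of $T^tG$ is exactly one of the form $\hat\delta_g = d_{s(g)}\lambda_g(\delta_{s(g)})$ with $\delta\in\Gamma(T^t_MG)$; here $\rho_g,\lambda_g$ denote right and left multiplication by $g$, which are defined on suitable open sets.

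For the first two claims I would pass to a $3$-associative restriction of $G$ (restricting changes none of the bundles $T^sG$, $T^tG$, nor the vector fields $\tilde\alpha$, $\hat\delta$, since the relevant multiplication germs near $M$ are unchanged). On such a restriction, right-invariance of $X$ is equivalent to $d\rho_h\circ X = X\circ\rho_h$ on the domain of $\rho_h$ for every $h$, i.e.\ to each $\rho_h$ being $X$-related to itself. Since the Lie bracket is natural under related maps, if $X_1$, $X_2$ are both $\rho_h$-related to themselves then so is $[X_1,X_2]$; and $[X_1,X_2]$ is a section of $T^sG$ because $T^sG=\ker ds$ is involutive. Hence $[X_1,X_2]$ is right-invariant, which is the first claim; the second is the mirror statement with $\rho$, $T^sG$ replaced by $\lambda$, $T^tG$.

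The third claim is the groupoid version of the fact that, in a Lie group, right translations commute with left translations. First I would show that the flow of a right-invariant $X=\tilde\alpha$ is a family of left translations: put $\beta_\tau := \phi^X_\tau\circ u$, a curve through $u$ of local sections of $s$; then for $g$ near a fixed point and $\tau$ small the curve $\tau\mapsto\beta_\tau(t(g))\cdot g$ is defined, starts at $u(t(g))\cdot g=g$, and has $\tau$-derivative $d\big(\rho_g\circ\rho_{\beta_\tau(t(g))}\big)(\alpha_{\,\cdot\,})$, which equals $d\rho_{\beta_\tau(t(g))\cdot g}(\alpha_{\,\cdot\,})=\tilde\alpha_{\beta_\tau(t(g))\cdot g}$ because $\big(a\cdot\beta_\tau(t(g))\big)\cdot g=a\cdot\big(\beta_\tau(t(g))\cdot g\big)$ for $a$ near $t(g)$ — an instance of associativity whose triple lies near $M\timesst M\timesst G$, so local associativity applies. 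By uniqueness of integral curves, $\phi^X_\tau(g)=\beta_\tau(t(g))\cdot g$. Dually, the flow of a left-invariant $Y$ is $\psi^Y_\sigma(g)=g\cdot\gamma_\sigma(s(g))$ for a curve $\gamma_\sigma$ through $u$ of local sections of $t$. Then, for small $\tau,\sigma$ and $g$ near the chosen point,
\[ \psi^Y_\sigma\big(\phi^X_\tau(g)\big)=\big(\beta_\tau(t(g))\cdot g\big)\cdot\gamma_\sigma(s(g))=\beta_\tau(t(g))\cdot\big(g\cdot\gamma_\sigma(s(g))\big)=\phi^X_\tau\big(\psi^Y_\sigma(g)\big), \]
the middle equality being associativity of the triple $\big(\beta_\tau(t(g)),g,\gamma_\sigma(s(g))\big)$, which lies near $M\timesst G\timesst M$. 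So the flows of $X$ and $Y$ commute near every point, and therefore $[X,Y]=0$.

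The only genuine difficulty is the bookkeeping forced by the partially-defined multiplication: one must shrink the time parameters and the neighborhood of the base point so that all the products written above exist, and — the real point — check that every application of associativity is for a triple staying in a neighborhood of $(M\timesst M\timesst G)\cup(M\timesst G\timesst M)\cup(G\timesst M\timesst M)$, so that the local-associativity axiom genuinely suffices for claim 3 (and, for claims 1 and 2, that the passage to a $3$-associative restriction is harmless). No regularity hypothesis on $G$ enters the argument beyond what is needed to make the framings isomorphisms in the first place.
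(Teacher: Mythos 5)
Your proof is correct and follows essentially the same route as the paper's: naturality of the Lie bracket under right/left translations (justified by associativity of triples with entries near the units) for the first two bullets, and commutation of flows — realized as left/right translations by integral curves through the units — for the third. The only cosmetic difference is that you pass to a $3$-associative restriction for the first two claims, whereas the paper gets by with local associativity alone since only relatedness of the vector fields near the unit $t(g)$ is needed before pushing the bracket forward by $\rho_g$.
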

\begin{proof}
    We first prove that the bracket of right-invariant vector fields is right-invariant.
    Note that a vector field $X$ on $G$ is right-invariant if and only if
    its value at $g\in G$ is $(d\rho_g)(X_{t(g)})$.
    Let $X_1$ and $X_2$ be right-invariant vector fields on $G$.
    By the local associativity of $G$, the vector fields $X_1$ and $X_2$ are
    invariant under $\rho_g$ near $t(g)$.
    Therefore, the value of $[X_1,X_2]$ at $g$ is indeed equal to $d\rho_g([X_1,X_2](t(g)))$, and $[X_1,X_2]$ is right-invariant.
    Similarly, the bracket of left-invariant vector fields is left-invariant.

    We now show that right-invariant vector fields commute with left-invariant vector fields.
    Suppose that $X$ is a right-invariant vector field on $G$,
    and $Y$ is a left-invariant vector field on $G$.
    Let $g\in G$.
    Let $\tau\mapsto \gamma_X(\tau)$ be the integral curve of $X$ with $\gamma_X(0) = t(g)$,
    and let $\tau'\mapsto \gamma_Y(\tau')$ be the integral curve of $Y$ with $\gamma_Y(0) = s(g)$.
    Then note that $\tau \mapsto \gamma_X(\tau) \cdot g$ is also an integral curve of $X$
    (defined for $\tau$ small),
    by local associativity.
    Also $\tau \mapsto \gamma_X(\tau) \cdot g \cdot \gamma_Y(\tau')$ is an integral curve of $X$ (defined for $\tau$ and $\tau'$ small).
    Similarly, the curves $\tau' \mapsto g\cdot \gamma_Y(\tau')$ and
    $\tau' \mapsto \gamma_X(\tau) \cdot g \cdot \gamma_Y(\tau')$ are integral curves
    of $Y$ (for $\tau,\tau'$ small).
    It follows that the flows of $X$ and $Y$ commute for small times, because flowing $g$
    first by $X$ for time $\tau$ and then by $Y$ for time $\tau'$ gives the same result as
    first flowing by $Y$ and then by $X$
    (namely $\gamma_X(\tau)\cdot g \cdot \gamma_Y(\tau')$).
\end{proof}

\begin{remark}
For a 3-associative local Lie groupoid, right-invariant vector fields are
indeed invariant under right
multiplication (and similarly for left-invariant vector fields under left
multiplication).
This justifies the choice of terminology.
\end{remark}

\Cref{thm:coverings} will follow from the following characterization.
It is a direct generalization of theorem 18 in \cite{olver}.

\begin{proposition}
    Let $G$, $M$, $s$, $t$ and $u$ be as above.
    Then given $s$- and $t$-framings, there is a
    local Lie groupoid structure giving rise to these
    framings if and only if
    \begin{itemize}
        \item the Lie bracket of left-invariant vector fields is left-invariant,
        \item the Lie bracket of right-invariant vector fields is right-invariant,
        \item the Lie bracket of a left-invariant vector field and a right-invariant vector field is zero.
    \end{itemize}
    If we demand that the local Lie groupoid be strongly connected, the structure is unique
    up to restriction.
    \label{prop:characterization}
\end{proposition}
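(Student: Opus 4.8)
The plan is to mimic the classical reconstruction of a Lie group from its left- and right-invariant Maurer--Cartan forms, as in \cite[Theorem~18]{olver}, by building the multiplication out of its right- and left-translations.

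The forward implication is immediate: a local Lie groupoid structure inducing $\omega_s$ and $\omega_t$ has these as its Maurer--Cartan framings, so \cref{lem:brackets-of-invariant-vf} yields the three bracket conditions. For the converse, assume the three conditions. Using $\omega_s$, call a section of $T^sG$ \emph{right-invariant} if its $\omega_s$-image is $t^*\sigma$ for some $\sigma\in\Gamma(T^s_MG)$; these frame every source fibre. Dually, $\omega_t$ singles out the \emph{left-invariant} sections of $T^tG$, framing every target fibre. Fix $g\in G$. On the manifold $P_g=\{(h,k)\in s^{-1}(t(g))\times s^{-1}(s(g)):t(h)=t(k)\}$ — a genuine manifold, since both source fibres fibre over the orbit $\mathcal{O}$ through $t(g)$ and $s(g)$ via $t$ — consider the rank-$\operatorname{rk}A$ distribution $\mathcal D$ spanned pointwise by the vectors $(\tilde\sigma_h,\tilde\sigma_k)$, $\tilde\sigma$ right-invariant. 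One checks $\mathcal D$ is tangent to $P_g$, and that it is involutive precisely because the bracket of right-invariant fields is right-invariant: the resulting structure functions are pulled back along $t$, and on $P_g$ the two targets agree, so the coefficients match on both factors. By Frobenius, $P_g$ is foliated; the leaf through $(u(t(g)),g)$ projects isomorphically onto a neighbourhood of $u(t(g))$ in $s^{-1}(t(g))$ (the $\tilde\sigma_h$ frame $T^s_hG$), and I define $hg:=R_g(h)$ to be this graphing map. An ODE argument along the leaf gives $t(hg)=t(h)$, while $s(hg)=s(g)$ is built in and $u(t(g))g=g$ is the base point. Symmetrically, using $\omega_t$ and the left-invariance hypothesis on a fibred product of target fibres, I build the left-translations $L_g$, hence a product $gh:=L_g(h)$ for $h$ near $u(s(g))$. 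So I obtain $m$ on a neighbourhood of the axis $M\timesst G$ from the $R_g$, and on a neighbourhood of $G\timesst M$ from the $L_g$.

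It remains to glue these two partial multiplications and verify the axioms. On the overlap (a neighbourhood of the units in $G\timesst G$) the third hypothesis enters: a right-invariant and a left-invariant field commute, so their flows commute; following a right-invariant ``recipe'' carrying $u(x)$ to $g$ and then a left-invariant one carrying $u(x)$ to $h$ lands at the same point as performing the two in the opposite order, and unwinding the definitions this says $L_g(h)=R_h(g)$. Smooth dependence of Frobenius leaves on the parameter $g$ gives joint smoothness of $m$ on its domain $\mathcal U$. The remaining groupoid axioms follow from the fact that a map preserving the right-invariant frame is determined by the image of a single point: since both $R_k\circ R_h$ and $R_{hk}$ preserve it and send $u(t(hk))$ to $hk$, they agree near the units, which is local associativity; the unit laws come from the leaves through the axes; inversion is obtained near $M$ from the implicit function theorem; and the Maurer--Cartan framings of the new structure are $\omega_s$ and $\omega_t$ by construction, as right-translation was arranged to carry $\tilde\sigma$ to $\tilde\sigma$. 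I expect the main obstacle to be not any individual axiom but the assembly itself: choosing the ambient manifolds $P_g$ so that $\mathcal D$ is actually involutive, controlling smoothness in $g$, keeping track of the open sets on which each step is valid, and matching the two partial products on their overlap.

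For uniqueness: two strongly connected structures inducing $\omega_s$ and $\omega_t$ have the same invariant vector fields, hence the same distributions, hence the same translations $R_g$, $L_g$, hence the same multiplication germ along the axes — a common restriction. Passing to $3$-associative restrictions (possible by the proposition in \cref{sec:assoc}), \cref{lem:stronglyconnecteddeterminedbyrestriction} then forces the two structures to agree up to restriction.
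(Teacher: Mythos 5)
Your proposal is correct and strategically the same as the paper's proof: the forward direction is lemma~\ref{lem:brackets-of-invariant-vf}, the product is built separately near the two axes $G\timesst M$ and $M\timesst G$ from the invariant frames, the commutation hypothesis is what matches the two partial products on their overlap, and uniqueness is reduced (via a $3$-associative restriction) to lemma~\ref{lem:stronglyconnecteddeterminedbyrestriction}. Where you differ is the construction device. The paper defines $g\cdot h$ directly as the time-$1$ flow applied to $g$ of the left-invariant field whose time-$1$ flow carries $u(s(g))$ to $h$ (and symmetrically with right-invariant fields), merely asserting independence of the choice of section; you instead build the translation maps as Frobenius leaves of a graph distribution on a fibred product of source fibres --- Cartan's method of the graph, which the thesis itself uses later for theorem~\ref{thm:classification-groupoids}. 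Your packaging makes well-definedness automatic, but it costs two obligations the flow picture avoids: that $P_g$ is a manifold (your appeal to ``the orbit'' is premature, since no groupoid structure exists yet and $t$ restricted to a source fibre is not obviously of constant rank from the framing data alone), and that $\mathcal{D}$ is tangent to $P_g$, i.e.\ that $dt$ of a right-invariant field depends only on the target of the point. The latter is the mirror image of what the paper's construction silently needs (namely $s(gh)=s(h)$), and both follow from the commutation condition together with this section's standing assumption of connected source and target fibres: if $Y$ is left-invariant and $[X,Y]=0$, then $t\circ\phi^\tau_Y=t$ gives $dt(X_{\phi^\tau_Y(g)})=dt(X_g)$, and left-invariant flows join any point of a $t$-fibre to its unit, so $dt(X_g)$ depends only on $t(g)$; this in turn makes $t|_{s^{-1}(x)}$ of constant rank onto a single orbit, so $P_g$ is a manifold. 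I would spell out that argument, since it is the one point where your route requires more than the paper's; otherwise the two proofs are at a comparable level of detail.
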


\begin{remark}
Theorem 18 in \cite{olver} claims that giving a local Lie group structure is equivalent to
prescribing right-invariant vector fields,
without specifying the left-invariant vector fields.
Explicit examples show, however, that this is not sufficient, and the left-invariant
vector fields need to be specified as well. More precisely, if only right-invariant
vector fields are specified, it may happen that no compatible left-invariant
vector fields exist. The mistake in the proof is that the
multiplication does not get defined near $G\times\{e\}$ (only near $\{e\}\times G$).
For an explicit example, one can look at $M = \R_{>0} \times \Sphere^1$, with coordinates $(x,\theta)$. The vector fields $X = x\frac{\partial}{\partial x}$ and $Y = x\frac{\partial}{\partial \theta}$ satisfy $[X,Y] = Y$.
If we try to prescribe $X$ and $Y$ as right-invariant vector fields on $M$,
there is no way to find corresponding left-invariant vector fields to make $M$ into a
local Lie group: if $Z$ is a left-invariant vector field that equals $\frac{\partial}{\partial x}$ at $(1,1)$, we must have $[Y,Z]=0$, but the flow of $Y$
maps the point $(1,1)$ to itself after time $2\pi$,
mapping $\frac{\partial}{\partial x}$ to $\frac{\partial}{\partial x} + 2\pi\frac{\partial}{\partial \theta}$.
This shows that $Z$ cannot be invariant under the flow of $Y$.
\end{remark}

\begin{proof}[Proof of \cref{prop:characterization}]
    The ``only if'' part of the proof is \cref{lem:brackets-of-invariant-vf}.
    We prove the ``if'' part.
    Suppose that we have framings that satisfy the above conditions.
    We will prescribe the multiplication and inversion maps.
    Let us start by defining the multiplication on a neighborhood of
    $G \timesst M$.
    Pick $g\in G$ and write $x = s(g)$. If $h\in t^{-1}(x)$ is sufficiently close
    to $x$,
    then
    \[ h = \phi^1_{\omega_t^{-1}(s^*\sigma)}(x) \]
    for some local section $\sigma$ of $T^t_MG$ near $x$
    (here $\phi^1$ denotes the time-1 flow of a vector field).
    Note that $\omega_t^{-1}(s^*\sigma)$ is left-invariant.
    We set
    \[ g\cdot h = \phi^1_{\omega_t^{-1}(s^*\sigma)}(g) \]
    if this flow exists.
    Note that this product $g\cdot h$ is well-defined for $h$ small,
    in the sense that it doesn't depend on the choice of $\sigma$.
    This defines the multiplication near $G \timesst M$.
    Similarly, one defines $g\cdot h = \phi^1_{\omega_s^{-1}(t^*\tau)}(h)$ if $g = \phi^1_{\omega_s^{-1}(t^*\tau)}(x)$
    is small, and $\tau$ is a local section of $T^s_MG$.

    One thing remains to check to ensure we have a well-defined multiplication:
    if $g = \phi^1_{\omega_s^{-1}(t^*\tau)}(x)$ and $h = \phi^1_{\omega_t^{-1}(s^*\sigma)}(x)$ for local
    sections $\tau$ and $\sigma$ as above, we need to ensure that the two definitions
    of $g\cdot h$ agree.
    The first definition defines $g\cdot h$ as
    \[ \phi^1_{\omega_t^{-1}(s^*\sigma)}(g) = \phi^1_{\omega_t^{-1}(s^*\sigma)}(\phi^1_{\omega_s^{-1}(t^*\tau)}(x)) \]
    and the second as
    \[ \phi^1_{\omega_s^{-1}(t^*\tau)}(h) = \phi^1_{\omega_s^{-1}(t^*\tau)}(\phi^1_{\omega_t^{-1}(s^*\sigma)}(x)) .\]
    Because left- and right-invariant vector fields commute, these definitions coincide
    for $g, h$ sufficiently close to $x$.
    Therefore, by restricting if necessary, we have
    defined a multiplication map on $G \timesst G$ near
    $G \timesst M \cup M \timesst G$.

    We leave it to the reader to check that this multiplication is locally associative
    (possibly after restricting
    further),
    and to define an inversion map near $M$.
    The uniqueness up to restriction follows immediately from
    lemma~\ref{lem:stronglyconnecteddeterminedbyrestriction}.
\end{proof}

\section{Classification result}

In \cite{olver}, Olver shows roughly speaking that every local Lie group
is covered by a cover of a globalizable local Lie group.
We prove the analogous result for groupoids, and strengthen it slightly.

\begin{maintheorem}
    Suppose $G$ is a bi-regular $s$-connected local Lie groupoid over $M$
    with integrable Lie algebroid $A$.
    Write $\tilde{G}$ for its source-simply connected cover,
    and write $\G(A)$ for the source-simply connected integration of $A$.
    Then we have the following commutative diagram.
    \begin{center}
    \begin{tikzpicture}
        \matrix(m)[matrix of math nodes, row sep=2.2em, column sep=2.6em]{
            & \tilde{G} & \\
            G & & U\subseteq \G(A) \\
            & \AsCo(G) & \\
        };
        \path[->] (m-1-2) edge node[auto,swap]{$p_1$} (m-2-1.north east);
        \path[->] (m-1-2) edge node[auto]{$p_2$} (m-2-3.north west);
        \path[->] (m-2-1.south east) edge (m-3-2);
        \path[->] (m-2-3.south west) edge (m-3-2);
    \end{tikzpicture}
    \end{center}
    Here, $U$ is an open neighborhood of $M$ in $\G(A)$, $p_1$ is the covering map,
    and the map $G \to \AsCo(G)$ is the natural map from a local Lie group to its
    associative completion.
    The map $p_2$ is a generalized covering of local Lie groupoids,
    which sends a point $g\in\tilde{G}$ to the class of the $A$-path associated
    to a $\tilde{G}$-path from $s(g)$ to $g$.
    \label{thm:classification-groupoids}
\end{maintheorem}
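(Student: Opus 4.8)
The goal is to assemble the commutative diagram relating $G$, $\tilde G$, $\AsCo(G)$, and an open neighborhood $U$ of $M$ in $\G(A)$. The strategy is to construct the map $p_2 : \tilde G \to \G(A)$ via $A$-paths, verify it is a generalized covering, and then use functoriality of $\AsCo$ together with the earlier smoothness theorem (\cref{thm:smoothness-of-asco}) and Mal'cev's theorem (\cref{cor:malcev-for-groupoids}) to populate the lower half of the diagram. First I would recall that by \cref{thm:coverings} the source-simply connected cover $\tilde G$ is itself a bi-regular local Lie groupoid and $p_1 : \tilde G \to G$ is a morphism of local Lie groupoids inducing an isomorphism on Lie algebroids; in particular $\tilde G$ has the same (integrable) algebroid $A$.

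Next I would define $p_2$. Given $g \in \tilde G$, pick a $\tilde G$-path from $s(g)$ to $g$ inside the source fiber (possible since $\tilde G$ is $s$-connected); differentiate and right-translate to the identity bisection using the Maurer--Cartan $s$-framing $\omega_{MC}^R$ of \cref{prop:characterization}, obtaining an $A$-path; its $A$-homotopy class is a well-defined element of $\G(A)$ — well-definedness is exactly the statement that $\tilde G$-homotopic paths (rel endpoints, in the source fiber) give $A$-homotopic $A$-paths, which holds because $\tilde G$ is source-\emph{simply} connected so any two such paths are homotopic, and the differentiation procedure is the standard one from \cite{crainic-fernandes-lie}. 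This map is smooth, intertwines source, target and units, and restricts on each source fiber to the map $s^{-1}_{\tilde G}(x) \to s^{-1}_{\G(A)}(x)$; since $\tilde G$ has simply connected source fibers and these map to (simply connected) source fibers of $\G(A)$ by a map inducing the identity on $A$, it is a local diffeomorphism on source fibers, hence a generalized covering. That $p_2$ is a morphism of local Lie groupoids follows from \cref{prop:characterization}: $p_2$ carries the right-invariant vector fields of $\tilde G$ to those of $\G(A)$ by construction of $\omega_{MC}^R$, and similarly on the left, so it is compatible with both framings, hence with the multiplications near the identity — i.e.\ on a shrinking, which is all a generalized covering of local Lie groupoids needs. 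The image $U := p_2(\tilde G) \subseteq \G(A)$ is an open neighborhood of $M$ (open because $p_2$ is a local diffeo on source fibers and submersive transversally, being a morphism of bi-regular groupoids).

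For the lower triangle: applying the functor $\AsCo$ to the morphism $p_1 : \tilde G \to G$ gives $\AsCo(\tilde G) \to \AsCo(G)$, and composing $\tilde G \to \AsCo(\tilde G)$ with this gives the arrow $\tilde G \to \AsCo(G)$; composing $G \to \AsCo(G)$ with $p_1$ gives the same arrow by the functoriality square displayed in \cref{sec:asco}, so the left square commutes. For the right square I would note that $U \subseteq \G(A)$ is an open part of a \emph{global} Lie groupoid, hence (a shrinking of) a globalizable — in particular globally associative — local Lie groupoid, so by \cref{cor:malcev-for-groupoids} and the discussion following \cref{prop:equivalence-relation} the natural map $U \to \AsCo(U)$ is injective with image essentially $\G(A)$ itself; precomposing with $p_2$ and using $\AsCo(p_2) : \AsCo(\tilde G) \to \AsCo(U)$ together with the same functoriality square gives the arrow $U \to \AsCo(G)$ making the right square commute — here one identifies $\AsCo(U)$ with (an open part of) $\G(A)$ and checks that the two composites $\tilde G \rightrightarrows \AsCo(G)$ agree, which is again pure functoriality since $p_1$ and $p_2$ both cover the identity on $A$.

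\textbf{Main obstacle.} The routine categorical bookkeeping is harmless; the real work is the well-definedness and smoothness of $p_2$, and more precisely showing $p_2$ is genuinely a \emph{morphism} of local Lie groupoids rather than merely a fiberwise map. The subtlety is the one flagged in the remark after \cref{prop:characterization}: matching right-invariant vector fields is not enough, one must also control the left-invariant ones, so I would need to check that the $A$-path construction is equivariant for \emph{both} the left and right Maurer--Cartan framings simultaneously — equivalently, that concatenation of $\tilde G$-paths corresponds to multiplication of $A$-homotopy classes in $\G(A)$ on the nose near the identity. This is where \cref{prop:characterization} earns its keep, and where I expect to spend the most care; everything downstream is then formal.
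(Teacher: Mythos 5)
Your overall architecture coincides with the paper's: build $p_2$ as a generalized covering, then obtain the lower half by applying $\AsCo$ to $p_1$ and identifying $\AsCo(\tilde G)\cong\AsCo(U)\cong\AsCo(\G(A))\cong\G(A)$ using the isomorphisms on algebroids and the source-simple connectedness of $\G(A)$; your right-square discussion is essentially this, except that $\AsCo(U)$ gets identified with all of $\G(A)$ (not an open part), and the smoothness of $\AsCo(\tilde G)$ is obtained in the paper from the observation that $\Assoc(\tilde G)$ lies in the (discrete) fiber of $p_2$ over the units --- so the lower half genuinely presupposes that $p_2$ has already been constructed.

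The genuine gap is exactly the step you defer. You define $p_2$ pointwise by choosing a $\tilde G$-path and taking the class of the associated $A$-path, and then assert smoothness, the morphism property and the generalized-covering property, flagging them as the place where you would ``spend the most care'' without supplying the argument. \Cref{prop:characterization} is not the tool that closes this: it reconstructs a local groupoid structure from prescribed framings, but it does not show that a map defined point by point via path classes is smooth, nor that it intertwines multiplications. The paper's missing device is Cartan's method of the graph: on the fibered product of $\tilde G$ and $\G(A)$ over the target maps one takes the foliation $\F_{(g_1,g_2)}=\{(\xi\cdot g_1,\xi\cdot g_2)\mid\xi\in A_{t(g_1)}\}$ (integrable because both carry the algebroid $A$), shows using bi-regularity and right-translation that the first projection restricted to the leaf $N_x$ through $(x,x)$ is a covering of the source fiber $s^{-1}(x)\subset\tilde G$, hence a diffeomorphism by source-simple connectedness, and sets $p_2=\pi_2\circ\phi$ where $\phi$ is the glued inverse (smooth because it is the extension by holonomy of $x\mapsto(x,x)$), with $U$ the image. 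This one construction delivers well-definedness, smoothness, and the covering/morphism properties simultaneously, and it also dissolves your worry about matching left- and right-invariant framings, since no framing-matching is ever done by hand. Without this (or an equivalent argument, e.g.\ a locally smooth choice of paths combined with the quotient map from $A$-paths to $\G(A)$), the central object of your proof is not yet constructed, so the proposal as written does not go through.
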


\begin{proof}
    Our first step is to show the existence of the upper half of the diagram.
    This is the statement that is proved in \cite[theorem 21]{olver}
    for the case of local groups, and the proof is analogous.
    We use Cartan's method of the graph.
    The argument is similar to the usual argument for integrability of Lie algebroid
    morphisms to Lie groupoid morphisms \cite[proposition 6.8]{moerdijk-mrcun}.

    Consider the fibered product $\tilde{G} \tensor[_t]{\times}{_t} \G(A)$.
    We equip it with a foliation $\F$, given by
    \[ \F_{(g_1,g_2)} = \left\{ (\xi\cdot g_1, \xi\cdot g_2) \mid \xi \in A_{t(g_1)} \right\} .\]
    Integrability of this distribution follows from the fact that $\tilde{G}$ and $\G(A)$
    have the same Lie algebroid.
    Fix a point $x\in M$.
    Write $N_x$ for the leaf of $\F$ through $(x,x)$.

    We claim that the projection $\pi_1 : N_x \to s^{-1}(x)$ is a covering map
    (we are taking the source fiber
    in $\tilde{G}$).
    Note that $\pi_1 : N_x \to s^{-1}(x)$ is a local diffeomorphism by bi-regularity of the local groupoid.
    Now if $(g_1,g_2) \in N_x$, and $U_1$ is a neighborhood of $(g_1,g_2)$ in $N_x$ that gets mapped
    diffeomorphically to a neighborhood $U_2$ of $g_1$ in $s^{-1}(x)$,
    then $U_2$ is uniformly covered. Indeed, if $(g_1,g_3)$ is another point on $N_x$,
    then by invariance of $\F$ under right multiplication in the $\G(A)$-direction,
    the translated submanifold $U_1 \cdot g_2^{-1} \cdot g_3$ is a neighborhood of $(g_1,g_3)$ in $N_x$
    that gets mapped diffeomorphically to $U_1$.
    This shows that the projection $\pi_1 : N_x \to s^{-1}(x)$ is a covering map.
    Because $s^{-1}(x)$ is simply connected, it is a diffeomorphism.
    The inverse is a diffeomorphism $\phi_x : s^{-1}(x) \to N_x$.

    Let $N = \bigcup_{x\in M} N_x$.
    Putting together the $\phi_x$ gives a map $\phi : \tilde{G} \to N$. This map is smooth,
    because it is the extension by holonomy of the map $x\mapsto (x,x)$.
    We define
    \[ p_2 = \pi_2 \circ \phi .\]
    Letting $U$ be the image of $p_2$, we obtain the required diagram
    \begin{equation}
    \begin{tikzpicture}[baseline=(current  bounding  box.center)]
        \matrix(m)[matrix of math nodes, row sep=2.2em, column sep=2.6em]{
            & \tilde{G} & \\
            G & & U\subseteq \G(A) \\
        };
        \path[->] (m-1-2) edge node[auto,swap]{$p_1$} (m-2-1.north east);
        \path[->] (m-1-2) edge node[auto]{$p_2$} (m-2-3.north west);
        \tag{$\dag$}
    \end{tikzpicture}
    \end{equation}
    
    We now establish the existence of the lower half of the diagram.
    We apply the functor $\AsCo$ to $p_1$, which results in
    \begin{center}
    \begin{tikzpicture}[baseline=(current  bounding  box.center)]
        \matrix(m)[matrix of math nodes, row sep=2.2em, column sep=2.6em]{
            & \tilde{G} & \\
            G & & \AsCo(\tilde{G}) \\
               & \AsCo(G) & \\
        };
        \path[->] (m-1-2) edge node[auto,swap]{$p_1$} (m-2-1.north east);
        \path[->] (m-1-2) edge (m-2-3.north west);
        \path[->] (m-2-3.south west) edge node[auto]{$\AsCo(p_1)$} (m-3-2);
        \path[->] (m-2-1.south east) edge (m-3-2);
    \end{tikzpicture}
    \end{center}
    We claim that $\AsCo(\tilde{G}) \cong \G(A)$, and that under this identification the natural
    map $\tilde{G} \to \AsCo(\tilde{G})$ corresponds to $p_2$.

    Note that $\AsCo(\tilde{G})$ is smooth (the associators of $\tilde{G}$ are discrete
    because they lie in the fiber of $\tilde{G} \to U$ over the identity).
    We have maps $\AsCo(\tilde{G}) \to \AsCo(U) \to \AsCo(\G(A)) \cong \G(A)$,
    and all three of these are Lie groupoids. At the level of Lie algebroids, these maps
    are isomorphisms, and because $\G(A)$ is source-simply connected, we must in fact have
    \[ \AsCo(\tilde{G}) \cong \AsCo(U) \cong \AsCo(\G(A)) \cong \G(A) .\]
    This is the identification we were looking for.
    Following along the maps, we see that an element $g\in\tilde{G}$ is mapped to $\G(A)$
    as follows:
    \[ g\in\tilde{G} \mapsto [g]\in\AsCo(\tilde{G}) \mapsto [p_2(g)] \in \AsCo(U) \mapsto [p_2(g)] \in \AsCo(\G(A)) \mapsto p_2(g) \in \G(A) .\]
    The resulting diagram
    \begin{center}
    \begin{tikzpicture}[baseline=(current  bounding  box.center)]
        \matrix(m)[matrix of math nodes, row sep=2.2em, column sep=2.6em]{
            & \tilde{G} & \\
            G & & \G(A) \\
               & \AsCo(G) & \\
        };
        \path[->] (m-1-2) edge node[auto,swap]{$p_1$} (m-2-1.north east);
        \path[->] (m-1-2) edge node[auto]{$p_2$} (m-2-3.north west);
        \path[->] (m-2-3.south west) edge (m-3-2);
        \path[->] (m-2-1.south east) edge (m-3-2);
    \end{tikzpicture}
    \end{center}
    is what we were looking for.
\end{proof}

\begin{corollary}
    If $G$ is a $s$-simply connected local Lie groupoid with integrable algebroid, then $\Assoc(G)$ is uniformly discrete (and, in particular, $\AsCo(G)$ is smooth).
\end{corollary}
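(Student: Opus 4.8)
The plan is to deduce this from \cref{thm:classification-groupoids} and \cref{thm:smoothness-of-asco}, so that almost no new work is required. First I would observe that, since $G$ is $s$-simply connected, its source-simply connected cover is $G$ itself: $\tilde G \cong G$ and $p_1$ is an isomorphism. Applying \cref{thm:classification-groupoids} then yields a generalized covering $p_2 : G \to U \subseteq \G(A)$, and—reading off the proof of that theorem—an identification $\AsCo(G) = \AsCo(\tilde G) \cong \AsCo(U) \cong \AsCo(\G(A)) \cong \G(A)$ under which the canonical map $G \to \AsCo(G)$ becomes $p_2$. Since $A$ is integrable, $\G(A)$ is a Lie groupoid, and hence $\AsCo(G)$ is smooth.

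With smoothness of $\AsCo(G)$ in hand, uniform discreteness of $\Assoc(G)$ is exactly the content of \cref{thm:smoothness-of-asco}. To invoke that theorem one needs $G$ to be bi-regular (which is part of the standing hypotheses in this setting) and to have products connected to the axes; the latter can be arranged by restricting the multiplication, an operation that changes neither $s$-simple connectedness, nor the Lie algebroid, nor the associative completion, so it is harmless.

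Alternatively—bypassing the ``products connected to the axes'' bookkeeping—I would argue directly. First, $p_2$ is a local diffeomorphism: it intertwines the source maps, so $dp_2$ is injective on $T^sG$ because $p_2$ restricted to a source fiber is a local diffeomorphism, and since $ds_{\G(A)}\circ dp_2 = ds_G$ any vector in $\ker dp_2$ lies in $T^sG$, forcing $dp_2$ to be injective everywhere; a dimension count upgrades this to a local diffeomorphism. Next, $\Assoc(G)\subseteq p_2^{-1}(M)$, since every associator maps to a unit in $\AsCo(G)$ and $G\to\AsCo(G)$ is identified with $p_2$. Finally, each $x\in M$ has a neighborhood $V$ in $G$ on which $p_2$ is a diffeomorphism onto an open subset of $\G(A)$, and because $p_2{\restriction_M}$ is the inclusion we get $V\cap p_2^{-1}(M) = V\cap M$; the union $W$ of these neighborhoods is then an open neighborhood of $M$ with $W\cap\Assoc(G)\subseteq W\cap p_2^{-1}(M) = M$, which is precisely uniform discreteness of $\Assoc(G)$.

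There is no deep obstacle here: the real work has already been done in \cref{thm:classification-groupoids} and \cref{thm:smoothness-of-asco}. The only points demanding care are verifying that the hypotheses of the cited results are genuinely met for $G$ and extracting the identification $\AsCo(G)\cong\G(A)$ cleanly from the proof of the classification theorem; the direct argument via $p_2$ being a local diffeomorphism is the safest route around both.
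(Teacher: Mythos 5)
Your direct argument via $p_2$ is essentially the paper's own proof: the paper notes that, since $\tilde G\cong G$, Theorem~\ref{thm:classification-groupoids} exhibits $G$ as a generalized cover of an open part $U\subseteq\G(A)$ of a Lie groupoid, and that the associators lie in the fiber of this covering over the identities, which is discrete; your version just spells out the local-diffeomorphism step and the neighborhood of $M$ explicitly (your first route, through smoothness of $\AsCo(G)$ and Theorem~\ref{thm:smoothness-of-asco}, is a more roundabout detour, since in the proof of Theorem~\ref{thm:classification-groupoids} the smoothness of $\AsCo(\tilde G)$ is itself obtained from exactly this discreteness observation). One small point to tighten: injectivity of $p_2$ on $V$ alone does not give $V\cap p_2^{-1}(M)=V\cap M$; you should also use that $p_2$ intertwines $s$ and $u$ (so $p_2(g)\in M$ forces $p_2(g)=p_2(u(s(g)))$) and shrink $V$ so that $u(s(V))\subseteq V$, after which injectivity yields $g=u(s(g))\in M$.
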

\begin{proof}
    The above result shows that $G$ is a generalized cover of a globalizable local
    Lie group. The associators of $G$ are therefore contained in the fiber of this
    generalized covering over the identity, and this fiber is discrete.
\end{proof}

Note that this corollary is false without the assumption of $s$-simply connectedness.
For example, there are local Lie groups with non-discrete associators.
An example appears in the next chapter, on \cpageref{page:ladder}.

\chapter{Associativity and integrability}
\label{chapter:integrability}

In this chapter, we discuss the relationship between the integrability of a Lie algebroid
and the associativity of a local groupoid integrating it.
After some basic observations in \ref{sec:associators}, we give an example in \ref{sec:example},
in order to give the reader more intuition.
In \ref{sec:review-simplicial}, \ref{sec:expansion-contraction} and \ref{sec:monodromy}, we briefly discuss some ideas about
simplicial sets and about monodromy groups.
Finally, in \ref{sec:assoc-is-mono} and \ref{sec:mono-is-assoc}, we prove that
associators and monodromy groups are intimately related.

\section{Observations about associators}
\label{sec:associators}

Let us study the set of associators of a local Lie groupoid more closely.
The product of two associators (if defined) is again an associator.
Note that in a globally associative local Lie group, all associators are trivial,
meaning that for all $x\in M$,
$\Assoc_{G}(x) = \{ x \}$.
As it turns out, restricting a local Lie groupoid does not change its associators,
in the following sense.

\begin{lemma}
    Let $G$ be a local Lie groupoid over $M$, with multiplication $m_1 : \U_1 \to G$
    and inversion map $i_1 : \V_1 \to G$.
    Write $G_1$ for $G$ with this local groupoid structure.
    Suppose that we restrict $G$ to get a local Lie groupoid over $M$
    with multiplication map $m_2 : \U_2 \to G$ and inversion map $i_2 : \U_2 \to G$.
    Write $G_2$ for $G$ with this local groupoid structure.
    Assume that $G_1$ and $G_2$ have products connected to the axes.
    Then for all $x\in M$ we have
    \[ \Assoc_x(G_1) = \Assoc_{x}(G_2) .\]
\end{lemma}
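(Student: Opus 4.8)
The inclusion $\Assoc_x(G_2)\subseteq\Assoc_x(G_1)$ is immediate, since any word over $G$ together with two parenthesizations exhibiting some $g$ as an associator of $G_2$ uses only the multiplication $m_2 = m_1{\restriction_{\U_2}}$, and hence does the same in $G_1$; all the content is in the reverse inclusion. For that I would pass to words: write $\sim_j$ for the equivalence relation on well-formed words generated by $m_j$-expansions and $m_j$-contractions. Both $G_1$ and $G_2$ are bi-regular (bi-regularity passes to restrictions, because left and right translation by a fixed element, and their derivatives along $M$, depend only on the germ of the multiplication, which is unchanged), and both have products connected to the axes, so \cref{prop:equivalence-relation} applies to each and gives, as explained in \cref{sec:asco}, $\Assoc_x(G_j) = \{g\in G_x : (g)\sim_j(x)\}$. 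Since $\U_2\subseteq\U_1$, every elementary $m_2$-move is an elementary $m_1$-move, so $\sim_2 \subseteq \sim_1$; it remains to prove the reverse, and since the generating moves act only on a bounded subword it suffices to show: $(a,b)\sim_2(m_1(a,b))$ for every $(a,b)\in\U_1$.

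To prove this I would argue by connectedness, using that $G_1$ has products connected to the axes. Given $(a,b)\in\U_1$, take a path $\gamma\colon[0,1]\to s^{-1}(t(b))$ with $\gamma(0)=t(b)$, $\gamma(1)=a$ and $(\gamma(\tau),b)\in\U_1$ for all $\tau$ (the symmetric alternative of the definition, a path deforming $b$ toward $s(a)$, is handled the same way with left translations). Put $\delta(\tau)=m_1(\gamma(\tau),b)$ and set $T=\bigl\{\tau\in[0,1]:(\gamma(\tau),b)\sim_2(\delta(\tau))\bigr\}$. Then $0\in T$, because $(\gamma(0),b)=(t(b),b)\in M\timesst G\subseteq\U_2$ contracts in one $m_2$-step to $(\delta(0))=(b)$, and I would show $T$ is open and closed. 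For $\tau$ near a given $\tau_0$, bi-regularity of $G_1$ lets me write $\gamma(\tau)=m_1(g,\gamma(\tau_0))$ with $g$ invertible and tending to the identity $t(\gamma(\tau_0))$ as $\tau\to\tau_0$, so for $\tau$ close enough $g$ lies near enough to $M$ that $(g,\gamma(\tau_0))$, $(g,\delta(\tau_0))$ and $g^{-1}$ all lie in the restricted domains $\U_2$, $\V_2$; one then inserts $g$ by an $m_2$-expansion, transports past it the $\sim_2$-chain witnessing $\tau_0\in T$ (leaving $g$ untouched), and absorbs $g$ again by an $m_2$-contraction, which places $\tau$ (resp.\ $\tau_0$, for closedness) in $T$. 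Hence $T=[0,1]$ and $(a,b)\sim_2(\delta(1))=(m_1(a,b))$. Combining, $\sim_1=\sim_2$, so $\Assoc_x(G_1)=\{g\in G_x:(g)\sim_1(x)\}=\{g\in G_x:(g)\sim_2(x)\}=\Assoc_x(G_2)$.

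The step I expect to be the main obstacle is this sliding argument. One must check that every product formed while transporting the $\sim_2$-chain genuinely lands in the smaller domain $\U_2$, and — more delicately — that $\bigl(m_1(g,\gamma(\tau_0))\bigr)b = g\bigl(\gamma(\tau_0)b\bigr)$, so that the transported chain ends exactly at $(\delta(\tau))$ and not merely at some $\sim_1$-equivalent single-letter word. Both are dealt with by taking $g$ as close to the identities as needed and by assuming, as is implicit throughout this chapter, that the multiplications are $3$-associative (a property automatically inherited by the restriction $G_2$); without such control the argument yields only $\sim_2$-equivalence to a possibly different representative of the same $\sim_1$-class.
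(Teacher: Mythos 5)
Your proposal is correct and takes essentially the same route as the paper: both reduce, via \cref{prop:equivalence-relation}, to showing that a single $m_1$-contraction $(w_k,w_{k+1})\mapsto m_1(w_k,w_{k+1})$ can be realized by $\sim_2$-moves, and both establish this by the open--closed sliding argument along a path provided by products-connected-to-the-axes, inserting small invertible elements via bi-regularity — the paper merely packages this as a re-run of the proof of \cref{lem:contractionintoexpansion} for the restricted multiplication (producing a block that is then $m_2$-contracted away), which is your argument with different bookkeeping. The ``one small factor'' associativity $\bigl(g\gamma(\tau_0)\bigr)b = g\bigl(\gamma(\tau_0)b\bigr)$ that you flag as the delicate point is used just as tacitly in the paper's own openness and closedness steps, so your treatment is no less complete than the paper's.
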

\begin{proof}
    Let $W$ be the set of well-formed words on $G$.
    We consider two equivalence relations on $W$.
    We will write ${\sim_1}$ for the equivalence relation on $W$ generated by contractions
    and expansions for the multiplication on $G_1$,
    and ${\sim_2}$ for the equivalence relation on $W$ generated by contractions and
    expansions for the multiplication on $G_2$.
    (These are precisely the equivalence relations that were considered in the construction
    of the associative completions of $G_1$ and $G_2$.)
    Proposition \ref{prop:equivalence-relation} tells us that
    \[ g\in \Assoc_{x}(G_1) \iff (g)\sim_1 (x) \quad\text{and}\quad
    g\in \Assoc_{x}(G_2) \iff (g)\sim_2 (x) .\]
    It therefore suffices to show that ${\sim_1}$ and ${\sim_2}$ are the same equivalence relation.

    Clearly,
    $w\sim_2 w' \Rightarrow w\sim_1 w'$.
    We claim that the converse implication also holds, so that the two equivalence relations are equal.
    It suffices to show that two words that are elementarily equivalent for ${\sim_1}$
    are also equivalent for ${\sim_2}$.
    Let $(w_1,...,w_k,w_{k+1},...,w_n) \in W$ such that $(w_k,w_{k+1}) \in \U_1$.
    Following along with the proof of lemma \ref{lem:contractionintoexpansion}
    \emph{for the local groupoid $G_2$},
    we find that we can expand (for the multiplication $m_2$) the subword $(...,w_k,w_{k+1},...)$ into
    \[ (...,a_l,...,a_1,a_1^{-1},...,a_l^{-1},m_1(w_k,w_{k+1}),...), \]
    where the inverses are inverses for $G_2$ (and thus also for $G_1$, but that
    is not important).
    We can then contract this word (for the multiplication $m_2$) into
    \[ (...,m_1(w_k,w_{k+1}),...), \]
    so that $(w_1,...,w_k,w_{k+1},...,w_n) \sim_2 (w_1,...,w_kw_{k+1},...,w_n)$.
    This shows that ${\sim_1}$ and ${\sim_2}$ are equal,
    proving the result.
\end{proof}

Shrinking a local Lie groupoid (i.e.\ replacing it with a neighborhood of $M$ in $G$) \emph{can}
change the associators drastically.
To illustrate, we will give an example of a local Lie group $G$
such that $\Assoc(G)$ is not discrete. \phantomsection\label{page:ladder}
Because every local Lie group has a neighborhood of the identity that is globalizable
(and therefore has trivial associators), this shows that the associators can change from
non-discrete to discrete by shrinking the local groupoid.

\begin{example}[Local Lie group with non-discrete associators]
Let $M$ be a singleton. Let $B \subset \R^2$ be the ladder-shaped set
\[ B = (\{0\}\times \R) \cup (\{1\}\times \R) \cup ([0,1]\times \Z) \]
and let $G$ be its thickening
\[ G = \left\{ p \in \R^2 \mid \exists q \in B \text{ such that } d(p,q) < \frac1{10} \right\} .\]
(Here, $d$ is the Euclidean distance.)
We will equip $G$ with the structure of a local Lie group in such a way that $\Assoc(G)$ is not discrete.

For each $n \in \Z_{>0}$, let $f_n : [\frac1{10},\frac9{10}] \to \R_{>0}$ be a smooth function that
is 1 outside of $[\frac13, \frac23]$ and such that the time-$\frac{8}{10}$ flow of the vector field
$f_n\frac{\partial}{\partial x}$ on $\R$ maps $\frac1{10}$ to $\frac9{10} + \frac1{100n}$.
In other words, the flow of $f_n\frac{\partial}{\partial x}$ is just a little bit faster than
that of $\frac{\partial}{\partial x}$.
For each $n \in \Z_{\leq 0}$, let $f_n$ be the function on $[\frac1{10},\frac9{10}]$ that takes value 1 everywhere.
Now consider the following vector fields on $G$:
\[ X(x,y) = \begin{cases} f_n(x)\frac{\partial}{\partial x} &
        \text{if } x\in\left[\frac1{10},\frac9{10}\right] \text{ and } y\in(n-\frac1{10},n+\frac1{10}) \\
        \frac{\partial}{\partial x} & \text{otherwise}, \end{cases} \]
\[ Y(x,y) = \frac{\partial}{\partial y} .\]
There is a unique local Lie group structure on $G$ over $M$ such that $X$ and $Y$
are bi-invariant (more precisely, unique up to restriction if we ask that the local groupoid has
with products connected to the axes).
We claim that for this local Lie group structure, $\Assoc(G)$ is not discrete.

Let $a = (\frac1{20},0)\in G$ and $b = (0,\frac1{20}) \in G$.
Then consider the product
\[ \underbrace{a^{-1}\cdots a^{-1}}_{20} \underbrace{b^{-1}\cdots b^{-1}}_{20n}
   \underbrace{b\cdots b}_{20n} \underbrace{a\cdots a}_{20} ,\]
where the numbers under the braces indicate the number of repetitions.
Clearly, this product results in the neutral element $(0,0)$ when evaluated from the inside out.
However, evaluating first the second half $c = b\cdots b a\cdots a$ from left to right,
and then evaluating the rest $a^{-1}\cdots a^{-1} b^{-1}\cdots b^{-1} c$ from right to left,
we get $\left(\frac1{100n},0\right)$.
This shows that $\left(\frac1{100n},0\right) \in \Assoc(G)$ for all positive $n$, so that the associators do not
form a discrete set.
\end{example}

\begin{figure}
    \centering
    \begin{tikzpicture}[scale=2]
        \clip (-2,-0.6) rectangle (2,1.5);

        \foreach \y in {-2,-1,0,1} {
            \draw (0.1,\y+0.1) -- (0.9,\y+0.1) -- (0.9,\y+0.9) -- (0.1,\y+0.9) -- cycle;
        }
        \draw (-0.1,-2) -- (-0.1,2);
        \draw (1.1,-2) -- (1.1,2);

        \draw (0,0) -- (0,1) -- (1.05,1) -- (1.05,0) -- (0.05,0);
        \draw[thick, postaction={decorate}, decoration={markings,mark=at position 0.5 with {\arrow{Latex}}}] (0,0) -- (0,1);
        \draw[thick, postaction={decorate}, decoration={markings,mark=at position 0.5 with {\arrow{Latex}}}] (0,1) -- (1.05,1);
        \draw[thick, postaction={decorate}, decoration={markings,mark=at position 0.5 with {\arrow{Latex}}}] (1.05,1) -- (1.05,0);
        \draw[thick, postaction={decorate}, decoration={markings,mark=at position 0.5 with {\arrow{Latex}}}] (1.05,0) -- (0.05,0);

        \draw[fill=black] (0,0) circle (0.015);
        \draw[fill=white] (0.05,0) circle (0.015);

        \draw[dashed,line width=0.5pt] (0.025,0) circle (0.2);

        \coordinate (A) at (2.5cm/2/2,-0.2cm/2/2);
        \draw[line width=0.5pt] (0.106873, 0.182474) -- (-0.929381, 0.647422);
        \draw[line width=0.5pt] (0.0755409, -0.193509) -- (-1.02338, -0.480527);

        \pgflowlevel{\pgftransformshift{\pgfpoint{-2.5cm}{0.2cm}}};
        \pgflowlevel{\pgftransformscale{2}};
        \pgftransformscale{1.5};
        \draw[dashed] (0.025,0) circle (0.2);
        \clip (0.025,0) circle (0.2);

        \foreach \y in {-2,-1,0,1} {
            \draw (0.1,\y+0.1) -- (0.9,\y+0.1) -- (0.9,\y+0.9) -- (0.1,\y+0.9) -- cycle;
        }
        \draw (-0.1,-2) -- (-0.1,2);
        \draw (1.1,-2) -- (1.1,2);

        \draw[thick] (0,0) -- (0,1) -- (1.05,1) -- (1.05,0) -- (0.05,0);

        \draw[fill=black] (0,0) circle (0.015);
        \draw[fill=white] (0.05,0) circle (0.015);

        \draw[line width=0.2pt,{Latex[length=0.3mm, width=0.4mm]}-{Latex[length=0.3mm, width=0.4mm]}] (0,-0.03) -- (0.05,-0.03);
    \end{tikzpicture}
    \caption{The vector field $X$ is obtained by slightly modifying
        $\frac{\partial}{\partial x}$.
        The origin is the black dot.
        If we flow the origin along $Y$
        ($=\frac{\partial}{\partial y}$) for time 1, then along $X$ for time 1,
        then along $-Y$ for time 1, and finally along $-X$ for time 1, we end
        up slightly away from the origin, at the white dot.
        This white dot is an associator.
        The gap is exaggerated in the picture for clarity.}
\end{figure}
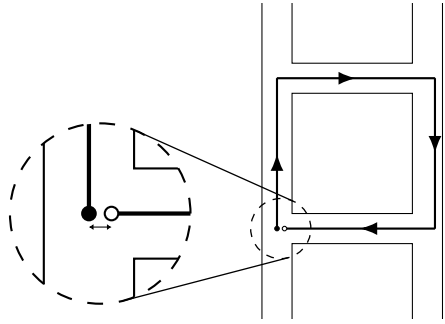

\section{Example}
\label{sec:example}

So far, we have not given an example of a local Lie groupoid with non-integrable algebroid.
We do this now.
The example will hopefully give the reader some intuition for associators.

Let $M = \Sphere^2\times \Sphere^2$, where each copy of $\Sphere^2$ is equipped with the usual round metric.
Let
\[ H = \{ ((y,y'),(x,x')) \in M\times M \mid x+y\neq 0\neq x'+y' \} \times \R .\]
We prescribe the source and target maps as
\[ s((y,y'),(x,x'),a) = (x,x') \in M \]
and
\[ t((y,y'),(x,x'),a) = (y,y') \in M .\]
We introduce a multiplication on $H$ by the formula
\[ ((z,z'),(y,y'),a_1) \cdot ((y,y'),(x,x'),a_2) = ((z,z'),(x,x'),a_1+a_2+A(\Delta xyz)+\lambda A(\Delta x'y'z')) ,\]
where $\lambda \in \R$ is a fixed parameter (recall that $A$ is the area),
and which is defined whenever
\[ A(\Delta xyz) \in (-\pi,\pi) \quad\text{and}\quad A(\Delta x'y'z') \in \left(-\frac{\pi}{\lvert \lambda\rvert},\frac{\pi}{\lvert\lambda\rvert}\right) .\]
(The condition on $A(\Delta x'y'z')$ is void if $\lambda = 0$.)
The algebroid of this local
Lie groupoid is integrable precisely if $\lambda$ is rational \cite[example 3.1]{lectures-integrability-lie}.

Let us check that $H$ is 3-associative.
Suppose that $((z,z'),(y,y'),a_1)$, $((y,y'),(x,x'),a_2)$ and $((x,x'),(w,w'),a_3)$ are in $H$.
Then
\begin{align*}
    & ( ((z,z'), (y,y'),a_1) \cdot ((y,y'),(x,x'),a_2) )\cdot ((x,x'),(w,w'),a_3) \\
    &= ((z,z'),(x,x'),a_1+a_2+a_3+A(\Delta xyz)+A(\Delta wxz)+\lambda A(\Delta x'y'z') + \lambda A(\Delta w'x'z'))
\end{align*}
and
\begin{align*}
    & ((z,z'), (y,y'),a_1) \cdot ( ((y,y'),(x,x'),a_2) \cdot ((x,x'),(w,w'),a_3) ) \\
    &= ((z,z'),(x,x'),a_1+a_2+a_3+A(\Delta wxy)+A(\Delta wyz)+\lambda A(\Delta w'x'y') + \lambda A(\Delta w'y'z')) .
\end{align*}
Now
\[ A(\Delta xyz) + A(\Delta wxz) = A(\Delta wxy) + A(\Delta wyz) \pmod{4\pi} \]
because both equal the area of the quadrangle $wxyz$ (which is defined up to $\pi$).
By the restriction on the areas of these triangles, equality must hold in $\R$ (not just in $\R/4\pi$).
Similarly, we will have
\[ \lambda A(\Delta x'y'z') + \lambda A(\Delta w'x'z') = \lambda A(\Delta w'x'y') + \lambda A(\Delta w'y'z') .\]
This proves 3-associativity.

This local Lie groupoid is not globally associative. The counterexample to 6-associativity
of $G''$ on \cpageref{example:gprimeprime} also works here,
by considering $G''$ as the local Lie subgroupoid $\{ (y,y'),(N,N) \} \times \R$ of $H$,
where $N \in \Sphere^2$ is the north pole.
Recall, however, that there was a neighborhood of $M \subset G''$ that was globally
associative.
This is not necessarily the case for $H$.

The following result will be a consequence of \cref{thm:assoc-mono}.

\begin{proposition}
    There is an open neighborhood of $M \subset H$ that is globally associative
    if and only if $\lambda \in \mathbb{Q}$ (i.e.\ iff the Lie algebroid is integrable).
    \label{lem:nonint}
\end{proposition}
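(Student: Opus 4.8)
The plan is to derive both implications from the monodromy computation for the algebroid of $H$: feeding it into \cref{thm:assoc-mono} for one direction, and into an explicit globalization for the other. First I would identify the Lie algebroid $A$ of $H$. Exactly as for the groupoid $\G$ over $\Sphere^2$ of \cref{sec:example-non-globalizable}, it is $TM\oplus\R$ over $M=\Sphere^2\times\Sphere^2$, with anchor the projection onto $TM$ and bracket twisted by the closed $2$-form $\varpi=\pr_1^\ast\omega+\lambda\,\pr_2^\ast\omega$, where $\omega$ is the round area form of total area $4\pi$ on $\Sphere^2$. Hence the isotropy algebra at any $x\in M$ is $\g_x=\R$, so $\G(\g_x)=\R$; the orbit of $x$ is all of $M$; and $\pi_2(M)\cong\Z^2$ is generated by $[\Sphere^2\times\{x_0'\}]$ and $[\{x_0\}\times\Sphere^2]$, on which the monodromy homomorphism takes the values $4\pi$ and $4\pi\lambda$ (cf.\ \cite[example 3.1]{lectures-integrability-lie}). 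Therefore
\[ \Mon_x \;=\; 4\pi\Z+4\pi\lambda\Z\;\subseteq\;\R=\G(\g_x)\qquad\text{for all }x\in M, \]
a discrete subgroup (equal to $\tfrac{4\pi}{q}\Z$ when $\lambda=p/q$ in lowest terms) if $\lambda\in\mathbb{Q}$, and a dense subgroup of $\R$ if $\lambda\notin\mathbb{Q}$.

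Next I would record that $H$ is bi-regular (left and right translations act on the source and target fibers, which are copies of $\R$, by translations), and that after shrinking it to a shrunk local Lie groupoid $H'$ — and restricting it so that it has products connected to the axes — \cref{thm:assoc-mono} applies and gives $\Assoc_x(H')=\Mon_x\cap H'_x$, where the natural local diffeomorphism $H'_x\to\G(\g_x)$ identifies $H'_x$ with a neighbourhood $(-\eps_x,\eps_x)$ of $0$ in $\R$.

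Suppose first $\lambda\notin\mathbb{Q}$. Since $\Mon_x=4\pi(\Z+\lambda\Z)$ is dense in $\R$, the set $\Assoc_x(H')=\Mon_x\cap(-\eps_x,\eps_x)$ is dense in $(-\eps_x,\eps_x)$, in particular strictly larger than $\{x\}$; and the same holds for \emph{every} shrinking of $H$. On the other hand, a globally associative local Lie groupoid has only trivial associators: a word witnessing $g\in\Assoc_x(U)$ evaluates both to $g$ and to $x$ with all intermediate products defined, so global associativity forces $g=x$; moreover global associativity is inherited by shrinkings. Combining these, no neighbourhood of $M$ in $H$ can be globally associative. Now suppose $\lambda=p/q\in\mathbb{Q}$ in lowest terms. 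Then $4\pi$ and $4\pi\lambda$ generate the lattice $L=\tfrac{4\pi}{q}\Z\subset\R$, and $\varpi$ — having all its spherical periods in $L$ — determines a Lie groupoid $\mathcal{G}$ integrating $A$, namely the $\R/L$--central extension of the pair groupoid $M\times M$ classified by $\varpi$; the map recording the $\R$-coordinate of a point of $H$ only modulo $L$ (together with its underlying pair of points) is a morphism $H\to\mathcal{G}$ that is a local diffeomorphism near $M$. Hence a small enough neighbourhood of $M$ in $H$ is isomorphic, as a local Lie groupoid, to an open neighbourhood of $M$ in the Lie groupoid $\mathcal{G}$, so it is globalizable and therefore globally associative. (One can also reach this conclusion through \cref{thm:classification-groupoids}, applied to a shrinking of $H$: since $A$ is integrable, that theorem exhibits $\tilde H$, hence $H$ near $M$, as a generalized covering of an open part of $\G(A)$, which is globalizable.) This produces the required globally associative neighbourhood of $M$ in $H$ and finishes the argument.

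The genuinely substantial input is \cref{thm:assoc-mono} itself; granting it, the rational/irrational dichotomy is simply the discreteness/density dichotomy for $4\pi\Z+4\pi\lambda\Z$. The points that need a little care are: arranging the standing hypotheses (bi-regularity, products connected to the axes, being ``shrunk'') so that \cref{thm:assoc-mono} can be invoked; and, for the $\lambda\in\mathbb{Q}$ direction, checking that the quotient construction really yields a Lie groupoid with algebroid $A$ and that $H\to\mathcal{G}$ is a local diffeomorphism near $M$. The latter is exactly where discreteness of the monodromy is used, and it is what turns ``trivial associators'' — which by itself does \emph{not} imply global associativity, cf.\ the remark after \cref{prop:equivalence-relation} — into genuine globalizability.
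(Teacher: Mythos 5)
Your proposal is correct and follows essentially the route the paper intends: the proposition is stated there as a direct consequence of \cref{thm:assoc-mono}, and your irrational case is exactly that consequence spelled out (dense monodromy $4\pi\Z+4\pi\lambda\Z$ forces nontrivial associators in every shrunk neighborhood, hence no globally associative neighborhood of $M$). For $\lambda\in\mathbb{Q}$ the paper leans on integrability plus the classification results to produce a globalizable (hence globally associative) neighborhood, which you also cite; your explicit $\R/L$-central-extension globalization of $H$ near $M$ is an equivalent, slightly more hands-on way to finish that direction.
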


\section{Review of simplicial complexes and sets}
\label{sec:review-simplicial}

We will describe the precise link between associators and monodromy groups later.
First, we quickly review the basics of simplicial complexes and simplicial sets.

An \emph{(abstract) simplicial complex} is a collection $S$ of finite non-empty sets,
such that if $A$ is an element of $S$, then so is every non-empty subset of $A$.
The elements of $S$ are called \emph{simplices}.
A simplex that contains $k+1$ elements is called a \emph{$k$-simplex}.
We will use the terms \emph{vertices}, \emph{edges} and \emph{faces} for the $0$-, $1$- and $2$-simplices
of a simplicial complex.
A face is said to be \emph{attached} to an edge if the edge is contained in the face.
In our arguments, $S$ will always be finite.
The \emph{dimension} of a simplicial complex is the largest $k$ for which it has a $k$-simplex.
The \emph{standard $n$-simplex} is the power set of $\{0,...,n\}$, with the empty set removed.
It is a simplicial complex of dimension $n$, and we will denote it by $[n]$.

An \emph{ordered simplicial complex} is a simplicial complex $S$,
together with a partial ordering of its vertices, such that the order is linear (i.e. total)
on each simplex.
The standard $n$-simplex comes with an obvious ordering of its vertices, making it into an
ordered simplicial complex.

Let $\Delta$ be the simplex category (its objects are linearly ordered sets of the form $\{0,...,n\}$,
and its morphisms are (non-strictly) order-preserving functions between such sets).
A \emph{simplicial set} is a contravariant functor from $\Delta$ to the category of small sets.
A morphism of simplicial sets is a natural transformation between them.
The \emph{degeneracy maps} of a simplicial set are the images of all of the maps
from $\{0,...,n+1\}$ to $\{0,...,n\}$ (for any $n$).
The \emph{face maps} of a simplicial set are the images of all of the maps
from $\{0,...,n\}$ to $\{0,...,n+1\}$ (for any $n$).

Every ordered simplicial complex $S$ can be turned into a simplicial set,
by prescribing it on objects as
\[ \{0,...,n\} \mapsto \{ \text{order-preserving maps $\{0,...,n\}\to\Verts(S)$ whose image is in $S$} \} ,\]
where $\Verts(S)$ is the set of vertices of $S$,
and on morphisms by composition.

Given a local Lie groupoid $G$ over $M$, we can associate to it a simplicial set $\Nerve_2G$
of dimension 2 by setting
\begin{align*}
    \{ \text{0-simplices} \} &= M \\
    \{ \text{1-simplices} \} &= G \\
    \{ \text{2-simplices} \} &= \U \subset G \timesst G .
\end{align*}
(Recall that $\U$ is the domain of the multiplication map.)
The higher-dimensional simplices are all degenerate.
The degeneracy maps insert identity elements, the face maps $G\to M$ are $s$ and $t$,
the face maps $\U \to G$ are $(g,h)\mapsto h$, $(g,h)\mapsto gh$, $(g,h)\mapsto g$.

\section{Expansion and contraction of simplicial complexes}
\label{sec:expansion-contraction}

Let $W_k$ be the ordered simplicial complex
\[ \{ \{0\}, ..., \{k\} \} \cup \{ \{0,1\}, \{1,2\},...,\{k-1,k\} \} .\]
It is shown in figure~\ref{fig:wk}.
A map $W_k \to \Nerve_2G$ is nothing but a well-formed word on $G$ of length $k$.
We now want to express equivalence of words (for the equivalence relation ${\sim}$ of before)
using simplicial terminology.

\begin{figure}
\begin{center}
    \begin{tikzpicture}
        \draw[thick,dotted] (2.4,0) -- (3.4,0);
        \fill (0,0) circle (0.05);
        \fill (1,0) circle (0.05);
        \fill (2,0) circle (0.05);
        \fill (3.8,0) circle (0.05);
        \fill (4.8,0) circle (0.05);
        \begin{scope}[thick,decoration={
            markings,
            mark=at position 0.55 with {\arrow{<}}}
            ] 
            \draw[postaction={decorate}] (0,0) -- (1,0);
            \draw[postaction={decorate}] (1,0) -- (2,0);
            \draw[] (2,0) -- (2.4,0);
            \draw[] (3.4,0) -- (3.8,0);
            \draw[postaction={decorate}] (3.8,0) -- (4.8,0);
        \end{scope}
        \draw[decoration={brace,raise=0.5cm},decorate] (0,0) -- (4.8,0)
        node[pos=0.5,yshift=0.6cm,anchor=south] {$k$ edges};
    \end{tikzpicture}
\end{center}
\caption{The simplicial complex $W_k$ is just a sequence of $k$ edges, head to tail.}
\label{fig:wk}
\end{figure}
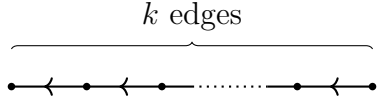

The simplicial complexes that we will work with will be particularly well-behaved:
they will be 2-dimensional, and each edge will have
at most two faces attached to it.
We will say that an edge is a \emph{boundary edge} if it has at most one face attached to it.

Suppose that $S$ is an ordered simplicial complex.
If $\{u,w\}$ is a boundary edge of $S$ with $u < w$,
we can obtain a new ordered simplicial complex $S'$ by
\begin{enumerate}
    \item adding a new vertex $v$ to the simplicial complex
        with $u<v<w$,
    \item adding edges $\{u,v\}$ and $\{v,w\}$, and a face $\{u,v,w\}$.
\end{enumerate}
We will say that $S'$ can be obtained from $S$ by \emph{expansion}.

Suppose that $S$ is an ordered simplicial complex.
If $\{u,v\}$ and $\{v,w\}$ are boundary edges of $S$, with $u < v < w$,
and $\{u,w\}$ is not an edge of $S$,
we can obtain a new ordered simplicial complex $S'$ by
\begin{enumerate}
    \item adding the edge $\{u,w\}$,
    \item adding the face $\{u,v,w\}$.
\end{enumerate}
We will say that $S'$ can be obtained from $S$ by \emph{contraction}.

\begin{definition}
    We say that an ordered simplicial complex $S$ is a \emph{good complex}
    if there is an integer $k\geq 1$ such that $S$ can be obtained
    from $W_k$ by repeated expansion or contraction.
\end{definition}

Note that a good complex comes with a choice of two vertices:
there is one vertex that is minimal among boundary vertices (we will call it the \emph{source} of the good complex)
and one vertex that is maximal among boundary vertices (we will call it the \emph{target} of the good complex).

\begin{definition}
    If $S$ is a good complex, a \emph{boundary path} of $S$ is an ordered subcomplex $S'$ of $S$
    that is contained in its boundary,
    and for which there is an isomorphism of ordered complexes $W_k \to S'$
    that maps the source of $W_k$ to the source of $S$, and the target of $W_k$ to the target of $S'$.
\end{definition}

\begin{remark}
    A good complex can have many boundary paths. For example,
    the complex shown in figure \ref{fig:good-complex-not-disk} has eight of them.
    The good complexes that will be most relevant for us are those that are
    homeomorphic to a disk. These have precisely two boundary paths.

    Every good complex is a bunch of disks and lines attached source-to-target (like in
    \ref{fig:good-complex-not-disk}, where the complex consists of two disks, a line,
    and another disk).
    If the number of such disks is $d$, there are $2^d$ boundary paths.
    Whenever we ask for a good complex to be homeomorphic to a disk, our goal
    is to be able to speak unambiguously about its two boundary paths.
\end{remark}

In other words, a boundary path of $S$ is just a path from source to target along the boundary of $S$
such that the vertices are increasing along the path.

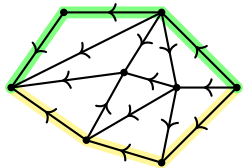
\begin{figure}
\begin{center}
    \begin{tikzpicture}
        \path (3,1) coordinate (a)
              (2,0) coordinate (b)
              (2,2) coordinate (c)
              (1.5,1.2) coordinate (d)
              (2.2,1) coordinate (h)
              (1,0.3) coordinate (e)
              (0.7,2) coordinate (f)
              (0,1) coordinate (g);
        \draw[line width=4.5, yellow!50, line cap=round] (a) -- (b) -- (e) -- (g);
        \draw[line width=4.5, green!50, line cap=round] (a) -- (c) -- (f) -- (g);
        \fill (a) circle (0.05);
        \fill (b) circle (0.05);
        \fill (c) circle (0.05);
        \fill (d) circle (0.05);
        \fill (h) circle (0.05);
        \fill (e) circle (0.05);
        \fill (f) circle (0.05);
        \fill (g) circle (0.05);
        \begin{scope}[thick,line cap=round,decoration={
            markings,
            mark=at position 0.55 with {\arrow{>}}}
            ] 
            \draw[postaction={decorate}] (a)--(b); 
            \draw[postaction={decorate}] (b)--(e);
            \draw[postaction={decorate}] (e)--(g);
            \draw[postaction={decorate}] (a)--(h); 
            \draw[postaction={decorate}] (h)--(b);
            \draw[postaction={decorate}] (h)--(e); 
            \draw[postaction={decorate}] (e)--(d); 
            \draw[postaction={decorate}] (d)--(g);
            \draw[postaction={decorate}] (h)--(d); 
            \draw[postaction={decorate}] (a)--(c); 
            \draw[postaction={decorate}] (c)--(h);
            \draw[postaction={decorate}] (c)--(d); 
            \draw[postaction={decorate}] (c)--(g); 
            \draw[postaction={decorate}] (c)--(f); 
            \draw[postaction={decorate}] (f)--(g);
        \end{scope}
    \end{tikzpicture}
\end{center}
\caption{A good complex. This one can be obtained from the yellow-marked $W_3$ on the bottom
    by (for example) an expansion, a contraction, an expansion,
    a contraction, an expansion, two contractions and an expansion.
    The source is the rightmost vertex, the target is the leftmost vertex.
    This complex has two boundary paths (one on the top, one on the bottom).
    They are highlighted in green and yellow.}
\end{figure}

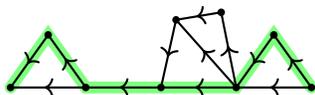
\begin{figure}
\begin{center}
    \begin{tikzpicture}
        \path (0,0) coordinate (a)
              (1,0) coordinate (b)
              (2,0) coordinate (c)
              (3,0) coordinate (d)
              (4,0) coordinate (e)
              (0.5,0.7) coordinate (f)
              (2.2,0.9) coordinate (g)
              (2.8,1) coordinate (h)
              (3.5,0.7) coordinate (i);
        \draw[line width=4.5, green!50, line cap=round] (e) -- (i) -- (d) -- (c) -- (b) -- (f) -- (a);
        \fill (a) circle (0.05);
        \fill (b) circle (0.05);
        \fill (c) circle (0.05);
        \fill (d) circle (0.05);
        \fill (e) circle (0.05);
        \fill (f) circle (0.05);
        \fill (g) circle (0.05);
        \fill (h) circle (0.05);
        \fill (i) circle (0.05);
        \begin{scope}[thick,line cap=round,decoration={
            markings,
            mark=at position 0.55 with {\arrow{>}}}
            ] 
            \draw[postaction={decorate}] (e)--(d);
            \draw[postaction={decorate}] (d)--(c);
            \draw[postaction={decorate}] (c)--(b);
            \draw[postaction={decorate}] (b)--(a);
            \draw[postaction={decorate}] (e)--(i);
            \draw[postaction={decorate}] (i)--(d);
            \draw[postaction={decorate}] (d)--(h);
            \draw[postaction={decorate}] (d)--(g);
            \draw[postaction={decorate}] (h)--(g);
            \draw[postaction={decorate}] (g)--(c);
            \draw[postaction={decorate}] (b)--(f);
            \draw[postaction={decorate}] (f)--(a);
        \end{scope}
    \end{tikzpicture}
\end{center}
\caption{A good complex that is not homeomorphic to a disk.
This one has eight boundary paths. One of them is highlighted.}
\label{fig:good-complex-not-disk}
\end{figure}

\begin{definition}
    If $S$ is a good complex (which we can consider as a simplicial set)
    and $\phi : S \to \Nerve_2G$ is a simplicial map,
    then the words corresponding to the restriction of $\phi$ to the boundary paths of $S$
    are called the \emph{boundary words} of $\phi$.
\end{definition}

The following result now holds more or less by construction.

\begin{proposition}
    Two well-formed words on $G$ are equivalent (for the equivalence relation ${\sim}$ defined earlier)
    if and only if there is a good complex $S$ and a simplicial map $\phi : S \to \Nerve_2G$
    such that both words are boundary words of $\phi$.
    \label{prop:equivalence-geometrically}
\end{proposition}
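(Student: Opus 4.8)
The plan is to prove both implications by setting up a dictionary between the two elementary moves on words (contraction and expansion of well-formed words) and the two elementary moves on ordered simplicial complexes (contraction and expansion of good complexes), and then checking that a chain of moves on one side produces a chain of moves on the other. The bridge is the observation that a $2$-simplex of $\Nerve_2 G$ is by definition an element $(g,h)\in\U$, and that along its boundary the two short edges carry $g$ and $h$ while the long edge carries the product $gh$; so ``$(g,h)\in\U$ together with the element $gh$'' is exactly the data of a $2$-simplex of $\Nerve_2 G$, which is exactly what a simplicial map into $\Nerve_2 G$ records on a face. Under this dictionary, attaching a single triangle along a boundary path of $S$ and extending $\phi$ over it corresponds to performing one elementary equivalence on the associated boundary word, and conversely.

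For the implication ``$\Rightarrow$'' I would fix a chain $w=w^0,w^1,\dots,w^n=w'$ in which each $w^{i+1}$ is obtained from $w^i$ by a single word-contraction or word-expansion, and build by induction on $i$ a good complex $S_i$ and a simplicial map $\phi_i\colon S_i\to\Nerve_2 G$ such that both $w^0$ and $w^i$ occur as boundary words of $\phi_i$. For $i=0$ take $S_0=W_{|w|}$ with $\phi_0$ equal to the word $w$; its unique boundary path realises $w^0=w$. In the inductive step, a word-contraction is realised by a complex-contraction (add the edge $\{u,w\}$ labelled $w_kw_{k+1}$ and the face $\{u,v_0,w\}$ mapped to $(w_k,w_{k+1})\in\U$), and a word-expansion by a complex-expansion (split the edge labelled $g$ at a fresh vertex $v_0$, with the new face mapped to the chosen factorisation $(w_k,w_{k+1})\in\U$, $w_kw_{k+1}=g$); this keeps $S_{i+1}$ good and, because the new face lands in $\U\subseteq\Nerve_2 G$, extends $\phi_i$ to a simplicial $\phi_{i+1}$ whose ``current'' boundary path realises $w^{i+1}$. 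The one thing that needs checking is that $w^0$ is still a boundary word of $\phi_{i+1}$: the original $W_{|w|}$ survives as a subcomplex of $S_{i+1}$ since moves only add simplices, and — provided fresh vertex names are used throughout — each of its edges can be touched by at most one move before it leaves the current word forever, hence acquires at most one face, hence stays a boundary edge; so $W_{|w|}$ is still a boundary path. Taking $S=S_n$, $\phi=\phi_n$ finishes this direction.

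For the implication ``$\Leftarrow$'', given simplicial $\phi\colon S\to\Nerve_2 G$ with $S$ a good complex and boundary words $v,v'$, I would induct on the number of $2$-simplices of $S$. If $S$ has no $2$-simplices then $S=W_k$ (any expansion or contraction of complexes adds a face), which has a single boundary path, so $v=v'$. Otherwise, present $S$ as an iterated expansion/contraction of some $W_k$ and undo the last move, obtaining a good complex $S'$ with one fewer face and $\phi'=\phi|_{S'}$ still simplicial. A short case analysis shows that each boundary path of $S$ either already lies in $\partial S'$ (hence is a boundary path of $S'$), or traverses the triangle of the last move and, after swapping that triangle's two-edge side for its one-edge side or vice versa, becomes a boundary path of $S'$; correspondingly the boundary word $v$ either equals the boundary word of $\phi'$ or differs from it by exactly one word-contraction (if the undone move was an expansion) or one word-expansion (if it was a contraction), where again simpliciality of $\phi$ forces the two consecutive letters to multiply in $\U$ to the third. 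So $v\sim\bar v$ and $v'\sim\bar v'$ for boundary words $\bar v,\bar v'$ of $\phi'$, and by induction $\bar v\sim\bar v'$, whence $v\sim v'$.

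The routine-but-fiddly part, and the only real obstacle, is the boundary-edge bookkeeping: in the ``$\Rightarrow$'' direction one must guarantee the frozen path $W_{|w|}$ never develops an edge with two faces (handled by fresh vertex names, which also keeps complex-contractions from attempting to recreate an existing edge, as their definition requires), and in the ``$\Leftarrow$'' direction one must check that undoing the last move yields a good complex, that $\phi$ restricts to a simplicial map, and that a boundary path of $S$ reroutes to a boundary path of $S'$. One also has to align the ordering $u<v_0<w$ with the ``arrows go right to left'' convention so that $\{u,v_0\}$, $\{v_0,w\}$, $\{u,w\}$ carry $w_{k+1}$, $w_k$, $w_kw_{k+1}$ in the correct order. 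None of this is deep — the statement genuinely holds essentially by construction — but these verifications are where the content lies.
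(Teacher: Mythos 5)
Your overall strategy is the same as the paper's: translate each elementary word move into the corresponding expansion/contraction of good complexes and induct along the chain, then reverse the dictionary for the converse (where your peel-off-the-last-face induction is actually more detailed than the paper's one-sentence treatment, and your bookkeeping that the frozen copy of $W_{|w|}$ acquires at most one face per edge is fine). However, there is a genuine gap at exactly the point you flag and then dismiss: a complex-contraction is only permitted when the edge $\{u,w\}$ it must add is \emph{not} already present, and fresh vertex names do nothing to guarantee this, because a contraction adds no vertex at all --- the obstructing edge joins two \emph{old} vertices. Concretely, take the chain $(a,b)\to(ab)\to(c,d)\to(cd)$ with $cd=ab$: the first contraction adds the edge $\{u,w\}$ and a face; the expansion inserts a fresh vertex $v'$ and attaches a \emph{second} face to $\{u,w\}$; at the final contraction the edge $\{u,w\}$ already exists, so the move is illegal, and since $\{u,w\}$ now carries two faces it is not even a boundary edge, so the contracted word $(cd)$ is not a boundary word of the current complex --- your inductive hypothesis cannot be rescued by ``doing nothing'' either. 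The same obstruction already appears for an expansion immediately undone by a contraction.

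The repair is the one the paper uses: choose the chain $w=w^0,\dots,w^n=w'$ of elementary equivalences to be of minimal length (so that no expansion is later cancelled by a contraction of the very letters it created). With a shortest chain the edge a contraction would add can never pre-exist, every complex move is legal, and your induction goes through verbatim. So the approach is right and matches the paper, but as written the forward direction fails on such chains, and the missing ingredient is the minimality (or reducedness) of the chosen sequence of word moves rather than any choice of vertex names.
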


\begin{proof}
    If the two words $w$ and $w'$ are equivalent, then there is a sequence of expansions and contractions
    of words turning $w$ into $w'$.
    Let $w = w_0, w_1, ..., w_n = w'$ be the shortest sequence of expansions and contractions
    from $w$ to $w'$.
    We can use this sequence to build a good complex $S$ and a map $\phi : S \to \Nerve_2G$,
    as we explain now.

    Start with a map $\phi_0 : W_k \to \Nerve_2G$ representing $w_0$ (so $w_0$ has $k$ letters).
    If $w_1$ is an expansion of $w_0$, we can expand $W_k$ in the corresponding edge
    to get a map $\phi_1 : S_1 \to \Nerve_2G$ with boundary words $w_0$ and $w_1$.
    If $w_1$ is a contraction of $w_1$, we can contract $W_k$ in the corresponding edge
    to get a map $\phi_1 : S_1 \to \Nerve_2G$ with boundary words $w_0$ and $w_1$.
    (Note that the edge that we need to add to the complex for the contraction is not already present.
    This edge can only be present if we have an expansion followed by a contraction
    undoing that expansion, but we chose the shortest sequence of words linking $w$ to $w'$.)

    Conversely, if we have a good complex $S$ and a map $\phi : S \to \Nerve_2G$ with boundary words $w_1$
    and $w_2$, we can use this information to exhibit a sequence of expansions and contractions of words
    turning $w_1$ into $w_2$.
\end{proof}

\section{Monodromy groups}
\label{sec:monodromy}

The integrability of a Lie algebroid is controlled by its \emph{monodromy groups} \cite{crainic-fernandes-lie}.
The monodromy group $\Mon_x$ at $x$ is a subgroup of $\G(\g_x)$ contained in the center $Z(\G(\g_x))$.
Its construction uses the language of $A$-paths, which we will assume the reader
is familiar with.
A detailed exposition can be found in \cite{crainic-fernandes-lie}
and \cite[sections 2.2, 3.3 and 3.4]{lectures-integrability-lie}.
Let us briefly recall the construction of $\Mon_x$.
Let $A$ be a Lie algebroid over $M$.

\begin{definition}
    Suppose $a$ and $a'$ are two $A$-paths over the same base path.
    We say that $a$ and $a'$ are \emph{$A$-homotopic along a trivial sphere}
    if there is an $A$-homotopy between $a$ and $a'$ whose base homotopy
    determines a trivial element in $\pi_2(\Orbit_x)$ (where $\Orbit_x$ is the orbit
    of $x$).
\end{definition}

Suppose $a_0$ is an $A$-path with base path
$\gamma_0$ in the orbit $\Orbit_x$.
If $S : [0,1]^2 \to \Orbit_x$ is such that
\begin{itemize}
    \item $S(0,t) = \gamma_0(t)$,
    \item $S(s,0) = \gamma_0(0)$ and $S(s,1) = \gamma_0(1)$ for all $s$,
\end{itemize}
then we can lift $dS : TI\times TI \to T\Orbit$ to a morphism of algebroids
$a \,dt + b\,ds : TI\times TI \to A$ such that $a(0,t)=a_0(t)$ and such that
$b(s,0)$ and $b(s,1)$ vanish.
We obtain an $A$-path $a_1(t) = a(1,t)$.
Its class modulo $A$-homotopies along trivial spheres
only depends on
the homotopy class (rel boundary) of $S$ \cite[proof op prop. 3.21]{lectures-integrability-lie}.
Let us write $\tilde{\partial}(a_0, S)$ for the class of $a_1$ modulo
$A$-homotopy along trivial spheres.

Now if in the above situation the path $t\mapsto S(1,t)$ is contant,
say at $x\in M$,
then $\tilde{\partial}(a_0, S)$ is (the class of) a $\g_x$-path.
By integration to a $\G(\g_x)$-path,
it determines an element of $\G(\g_x)$, which we call
\[ \partial(a_0,S) = \text{endpoint of integration of $\tilde{\partial}(a_0,S)$} .\]
We will also write $0_x$ for the trivial $A$-path at $x$.
The monodromy group $\Mon_x$ is defined as the image of
\[ \partial : \pi_2(\Orbit_x) \to \G(\g_x) : [\alpha] \mapsto \partial(0_x, \alpha) ,\]
where we consider an element $[\alpha] \in \pi_2(\Orbit_x,x)$ as represented by a map $[0,1]^2\to\Orbit_x$.

\section{Associators are monodromy elements}
\label{sec:assoc-is-mono}

In order to give a precise connection between monodromy groups and associators,
we will need a condition on $G$ that ensures that elements can only be multiplied
if they are sufficiently small (i.e.\ we may need to restrict $G$'s multiplication).

We will need some auxiliary data.
The goal of this data is to associate to every sufficiently small element $g\in G$
an $A$-path $P(g)$ representing it (up to homotopy
along trivial spheres).
Moreover, we will do this in such a way that
if $g_1,g_2 \in G$ satisfy $s(g_1)=s(g_2)$ and $t(g_1)=t(g_2)$, then
$P(g_1)$ and $P(g_2)$ lie over the same base path in $M$.

Choose a function $f:[0,1]\to[0,1]$ that is 0 near 0 and 1 near 1. We will use it
to reparameterize curves to ensure smoothness of concatenations (but it has no further
meaning).
Choose a metric $\langle\cdot,\cdot\rangle$ on $M$.
Pick an $A$-connection $\nabla$ on $A$.
This connection allows us to define an exponential map $\exp_\nabla : A \to G$,
defined in a neighborhood of the zero section $M$ (\cite[section 4.4]{lectures-integrability-lie}).
If $g\in G$ is sufficiently close to $M$, then it is in the image of $\exp_{\nabla}$,
say $g=\exp(\xi)$.
Write $x=s(g)$. Then we have a $G$-path $\tau \mapsto \exp(f(\tau)g)$.
Associated to this $G$-path is an $A$-path (by differentiating and applying the
Maurer-Cartan form), which we call $\tilde{P}(g)$.
Moreover, if $g\in G$ is sufficiently close to $M$, the base path of $\tilde{P}(g)$
is contained completely in a uniformly normal subset of $M$.
(An open subset of a Riemannian manifold is \emph{uniformly normal} if there exists
some $\delta > 0$ such that the subset is contained in a geodesic ball of radius
$\delta$ around each of its points. See \cite[lemma 5.12]{lee-riemannian}.
Important for us will be the fact that between any two points in a uniformly normal
neighborhood, there is a unique minimizing geodesic, and it lies in this neighborhood.)
Now we modify the $A$-path $\tilde{P}(g)$ into an $A$-path $P(g)$ that lies
over the geodesic $\gamma_{s(g),t(g)}$ from $s(g)$ to $t(g)$.
We do this by prescribing
\[ P(g) = \partial(\tilde{P}(g), [\alpha]) ,\]
where $[\alpha] : [0,1] \times [0,1] \to M$ is the map such that
$s\mapsto \alpha(t,s)$ is the geodesic from $(\rho\circ\tilde{P}(g))(t)$
to $\gamma_{s(g),t(g)}(t)$.
This way, we have associated to each $g\in G$ sufficiently close to $M$
an $A$-path $P(g)$ that lies over the geodesic from $s(g)$ to $t(g)$.
Write $P(g_k,...,g_1)$ for the concatenation of $P(g_1)$, ..., $P(g_k)$.
If $w = (w_k,...,w_1)$ is a well-formed word on $G$,
write $P(w) = P(w_k,...,w_1)$.

Write $\Delta[k]$ for the standard $k$-simplex:
\[ \Delta[k] = \left\{ (\lambda_0,...,\lambda_k) \in \R^{k+1} \mid \lambda_i \geq 0, \sum_i \lambda_i = 1 \right\} .\]
The boundary of $\Delta[2]$ consists of three line segments. If $(g,h) \in \U$,
write $\beta_{(g,h)}$ for the map $\partial\Delta[2] \to M$ that sends
the first edge of $\Delta[2]$ to the base path of $P(g)$,
the second edge to the base path of $P(gh)$, and the third edge to the base path of $P(h)$.

Let $S''$ be a neighborhood of the diagonal in $M\times M$ such that for all $(x,y)\in S''$
there is a unique shortest geodesic from $x$ to $y$.
Let
\[ S' = \{ (x,y,z) \in M\times M\times M \mid (x,y), (y,z), (x,z) \in S'' \} .\]
If $(x,y,z) \in S'$, we will write $\beta_{(x,y,z)} : \partial \Delta[2] \to M$ for the
map that sends the edges of $\Delta[2]$ to the three geodesics between $x$, $y$ and $z$.
Consider the space
\[ S = \left\{ (x,y,z,[\alpha]) \in S'\times \left( \frac{\text{maps $\Delta[2] \to M$}}{\text{homotopy rel $\partial$}} \right) \mid
    \alpha{\restriction_{\partial \Delta[2]}} = \beta_{(x,y,z)}
    \right\} .\]

It is a smooth manifold, and the map $\pi : S \to M^3 : (x,y,z,[\alpha]) \mapsto (x,y,z)$
is a local diffeomorphism.
Note that
\[ T = \{ (x,y,z) \in M\times M\times M \mid x=y \text{ or } y=z \text{ or } x=z \} \]
naturally sits inside $S$ by
\[ T\ni (x,y,z)\mapsto (x,y,z,[\text{maps whose image is contained in the image of $\beta_{(x,y,z)}$}]) .\]
Choose a map
$\varphi_2 : U \to S$
from a neighborhood $U$ of $T$ in $M^3$
such that
\begin{itemize}
    \item $\pi \circ \varphi_2 = \id_{U}$,
    \item $\varphi_2{\restriction_T} = \id_T$.
\end{itemize}

\begin{definition}
    Suppose that $G$ is a local Lie groupoid over $M$, and that we have chosen
    auxiliary information $(f,\langle\cdot,\cdot\rangle,\nabla,\varphi_2)$ as above.
    We say that $G$ is \emph{shrunk} if
    \begin{enumerate}
        \item the path $P(g)$ is defined for every $g\in G$,
        \item the isotropy groups $G_x$ are simply connected
            (this implies that we have well-defined maps $G_x \to \G(\g_x)$
            as explained below),
        \item for each $x\in M$, the map $G_x \to \G(\g_x)$
            is injective,
        \item if $(g,h) \in \U$ then $\{s(h),t(h)=s(g),t(g)\} \in U$,
            so that we may write $\varphi_2(g,h) := \varphi_2(s(h),t(h),t(g))$,
        \item for every $(g,h) \in \U$, the class of $P(gh)$
            (as $A$-path modulo $A$-homotopy along trivial spheres)
            equals
            \[ \tilde{\partial}(P(g,h), \varphi_2(g,h)) .\]
            \label{condition-shrunk-essence}
    \end{enumerate}
\end{definition}

The map $G_x \to \G(\g_x)$ works as follows. Let $g\in G_x$.
Take a path in $G_x$ from $x$ to $g$. Differentiate this path
to get a $\g_x$-path. This path integrates to a $\G(\g_x)$-path.
The endpoint of this path is the image of $g$ under the map $G_x \to \G(\g_x)$.
If $G_x$ is simply-connected, this map is well-defined, because any two paths
in $G_x$ from $x$ to $g$ are homotopic (and this homotopy induces an $A$-homotopy
at the level of $A$-paths, which implies that the integrations to $\G(\g_x)$-paths
have the same endpoint).

Every transitive local Lie groupoid has an open neighborhood of the identities
that is shrunk (after restricting).
The first three conditions can be satisfied by shrinking $G$.
The fourth can be satisfied by restricting the multiplication.
The last condition can be satisfied by shrinking and restricting:
it is automatically satisfied for a global Lie groupoid,
and over every contractible neighborhood $U$, the algebroid
is integrable (and over $U$ and near $M$, the local groupoid is
isomorphic to a global integration of $A{\restriction_U}$,
so that the condition holds for these small groupoid elements).

\begin{proposition}
    Suppose that $G$ is a shrunk local Lie groupoid over $M$.
    If $w = (w_1,...,w_k)$ and $w' = (w'_1,...,w'_{k'})$ are equivalent
    words, then the $A$-paths $P(w_1,...,w_k)$ and $P(w'_1,...,w'_{k'})$ are $A$-homotopic.
\end{proposition}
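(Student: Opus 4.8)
The plan is to reduce the claim to a single elementary equivalence, then verify that an expansion or contraction preserves the $A$-homotopy class of the associated concatenated $A$-path. Since $w \sim w'$ means the two words are connected by a finite chain of expansions and contractions, and since $A$-homotopy of $A$-paths is an equivalence relation (in particular transitive), it suffices to show: if $w'$ is obtained from $w$ by a single contraction (equivalently, $w$ from $w'$ by a single expansion), then $P(w)$ and $P(w')$ are $A$-homotopic. Both $P(w)$ and $P(w')$ are concatenations of the building blocks $P(w_i)$, and a contraction only changes one spot, replacing $P(w_k) \ast P(w_{k+1})$ (in the appropriate order) by $P(w_k w_{k+1})$ while leaving all other factors untouched. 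Because concatenation of $A$-homotopic $A$-paths with fixed common factors yields $A$-homotopic concatenations, the whole problem localizes to the two-letter case: show $P(g) \ast P(h)$ is $A$-homotopic to $P(gh)$ whenever $(g,h) \in \U$.

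\textbf{Reducing to condition \eqref{condition-shrunk-essence}.} This two-letter statement is essentially what it means for $G$ to be shrunk. By condition \eqref{condition-shrunk-essence} in the definition of a shrunk local Lie groupoid, the class of $P(gh)$ modulo $A$-homotopy along trivial spheres equals $\tilde{\partial}(P(g,h), \varphi_2(g,h))$. So the plan is: first, unwind the definition of $\tilde{\partial}$ applied to the concatenation $P(g,h) = P(g) \ast P(h)$ together with the chosen filling disk $\varphi_2(g,h)$ of the triangle $\beta_{(g,h)}$ in $M$; this produces an $A$-path over the geodesic from $s(h)$ to $t(g)$, together with an explicit $A$-homotopy (the lift $a\,dt + b\,ds$ of the disk $dS$) relating $P(g) \ast P(h)$ to that output $A$-path. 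Second, observe that the base homotopy here is the disk $\varphi_2(g,h)$, whose boundary is $\beta_{(g,h)}$, which in turn is precisely the concatenation of the base paths of $P(g)$, $P(gh)$ (reversed), $P(h)$; hence $\tilde\partial(P(g,h),\varphi_2(g,h))$ lies over the same base path as $P(gh)$. Third, combine: condition \eqref{condition-shrunk-essence} says the output $A$-path is $A$-homotopic along a trivial sphere to $P(gh)$, and the construction of $\tilde\partial$ says it is $A$-homotopic to $P(g)\ast P(h)$; chaining these gives $P(g)\ast P(h) \sim_A P(gh)$.

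\textbf{Assembling the induction.} With the two-letter case in hand, I would run the induction over the length $n$ of a chain $w = w^{(0)}, w^{(1)}, \dots, w^{(n)} = w'$ of elementary equivalences. At each step $w^{(j)} \to w^{(j+1)}$, a single contraction/expansion occurs at some position; write $w^{(j)}$ as (prefix)$\cdot(w_k, w_{k+1})\cdot$(suffix) and $w^{(j+1)}$ as (prefix)$\cdot(w_k w_{k+1})\cdot$(suffix). Then $P(w^{(j)})$ is the concatenation $P(\text{suffix}) \ast P(w_k,w_{k+1}) \ast P(\text{prefix})$ (watching the ordering convention, since arrows compose right-to-left) and similarly for $P(w^{(j+1)})$; applying the two-letter result in the middle factor and leaving the outer factors fixed gives $P(w^{(j)}) \sim_A P(w^{(j+1)})$. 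Transitivity of $A$-homotopy then yields $P(w) \sim_A P(w')$.

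\textbf{Main obstacle.} The routine bookkeeping — ordering conventions, smoothness of concatenations via the reparametrization $f$, and the fact that concatenation descends to $A$-homotopy classes and respects it — is all standard and not the real difficulty. The crux is the two-letter identity, and there the subtle point is matching up the \emph{two} potentially different notions of homotopy floating around: $\tilde\partial$ is only well-defined modulo $A$-homotopy \emph{along trivial spheres}, whereas the conclusion we want is ordinary $A$-homotopy. This is fine here because all the base paths and base homotopies in sight (the concatenated geodesic triangle $\beta_{(g,h)}$ and its filling disk $\varphi_2(g,h)$, which is chosen in a uniformly normal region) are contractible rel endpoints, so ``$A$-homotopy along a trivial sphere'' collapses to ordinary $A$-homotopy; but one must make this explicit rather than wave at it. A secondary care point is to confirm that $\varphi_2(g,h)$ is genuinely available — i.e. that $\{s(h), t(h), t(g)\} \in U$ — but that is guaranteed by condition (4) of being shrunk.
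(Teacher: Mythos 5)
Your proposal is correct and follows essentially the same route as the paper: reduce to a single elementary equivalence and then use condition (5) of being shrunk together with the fact that $\tilde{\partial}(P(w_i,w_{i+1}),\varphi_2(w_i,w_{i+1}))$ is by construction $A$-homotopic to $P(w_i,w_{i+1})$, concluding $P(w_iw_{i+1})\sim_A P(w_i,w_{i+1})$ and assembling by concatenation. Your worry about ``$A$-homotopy along trivial spheres'' versus ordinary $A$-homotopy is harmless but unnecessary: by definition the former is a particular kind of $A$-homotopy, so the implication you need holds automatically without any contractibility argument.
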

\begin{proof}
    It suffices to prove this if $w$ and $w'$ are elementarily equivalent,
    so we will assume that $w'$ is obtained from $w$ by multiplying the letters
    $w_i$ and $w_{i+1}$.
    The $A$-path
    \[ \tilde{\partial}(P(w_i,w_{i+1}),\varphi_2(w_i,w_{i+1})) \]
    is homotopic to $P(w_i,w_{i+1})$ by construction.
    Because $G$ is shrunk, it is also $A$-homotopic to
    the $A$-path $P(w_iw_{i+1})$.
    We conclude that $P(w_i,w_{i+1})$ and
    $P(w_iw_{i+1})$ are $A$-homotopic, which suffices to prove the result.
\end{proof}

The monodromy group $\Mon_x$ consists precisely of the endpoints
of integrations of $\g_x$-paths that are $A$-homotopic to the trivial path
$0_x$.
This gives us the following corollary.

\begin{corollary}
    Suppose that $G$ is a shrunk local Lie groupoid over $M$.
    Let $x\in M$.
    Consider $G_x$ as a subset of $\G(\g_x)$ using the natural map
    $G_x \to \G(\g_x)$.
    Then $\Assoc_x(G) \subset \Mon_x$.
    \label{cor:assoc-is-mon}
\end{corollary}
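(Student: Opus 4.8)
The plan is to deduce the corollary directly from the proposition immediately preceding it, together with the characterisation of $\Mon_x$ recalled just above. Let $g\in\Assoc_x(G)$. By the very definition of an associator there is a well-formed word $w=(w_1,\dots,w_k)$ that can be evaluated, by suitable bracketings, both to $u(x)$ and to $g$; hence $(u(x))\sim w\sim(g)$ for the equivalence relation $\sim$ on words, and in particular $(u(x))\sim(g)$. The preceding proposition then gives that the $A$-paths $P(u(x))$ and $P(g)$ are $A$-homotopic.

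Next I would identify $P(u(x))$ by unwinding its construction. Since $u(x)$ is the image under $\exp_\nabla$ of the zero vector of $A_x$, the $G$-path $\tau\mapsto\exp_\nabla(f(\tau)\cdot 0)$ is constant at $x$, so its associated $A$-path $\tilde P(u(x))$ is the trivial path $0_x$; and the geodesic-straightening correction $\tilde{\partial}(\tilde P(u(x)),[\alpha])$ is trivial because $s(u(x))=t(u(x))=x$ forces $[\alpha]$ to be a constant homotopy. Hence $P(u(x))=0_x$, and combining with the first paragraph, $P(g)$ is $A$-homotopic to $0_x$. Moreover, since $g\in G_x$ we have $s(g)=t(g)=x$, so by construction $P(g)$ lies over the (constant) geodesic from $x$ to $x$, i.e.\ $P(g)$ is a $\g_x$-path.

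Now I would invoke the characterisation of the monodromy group: $\Mon_x$ is exactly the set of endpoints of integrations of $\g_x$-paths that are $A$-homotopic to $0_x$. Since $P(g)$ is precisely such a $\g_x$-path, the endpoint of its integration to a $\G(\g_x)$-path lies in $\Mon_x$. The remaining point is that this endpoint coincides with the image of $g$ under the natural map $G_x\to\G(\g_x)$ — which is well defined because $G$ is shrunk, hence $G_x$ is simply connected. Granting this, $g\in\Mon_x\cap G_x$, and as $g\in\Assoc_x(G)$ was arbitrary we obtain $\Assoc_x(G)\subset\Mon_x$ under the identification $G_x\subset\G(\g_x)$.

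I expect the only genuinely non-routine step to be this last identification, that integrating $P(g)$ recovers the class of $g$ in $\G(\g_x)$. This requires unwinding the definition of $P(g)$ — the reparametrisation by $f$, the use of $\exp_\nabla$, and especially the correction that straightens the base path onto a geodesic — and comparing it with the definition of the map $G_x\to\G(\g_x)$ (differentiate a path in $G_x$ from $x$ to $g$, then integrate). The key is that the straightening operation $\tilde{\partial}$ alters $\tilde P(g)$ only within its class modulo $A$-homotopy along trivial spheres, which is guaranteed by the property — built into the hypothesis that $G$ is shrunk — that the base path of $\tilde P(g)$ lies in a uniformly normal neighbourhood, so that the homotopy $[\alpha]$ sweeps out a trivial sphere. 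Everything else is bookkeeping with the definitions of \cref{sec:monodromy} and \cref{sec:assoc-is-mono}.
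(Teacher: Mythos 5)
Your argument is essentially identical to the paper's: there the corollary is deduced from the preceding proposition together with the remark that $\Mon_x$ consists precisely of the endpoints of integrations of $\g_x$-paths that are $A$-homotopic to $0_x$, which is exactly your chain $(g)\sim(x)$, $P(x)=0_x$, and $P(g)$ a $\g_x$-path $A$-homotopic to the trivial path. The one step you single out as non-routine --- that integrating $P(g)$ recovers the image of $g$ under the natural map $G_x\to\G(\g_x)$ --- is left entirely implicit in the paper (it is also used silently in the proof of \cref{prop:mon-is-assoc} when ``working in $\G(\g_x)$''), so your write-up is, if anything, more explicit about where the real content lies.
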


\section{Monodromy elements are associators}
\label{sec:mono-is-assoc}

Suppose that $G$ is shrunk.
If we have a simplicial map $[1] \to \Nerve_2G$,
we can associate to it a curve in $M$ as before:
we take the base path of the curve $P(g)$ where $g$ is the image of the 1-cell.
If we have a simplicial map $[2] \to \Nerve_2G$,
we get a homotopy class of maps $\Delta[2] \to M$ rel boundary,
namely $\varphi_2(g,h)$ as before, where $(g,h) \in \U$ is the image of the
2-cell.
More generally, if we have a good complex $S$ and a simplicial map $\phi : S \to \Nerve_2G$,
we get an induced homotopy class of maps $\realization{S} \to M$ rel boundary,
where $\realization{S}$ is the geometric realization of $S$,
by mapping each 1-cell and 2-cell as above, which we call $[\phi]$.

\begin{lemma}
    Let $G$ be a shrunk local Lie groupoid.
    Suppose $S$ is a good complex homeomorphic to a disk,
    and $\phi : S\to \Nerve_2G$ is a simplicial map.
    Let $w$ and $w'$ be the boundary words of $\phi$.
    Then
    \[ \tilde{\partial}(P(w),[\phi]) \text{ and } P(w') \]
    are $A$-homotopic along a trivial sphere.
    \label{lem:monodromy-word}
\end{lemma}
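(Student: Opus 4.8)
The statement is essentially a ``naturality of $\tilde\partial$ under elementary moves, patched together over a disk'' assertion, so the plan is to induct on the number of expansions and contractions used to build $S$ from some $W_k$. First I would fix an isomorphism $W_k \to S'$ onto one of the two boundary paths of $S$ (say the one carrying $w$), and record that $[\phi]$ restricted to that boundary path is exactly the base path of $P(w)$ by construction; likewise for $w'$ on the other boundary path. The base case is $S = W_k$ with no 2-cells: then both boundary paths coincide, $w = w'$, $[\phi]$ is the (homotopically trivial, since $S$ is contractible to a path) class, and $\tilde\partial(P(w),[\phi]) = P(w)$ holds because $\tilde\partial$ with a constant-type homotopy does nothing up to $A$-homotopy along a trivial sphere --- this is immediate from the properties of $\tilde\partial$ recalled in \cref{sec:monodromy} and the fact that $[0,1]^2 \to M$ factoring through a $1$-complex has trivial class in $\pi_2$.

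\textbf{Inductive step.} Suppose $S$ is obtained from a smaller good complex $S_0$ (still a disk) by a single expansion or contraction along a boundary edge, and that the result holds for $(S_0, \phi|_{S_0})$. An expansion adds one vertex $v$, two edges, and one face $\{u,v,w\}$ to a boundary edge $\{u,w\}$; under $\phi$ this face is some $(g,h)\in\U$ with $gh$ the image of the old edge. Passing from the old boundary word to the new one is exactly an elementary expansion of words $(\dots, gh, \dots) \rightsquigarrow (\dots, g, h, \dots)$, and the homotopy class $[\phi]$ differs from $[\phi|_{S_0}]$ precisely by glueing in the triangle $\varphi_2(g,h)$ along $\beta_{(g,h)}$. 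So I would use the defining property of a shrunk groupoid (condition \ref{condition-shrunk-essence}), which says $P(gh)$ equals $\tilde\partial(P(g,h),\varphi_2(g,h))$ up to $A$-homotopy along a trivial sphere, together with the concatenation/functoriality of $\tilde\partial$ (composing homotopies corresponds to composing the glued-in $\Delta[2]$'s), to propagate the inductive hypothesis across the new face. A contraction is handled by the same computation read in reverse, using that $\tilde\partial$ is ``invertible'' up to $A$-homotopy along trivial spheres when the glued triangle is run backwards.

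\textbf{The main obstacle.} The delicate point is the bookkeeping of \emph{which} boundary path of $S$ we track at each stage and the precise statement that $\tilde\partial$ behaves functorially under horizontal and vertical concatenation of the square homotopies $S : [0,1]^2 \to M$ --- i.e.\ that $\tilde\partial(a_0, S_1 \ast S_2) = \tilde\partial(\tilde\partial(a_0,S_1), S_2)$ modulo $A$-homotopy along trivial spheres, and that reparametrizing or subdividing $S$ does not change the output. This is morally ``well-definedness of $\partial$ on homotopy classes'' from \cite[prop.~3.21]{lectures-integrability-lie}, but one must check it applies when the homotopy is assembled from many small triangles rather than a single square, and that the sphere classes being killed at each step genuinely add up to a trivial element of $\pi_2(\Orbit_x)$ --- which holds because $S$ is a disk, hence $|S|$ is contractible, so the composite homotopy rel boundary is homotopically trivial in $M$ and a fortiori in the orbit. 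The remaining steps (matching words to boundary paths, checking the expansion/contraction dictionary) are routine once this functoriality is in hand.

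\textbf{Conclusion.} Applying the induction down to the base case $W_k$ gives $\tilde\partial(P(w),[\phi])$ and $P(w')$ are $A$-homotopic along a trivial sphere, which is the claim. I would remark that the hypothesis that $S$ is homeomorphic to a disk is used exactly to guarantee that $|S|$ is simply connected so that $[\phi]$, restricted appropriately, determines a well-defined homotopy class rel boundary and kills only trivial $\pi_2$-classes; the general good-complex case (a chain of disks and segments) would follow by applying the disk case to each disk piece, but is not needed here.
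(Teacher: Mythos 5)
Your proposal is correct and follows essentially the same route as the paper: the paper's proof is a two-line induction on the number of faces of $S$, with the one-face case given directly by condition (\ref{condition-shrunk-essence}) of the shrunk hypothesis and the inductive step relying implicitly on the concatenation behaviour of $\tilde{\partial}$ that you spell out. Your version simply makes explicit the functoriality and boundary-path bookkeeping that the paper leaves tacit.
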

\begin{proof}
    If $S$ has only one face,
    this follows immediately from condition (\ref{condition-shrunk-essence})
    in the definition of a shrunk local groupoid.
    By induction on the number of faces of $S$, the statement follows.
\end{proof}

\begin{proposition}
    Suppose that $G$ is a shrunk local Lie groupoid over $M$.
    Let $x\in M$.
    Consider $G_x$ as a subset of $\G(\g_x)$ using the natural map
    $G_x \to \G(\g_x)$.
    Suppose that $S$ is a good complex homeomorphic to a disk,
    and that
    \[ \phi : S \to \Nerve_2G \]
    is a simplicial map sending the boundary to $x$.
    (In particular, $\phi$ induces a class $[\phi] \in \pi_2(M,x)$.)
    Suppose that one of the boundary words of $\phi$ is $(x)$,
    and that the other boundary word has all letters in $G_x$.
    Then $\partial([\phi]) \in \Assoc_x(G)$.
    \label{prop:mon-is-assoc}
\end{proposition}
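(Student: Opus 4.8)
The plan is to use Lemma~\ref{lem:monodromy-word} to recognize the $A$-path read off the non-trivial boundary word as a representative of the monodromy element, and then to use Propositions~\ref{prop:equivalence-geometrically} and~\ref{prop:equivalence-relation} to turn the formal equivalence ``$(x)\sim(\text{that word})$'' into an honest element of $\Assoc_x(G)$. Write the two boundary words of $\phi$ as $(x)$ and $w=(g_1,\dots,g_k)$, all $g_i\in G_x$. Applying Lemma~\ref{lem:monodromy-word} with $(x)$ in the role of ``$w$'': here $P((x))=0_x$ is the trivial $\g_x$-path, and since $\phi$ sends the boundary of the disk $S$ to $x$, its induced class $[\phi]\in\pi_2(\Orbit_x,x)$ is of exactly the type occurring in the definition of the monodromy homomorphism, so that $\tilde{\partial}(0_x,[\phi])$ is precisely the $\g_x$-path whose integration is, by definition, $\partial([\phi])$. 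The lemma gives that $P(w)$ is $A$-homotopic along a trivial sphere to $\tilde{\partial}(0_x,[\phi])$; hence, viewed in $\G(\g_x)$, $P(w)$ integrates to $\partial([\phi])$. Since $P(w)$ is the concatenation of the $\g_x$-paths $P(g_1),\dots,P(g_k)$ and the class of $P(g)$ for $g\in G_x$ equals the image $\bar g$ of $g$ under $G_x\to\G(\g_x)$ (essentially the definitions of $P$ and of that map), this says $\partial([\phi])=\bar g_1\cdots\bar g_k$ in $\G(\g_x)$.

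To produce an honest associator I would first \emph{normalize}: replace $(S,\phi)$ by a refined, still disk-shaped, good complex $(S',\phi')$ with one boundary word still $(x)$, with the same induced class $[\phi']=[\phi]$, but whose other boundary word $w'=(h_1,\dots,h_N)$ consists only of \emph{small} (hence invertible) elements of $G_x$. This is where bi-regularity and the explicit shape of the multiplication domain $\U$ are used: a $2$-simplex of $\Nerve_2G$ lying over an edge realizes an arbitrary factorization of that edge, so each edge $e$ with $\phi(e)=g_i$ can be subdivided by a chain of expansions along a short path from $x$ to $g_i$ inside $G_x$, splitting $g_i$ into small factors while keeping all intermediate products defined in $G$; adjoining these expansions changes neither the $(x)$-side nor the base paths, so $[\phi']=[\phi]$. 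Once the $h_j$ are small, the product $g:=h_1\cdots h_N$ (bracketed so as to follow the integration of $P(w')$) is defined \emph{in} $G$, with $g\in G_x$; carrying it out as a sequence of contractions of the word $w'$ exhibits $(g)\sim w'$, while Proposition~\ref{prop:equivalence-geometrically} applied to $\phi'$ gives $w'\sim(x)$. Hence $(g)\sim(x)$, so by Proposition~\ref{prop:equivalence-relation} and the identification of $\Assoc(G)$ with the kernel of $G\to\AsCo(G)$ (we may assume $G$ has products connected to the axes, since a restriction does not change $\Assoc_x(G)$) we get $g\in\Assoc_x(G)$. Finally $\bar g=\bar h_1\cdots\bar h_N=\partial([\phi])$ — the first equality by iterating the defining property~(\ref{condition-shrunk-essence}) of a shrunk groupoid, the second by the previous paragraph — so $\partial([\phi])\in\Assoc_x(G)$ under the identification $G_x\subset\G(\g_x)$.

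The step I expect to be the real obstacle is the normalization: one must arrange the refinement so that the homotopy class of the induced sphere is genuinely unchanged \emph{and} so that, after subdivision, the product of the new boundary word really closes up inside $G$ and not merely inside $\G(\g_x)$ — that is, so that every intermediate product stays in the domain of multiplication. This is exactly where the hypotheses bundled into ``shrunk'' are needed, in particular simple connectedness of the isotropy groups $G_x$ (which makes the maps $G_x\to\G(\g_x)$ well-defined and injective) together with bi-regularity; everything else is bookkeeping with words and good complexes. Combined with Corollary~\ref{cor:assoc-is-mon}, this proposition is what yields the coincidence $\Assoc_x(G)=\Mon_x\cap G_x$ of Theorem~\ref{thm:assoc-mono}.
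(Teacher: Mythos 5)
Your first half coincides with the paper's argument: using \cref{lem:monodromy-word} with the trivial boundary word, $\partial([\phi])$ is the endpoint of the integration of $P(w)$, i.e.\ the product $\bar w_1\cdots \bar w_k$ in $\G(\g_x)$, and \cref{prop:equivalence-geometrically} gives $(w_1,\dots,w_k)\sim(x)$ in $W(G)$. The divergence, and the genuine gap, is your ``normalization'' step: the claim that, once the letters $h_j$ are small, the product $g:=h_1\cdots h_N$ is defined in $G$ and that $w'$ can be contracted in $W(G)$ to the single letter $(g)$. Smallness of the individual letters gives no control over the consecutive sub-products that a chain of contractions must form: each such partial product has to lie in the multiplication domain $\U$ together with its neighbour, and these partial products trace (roughly) the integration path of $P(w')$ in $\G(\g_x)$, which can perfectly well leave the image of $G_x$; nothing in the ``shrunk'' hypotheses, nor in bi-regularity, rules this out, and ``how small is small enough'' for the next multiplication depends on the partial product you have already formed, which you cannot know when you fix the subdivision. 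The example given right after \cref{lem:smalllemma} ($U=(-0.5,0.5)/\Z\subset\R/\Z$, letters $0.25$) is exactly this phenomenon: every letter is small and the total product lies in $U$, yet the partial products escape $U$ and no contraction chain exists. In your situation the desired word equivalence is in fact true, but only because $\G(\g_x)$ is simply connected --- not because the product can be carried out inside $G$.

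The missing ingredient is precisely \cref{lem:smalllemma}, the lemma your route tries to bypass: applying it to $G_x$, viewed (via the injective map of the shrunk hypotheses) as an open neighbourhood of the identity in the source-simply connected group $\G(\g_x)$, yields $(\partial([\phi]))\sim(w_1,\dots,w_k)$ in $W(G_x)$, hence in $W(G)$, through a possibly long chain of expansions \emph{and} contractions that never requires the iterated product to be performable in one go; combined with $(w_1,\dots,w_k)\sim(x)$ this finishes the proof, with no subdivision of $S$ at all. (Incidentally, even your subsidiary claim that the refinement satisfies $[\phi']=[\phi]$ needs the normalization $\varphi_2{\restriction_T}=\id_T$ on degenerate triples, not merely the fact that the base paths are unchanged.) So the proposal is not a complete proof as it stands: repairing the contraction step amounts to proving, or invoking, \cref{lem:smalllemma}.
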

\begin{proof}
    Let $w = (w_1,...,w_k)$ be the other boundary word of $\phi$.
    By lemma \ref{lem:monodromy-word}, $\partial([\phi])$ is the endpoint
    of the integration of $P(w) = P(w_1,...,w_k)$ to a $\G(\g_x)$-path.
    Now working in $\G(\g_x)$, we see that
    \[ \partial([\phi]) = \prod_{i=1}^k w_i .\]
    By lemma \ref{lem:smalllemma} below, applied to $G_x \subset \G(\g_x)$, we have
    \[ (\partial([\phi])) \sim (w_1,...,w_k) \quad\text{in $W(G_x)$.}\]
    In particular, this equivalence holds in $W(G)$.
    However, the map $\phi$ shows that $(w_1,...,w_k) \sim (x)$
    in $W(G)$
    (by proposition \ref{prop:equivalence-geometrically}).
    This implies that $(\partial([\phi])) \sim (x)$ in $W(G)$,
    and so $\partial([\phi]) \in \Assoc_x(G)$.
\end{proof}

\begin{lemma}
    Let $\G$ be a source-simply connected Lie groupoid,
    and let $U \subset \G$ be an open neighborhood of the identities,
    considered as a local Lie groupoid.
    Suppose that $g_1, \ldots, g_n \in U$ are such that their product
    $g = g_1\cdots g_n$ lies in $U$.
    Then in $W(U)$ we have
    \label{lem:smalllemma}
    \[ (g_1,\ldots,g_n) \sim (g) .\]%
\end{lemma}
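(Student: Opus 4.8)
The plan is to restate the claim in terms of associative completions and then to identify $\AsCo(U)$ with $\G$ itself, the point being that $\G$ is source–simply connected. Recall that ``$(g_1,\dots,g_n)\sim(g)$ in $W(U)$'' is by definition the statement that $[g_1,\dots,g_n]=[g]$ in $\AsCo(U)$. Replacing $U$ by a restriction of itself changes neither the space of arrows nor the product $g=g_1\cdots g_n$ (which is computed in $\G$), and an equivalence of words for a restriction is also one for $U$; so I may assume from the start that $U$ is bi-regular (automatic, being open in the Lie groupoid $\G$) and has products connected to the axes. Since $\G$ is a global Lie groupoid it is globally associative, hence $\AsCo(\G)=\G$, and applying the functor $\AsCo$ to the inclusion $U\hookrightarrow\G$ yields a morphism of groupoids
\[ \Phi\colon \AsCo(U)\longrightarrow\G,\qquad [w_1,\dots,w_k]\longmapsto w_1\cdots w_k \]
(product taken in $\G$), fitting into $\Phi\circ\iota=(U\hookrightarrow\G)$, where $\iota\colon U\to\AsCo(U)$. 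In particular $\Phi([g_1,\dots,g_n])=g=\Phi([g])$, so it suffices to prove that $\Phi$ is injective on the source fibre over $x:=s(g)$.

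The next step is to check that $\AsCo(U)$ is smooth and that $\Phi$ is a fibrewise covering. The key observation is that $\Assoc(U)$ is \emph{trivial}: by the corollary to Proposition~\ref{prop:equivalence-relation} it equals $\ker\iota$, and $\ker\iota\subseteq\ker(U\hookrightarrow\G)=M$ because the inclusion is injective. Hence $\Assoc(U)$ is uniformly discrete, so by Theorem~\ref{thm:smoothness-of-asco} the quotient $\AsCo(U)$ is a smooth Lie groupoid and $\iota$ is a local diffeomorphism. Consequently $\iota$, and therefore also $\Phi$, induces an isomorphism on Lie algebroids (both $\iota$ and $\Phi\circ\iota$ are local diffeomorphisms near $M$); and a morphism of Lie groupoids which is an isomorphism on Lie algebroids restricts to covering maps between corresponding source fibres.

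Now I invoke source–simple connectedness. The source fibre $s^{-1}(x)\subset\G$ is simply connected, so the covering $\Phi\colon s^{-1}(x)_{\AsCo(U)}\to s^{-1}(x)_{\G}$, being a connected covering of a simply connected manifold, is a diffeomorphism; in particular $\Phi$ is injective on this source fibre. Since $[g_1,\dots,g_n]$ and $[g]$ both lie in $s^{-1}(x)_{\AsCo(U)}$ and both map to $g$, they coincide, which is exactly $(g_1,\dots,g_n)\sim(g)$ in $W(U)$.

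The step that needs care is the connectedness of the source fibres of $\AsCo(U)$, needed to pass from ``$\Phi$ is a covering of a simply connected fibre'' to ``$\Phi$ is a diffeomorphism of fibres''; one obtains it from $s$-connectedness of $U$, since every element of a source fibre of $\AsCo(U)$ is a product of $\iota$-images of elements of $U$, each of which can be joined to an identity inside its source fibre. In the situation in which the lemma is applied — $U=G_x$ sitting inside the source–simply connected group $\G(\g_x)$ — $U$ is a connected Lie group, $\AsCo(U)$ is a connected Lie group generated by $\iota(U)$, and $\Phi$ is a covering homomorphism onto a simply connected group, hence an isomorphism, so there the argument is immediate. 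The remaining points (that the initial restriction of $U$ is harmless, that $\AsCo(\G)=\G$ for a global Lie groupoid, and that $\Phi$ is smooth) are routine; one can also phrase the whole proof through Proposition~\ref{prop:equivalence-geometrically}, exhibiting a good complex mapping into $\mathcal N_2U$, but the $\AsCo$-identification above seems the most economical route.
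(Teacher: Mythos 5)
Your argument is essentially the paper's proof of Lemma~\ref{lem:smalllemma}: apply $\AsCo$ to the inclusion $U\into\G$, note that $\AsCo(U)\to\AsCo(\G)\cong\G$ is a morphism of Lie groupoids inducing an isomorphism on Lie algebroids, and use source-simple connectedness of $\G$ to get injectivity; you merely make explicit what the paper leaves implicit (triviality of $\Assoc(U)$, smoothness of $\AsCo(U)$ via Theorem~\ref{thm:smoothness-of-asco}, the harmless restriction to obtain products connected to the axes, and the fibrewise covering argument).

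The caveat you flag at the end deserves emphasis, because it is not something you can ``obtain'': source-connectedness of $U$ is not among the hypotheses of the lemma, and without it the statement is false, so your appeal to $s$-connectedness of $U$ is a genuinely additional assumption --- one the paper's own proof also uses silently, since without it $\AsCo(U)$ need not be source-connected, and a non-connected cover of the simply connected source fibres of $\G$ need not inject. For a counterexample take $\G=\R$ and $U=(-1,1)\cup(10,11)$, with $g_1=\dots=g_{21}=\tfrac12$, so that $g=\tfrac{21}{2}\in U$. Two letters of $(-1,1)$ can only contract to a letter of $(-1,1)$, a letter of $(-1,1)$ and a letter of $(10,11)$ can only contract to a letter of $(10,11)$, two letters of $(10,11)$ cannot be contracted at all (their sum lies outside $U$), and expansions are the reverses of these moves; hence the number of letters lying in $(10,11)$ is invariant under $\sim$, and $(\tfrac12,\dots,\tfrac12)\not\sim(\tfrac{21}{2})$ even though all stated hypotheses of the lemma are met. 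So the correct statement (and the one actually needed) must include connectedness of the source fibres of $U$; in the only place the lemma is applied, $U=G_x\subset\G(\g_x)$ is simply connected by the definition of a shrunk local groupoid, so the hypothesis holds there, and with it your proof --- and the paper's --- goes through.
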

\begin{proof}
    Apply the functor $\AsCo$ to the inclusion $U \into \G$.
    We get $\AsCo(U) \to \AsCo(\G) \cong \G$, which is a morphism of Lie groupoids.
    It is an isomorphism
    at the level of Lie algebroids,
    and $\G$ is source-simply connected, so this map is an isomorphism.
    In particular, it is an injection.
    Because the words $(g_1, ..., g_n), (g) \in W(U)$ are mapped to the same element of $\G$,
    they must represent the same element of $\AsCo(U)$. This means that they are equivalent.
\end{proof}

Note that this lemma is false if $\G$ is not $s$-simply connected.
For example, if we take $U = (-0.5,0.5)/\Z \subset \R/\Z = \G$,
then $0.25+0.25+0.25+0.25=0$ in $\G$, but $(0.25,0.25,0.25,0.25) \not\sim (0)$ as words
in $W(U)$.

We now state and prove the main theorem relating associators to monodromy.

\begin{maintheorem}
    Suppose that $G$ is a shrunk local Lie groupoid over $M$. Let $x\in M$.
    Consider $G_x$ as a subset of $\G(\g_x)$ using the natural map
    $G_x \to \G(\g_x)$.
    \label{thm:assoc-mono} 
    Then
    \[ \Assoc_x(G) = \Mon_x \cap G_x .\]
\end{maintheorem}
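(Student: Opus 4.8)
The plan is to establish the two inclusions of the claimed equality separately. The inclusion $\Assoc_x(G) \subseteq \Mon_x \cap G_x$ is essentially already in hand: an associator at $x$ lies in $G_x$ by the very definition of $\Assoc_x(G)$, and \cref{cor:assoc-is-mon} gives $\Assoc_x(G) \subseteq \Mon_x$ once $G_x$ is viewed inside $\G(\g_x)$ through the natural map (which is injective because $G$ is shrunk). So the substance of the theorem is the reverse inclusion $\Mon_x \cap G_x \subseteq \Assoc_x(G)$, and that is where I would put the work.

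For the reverse inclusion, let $g \in \Mon_x \cap G_x$. Since $g \in \Mon_x$, there is a class $[\alpha] \in \pi_2(\Orbit_x, x)$ with $\partial([\alpha]) = g$; unwinding the construction of $\partial$ in \cref{sec:monodromy}, we may lift $d\alpha$ to a morphism of algebroids $a\,dt + b\,ds$ on $I \times I$ with $a(0,\cdot) = 0_x$ and $b(\cdot,0) = b(\cdot,1) = 0$, so that $a_1 := a(1,\cdot)$ is a $\g_x$-path whose integration endpoint in $\G(\g_x)$ is $g$. The goal is to \emph{combinatorialize this homotopy}: to produce a good complex $S$ homeomorphic to a disk and a simplicial map $\phi \colon S \to \Nerve_2 G$ such that $\phi$ sends the boundary of $\realization{S}$ to $x$, one boundary word of $\phi$ is $(x)$, and the other boundary word $w = (w_1,\dots,w_k)$ has all of its letters in $G_x$ with $w_1\cdots w_k = g$ in $\G(\g_x)$ (consistently, $[\phi]$ will represent $[\alpha]$). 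Granting such a $\phi$, \cref{prop:mon-is-assoc} gives $\partial([\phi]) \in \Assoc_x(G)$, while the proof of that proposition identifies $\partial([\phi])$ with $w_1\cdots w_k = g$; hence $g \in \Assoc_x(G)$ and the theorem follows.

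The construction of $\phi$ is the technical heart. I would triangulate the square $I \times I$ — the domain of the homotopy $a\,dt + b\,ds$ — by a grid so fine that the image under $\alpha$ of every closed simplex lies in a geodesically convex ball of $M$, and fine enough that each such small simplex, together with the geodesics joining its vertices, lifts through the shrunk structure of $G$ to a $2$-simplex of $\Nerve_2 G$ (i.e.\ to an element of $\U$); geodesic convexity then forces the auxiliary fillings $\varphi_2$ on these simplices to agree, up to homotopy rel boundary, with $\alpha$ restricted to them, which is what makes the assembled $\phi$ satisfy $[\phi] = [\alpha]$. On each $t$-step of the grid one puts the element of $G$ obtained by integrating the $a$-component of the lift along that step (via $\exp_\nabla$), and on each $s$-step the element obtained from the $b$-component; ordering the grid vertices by distance from the corner $(0,0)$ turns the triangulated square into a good complex homeomorphic to a disk with source $(0,0)$ and target $(1,1)$. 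Along the sides $\{s=0\}$, $\{t=0\}$, $\{t=1\}$ one has $a = a_0 = 0_x$ or $b = 0$, so those edges carry units; along $\{s=1\}$ the edges carry the nontrivial elements of $G_x$ integrating $a_1$, whose product is $g$. Contracting the unit stretches of the two boundary paths down to single edges turns one boundary word into $(x)$ and the other into the word $w$ in $G_x$ with product $g$, placing us exactly in the hypotheses of \cref{prop:mon-is-assoc}; equivalently, by \cref{prop:equivalence-geometrically}, this exhibits $(w) \sim (x)$ in $W(G)$.

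The main obstacle is precisely this construction: choosing a single triangulation fine enough that all the shrunk-structure lifts of the small simplices exist \emph{and} agree on overlaps (so $\phi$ is a genuine simplicial map), that the $\varphi_2$-fillings reproduce $\alpha$ up to homotopy rel boundary (so $[\phi]$ is controlled and $\partial([\phi]) = g$ rather than trivializing), and that the boundary data integrates as claimed. This is in essence a simplicial-approximation argument carried out inside the local Lie groupoid, and keeping the various compatibilities under control simultaneously is the delicate part; everything else is bookkeeping with the results already proved in \cref{sec:assoc-is-mono} and \cref{sec:mono-is-assoc}.
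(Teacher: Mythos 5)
Your overall skeleton is the same as the paper's: the inclusion $\Assoc_x(G)\subset\Mon_x\cap G_x$ is exactly \cref{cor:assoc-is-mon}, and the reverse inclusion is reduced to producing, for a class $[\alpha]$ with $\partial([\alpha])=g$, a good complex $S$ homeomorphic to a disk and a simplicial map $\phi:S\to\Nerve_2G$ sending the boundary to $x$, having $(x)$ as one boundary word and a word with letters in $G_x$ as the other, with $[\phi]=[\alpha]$, after which \cref{prop:mon-is-assoc} finishes the argument. The problem is that your construction of $\phi$, which is the actual content of the proof, has a genuine gap.

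You take $S$ to be the triangulated homotopy square itself, label the grid edges by integrating the $a$- and $b$-components of the lift, and assert that for a fine enough grid each small simplex ``lifts'' to a $2$-simplex of $\Nerve_2G$. But a $2$-simplex of $\Nerve_2G$ is a composable pair $(g,h)\in\U$ whose three faces are $h$, $gh$ and $g$; so your prescription requires, for every small triangle, that the product of the two earlier edge labels be defined \emph{and exactly equal} to the label already assigned to the third edge (and the two triangles sharing the diagonal of a grid square impose two generally incompatible values for that diagonal). Neither fineness nor geodesic convexity gives this exact closing-up; with labels obtained by integrating the lift, the product around a small square need not be a unit in $G$, and making it so would itself require a separate argument (for instance that over small contractible charts the shrunk groupoid agrees near the units with a global integration of the restricted algebroid, together with chart-independence of the edge labels), which you do not supply. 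Likewise, the claim that ordering the grid vertices by distance from $(0,0)$ turns the triangulated square into a good complex, i.e.\ one obtainable from some $W_k$ by expansions and contractions, is a nontrivial combinatorial assertion left unproved. The paper's proof is built precisely to avoid both points: it labels the edges of a fine triangulation of $\Delta$ by $g_e=\exp(\sigma(V_e))$ with no multiplicative compatibility demanded among the three edges of a face, and it never uses the triangulation as the good complex; instead $S$ is assembled abstractly from $W_1$ by expansions (insertion of blocks) and contractions (one lace per face of the triangulation), so the only products needed are the conjugated triple products around single faces, which are defined for $k$ large, and the lace products simply become the letters $p_i\in G_x$ of the second boundary word. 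Unless you either prove the exact closure on small squares together with the good-complex property of the ordered grid, or switch to a lace-type construction, your argument does not go through as written.
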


\begin{proof}
    The inclusion $\subset$ was corollary \ref{cor:assoc-is-mon}.
    We now prove the inclusion $\supset$.
    By proposition \ref{prop:mon-is-assoc}, it suffices to show that for every
    $[\alpha] \in \pi_2(M,x)$,
    there is a good complex $S$, homeomorphic to a disk,
    and a simplicial map $\phi : S \to \Nerve_2G$, such that
    \begin{enumerate}
        \item the induced map $\realization{S} \to M$ maps the boundary of
            $\realization{S}$ to $x$,
        \item one of the boundary words of $\phi$ is $(x)$,
        \item $[\phi] = [\alpha]$ as elements of $\pi_2(M,x)$.
    \end{enumerate}

    Write $\Delta = \{ (x,y) \in \R^2 \mid x\geq 0, y\geq 0, x+y\leq 1\}$.
    Represent $[\alpha]$ as a map $\alpha : \Delta \to M$, mapping the boundary of $\Delta$ to $x$.
    Fix a splitting $\sigma : TM \to A$ of the anchor.
    If $e$ is a sufficiently short oriented line segment in $\Delta$,
    from a point $v_1$ to a point $v_2$,
    we will associate to it an element $g_e$ of $G$, as follows.
    There is a tangent vector $V_e$ in $T_{\alpha(v_1)}M$ such that
    $t(\exp(\sigma(V_e))) = \alpha(v_2)$ (if $e$ is sufficiently short).
    Take $g_e = \exp(\sigma(V_e))$.
    The base path of $P(g_e)$ is then a geodesic from $\alpha(v_1)$ to $\alpha(v_2)$.

    If $k$ is a positive integer, we can subdivide $\Delta$ into $k$\footnote[2]{this number should be squared} triangles,
    as shown in figure~\ref{fig:triangulation}.
    By choosing $k$ large enough, we can ensure that
    \begin{enumerate}
        \item every edge $e$ of the triangulation is short enough to ensure that
            $g_e$ is defined,
        \item if $e_3,e_2,e_1$ are the edges of a face, the products
            $(g_{e_3}g_{e_2})g_{e_1}$ and $g_{e_3}(g_{e_2}g_{e_1})$ are both defined,
        \item if $e_3,e_2,e_1$ are the edges of a face, and $f_m,...,f_1$ are edges
            of the triangulation forming a path
            of total Euclidean length at most 10 (as measured in $\R^2$)
            with $t(e_3) = s(e_1) = t(f_m)$,
            then the product
            \[\tag{$\dag$}\label{eq:laceproduct} (g^{-1}_{f_1} \cdots (g^{-1}_{f_m} (g_{e_3} g_{e_2} g_{e_1}) g_{f_m}) \cdots g_{f_1}) \]
            is defined (the brackets that are not specified may be put in in any way).
    \end{enumerate}
    If $k$ is large enough to guarantee these properties, we get an induced
    element of $\pi_2(M,x)$, as follows:
    we can map the 1-skeleton of the triangulated $\Delta$ to $M$ by mapping
    each vertex of the triangulation to its image under
    $\alpha$, and mapping each edge $e$ to the base path of $P(g_e)$.
    For each face of the triangulation, there is a preferred homotopy class of map
    into $M$ (rel boundary), namely the one determined by $\varphi_2$.
    All in all, we get a well-defined homotopy class of map from $\Delta$ to $M$,
    rel 1-skeleton. In particular, this defines a homotopy class of map from $\Delta$ to $M$,
    rel boundary, and hence an element of $\pi_2(M,x)$.
    After choosing $k$ larger if necessary, we may assume that this class is precisely $[\alpha]$.

    Let us write $\eps_{3k}, ..., \eps_{1}$ for the $3k$ edges at the circumference of $\Delta$,
    in counterclockwise order starting at the origin $(0,0)$, and with the corresponding orientation
    (so $\eps_k,...,\eps_1$ are oriented to the right, $\eps_{2k},...,\eps_{k+1}$ to the upper left,
    and $\eps_{3k},...,\eps_{2k+1}$ down).
    We will work with ordered sequences of oriented edges of the triangulation.
    Such a sequence of oriented edges $(e_n, ..., e_1)$ is \emph{well-formed}
    if $e_{i}$ ends where $e_{i+1}$ starts (for all $i$).
    A \emph{block} will be a well-formed sequence (possibly empty) of the form
    \[ (e_1^{-1}, ..., e_n^{-1}, e_n, ..., e_1) ,\]
    where $e_i^{-1}$ is the edge $e_i$ with its opposite orientation.
    This name is chosen to be the same as in the proof of theorem \ref{thm:smoothness-of-asco}.
    If $F$ is one of the faces of the triangulation, a \emph{lace around $F$} will be a
    sequence of the form
    \[ (e_1^{-1}, ..., e_n^{-1}, c, b, a, e_n, ..., e_1) ,\]
    where $a, b, c$ are the three sides of $F$, oriented counterclockwise,
    and $e_1$ starts at $(0,0)$. (So it loops around $F$ once, and encloses nothing else.)
    The words $(e_n,...,e_1)$ and $(e_1^{-1},...,e_n^{-1})$ are the \emph{ends} of the lace.

    Now let $(e_n, ..., e_1)$ be a sequence of oriented edges of the triangulation
    that simultaneously satisfies the conditions
    \begin{enumerate}[(a)]
        \item the sequence can be obtained from $(\eps_{3k},...,\eps_2,\eps_{1})$ by repeated insertion
            of blocks,
        \item the sequence is a concatenation of laces $\ell_N, ..., \ell_1$,
            one around each face of the triangulation,
            and each end of a lace has at most $2k$ edges (which certainly ensures that
            the total euclidean length of the edges of that end is less than 10).
    \end{enumerate}
    Such a sequence always exists.
    Figure \ref{fig:sequence-in-triangulation} shows an example for $k=2$.
    The video at
    \url{http://math.uiuc.edu/~michiel2/edge-sequence/}
    illustrates an appropriate sequence of edges for $k=5$,
    which can immediately be generalized to
    work for any value of $k$.
    Note that for each of the laces, the product of the elements associated to the edges
    is defined if we use the order as in (\ref{eq:laceproduct}).
    We will call this the \emph{product} of the lace.

    Now this sequence allows us to build up the required good complex $S$ and
    the simplicial map $\phi : S \to \Nerve_2G$.
    Start with the complex $W_1$, mapped to the element $x$.
    Expand $3k-1$ times, to get a good complex with two boundary paths: one is the original $W_1$,
    the other is a copy of $W_{3k}$. On the complex we have so far,
    $\phi$ will still map everything to $x$.

    Recall that $(e_n,...,e_1)$ can be obtained from $(\eps_{3k},...,\eps_1)$ by insertion of blocks.
    This means that the word $(g_{e_n},...,g_{e_1})$ can be obtained from $(x,...,x)$ by expansion.
    Expanding our good complex further, we therefore get a complex and a map $\phi$
    with boundary words $(x)$ and $(g_{e_n},...,g_{e_1})$.

    Now we group the $(e_n,...,e_1)$ by laces.
    Because the corresponding products are defined, we may now continue building our good complex using
    contractions.
    This results in a complex and map $\phi$ with boundary words
    $(x)$ and $(p_N,...,p_1)$,
    where $p_i$ is the product of the $i$'th lace.

    By construction, this simplicial map $\phi$ induces $[\alpha]$, proving the result.
\end{proof}

\begin{figure}
    \centering
    \begin{tikzpicture}[scale=0.6]
        \foreach \x in {0,...,7}
        {\pgfmathtruncatemacro{\maxy}{7-\x}
          \foreach \y in {0,...,\maxy}
            \draw (\x,\y) -- (\x+1,\y) -- (\x,\y+1) -- (\x,\y);}
    \end{tikzpicture}
    \caption{The triangulation of $\Delta$, shown here for $k=8$.}
    \label{fig:triangulation}
\end{figure}
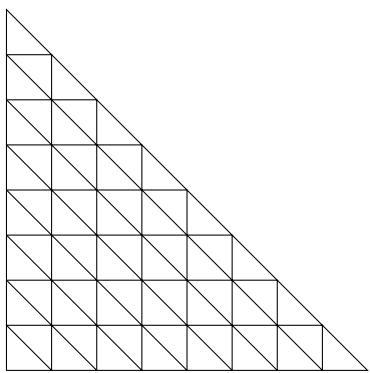

\begin{figure}
    \begin{center}
        \begin{tikzpicture}[scale=3]
            \pgfmathsetlengthmacro{\tick}{0.3mm}
            \draw[rounded corners=1] (0,-2 * \tick) -- (1cm,-2*\tick) -- (2cm+2*\tick,-2*\tick) --
            (1cm+\tick,1cm-\tick) -- (1cm+\tick,-\tick) -- (0,-\tick) -- (0,0) -- (1cm,0) -- (1cm,1cm) --
            (0,2cm) -- (0,1cm) -- (1cm-\tick, 1cm) -- (1cm-\tick,\tick) -- (0,\tick) -- (0,2*\tick) --
            (1cm-2*\tick, 2*\tick) -- (1cm-2*\tick, 1cm-\tick) -- (\tick, 1cm-\tick) --
            (1cm-3*\tick,3*\tick) -- (0,3*\tick) -- (0,4*\tick) -- (1cm-5.4142*\tick, 4*\tick) --
            (0,1cm-1.4142*\tick) -- (0,5*\tick);
            \draw[fill=black] (0,-2*\tick) circle[radius=0.5*\tick]; 
            \draw[fill=white] (0,5*\tick) circle[radius=0.5*\tick]; 
            \node at (1.33,0.27) {$F_1$};
            \node at (0.33,1.3) {$F_2$};
            \node at (0.67,0.67) {$F_3$};
            \node at (0.28,0.33) {$F_4$};
        \end{tikzpicture}
    \end{center}
    \caption{A sequence of edges that satisfies the conditions in the proof of
        theorem \ref{thm:assoc-mono}, for $k=2$.
        The sequence is shown here as a path along the edges
        of the triangulation that starts at the black dot (the first edge is $\eps_1$)
        and ends at the white dot (the last edge is $\eps_{3k}$).
        The sequence is of the form $(\text{lace around $F_1$}, \text{lace around $F_2$},
        \text{lace around $F_3$},\text{lace around $F_4$})$, and simultaneously of the form
        $(\eps_1, \eps_2, \eps_3, \text{block}, \eps_4, \eps_5, \text{block}, \text{block}, \eps_6)$.}
    \label{fig:sequence-in-triangulation}
\end{figure}
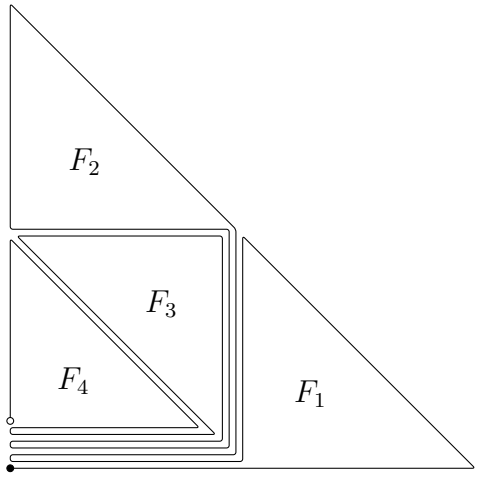

\backmatter
\bibliographystyle{plain}
\bibliography{thesis}

\appendix
\chapter{Appendix: Computer code}

The following code renders a video that illustrates
the triangulation of figure \ref{fig:triangulation}
and a sequence of edges that satisfies the conditions in the proof of theorem
\ref{thm:assoc-mono}.
To execute this code, Python 3 is required, with the numpy and matplotlib libraries
installed.
\vspace*{0.5em}

\begin{python}
import matplotlib.pyplot as plt
import matplotlib.animation as animation
import numpy
from math import ceil

k = 5
fps = 30
speed = 3.0

# lace around a lower triangle
def lower_lace(i, j):
    for x in range(1,i+1):
        yield (x,0)
    for y in range(0,j+1):
        yield (i+1,y)
    yield (i,j+1)
    yield (i,j)
    yield (i+1,j)
    for y in range(j,0,-1):
        yield (i+1, y-1)
    for x in range(i+1,0,-1):
        yield (x-1,0)

# lace around an upper triangle
def upper_lace(i,j):
    for x in range(1,i+1):
        yield (x,0)
    for y in range(0,j+1):
        yield (i+1,y)
    yield (i+1, j+1)
    yield (i,j+1)
    yield (i+1,j)
    for y in range(j,0,-1):
        yield (i+1, y-1)
    for x in range(i+1,0,-1):
        yield (x-1,0)

# laces for all triangles in a column
def column(i):
    yield from lower_lace(i,k-i-1)
    for j in range(k-i-2, -1, -1):
        yield from upper_lace(i,j)
        yield from lower_lace(i,j)

# the entire sequence
def everything():
    yield (0,0)
    for i in range(k-1,-1,-1):
        yield from column(i)

path = list(everything())
xs = [p[0] for p in path]
ys = [p[1] for p in path]

def animate(n, point):
    point.set_data([
        numpy.interp(speed*n/fps, range(len(xs)), xs),
        numpy.interp(speed*n/fps, range(len(ys)), ys)])

if __name__ == "__main__":
    fig = plt.figure()
    fig.set_facecolor('white')
    plt.plot(xs, ys, color='#cccccc', linewidth=2.0)
    plt.plot([i for i in range(k+1) for j in range(k+1-i)],
        [j for i in range(k+1) for j in range(k+1-i)],
        'o', color='#cccccc')
    plt.axis([-1, k+1, -1, k+1])
    plt.axis('off')
    ax = plt.gca()
    plt.tight_layout(pad=0.1)
    point, = ax.plot([0],[0],'o', color='#000000',
        markersize=10)
    ani = animation.FuncAnimation(fig, animate,
        list(range(ceil(fps*len(path)/speed))),
        fargs=[point], interval=(1000/fps))

    # save animation to a file
    FFMpegWriter = animation.writers['ffmpeg']
    metadata = dict(title='Edge sequence')
    writer = FFMpegWriter(fps=fps, metadata=metadata)
    ani.save('edge-sequence.mp4', writer=writer)
    # show animation on screen
    plt.show()
\end{python}

\end{document}